\documentclass[10pt]{amsart}
\usepackage[a4paper, margin=1.2in]{geometry}
\usepackage{cite}
\usepackage{amsmath, amsthm, amssymb,tikz}
\usepackage{accents}
\usepackage{csquotes}
\usepackage{tikz-cd}
\usepackage{MnSymbol}
\usepackage[hidelinks]{hyperref}
\usetikzlibrary{arrows}
\usepackage{longtable}
\usepackage{mathtools}
\usepackage{bbm}
\usepackage[shortlabels]{enumitem}

\newcommand{\R}{\mathbb{R}}
\newcommand{\Q}{\mathbb{Q}}
\newcommand{\Z}{\mathbb{Z}}

\newcommand{\A}{\mathbb{A}}
\newcommand{\C}{\mathbb{C}}

\newcommand{\F}{\mathbb{F}}
\newcommand{\G}{\mathbb{G}}

\newcommand{\Gal}{\mathrm{Gal}}
\newcommand{\GL}{\mathrm{GL}}

\newcommand{\Spec}{\mathrm{Spec}}

\newcommand{\tr}{\mathrm{tr}}
\newcommand{\Res}{\mathrm{Res}}
\newcommand{\Aut}{\mathrm{Aut}}

\newtheorem{lem}{Lemma}[section]
\newtheorem{thm}[lem]{Theorem}
\newtheorem{defn}[lem]{Definition}
\newtheorem{prop}[lem]{Proposition}
\newtheorem{cor}[lem]{Corollary}
\newtheorem{con}[lem]{Conclusion}
\newtheorem{rem}[lem]{Remark}

\begin{document}
\title[Base change fundamental lemma over local function fields]{Base Change Fundamental Lemma FOR CENTRAL ELEMENTS IN DEPTH-ZERO HECKE ALGEBRAS OVER LOCAL FUNCTION FIELDS}
\author{Weimin Jiang}
\address{University of Maryland, College Park}
\begin{abstract}
Let $G$ be an unramified group over a local function field of characteristic $p>0$. This article introduces an abstract norm map between the twisted conjugacy classes and conjugacy classes of $G$ in the positive characteristics setting, applies the process of stabilization of (twisted) trace formula and proves the corresponding base change fundamental lemma for regular semisimple elements.
\end{abstract}
\maketitle

\tableofcontents
\vspace{-1cm}
\section{Introduction}
\subsection{Main Results}
Let $F$ denote a local function field of characteristic $p>0$, and let $F_r\supset F$ denote the unique degree $r$ unramified extension of $F$ contained in a fixed separable closure $F^s$ of $F$, and let $\theta$ denote a generator of $\Gal(F_r/F)$. Let $G$ denote an unramified connected reductive group over $F$. The automorphism $\theta$ determines an $F$-automorphism of $G(F_r)$, which is still denoted by $\theta$. For simplicity, we will assume that $G_{\text{der}}=G_{\text{sc}}$ in the remainder of the introduction.

From the concrete norm map 
\begin{equation*}
\begin{split}
N:G(F_r)&\to G(F_r)\\
g&\mapsto g\theta(g)\dots\theta^{r-1}(g),
\end{split}
\end{equation*}
Kottwitz \cite{kot82} was able to define an abstract norm map $\mathcal{N}$ from the set of stable $\theta$-conjugacy classes in $G(F_r)$ to the set of stable conjugacy classes in $G(F)$ for perfect fields $F$. We will say that $\gamma\in G(F)$ \textit{is a norm} if $\gamma$ is stably conjugate to $N\delta$ for some $\delta\in G(F_r)$, which means that there exist $g\in G(F^s)$ such that $g^{-1}(N\delta)g=\gamma$.

Fix a semisimple element $\gamma\in G(F)$ and we let $G_\gamma\subset G$ denote its centralizer in $G$. For $f\in C^\infty_c(G(F))$, the algebra of locally constant and compactly supported $\C$-valued functions on $G(F)$, we can define the \textit{orbital integral} 
\begin{equation*}
    O^{G(F)}_{\gamma}(f)=\int_{G_\gamma(F)\backslash G(F)}f(g^{-1}\gamma g)\,dg/dt
\end{equation*}
depending on the choice of Haar measures $dg$ and $dt$ on $G(F)$ and $G_\gamma(F)$ respectively. We also consider the \textit{stable orbital integral}
\begin{equation*}
SO^{G(F)}_\gamma(f)=\sum_{\gamma'}e(G_{\gamma'})O^{G(F)}_{\gamma'}(f),
\end{equation*}
where $\gamma'$ sums over the conjugacy classes in $G(F)$ within the stable conjugacy class of $\gamma$, and $e(G_{\gamma'})\in\{\pm1\}$ is the sign attached by Kottwitz \cite{kottwitz1983sign} to connected reductive groups. Notice that our assumption that $G_\mathrm{der}=G_\mathrm{sc}$ implies that the centralizers $G_{\gamma'}$ are connected. If $\gamma$ and $\gamma'$ are stably conjugate, their centralizers are inner forms of each form, therefore we may require the Haar measures are compatible with each other in the definition of the orbital integrals $TO^{G(F)}_{\gamma'}(f)$ in the sense of \cite{kottwitz1988tamagawa} p. 631.
Similarly, for an element $\delta\in G(F_r)$ such that $N\delta$ is semisimple, we define its twisted centralizer to be the connected reductive group $G_{\delta\theta}$ such that
\begin{equation*}
    G_{\delta\theta}(F)=\{g\in G(F_r): g^{-1}\delta\theta(g)=\delta\}.
\end{equation*}
Then for $\phi\in C^\infty_c(G(F_r))$, similarly we can define the \textit{twisted orbital integral}
\begin{equation*}
    TO^{G(F_r)}_{\delta\theta}(\phi)=\int_{G_{\delta\theta}(F)\backslash G(F_r)}\phi(g^{-1}\delta\theta(g))\, dg/dt
\end{equation*}
and its stable version
\begin{equation*}
SO^{G(F_r)}_{\delta\theta}(\phi)=\sum_{\delta'}e(G_{\delta'\theta})TO^{G(F_r)}_{\delta'\theta}(\phi),
\end{equation*}
where the sum is over $\theta$-conjugacy classes $\delta'$ in $G(F_r)$ inside the stable $\theta$-conjugacy class of $\delta$, or equivalently (Proposition \ref{prop of abstract norm map}), whose norm down to $G(F)$ is in the same stable conjugacy class as $N\delta$. If $\gamma\in G(F)$ is stably conjugate to $N\delta$, then $G_{\delta\theta}$ is an inner form of $G_\gamma$ (Lemma \ref{lemma of inner form}). Therefore we may also require the Haar measures on these groups to be compatible.

We say that $\phi\in C^\infty_c(G(F_r))$ and $f\in C^\infty_c(G(F))$ are \textit{associated} or have \textit{matching orbital integrals} (resp., at regular semisimple elements) if for every (resp., regular) semisimple element $\gamma\in G(F)$ we have 
\begin{equation*}\tag{1.1.1}\label{1.1.1}
SO^{G(F)}_\gamma(f)= \left\{ \begin{array}{rcl}
SO^{G(F_r)}_{\delta\theta}(\phi) & \mbox{if}
& \gamma=\mathcal{N}\delta \\ 0 & \mbox{if} &\gamma \mbox{ is not a norm}
\end{array}\right.
\end{equation*}

The case of spherical Hecke algebras was studied first. Suppose $K_r\subset G(F_r)$ and $K\subset G(F)$ are hyperspecial maximal compact subgroups associated to a hyperspecial vertex in the Bruhat-Tits building $\mathcal{B}(G(F))$ of $G(F)$, and suppose $\phi\in C^\infty_c(G(F_r))$ belongs to the spherical Hecke algebra $\mathcal{H}(G(F_r), K_r)$ of bi-invariant functions under $K_r$. In particular, the spherical Hecke algebras are commutative. The Satake isomorphisms gives rise to a natural algebra homomorphism
\begin{equation*}\tag{1.1.2}\label{1.1.2}
    b_r: \mathcal{H}(G(F_r), K_r)\to\mathcal{H}(G(F), K)
\end{equation*}
which will be called the \textit{base change homomorphism}. The base change fundamental lemma for spherical functions asserts that $\phi$ and $b_r(\phi)$ are associated. This was proved in the special cases of $\GL_2$ \cite{langlands1980base} and $\GL_n$ \cite{arthur1989simple}, which gave rise to global and local applications, and for general unramified reductive groups it was proved by Clozel \cite{Clozel90} and Labesse \cite{labesse1990fonctions}.

In \cite{haines2009base}, an analogous base change fundamental lemma is proved for centers of parahoric Hecke algebras by Haines, which replaced the hyperspecial maximal compact subgroup $K_r$ (resp., $K$) above by a general parahoric subgroup $J_r$ (resp., $J$) defined as intersection of the group of elements in $G(F_r)$ (resp. $G(F)$) that fix a facet in $\mathcal{B}(G(F))$ with the kernel of Kottwitz homomorphism(see section 2 of \textit{loc. cit.}). In particular, when the facet is a hyperspecial vertex, we have recovered $J_r=K_r$. However, 
since the resulting parahoric Hecke algebras $\mathcal{H}(G(F_r), J_r)$ and $\mathcal{H}(G(F), J)$ are in general no longer commutative, the base change homomorphism will be restricted to the center of those Hecke algebras:
\begin{equation*}
    b_r: \mathcal{Z}(G(F_r), K_r)\to\mathcal{Z}(G(F), K)    
\end{equation*}
\begin{rem}
From now on and in this article, when we speak of base change fundamental lemma, we refer to the matching of elements in the centers of  linked by the base change homomorphisms.     
\end{rem}

Later in \cite{haines2012base}, Haines generalizes this result to so called \textit{depth-zero} principal series blocks in the Bernstein decomposition. Generalizing his method, Shenghao Li\cite{li2026base} is able to show the base change fundamental lemma for Bernstein centers of principal series blocks (not just depth-zero ones).

All the results \cite{Clozel90, labesse1990fonctions, haines2009base, haines2012base, li2026base} stated above only work over groups over $p$-adic fields $F$ of $\text{char}\,F =0$. Results in positive characteristic are more ad-hoc for specific groups. In his thesis \cite{ray2010base}, Ray-Dulany proved the base change fundamental lemma for Iwahori-Hecke algebra of $\GL_2$ for any local field with characteristic not equal to $2$ by explicit calculations. In \cite{feng2020nearby}, Feng gave a geometric proof of the base change fundamental lemma for regular semisimple elements for parahoric Hecke algebras of $\GL_n$ over local function fields using cohomology of certain moduli stacks of shtukas and the Langlands-Kottwitz method, which generalized the previous results by \cite{ngo2006d} for spherical Hecke algebras for  $\GL_n$ over local function fields. Most recently, Bartling and Ito\cite{Ito} prove base change fundamental lemma for central elements in the parahoric Hecke
algebras for general unramified reductive groups over local function fields, using the technique of close local fields.

In this article, we prove the base change fundamental lemma for Bernstein centers of depth-zero principal
series blocks at regular semisimple elements in unramified groups $G$ over local function fields, following and generalizing the strategies in \cite{haines2009base,haines2012base}. To state the theorem, let us fix some more notations. Denote the ring of integers of $F$ by $\mathcal{O}_F$. Let $A$ denote a maximal $F$-split torus of $G$ and set $T=Z_G(A)$, a maximal torus of $G$ which is defined and unramified over $F$. We can choose an Iwahori subgroup $I\subset G(F)$ which is in good position relative to $T$. Let $I^+$ denote the pro-unipotent radical of $I$. Furthermore, let $T(F)_1$ denote the maximal compact open subgroup of $T(F)$, and let $T(F)^+_1=T(F)_1\cap I^+$ denotes its pro-unipotent radical. We will denote $\chi$ to be a character on $T(F)_1/T(F)^+_1$. Such characters are called \textit{depth-zero} characters. Via the canonical isomorphism
\begin{equation*}
 T(F)_1/T(F)^+_1\xrightarrow{\sim} I/I^+   
\end{equation*}
we see that $\chi$ induces a character $\rho:=\rho_\chi$ on $I$, trivial on $I^+$.
Then we can consider the Hecke algebra $\mathcal{H}(G,\rho)$ of our study:
\begin{equation*}
\mathcal{H}(G,\rho):=\{f\in C^\infty_c(G): f(igi')=\rho^{-1}(i)f(g)\rho^{-1}(i'), \forall\,i, i'\in I,\, \forall g\in G\},   
\end{equation*}
where the convolution is defined using the Haar measure which gives $I$ volume $1$. 
Write $\mathcal{Z}(G,\rho)$ for $Z(\mathcal{H}(G,\rho))$.

Let $N_r: T(F_r)_1\to T(F)_1$ denote the naive norm homomorphism. Therefore, the character $\chi_r:=\chi\circ N_r$ is a depth-zero character on $T(F_r)_1$, and it gives rise to the character $\rho_r$ on $I_r$ and the Hecke algebra $\mathcal{H}(G_r,\rho_r)$ with center $\mathcal{Z}(G_r,\rho_r)$. Here $G_r:=G(F_r)$ and $I_r\subset G_r$ is the Iwahori subgroup corresponding to $I$.

We will define a \textit{base change homomorphism}
\begin{equation*}
    b_r: \mathcal{Z}(G_r,\rho_r)\to\mathcal{Z}(G,\rho) 
\end{equation*}
Finally we can state the main theorem of this article:
\begin{thm}\label{main thm}
Let $F$ be a local function field of arbitrary positive characteristic. Let $G$ be a connected unramified reductive group defined over $F$. Denote $F_r/F$ to be an unramified extension of local function fields of degree $r$.
Then for any $\phi\in\mathcal{Z}(G_r,\rho_r)$, $\phi$ and $b_r(\phi)$ have the matching stable orbital integrals at every \textbf{regular semisimple element}.
\end{thm}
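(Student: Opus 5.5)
We follow the strategy of Haines \cite{haines2009base,haines2012base}, reducing the statement to a comparison of (twisted) traces on representations and then to a pseudo-coefficient/density argument. The characteristic-zero ingredients have to be replaced by positive-characteristic ones: the twisted trace Paley--Wiener theorem, the twisted Weyl integration formula, and the local stabilization for regular semisimple elements.

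\textbf{Step 1: describe both centers.} Using the Bernstein decomposition for the depth-zero principal series block $\mathfrak{s}=[T,\chi]$ (resp.\ $\mathfrak{s}_r=[T_r,\chi_r]$), we identify $\mathcal{Z}(G,\rho)$ with the ring of regular functions on the variety of unramified twists $\chi\xi$ modulo the stabilizer $W_\chi$. Concretely this ring is $\C[\Lambda]^{W_\chi}$, where $\Lambda=T(F)/T(F)_1$. The analogous description holds for $\mathcal{Z}(G_r,\rho_r)$, with $W_{\chi_r}\supseteq W_\chi$. We then define $b_r$ dually. An unramified character $\xi$ of $T(F)$ gives the character $\xi\circ N_{F_r/F}$ of $T(F_r)$, which is an unramified twist of $\chi_r$ restricted appropriately. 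This map of varieties is $W_\chi$-equivariant and induces $b_r$. The key property to record is the character identity
\[
\mathrm{tr}\bigl(\phi\circ I_\theta \mid \Pi\bigr)=\mathrm{tr}\bigl(b_r\phi\mid \pi\bigr)
\]
for every irreducible $\pi$ in the block with base change $\Pi$. This is checked first on the full induced principal series $i_B^G(\chi\xi)$ and $i_{B_r}^{G_r}(\chi_r\,\xi\circ N)$, with a suitable intertwining operator $I_\theta$. The center acts on these by the scalar given by the Bernstein-center function, so the identity is immediate. For all other representations, outside the block, both sides vanish.

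\textbf{Step 2: pass from traces to orbital integrals.} We apply van Dijk's formula (twisted and untwisted) for traces of induced representations. This expresses $\mathrm{tr}(\phi\circ I_\theta\mid i(\chi_r\xi_r))$ as an integral over $T$ of the constant term $\phi^{(B)}$ against $\chi_r\xi_r$, and similarly for $b_r\phi$. The same holds after twisting by any discrete series-type data of Levi subgroups. Using Kazhdan's density theorem and its twisted version in positive characteristic, the identity of traces for all tempered $\pi$ then gives equality of (twisted) orbital integrals on regular semisimple elements. Here we use the norm correspondence $\mathcal{N}$ of Proposition~\ref{prop of abstract norm map} and the inner-form compatibility of Lemma~\ref{lemma of inner form}. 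The matching must be carried out in two cases:
\begin{itemize}
\item \emph{Elements whose centralizer is a non-elliptic torus.} We reduce to the Levi $M$ by descent, using that constant terms commute with $b_r$.
\item \emph{Elliptic regular elements.} We use Kottwitz's twisted-trace/stabilization comparison, with pseudo-coefficients of discrete series tested against $\phi$. Since $\phi$ lies in the principal series block, its elliptic twisted orbital integrals pair trivially with all discrete series outside the block. Hence they are controlled by the elliptic parts of Steinberg-type subquotients of the principal series, where the identity from Step 1 still holds.
\end{itemize}
The vanishing of $SO_\gamma(b_r\phi)$ at non-norms follows because $b_r\phi$ is a base change transfer and the twisted side has no contributions there. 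Alternatively, we can use Labesse's argument: $\gamma$ fails to be a norm only if some Kottwitz invariant obstructs, which forces cancellation in the stable sum.

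\textbf{Main obstacle.} The hard point is the positive-characteristic input. We need a twisted trace Paley--Wiener/density theorem and the Harish-Chandra-type local character expansion near regular semisimple elements, without assuming characteristic zero or large $p$. We would circumvent this by restricting to regular semisimple $\gamma$ throughout, which is why the theorem is only claimed there. At such $\gamma$, the twisted Weyl integration formula and the description of characters on regular tori are available by Lemaire's twisted results and the Harish-Chandra theory on the regular locus, which hold in all characteristics. The remaining delicate bookkeeping is the sign and measure normalization $e(G_{\delta\theta})$ versus $e(G_\gamma)$ in the stable sums, which we handle via Lemma~\ref{lemma of inner form}.
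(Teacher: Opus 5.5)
\textbf{There is a genuine gap in Step~2, at elliptic regular $\gamma$.} After descent, this case is the whole content of the theorem.

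Traces on the full principal series $i^G_B(\xi)$ and $i^{G_r}_{B_r}(\xi_N)$ only see constant terms along $B$ (van Dijk). So the identity you check in Step~1 says nothing about elliptic (twisted) orbital integrals.

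To reach those integrals you appeal to a character identity $\mathrm{tr}(\phi\circ I_\theta\mid\Pi)=\mathrm{tr}(b_r\phi\mid\pi)$ for irreducible (e.g.\ Steinberg-type) subquotients $\pi$ ``with base change $\Pi$'', together with pseudo-coefficients and a twisted density theorem. This fails on several counts.
\begin{enumerate}
\item It presupposes a local base change correspondence on the whole block, with twisted character identities. No such correspondence is available for general unramified $G$. Where such correspondences exist, they are ordinarily obtained from a global comparison that already requires matching of this kind, so the argument is circular.
\item A density argument only compares two functions on the same group. To use it you would need a transfer of $\phi$ to $G(F)$ with a known spectral identity, which is again the statement being proved.
\item Even on $i^{G_r}_{B_r}(\xi_N)$, the element $\phi$ does not act by a scalar. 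It acts by $[\beta^{-1}(\phi)](\xi_N)$ on the $\rho_r$-isotypic part. Your identity therefore amounts to $\langle\mathrm{tr}\,\Pi I_\theta,e_{\rho_r}\rangle=\langle\mathrm{tr}\,\pi,e_\rho\rangle$. For irreducible subquotients this depends on how they match and how $I_\theta$ is normalized on each (compare \eqref{7.5.5} and \eqref{7.5.7}).
\item Passing from trace identities to matching of \emph{stable} twisted orbital integrals at elliptic elements needs twisted elliptic orthogonality or density in characteristic $p$. You name this as the main obstacle but do not supply it. Local constancy of twisted characters on the regular locus (Proposition~\ref{prop of character distribution}) is not such a statement.
\end{enumerate}

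\textbf{How the paper proceeds instead.} The paper never uses a local correspondence or a density theorem.

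First come the reductions of \S\ref{sec 4}. These include the vanishing at non-norms, which is proved pointwise: one shows $(b_r\phi)\eta=b_r\phi$ for characters $\eta$ of $H^0_{\mathrm{ab}}(F,G)$ trivial on norms, and combines this with Labesse's criterion. It is not proved by appeal to the twisted side, so your first justification there is circular.

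Next, the paper globalizes and uses the simple twisted trace formula, stabilized by generalized Kottwitz functions at an inert place. This constructs \emph{local data} $(a_i(\pi),b_i(\Pi))$ for which the trace identities (A') are \emph{equivalent} to matching (B'), for all functions in $\mathcal{H}(G_r,\rho_r)$ and $\mathcal{H}(G,\rho)$.

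The trace identities for $(\phi,b_r\phi)$ are then produced from the Labesse elementary functions $\phi_{u,\chi_r}$ and $f_{t,\chi}$. These are associated by direct computation and have explicit traces via Jacquet modules. Applying (A') to them and separating by cuspidal support gives \eqref{7.6.2}. Multiplying by $ch_{\xi_0}(b_r\phi)=ch_{\xi_{0r}}(\phi)$ and summing over $\xi'_0$ gives (A') for $(\phi,b_r\phi)$, hence (B').

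Some substitute for this local-data mechanism is what your Step~2 is missing.
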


We write $\mathcal{H}(G(F_r),I^+_r)$ (resp., $\mathcal{H}(G(F),I^+)$)for the Hecke algebra of locally constant compactly supported functions on $G(F_r)$ (resp., $G(F)$) that are bi-invariant under the open compact subgroups $I^+_r$ (resp., $I^+$), the pro-unipotent radical of $I_r$ (resp.,  $I$), and we write $\mathcal{Z}(G(F_r),I^+_r)$ (resp., $\mathcal{Z}(G(F),I^+)$) for the center of these Hecke algebras.
In \cite{haines2012base}, \S 10, a base change homomorphism 
\begin{equation*}
    b_r:\mathcal{Z}(G(F_r),I^+_r)\to\mathcal{Z}(G(F),I^+)
\end{equation*}
was constructed and characterized using the injection
\begin{equation*}
 \mathcal{Z}(G(F_r),I^+_r)\hookrightarrow\prod_{\chi'}\mathcal{Z}(G(F_r), I_r, \chi'), 
\end{equation*}
where $\chi'$ ranges over the finite set of all depth-zero characters on $T(F_r)_1$. As a result, we are able to show a base change fundamental lemma for pro-$p$ Iwahori subgroups from Theorem \ref{main thm}:
\begin{cor}\label{pro-p Iwahori result}
If $\phi\in\mathcal{Z}(G(F_r),I^+_r)$, then the functions $\phi$ and $b_r(\phi)$ are associated.    
\end{cor}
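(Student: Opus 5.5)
The plan is to reduce Corollary \ref{pro-p Iwahori result} to Theorem \ref{main thm} by decomposing $\mathcal{Z}(G(F_r),I^+_r)$ along depth-zero characters, handling separately the components that are not in the image of the norm. I only aim for matching at regular semisimple elements, as in Theorem \ref{main thm}.

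\textbf{Step 1 (decomposition).} Since $I_r/I_r^+\cong T(F_r)_1/T(F_r)_1^+$ is a finite abelian group of order prime to $p$, the characters $\rho_{\chi'}$ give orthogonal central idempotents $e_{\rho_{\chi'}}$ of $\mathcal{H}(I_r,I_r^+)$, summing to $1$. Grouping them into $W$-orbits $[\chi']$ gives central idempotents $e_{[\chi']}$ of $\mathcal{H}(G(F_r),I^+_r)$, by the depth-zero Bernstein decomposition recalled from \cite{haines2012base}. Hence every $\phi\in\mathcal{Z}(G(F_r),I_r^+)$ can be written as
\begin{equation*}
\phi=\sum_{[\chi']}\phi\ast e_{[\chi']},
\end{equation*}
and each summand is determined by its images $e_{\rho_{\chi'}}\ast\phi\ast e_{\rho_{\chi'}}\in\mathcal{Z}(G_r,\rho_{\chi'})$. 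This is exactly the injection into $\prod_{\chi'}\mathcal{Z}(G(F_r),I_r,\chi')$ used to characterize $b_r$ in \cite{haines2012base}, \S 10. By that characterization:
\begin{itemize}
\item[(a)] $b_r$ sends the $[\chi\circ N_r]$-component to the $[\chi]$-component, compatibly with the homomorphism $b_r:\mathcal{Z}(G_r,\rho_r)\to\mathcal{Z}(G,\rho)$ defined before Theorem \ref{main thm};
\item[(b)] $b_r$ kills the components indexed by orbits $[\chi']$ that contain no character of the form $\chi\circ N_r$.
\end{itemize}
I will also check that passing between $\phi\ast e_{[\chi]}$ and its $\rho$-isotypic part only rescales orbital integrals by a harmless constant. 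This holds because $\mathrm{vol}(I^+)$ versus $\mathrm{vol}(I)$ normalizations are the same on both sides, so $SO$ of $e_{[\chi]}$-parts is a sum of $SO$'s of functions in the various $\mathcal{Z}(G,\rho_{w\chi})$.

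\textbf{Step 2 (norm components).} Stable (twisted) orbital integrals are linear in the function, so it suffices to treat each summand. For a summand in case (a), Theorem \ref{main thm} gives the matching with $b_r(\phi\ast e_{[\chi\circ N_r]})$ at all regular semisimple $\gamma$, including the vanishing when $\gamma$ is not a norm. Summing over the $W$-conjugates $w\chi$ uses that $b_r$ is $W$-equivariant, and that $\mathcal{N}$ and stable orbital integrals are unchanged by replacing $\chi$ by $w\chi$.

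\textbf{Step 3 (non-norm components, the main obstacle).} For a summand $\psi$ in case (b), I must show $SO^{G(F_r)}_{\delta\theta}(\psi)=0$ for every $\delta$ with $\mathcal{N}\delta$ regular semisimple. My first attempt is the argument of \cite{haines2012base} adapted to positive characteristic.
\begin{itemize}
\item A regular semisimple twisted class meeting the support of $\psi$ can be $\theta$-conjugated so that its norm lies in $T'(F)$ for some torus $T'$.
\item Then $\theta$-conjugate by $t\in T(F_r)_1$, which sends $\delta\mapsto t^{-1}\delta\theta(t)$. Combined with the $(\rho_{\chi'},\rho_{\chi'})$-equivariance of $\psi$, this multiplies the integrand by $\chi'(t^{-1}\theta(t))$-type factors.
\item The stable twisted orbital integral is invariant under this operation, so it vanishes unless $\chi'$ is trivial on $\{t^{-1}\theta(t)\}=\ker N_r$.
\item Since $\chi'$ is $\theta$-stable up to $W$, Hilbert 90 for the finite field torus shows that this triviality is equivalent to $\chi'=\chi\circ N_r$.
\end{itemize}
If this direct computation proves awkward, the fallback is to work with Iwahori--Matsumoto/Bernstein presentations of each component. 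Via the Kottwitz-type twisted character identity, the regular semisimple stable twisted orbital integrals of $\psi$ are then expressed through traces on $\theta$-stable representations in the block $[\chi']$, and such a block contains no $\theta$-stable members unless $[\chi']$ is a base-change block.

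Combining Steps 2 and 3 gives \eqref{1.1.1} at every regular semisimple $\gamma$ for $\phi$ and $b_r(\phi)$.
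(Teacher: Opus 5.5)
Your proposal is correct and follows the paper's route. The paper deduces the corollary from Theorem \ref{main thm} through the injection $\mathcal{Z}(G(F_r),I^+_r)\hookrightarrow\prod_{\chi'}\mathcal{Z}(G(F_r),I_r,\chi')$ and the characterization of $b_r$ in \cite{haines2012base}, \S 10; your Step 3 first attempt ($\theta$-conjugating by $t\in T(F_r)_1$, plus Lang's theorem identifying $\ker N_r$ on $T(F_r)_1/T(F_r)^+_1$ with $\{t\theta(t)^{-1}\}$) is exactly the vanishing of the non-norm components that this reduction tacitly needs.

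Two adjustments. First, do the bookkeeping character by character via $e_{\rho_{\chi'}}\ast\phi$ rather than by $W_r$-orbits: an orbit containing a norm can also contain non-norms, and Step 3 handles these without any ``$\theta$-stable up to $W$'' hypothesis. Second, discard the fallback, because its key claim is false: for $\GL_2$ and $r=2$, the character $\chi'=(\alpha,\alpha^q)$ with $\alpha\neq\alpha^q$ gives a block containing irreducible $\theta$-stable principal series, even though no character in its orbit has the form $\chi\circ N_r$.
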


This article overlaps with \cite{feng2020nearby} in the case when $G=\GL_n$. However, we are able to prove the vanishing results when $\gamma$ is not a norm, which was not done in \textit{loc. cit.} Moreover, this article overlaps with \cite{Ito} for central elements of Iwahori-Hecke algebras, where the results are proved for regular semisimple elements in \textit{loc. cit.}.
\subsection{Motivations}
The analogs of Theorem \ref{main thm} and Corollary \ref{pro-p Iwahori result} for groups over characteristic $0$ fields already could be useful in the pseudo-stabilization of the Lefschetz trace formula for certain simple Shimura varieties (see for example, \cite{haines2009base}, p.4 and \cite{haines2012shimura}, \S 13). For local function fields, similar counting points formula has been estabilished in \cite{song2024langlands}, Theorem 5.9.2. We will explain how Corollary \ref{pro-p Iwahori result} could be used to stabilize the corresponding Lefschetz trace formula on certain moduli space of shtukas.

Let $X$ be a smooth projective geometrically irreducible curve over $\F_q$, let $F=\F_q(X)$ be the function field of $X$. For a closed point $x\in X$, we write $F_x$ the completion of $F$ at $x$ and $\mathcal{O}_x$ the corresponding valuation ring. Let $\A=\A_F$ be the group of adeles of the global field $F$. Let $G$ be a connected reductive group over $F$ and let $\mathcal{G}$ be a parahoric model of $G$ over $X$. We also assume that $G_x$ is unramified. Fix two distinct closed points $x$ and $\infty$ in $X$. Let $K\subset G(\A)$ be a compact open subgroup of the form $K=K_xK^x\mathcal{G}(\mathcal{O}_\infty)$, where $K^x$ is a sufficiently small open compact subgroup of $G(\A^{\infty,x})$ and $K_x\subset\mathcal{G}(\mathcal{O}_x)$ is the pro-$p$ unipotent radical of some Iwahori subgroup $I_x\subset\mathcal{G}(\mathcal{O}_x)$.

Consider a proper moduli space of shtukas $\mathrm{Sht}:=\mathrm{Sht}^{\underline{\mu}}_{\mathcal{G}, \Xi, K}$ with two legs and with one leg fixed, over the local ring $\Spec\,\mathcal{O}_{y,\infty'}$ (defined in \S4.7 in \cite{song2024langlands}). We assume that the space satisfies some boundedness condition as in \textit{loc. cit.}. Let $f=f_x\otimes f^x$ where $f_x\in C^\infty_c(\mathcal{G}(\mathcal{O}_x))$ and $f^x\in C^\infty_c(G(\A^x))$ where the $\infty$-component of $f^x$ is given by the characteristic function of $\mathcal{G}(\mathcal{O}_\infty)$. The elliptic semisimple part of the alternating sum of the traces of the Hecke operator $f$ composed with a power $\sigma^r$ of Frobenius at $x$ is given by 
\begin{equation*}\tag{1.2.1}\label{counting points formula}
\begin{split}
\mathrm{tr}&(\sigma^r\times f\,|\,H^*(\mathrm{Sht}\otimes \overline{F}_{y,\infty'}, \bar{\Q}_\ell))^{\mathrm{ell,reg}}\\&=\sum_{(\gamma_0;\gamma,\delta)}|\ker^1(F, G_{\gamma_0})|\mathrm{vol}(G_{\gamma_0}(F)\backslash G_{\gamma_0}(\A)/\Xi)O^{G(\A^{x,\infty})}_\gamma(f^{x,\infty}) TO^{G(F_{x^r})}_{\delta_x,\sigma}(\phi_{r,x})TO^{G(F_{\infty^r})}_{\delta_{\infty},\sigma}(\phi_{r,\infty})
\end{split}
\end{equation*}
where the sum runs over the so called elliptic Kottwitz triples $(\gamma_0;\gamma,\delta)\in G(F)\times G(\A^{x,\infty})\times G(F_{x^r}\times F_{\infty^r})$ attached to a fixed point with some vanishing properties. Here $F_{x^r}$ is the degree-$r$ unramified extension of $F_x$.

In particular, it is conjectured that we can find the test function $\phi'_{r,x}$ (resp. $\phi'_{r,\infty}$) inside the center $\mathcal{Z}(G(F_{x^r}), K_{x^r})$ (resp. $\mathcal{Z}(G(F_{\infty^r}), K_{\infty^r})$) of the Hecke algebra $\mathcal{H}(G(F_{x^r}), K_{x^r})$ (resp. $\mathcal{H}(G(F_{\infty^r}), K_{\infty^r})$) with the same stable twisted orbital integral as $\phi_{r,x}$ (resp. $\phi_{r,\infty}$). We may simplify the right hand side to be 
\begin{equation*}\tag{1.2.2}\label{1.2.2}  \tau(G)\sum_{\gamma_0}SO^{G(\A^{x,\infty})}_\gamma(f^{x,\infty})SO^{G(F_{x^r})}_{\delta_x,\sigma}(\phi'_{r,x})SO^{G(F_{\infty^r})}_{\delta_{\infty},\sigma}(\phi'_{r,\infty})
\end{equation*}
At the place $x$, by Corollary \ref{pro-p Iwahori result}, we can write 
\begin{equation*}\tag{1.2.3}\label{1.2.3}
  SO^{G(F_{x^r})}_{\delta_x,\sigma}(\phi'_{r,x})= SO^{G(F_{x})}_{\gamma_x}(b_{r,x}(\phi'_{r,x})) 
\end{equation*}
where $b_{r,x}$ is the base change homomorphism $b_{r,x}:\mathcal{Z}(G(F_{x^r}), K_{x^r})\to\mathcal{Z}(G(F_{x}), K_{x})$. Similar base change identity holds for the place $\infty$ from the results in \cite{kottwitz1986base} since $K_\infty$ is hyperspecial maximal compact open subgroup. 

With the extra assumptions that $\mathrm{Sht}$ has no global endoscopy and that the matching (\ref{1.2.3}) can be extended to all semisimple elements, we may eventually write (\ref{1.2.2}) as 
\begin{equation*}\tag{1.2.4}\label{1.2.4}
\tau(G) \sum_{\gamma_0}SO^{G(\A)}_{\gamma}(f^{x,\infty}b_{r,x}(\phi'_{r,x})b_{r,\infty}(\phi'_{r,\infty})),    
\end{equation*}
which resembles the geometric side of the (stabilized) global trace formula for the regular action $R$ of $f':=f^{x,\infty}b_{r,x}(\phi'_{r,x})b_{r,\infty}(\phi'_{r,\infty})$ on the space $L^2(G(F)A_G\backslash G(\A))$:
\begin{equation*}\tag{1.2.5}\label{1.2.5}
\tr\,R(f)=\sum_\pi m(\pi)\,\tr\,\pi(f'),    
\end{equation*}
where $\pi$ ranges over irreducible representations of $L^2(G(F)A_G\backslash G(\A))$. Therefore, we can
relate the local factor of the zeta function with automorphic representations, eventually the $L$-function. (See \cite{haines2012shimura}, Cor. 1.4 for more details)
\subsection{Outline of the paper}
In \S\ref{sec 2}, we will define the abstract norm homomorphism for all semisimple elements in a quasi-split reductive group over $F$. This generalizes the classical results of Kottwitz \cite{kot82} to non-perfect fields, using the results from \cite{HaKi22}. The abstract norm homomorphism relates the stable twisted conjugacy classes of $G(E)$ with stable conjugacy classes of $G(F)$, providing the "transfer of conjugacy classes" needed in (\ref{1.1.1}). In \S \ref{sec 3}, we briefly recall the necessary background for the base change fundamental lemma, including the definitions of Iwahori subgroups, Bernstein center for depth-zero principal series blocks and base change homomorphisms.

The core technical content starts from \S\ref{sec 4}. In order to prove Theorem \ref{main thm}, we will reduce the problem to the case that the group has simply connected derived group and the element is strongly regular semisimple both in the adjoint group and the group itself (see \ref{conclusion of reduction steps}). In order to turn the identities between integrals into identities between traces of certain representations, we have to prove a version of very simple trace formulas (\S \ref{sec 5}), then stabilize them (\S \ref{sec 6}) in order to compare them on different groups. Finally, by producing the \textit{local data} adapted to our situation (\S\ref{sec 7.3}, \ref{sec 7.4}), we are able to reduce to the cases considered in \cite{haines2012base}. Using the same technique of Labesse elementary functions, we are finally able to prove the main theorem \ref{main thm}.
\subsection*{Acknowledgments}
I would like to thank my advisor Thomas Haines for suggesting this topic and his continuous encouragement and interest. I would like to thank Shenghao Li for spotting a mistake in the original reduction steps. I also thank Shin Eui Song for informing me the results of Hamacher-Kim. I would like to thank Peter Dillery and Kazuhiro Ito for helpful discussions.
This research was partially supported by NSF grants DMS 2200873.
\section{Norm Mapping and Conjugacy Completeness}\label{sec 2}
Let $F$ be a field of characteristic $p>0$ (not necessarily perfect). Let $G$ be a connected reductive group over $F$. We fix once for all a separable closure $F^{\text{s}}$ of $F$ inside an algebraic closure $\bar{F}$ of $F$, and let $\Gamma= \text{Gal}(F^s/F)$ be the absolute Galois group. Consider a conjugacy class $C$ of $G$. By conjugacy class we mean the conjugacy class in $G(F^s)$ (rather than in $G(\bar{F})$). We denote $C^\sigma:=\sigma(C)$ and $x^\sigma:=\sigma(x)$ for any $x\in G$. The conjugacy class $C$ is defined over $F$ if and only if $C^\sigma = C$ for all $\sigma\in\Gamma$, and the conjugacy class of an element $x\in G$ is defined over $F$ if and only if $x^\sigma$ is conjugate to $x$ under $G(F^s)$ for all $\sigma\in\Gamma$.

Let $E/F$ be a cyclic extension of degree $r$. We further assume $G$ to be an unramified connected reductive group over $F$, let $\theta\in\text{Gal}(E/F)$ be a generator. Let $R_{E/F} G_E$ denote the Weil restriction of scalars of $G_E:=G\times_FE$ to a group over $F$. The automorphism $\theta$ of $E$ determines an $F$-automorphism of $R_{E/F}G_E$ and an automorphism on its $F$-points $G(E)$, which will be also denoted by $\theta$.

Consider the \textit{concrete norm} $N_r:G(E)\to G(E)$ given by 
\begin{equation*}
N_r\delta = \delta\theta(\delta)\dots\theta^{r-1}(\delta)
\end{equation*}
It is easy to see that the conjugacy class of $N_r\delta$ in $G(F^s)$ is defined over $F$ since we can calculate that
\begin{equation*}
\theta(N_r\delta)=\delta^{-1}(N_r\delta)\delta    
\end{equation*}
The goal is to define a well-defined \textit{abstract norm map}.
\begin{equation*}\tag{2.0.1}\label{abstract norm}
\mathcal{N}:\{\text{stable }\theta\text{-conjugacy classes in }G(E)\}\to\{\text{stable conjugacy classes in }G(F)\}   
\end{equation*}
To make sense of all that, we will have to define \textit{stable $\theta$-conjugacy class} first, then we will show that the conjugacy class of $N_r\delta$ actually has a rational point over $F$.

\subsection{Stable Twisted Conjugacy}\label{sec 2.1}
We follow \cite{kot82}, \S 5. We assume $G_{\rm der}=G_{\rm sc}$ for the moment, later we will remove this assumption by using the $z$-extension of $G$.

Let $I:=R_{E/F}G_E$, there is a natural isomorphism $I_E\xrightarrow{\sim}G_E\times\dots\times G_E$, with the factors indexed by $\Gal(E/F)$. The element $\theta\in\Gal(E/F)$ determines an automorphism $s\in\text{Aut}_F(I)$, which takes the form
\begin{equation*}
s:(x_1,\dots,x_r)\mapsto(x_r,x_1,\dots,x_{r-1})    
\end{equation*}
on $E$-valued points. We can identify $G$ with $I^s$ by embedding $G$ into $I$ diagonally. 
The composition
\begin{equation*}
G(E)=I(F)\to I(E)=G(E)\times\dots\times G(E)    
\end{equation*}
is given by 
\begin{equation*}
x\mapsto (x^{\theta^{r-1}},\dots,x^\theta,x).
\end{equation*}
We can also define the concrete norm map on $I$ by
$Nx=x^{s^{r-1}}\dots x^s\cdot x$. It is defined over $F$ and it is the same on $G(E)=I(F)$ as the norm map $N:G(E)\to G(E)$ we defined before. We say that $x,y\in I(F^s)$ are \textit{$F^s$-$\theta$-conjugate} if there exists an element $g\in I(F^s)$ such that $x=g^{-s}yg$. We say that $x,y\in G(E)$ are \textit{$F^s$-$\theta$-conjugate} if they are $F^s$-$\theta$-conjugate as elements $G(E)=I(F)\subset I(F^s)$.

\begin{lem}\label{Lemma 2.1}
\begin{enumerate}[(a)]
    \item For $g,y\in I(F^s)$, we have $N(g^{-s}yg)=g^{-1}(Ny)g$.
    \item Let $x,y\in I(F^s)$. Then $x,y$ are $F^s$-$\theta$-conjugate if and only if $Nx, Ny$ are conjugate in $I(F^s)$.
    \item The embedding $G\hookrightarrow I$ induces a $\Gamma=\Gal(F^s/F)$-equivariant injection from the set of conjugacy classes in $G(F^s)$ to the set of conjugacy classes in $I(F^s)$.
\end{enumerate}    
\end{lem}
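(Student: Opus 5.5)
The plan is to work throughout with $E$-points via the isomorphism $I_E\cong G_E\times\dots\times G_E$, on which $s$ acts by the cyclic shift. This is legitimate because $E/F$ is separable, so $E\subset F^s$ and $I(F^s)=G(F^s)^r$. Write $x=(x_1,\dots,x_r)$. Then $x^{s^k}$ is the $k$-fold shift of $x$. The first coordinate of $Nx=x^{s^{r-1}}\cdots x^s x$ is a cyclic product of the $x_i$, and the other coordinates are cyclic rotations of the same product.

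For (a), I would compute directly. Each factor $x\mapsto x^{s^k}$ is a group automorphism, so
\[
(g^{-s}yg)^{s^k}=g^{-s^{k+1}}y^{s^k}g^{s^k}.
\]
In the product $\prod_{k=r-1}^{0}(g^{-s^{k+1}}y^{s^k}g^{s^k})$ the inner terms telescope. We are left with $g^{-s^r}(Ny)g$, and $s^r=\mathrm{id}$, which gives (a).

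For (b), the direction ``$\Rightarrow$'' is immediate from (a). For ``$\Leftarrow$'', suppose $Nx=h^{-1}(Ny)h$. By (a) we may replace $y$ by $h^{-s}yh$ and so assume $Nx=Ny$. It then suffices to solve $x=g^{-s}yg$ with $g=(g_1,\dots,g_r)$. Coordinatewise this is a chain of equations $x_i=g_{i-1}^{-1}y_ig_i$ (indices mod $r$, with the indexing fixed by the shift convention). Set $g_r=1$ and define $g_1,\dots,g_{r-1}$ successively from these equations. The last equation, closing the cycle, then reduces exactly to the equality of the relevant coordinate of $Nx$ and $Ny$, which holds by assumption. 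The main care needed is bookkeeping of the ordering conventions so that the closing condition matches the coordinate of $N$. This is the only real obstacle, and it is notational.

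For (c), the diagonal embedding $G\to I$ sends $c\in G(F^s)$ to $(c,\dots,c)$. Over $F^s$ the map is the diagonal $G(F^s)\to G(F^s)^r$, and conjugacy in a product is coordinatewise. Hence if the images of $a$ and $b$ are conjugate, then already the first coordinates $a$ and $b$ are conjugate in $G(F^s)$, which gives injectivity. $\Gamma$-equivariance holds because the embedding $G\hookrightarrow I$ is defined over $F$, so it commutes with the $\Gamma$-action on $F^s$-points. Therefore it also commutes with the induced action on conjugacy classes. Note that $\Gamma$ acts on $I(F^s)$ by twisting together with permuting factors, but we never need this explicit description, only $F$-rationality of the map.
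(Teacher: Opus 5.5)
Your proof is correct and takes the same approach as the paper. For (a) and (b) the paper just cites the computations of Kottwitz's Lemma 5.2, which are your telescoping identity and your coordinatewise solution of $x=g^{-s}yg$, closed up by equality of one coordinate of $Nx$ and $Ny$; for (c) it uses your diagonal-embedding argument, with $\Gamma$-equivariance coming from $F$-rationality.
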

\begin{proof}
The calculations of \cite{kot82}, Lemma 5.2 are still valid here. The $\Gamma$-equivariant part is clear since the embedding $G\hookrightarrow I$ is defined over $F$. Let $x,y\in G(F^s)$, then they embed as $(x,\dots,x)$ and $(y,\dots,y)$ in $I(F^s)$. Then it is clear that they are conjugate in $I(F^s)$ if and only if they are conjugate in $G(F^s)$.  

\end{proof} 
We get an immediate corollary from (b) and (c) of Lemma \ref{Lemma 2.1} above.
\begin{cor}\label{cor 2.2}
Let $x,y\in G(E)$. Then $Nx, Ny$ are conjugate in $G(F^s)$ if and only if $x,y$ are $\theta$-conjugate by an element in $G(F^s)$.
\end{cor}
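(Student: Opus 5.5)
We deduce Corollary \ref{cor 2.2} directly from parts (b) and (c) of Lemma \ref{Lemma 2.1}, the only point needing care being the passage between $G$ and $I=R_{E/F}G_E$ and between the two norm maps.

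Regard $x,y\in G(E)=I(F)\subset I(F^s)$. The norm $N$ on $I$ restricts on $I(F)=G(E)$ to the concrete norm $N_r$. Under the identification $I_E\cong G_E\times\dots\times G_E$, the image of $N_r x$ is $I(F)\hookrightarrow I(F^s)$. On the other hand, $N_rx$ viewed in $G(E)\subset G(F^s)$ embeds diagonally via $G=I^s\hookrightarrow I$. To compare these we use that $N x$ is fixed by the conjugation action in the appropriate sense: $Nx=x^{s^{r-1}}\cdots x$ has $E$-components $(N_r x)^{\theta^{i}}$ up to the cyclic reindexing. Since $\theta(N_r x)=x^{-1}(N_rx)x$, all components are $G(E)$-conjugate to $N_r x$. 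Hence $Nx$ is conjugate in $I(F^s)$ (indeed already in $I(E)$) to the diagonal element $(N_rx,\dots,N_rx)$, which is the image of $N_rx$ under $G\hookrightarrow I$. The same holds for $y$.

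With this in hand, the chain of equivalences is the following. First, $N_rx,N_ry$ are conjugate in $G(F^s)$ if and only if their diagonal images are conjugate in $I(F^s)$; this is the injectivity statement of Lemma \ref{Lemma 2.1}(c). Second, by the previous paragraph, the diagonal images are conjugate exactly when $Nx,Ny$ are conjugate in $I(F^s)$. Third, Lemma \ref{Lemma 2.1}(b) says this holds if and only if $x,y$ are $F^s$-$\theta$-conjugate. By the paper's definition for elements of $G(E)$, this last condition is precisely that $x,y$ are $\theta$-conjugate by an element of $I(F^s)$, which is the meaning of "by an element in $G(F^s)$" in the statement under $G(E)=I(F)$.

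The main obstacle is the bookkeeping in the first paragraph, namely checking the component description of $Nx$ and producing an explicit conjugator. The conjugator can be built from partial products $x^{\theta^{r-1}}\cdots x^{\theta^{i}}$ in each coordinate. We verify this directly using $\theta(N_rx)=x^{-1}(N_rx)x$ iteratively. Everything else is formal.
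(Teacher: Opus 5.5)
Your proposal is correct and is exactly the deduction the paper intends, which states the corollary as an immediate consequence of Lemma \ref{Lemma 2.1}(b) and (c). You also make explicit the step the paper leaves implicit, namely that $Nx\in I(F)\subset I(F^s)$ is conjugate in $I(E)$ to the diagonal image of $N_rx$ because its components $\theta^i(N_rx)$ are all $G(E)$-conjugate to $N_rx$, and you correctly read ``$\theta$-conjugate by an element in $G(F^s)$'' as $F^s$-$\theta$-conjugacy in $I(F^s)$.
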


To define stable $\theta$-conjugacy, we have to define $\theta$-centralizers.
\begin{defn}\label{defn of twisted centralizer}
For $x\in G(E)=I(F)$, let $I_{sx}=\{g\in I: g^{-s}xg=x\}$. This group will be called the $\theta$\textit{-centralizer} of $x$.     
\end{defn}
It is an $F$-subgroup of $I$. Since we have $I_E=G_E\times\dots\times G_E$, where the factors are indexed by elements of $\Gal(E/F)$: $\theta^{r-1},\dots, \theta, 1$, let $p: I_E\to G_E$ be the projection onto the last factor: $(x_1,\dots,x_r)\mapsto x_r$.
\begin{rem}\label{rem 2.4}
Consider the $F$-points of $I_{s\delta}$ for $\delta\in G(E)=I(F)$, this can be given as the $F$-point of a connected reductive group as follows:
\begin{equation*}
\begin{split}
    I_{s\delta}(F)&=\{g\in I(F): g^{-s}\delta g=\delta\}\\
    &=\{g\in G(E): g^{-\theta}\delta g=\delta\}
\end{split}
\end{equation*}
We will denote by $G_{\delta\theta}$ in the remainder of the sections.
\end{rem}

\begin{lem}\label{lemma of inner form}
Let $x\in G(E)$. Then the projection $p$ induces an isomorphism $(I_{sx})_E\xrightarrow{\sim}(G_E)_{Nx}$. This makes $I_{sx}$ an $F$-form of the centralizer of $Nx$ in $G$.    
\end{lem}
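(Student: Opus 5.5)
We work over $E$, where $I_E\cong G_E\times\dots\times G_E$ with factors indexed by $\theta^{r-1},\dots,\theta,1$. The first step is to compute the $E$-points of $I_{sx}$ in these coordinates. We write the image of $x$ as $(x_1,\dots,x_r)$ with $x_i=x^{\theta^{r-i}}$, and an element $g=(g_1,\dots,g_r)$. Since $g^s=(g_r,g_1,\dots,g_{r-1})$, the condition $g^{-s}xg=x$ becomes the system
\begin{equation*}
g_r^{-1}x_1g_1=x_1,\quad g_1^{-1}x_2g_2=x_2,\quad\dots,\quad g_{r-1}^{-1}x_rg_r=x_r .
\end{equation*}
The same system describes $R$-points for any $E$-algebra $R$, so it is an identity of functors.

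Next we solve the system. Read backwards, it shows that $g_{r-1},g_{r-2},\dots,g_1$ are determined successively by $g_r$: $g_{r-1}=x_rg_rx_r^{-1}$, then $g_{r-2}=x_{r-1}g_{r-1}x_{r-1}^{-1}=(x_{r-1}x_r)g_r(x_{r-1}x_r)^{-1}$, and so on up to $g_1=(x_2\cdots x_r)g_r(x_2\cdots x_r)^{-1}$. The first equation then gives the single remaining constraint on $g_r$:
\begin{equation*}
g_r^{-1}(x_1x_2\cdots x_r)g_r=x_1x_2\cdots x_r .
\end{equation*}
By the formula for $N$ on $I$, the product $x_1\cdots x_r=x^{\theta^{r-1}}\cdots x^{\theta}x$ is the $r$-th component of $Nx$ viewed in $I(E)$; equivalently, it is $Nx$ up to the cyclic reindexing convention. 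Any cyclic rotation of the product is conjugate to the original by a partial product, so if the convention gives the reversed order, we conjugate once more. In either case $p$ restricts to a bijection on $R$-points between $(I_{sx})(R)$ and the centralizer of $Nx$ (or of a fixed $G(E)$-conjugate of it). The inverse is given by the explicit polynomial formulas above, so $p$ is an isomorphism of $E$-group schemes $(I_{sx})_E\to(G_E)_{Nx}$.

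The main obstacle is bookkeeping rather than substance. The index conventions (whether $p$ picks out $x$ or $x^{\theta^{r-1}}$, and the ordering of $N$) must be made consistent, and a clean isomorphism onto the centralizer of exactly $Nx$, not of a conjugate, requires choosing the right factor. The paper's convention, with $p$ the last factor corresponding to $x$ itself and $N=x^{s^{r-1}}\cdots x$, gives exactly $Nx$ after the computation above. In characteristic $p$ with $F$ imperfect, we should also work scheme-theoretically with functors of points, not just $F^s$-points, so that non-smooth centralizers are handled correctly. Since the inverse formulas are algebraic, this causes no problem.

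Finally, for the "$F$-form" claim: $I_{sx}$ is an $F$-subgroup of $I$ because $s$ and $x\in I(F)$ are $F$-rational. The centralizer $G_{Nx}$ is defined over $E$, and over $E$ (hence over $F^s$) the two groups become isomorphic via $p$. Therefore $I_{sx}$ is an $F$-form of the centralizer of $Nx$. The $\Gamma$-cocycle describing this form is the twisted descent datum coming from $s$, which Lemma \ref{Lemma 2.1}(a) exhibits as conjugation compatible with $N$.
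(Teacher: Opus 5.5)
Your proof is correct and takes essentially the paper's route. The paper only cites Kottwitz's Lemma 5.4, whose proof is exactly your componentwise computation: $g_1,\dots,g_{r-1}$ are determined by $g_r=p(g)$, which must centralize $x_1\cdots x_r=p(Nx)$. Your functor-of-points phrasing, with explicit polynomial inverse, usefully shows the argument still works in characteristic $p$ when $Nx$ has a non-smooth centralizer.

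One small correction to your hedge: reversing the order of the product is not a cyclic rotation (up to conjugacy it replaces $\theta$ by $\theta^{-1}$). This is moot, since, as you note, the paper's definition $Nx=x^{s^{r-1}}\cdots x^s x$ gives exactly $p(Nx)$.
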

\begin{proof}
The proof of Lemma 5.4 in \cite{kot82} are still valid here. 
\end{proof}
We define the notion of $\sigma$-(strongly regular) semisimplicity below.
\begin{defn}\label{defn of sigma semisimplicity}
We say that $\delta\in G(E)$ is $\sigma$-semisimple, $\sigma$-regular semisimple or $\sigma$-strongly regular semisimple if $N\delta\in G(E)$ is semisimple, regular semisimple or strongly regular semisimple.    
\end{defn}
We can show that twisted orbital integral converges for $\sigma$-regular semisimple elements. Let $\phi\in \mathcal{H}(G,\rho)$ be a function in the depth-zero Hecke algebra, where as before $\rho$ is a character on $I$, trivial on $I^+$. Since $\rho$ is a character on a finite abelian group, it must be unitary. Therefore, $|\phi|\in\mathcal{H}(G,I)$, the Hecke algebra of compactly supported bi-invariant functions under $I$. Using the results of Proposition 5.2 in \cite{Ito}, we see that the twisted orbital integrals of $\phi$ converge absolutely for $\sigma$-regular semisimple elements.

Let us assume $G$ is a general quasi-split reductive group defined over $F$, we need to define yet another subgroup $I^\circ_{sx}$. We define $I^\circ_{sx}$ as the inverse image under the $E$-isomorphism $p: I_{sx}\to G_{Nx}$ of the subgroup $G^\circ_{Nx}$ of $G_{Nx}$. We will see that $I^\circ_{sx}$ is defined over $F$ in the following lemma. It is clear that $I^\circ_{sx}=I_{sx}$ when $G_{\text{der}}$ is simply connected. In general, we consider a $z$-extension $\alpha: G'\to G$, whose definitions are given below. We will also get a corresponding $z$-extension $\gamma: I'\to I$ where $I'=R_{E/F}((G')_E)$.

\begin{defn}\label{defn z-ext}
Given a connected reductive group $G$ over $F$, we say that a homomorphism $\alpha:G'\to G$ is a \textit{$z$-extension} of $G$ if
\begin{enumerate}
    \item $G'$ is also a connected reductive group over $F$, whose derived group is simply connected.
    \item $\ker(\alpha)\subset Z(G')$ and is isomorphic to a product of tori of the form $R_{L/F}\G_m$, where each $L$ is a finite separable extension of $F$.
    \item $\alpha$ is surjective.
\end{enumerate}
\end{defn}
The existence of $z$-extensions is proved in \cite{Milne1982conjugates} Prop 3.1.

\begin{lem}\label{lem 2.8}
For any $y\in G'(E)$ such that $\alpha(y)=x$, we have $\gamma(I'_{sy})=I^\circ_{sx}$. In particular, $I^\circ_{sx}$ is defined over $F$.     
\end{lem}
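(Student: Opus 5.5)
The plan is to pass to $E$-points, or rather to $F^s$-points, via the projection $p$ and Lemma \ref{lemma of inner form}. This reduces the statement to a claim about ordinary centralizers in $G$ and $G'$ under the $z$-extension $\alpha$.

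First we check that $\gamma$ maps $I'_{sy}$ into $I_{sx}$. Indeed, $\gamma$ is the Weil restriction of $\alpha_E$, so it commutes with $s$. Hence if $g^{-s}yg=y$, then $\gamma(g)^{-s}x\gamma(g)=x$. Moreover $\gamma$ is compatible with the projections $p$, in the sense that $p\circ\gamma=\alpha\circ p'$ on $I'_E$. It is also compatible with norms: $\alpha(Ny)=Nx$, since $\alpha$ is a homomorphism commuting with $\theta$.

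Second, by Lemma \ref{lemma of inner form} applied to $G'$, $p'$ identifies $(I'_{sy})_E$ with $G'_{Ny}$. The group $G'_{Ny}$ is connected because $G'_{\mathrm{der}}$ is simply connected (Steinberg's theorem; $Ny$ is semisimple in the case of interest, and in general we use the standard fact for semisimple parts). Via the commutative square above, the claim $\gamma(I'_{sy})=I^\circ_{sx}$ becomes, after base change to $E$ (equivalently to $F^s$), the claim $\alpha(G'_{Ny})=G^\circ_{Nx}$.

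Third, we prove $\alpha(G'_{Ny})=G^\circ_{Nx}$ as in the perfect-field case (\cite{kot82}, Lemma 5.6 / Langlands). The image $\alpha(G'_{Ny})$ is connected and contained in $G_{Nx}$, so it lies in $G^\circ_{Nx}$. For the reverse inclusion, let $g\in G_{Nx}$ and lift it to $g'$. Then $g'^{-1}(Ny)g'=Ny\cdot z$ with $z\in\ker\alpha=Z$, a central torus. This gives a morphism $G_{Nx}\to Z$ whose kernel is $\alpha(G'_{Ny})$. A maximal torus $T'$ of $G'$ containing $Ny$ maps onto a maximal torus $T\subset G^\circ_{Nx}$, and $T'\subset G'_{Ny}$, so $T\subset\alpha(G'_{Ny})$. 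Now $G^\circ_{Nx}$ is generated by $T$ and the root subgroups $U_a$ for roots $a$ with $a(Nx)=1$. Each such $U_a$ lifts isomorphically to the root subgroup $U'_a$ of $G'$, and $a(Ny)=a(Nx)=1$, so $U'_a\subset G'_{Ny}$. Hence $G^\circ_{Nx}\subset\alpha(G'_{Ny})$.

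Finally, the image of an $F$-group under the $F$-morphism $\gamma$ is an $F$-subgroup. Over $E$ it coincides with $p^{-1}(G^\circ_{Nx})=(I^\circ_{sx})_E$. Therefore $I^\circ_{sx}$ descends to, and is, an $F$-subgroup.

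The main obstacle is the non-perfect setting. Images and scheme-theoretic equalities must be handled over $F^s$ rather than $\bar F$, and we must make sure $\alpha$ restricted to centralizers is smooth and surjective on the relevant reduced subgroups. I would handle this by working with smooth centralizers of semisimple elements (which are smooth) and using that $\ker\alpha$ is a smooth torus, so the equality can be checked on $\bar F$-points and then descended. The $F$-rationality is then automatic because $\gamma$ and $I'_{sy}$ are defined over $F$.
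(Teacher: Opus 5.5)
Your proposal is correct and follows the paper's route. The inclusion $\gamma(I'_{sy})\subset I^\circ_{sx}$ comes from connectedness of $I'_{sy}$, and the reverse inclusion comes from transporting the surjectivity of $\alpha\colon G'_{Ny}\to G^\circ_{Nx}$ through the isomorphisms $p$, $p'$ of Lemma \ref{lemma of inner form}. The paper merely asserts that surjectivity, whereas you prove it via the maximal torus and the root subgroups $U_a$ with $a(Nx)=1$; this is the same argument the paper later uses for $G\to G_{\mathrm{ad}}$, so your auxiliary map $G_{Nx}\to Z$ is superfluous.

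Drop the aside about non-semisimple $Ny$. Centralizers need not be connected even when $G'_{\mathrm{der}}$ is simply connected (e.g.\ a regular unipotent in $\mathrm{SL}_2$), so the argument genuinely uses that $Nx$ is semisimple, which is the case in which the paper applies the lemma.
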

\begin{proof}
It is quite easy to see that $\gamma(I'_{sy})\subset I_{sx}$. Therefore, the composition $p\circ\gamma: (I'_{sy})_E\to G_{Nx}$ makes sense. Since $I'_{sy}$ is connected, we have $p(\gamma(I'_{sy}))\subset G^\circ_{Nx}$, therefore $\gamma(I'_{sy})\subset I^\circ_{sx}$. Conversely, we use the fact that the map $\alpha:G'_{Ny}\to G^\circ_{Nx}$ is surjective, therefore, the pullback to the isomorphic groups $\gamma: I'_{sy}\to I^\circ_{sx}$ should still be surjective.     
\end{proof}

\begin{defn}\label{defn stable twisted conjugacy}
We say that $x,y\in G(E)=I(F)$ are \textit{stably $\theta$-conjugate} if there exists $g\in I(F^s)$ such that $g^{-s}xg=y$ and $g^\tau g^{-1}\in I^\circ_{sx}$ for all $\tau\in\Gamma$.    
\end{defn} 
Now since $x,y\in G(E)=I(F)$, we have $(g^{-s}xg)^\tau=g^{-\tau s}xg^\tau=g^{-s}xg$, hence $g^\tau g^{-1}\in I_{sx}$ for all $\tau\in\Gamma$. Therefore stable $\theta$-conjugacy and $F^s$-$\theta$-conjugacy coincide whenever $I^\circ_{sx}=I_{sx}$, in particular whenever $G_{\text{der}}$ is simply connected.

We introduce a special kind of $z$-extension $\alpha: G'\to G$.
\begin{defn}\label{defn of adapted z-extension}
We will say that the $z$-extension $\alpha:G'\to G$ is \textit{adapted to $E$ (or $E/F$)} if the norm map $\ker(\alpha)(E)\to\ker(\alpha)(F)$ is surjective.    
\end{defn}

\begin{lem}\label{lem 2.11}
\begin{enumerate}[(a)]
    \item Let $G$ be a connected quasi-split reductive group. There exists a $z$-extension $\alpha:G'\to G$ adapted to $E$.
    \item Let $\alpha:G'\to G$ be a $z$-extension adapted to $E$. Then $x,y\in G(E)$ are stably $\theta$-conjugate if and only if there exist $F^s$-$\theta$-conjugate elements $x',y'\in G'(E)$ such that $\alpha(x')=x$ and $\alpha(y')=y$.
\end{enumerate}  
\end{lem}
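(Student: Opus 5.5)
For part (a) I will follow the construction in Kottwitz \cite{kot82}, Lemma 5.6 (or Milne--Shih), and check that only separability is used. Choose a finite Galois extension $L/F$ containing $E$ that splits $G$; since $G$ is unramified we may take $L$ unramified, hence separable. We form $G'$ as the pushout of $G_{\rm sc}\times Z$ along the central subgroup $Z(G_{\rm sc})$, where $Z$ is an induced torus of the form $R_{L/F}\G_m^n$ receiving a closed embedding of the finite multiplicative group $Z(G_{\rm sc})$. We need such an embedding into an induced torus split by $L$; to get it, embed the character group into a free $\Z[\Gal(L/F)]$-module. This works because $X^*(Z(G_{\rm sc}))$ is a finitely generated $\Gal(L/F)$-module, so it is a quotient of a free $\Z[\Gal(L/F)]$-module.

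One point needs care in characteristic $p$. The group $Z(G_{\rm sc})$ may be non-smooth, for instance $\mu_p$. It is still of multiplicative type and split by $L$, and the duality between groups of multiplicative type and their character modules holds over any field. So the pushout $G'$ is a smooth connected reductive group. Its derived group is $G_{\rm sc}$, and $\ker(G'\to G)\cong Z$ is an induced torus. Thus $G'\to G$ is a $z$-extension.

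Adaptedness is then immediate. Since $E\subset L$, the torus $Z$ is a product of copies of $R_{L/F}\G_m$, and $R_{L/F}\G_m(E)=(L\otimes_F E)^\times\cong\prod_{\Gal(E/F)}L^\times$. Under this identification the norm to $R_{L/F}\G_m(F)=L^\times$ becomes a product map, which is clearly surjective. Equivalently, $\hat H^0(\Gal(E/F),Z(E))=0$ for induced tori by Shapiro's lemma.

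For part (b), first suppose $x',y'\in G'(E)$ are $F^s$-$\theta$-conjugate, say $g'^{-s}x'g'=y'$ with $g'\in I'(F^s)$, where $\alpha(x')=x$ and $\alpha(y')=y$. Set $g=\gamma(g')$. Then $g^{-s}xg=y$. As remarked after Definition \ref{defn stable twisted conjugacy}, each $g'^\tau g'^{-1}$ lies in $I'_{sx'}$, because $G'_{\rm der}$ is simply connected. Applying $\gamma$ and Lemma \ref{lem 2.8} gives $g^\tau g^{-1}\in\gamma(I'_{sx'})=I^\circ_{sx}$, which proves the first direction.

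Conversely, suppose $g^{-s}xg=y$ with $g^\tau g^{-1}\in I^\circ_{sx}$ for all $\tau\in\Gamma$. Fix any lift $x'$ of $x$; a lift exists because $H^1(F,R_{E/F}\ker\alpha)=0$, so $I'(F)\to I(F)$ is surjective. We lift $g$ to some $g'\in I'(F^s)$, using separable surjectivity of $\gamma$ with smooth kernel. The cocycle $\tau\mapsto g^\tau g^{-1}$ takes values in $I^\circ_{sx}=\gamma(I'_{sx'})$, so we want to adjust $g'$ by $\ker\gamma(F^s)$ so that $g'^{-s}x'g'$ becomes $F$-rational.

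The element $y'':=g'^{-s}x'g'$ maps to $y$, so $y''^\tau y''^{-1}\in\ker\gamma(F^s)$ defines a $1$-cocycle of $\Gamma$ in the induced torus $R_{E/F}\ker\alpha$. Since that $H^1$ vanishes by Hilbert 90 and Shapiro, we can write the cocycle as a coboundary $z^{\tau}z^{-1}$ with $z$ central, and replace $y''$ by $y''z^{-1}$. Absorbing the central correction into $g'$ requires solving $z=c^{-s}c=c^{1-s}$ for some central $c$; failing that, we can at least use the freedom to change the lift $x'$ by $\ker\alpha(E)$. Solving $z=c^{1-s}$ is exactly where adaptedness enters: the obstruction lives in $\ker\alpha(F)/N(\ker\alpha(E))$, which vanishes by Definition \ref{defn of adapted z-extension}. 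Once it is solved, $x'$ and $y'=g'^{-s}x'g'$ lie in $G'(E)$ and are $F^s$-$\theta$-conjugate.

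The main obstacle is this bookkeeping of central corrections in the converse direction: identifying the obstruction precisely as a Tate cohomology class killed by the norm surjectivity. I will follow Kottwitz's Lemma 5.8 argument verbatim and check that only $H^1$-vanishing for induced tori over $F^s$, which holds for any separable extension, is used.
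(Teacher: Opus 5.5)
Your route follows Kottwitz's Lemma 5.6, as the paper's does, but there are two concrete problems.

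\textbf{Part (a): the construction is wrong.} The pushout $(G_{\rm sc}\times Z)/Z(G_{\rm sc})$ comes with no map to $G$. Its quotient by $Z$ is $G_{\rm sc}/Z(G_{\rm sc})=G_{\rm ad}$, so it is a $z$-extension of $G_{\rm ad}$, not of $G$; for $G$ a torus it is just $Z$. You have to build in $Z(G)^\circ$:
\begin{itemize}
\item Let $\mu=\ker(G_{\rm sc}\times Z(G)^\circ\to G)$. This is a finite central group of multiplicative type, split by $L$.
\item Embed $\mu\hookrightarrow Z$ with $Z$ a product of copies of $R_{L/F}\G_m$.
\item Set $G'=(G_{\rm sc}\times Z(G)^\circ\times Z)/\mu$, with $\mu$ embedded by $m\mapsto(m,m^{-1})$.
\end{itemize}
Then $(h,c,z)\mapsto \pi(h)c$, with $\pi:G_{\rm sc}\to G$, is surjective with kernel $Z$, and $G'_{\rm der}=G_{\rm sc}$. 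Your adaptedness argument then works as written.

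\textbf{Part (b), converse: the hypothesis is never used.} You state $g^\tau g^{-1}\in I^\circ_{sx}$ but never use it. After making $y''z^{-1}$ rational, absorbing $z$ into $g'$ requires $z\in(1-s)\ker\gamma(F^s)$, i.e.\ $N(z)=1$. Changing $z$ (equivalently the lift $x'$ or $y'$) by $\ker\alpha(E)$ moves $N(z)$ only by elements of $N(\ker\alpha(E))\subset\ker\alpha(F)$. So adaptedness kills the obstruction only once you know $N(z)\in\ker\alpha(F)$, i.e.\ that $N(z)$ is $\Gamma$-invariant. This is exactly where stability must enter.

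As written, your argument would show that any two $F^s$-$\theta$-conjugate elements have $F^s$-$\theta$-conjugate lifts. By your forward direction they would then be stably $\theta$-conjugate, which is false in general when the relevant centralizers are disconnected.

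The missing step is as follows. The group $\ker\gamma\cap I'_{sx'}$ is the $s$-fixed part of $R_{E/F}\ker\alpha$, i.e.\ the torus $\ker\alpha$. So by Lemma~\ref{lem 2.8} you can write $g'^\tau g'^{-1}=h_\tau c_\tau$ with $h_\tau\in I'_{sx'}(F^s)$ and $c_\tau\in\ker\gamma(F^s)$. Since $h_\tau$ $\theta$-centralizes $x'$, this gives $y''^\tau=y''\,c_\tau\,s(c_\tau)^{-1}$. Hence $z^\tau z^{-1}=y''^\tau y''^{-1}$ has norm $1$, so $N(z)$ is $\Gamma$-fixed and lies in $\ker\alpha(F)$.

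Only now does adaptedness apply. It gives $z_0\in\ker\alpha(E)$ with $N(z_0)=N(z)$. Replacing $z$ by $zz_0^{-1}$ keeps $y''(zz_0^{-1})^{-1}$ rational and lifting $y$. Moreover $zz_0^{-1}$ lies in the image of $1-s$ on $\ker\gamma(F^s)$, so it can be absorbed into $g'$.
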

\begin{proof}
The proof of Lemma 5.6 \cite{kot82} can be adapted to the equal characteristic situation. 
\end{proof}

\subsection{Definition of the Norm Homomorphism}\label{sec 2.2}
To show that for any $\delta\in G(E)$, $N_r(\delta)$ is stably conjugate to an element in $G(F)$, therefore defines a stable conjugacy class in $G(F)$, we will have to use the classical results from \cite{kot85} and recent progress from \cite{HaKi22}. Denote $\breve{F}$ to be the maximal unramified extension of $F$, in particular we have $E\subset \breve{F}$.

\begin{prop}\label{Prop in Isocrystal}
Let $\delta\in G(\breve{F})$. Then there exists an $F$-Levi subgroup $M$ and a pair $(J,u)$ such that
\begin{enumerate}
    \item $J$ is an inner twist of $M$;
    \item $u$ is an $\breve{F}$-isomorphism $J_{\breve{F}}\xrightarrow{\sim}M_{\breve{F}}$;
    \item $u(\theta(x))=b\cdot\theta(u(x))\cdot b^{-1}$.
    \item The inner twist $u$ identifies $J(F)$ with the set 
\begin{equation*}
    \{g\in G(\breve{F}): g^{-1}\delta\theta(g)=\delta\}
\end{equation*}
\end{enumerate}
\end{prop}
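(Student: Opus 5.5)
The plan is the standard isocrystal construction of Kottwitz (\cite{kot85}, \S 5--6), run in equal characteristic with the input from \cite{HaKi22}. We regard $\delta\in G(\breve{F})$ as the element $b:=\delta$ defining a $\theta$-twisted Frobenius $\theta_b:=\mathrm{Ad}(b)\circ\theta$ on $G(\breve{F})$, where $\theta$ now denotes the Frobenius of $\breve{F}/F$. We then take $J:=J_b$, the group of $\theta$-centralizers of $b$:
\begin{equation*}
J(R)=\{g\in G(R\otimes_F\breve{F}): g^{-1}b\,\theta(g)=b\}.
\end{equation*}
Item (4) then holds by construction, once we know $J$ is representable by a connected reductive $F$-group.

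\textbf{Step 1 (slope decomposition).} Attach to $b$ its Newton cocharacter $\nu_b:\mathbb{D}_{\breve{F}}\to G_{\breve{F}}$, where $\mathbb{D}$ is the pro-torus with character group $\Q$. Over a perfect residue field and an arbitrary complete discretely valued field with algebraically closed residue field, this is available from \cite{HaKi22}, which extends the slope decomposition of isocrystals with $G$-structure to equal characteristic. We use two facts. First, $\nu_b$ satisfies $\mathrm{Ad}(b)\circ\theta(\nu_b)=\nu_b$, so the centralizer $M_{\breve{F}}:=Z_{G_{\breve{F}}}(\nu_b)$ is stable under $\theta_b$. Second, after $\theta$-conjugating $b$ we may assume $b$ is decent, so $\nu_b$ is defined over a finite unramified extension. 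Decency is the point where we use the slope filtration from \cite{HaKi22} instead of the characteristic-zero Dieudonné--Manin argument.

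\textbf{Step 2 (descending $M$ and $J$).} Since $G$ is quasi-split over $F$, the $G(\breve{F})$-conjugacy class of $\nu_b$ is $\theta$-stable. By Steinberg's theorem ($H^1(\breve{F},\cdot)$ vanishes for connected groups; this holds over $\breve{F}$ in positive characteristic because $\breve{F}$ has cohomological dimension $\le 1$ and is $C_1$ in the separable sense), we may $\theta$-conjugate $b$ so that $\nu_b$ is $F$-rational up to conjugacy. Then the centralizer $M$ of a suitable $F$-rational representative is an $F$-Levi subgroup. Because $b\in M(\breve{F})$ is basic in $M$, the Frobenius $\theta_b$ defines an $F$-structure $J$ on $M_{\breve{F}}$, and $u:J_{\breve{F}}\xrightarrow{\sim}M_{\breve{F}}$ is the tautological identification. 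Item (3) is exactly the statement that $u$ intertwines $\theta$ on $J$ with $\theta_b=\mathrm{Ad}(b)\circ\theta$ on $M$. Since $b$ is basic in $M$, $\mathrm{Ad}(b)$ is inner for $M$, which gives item (1).

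\textbf{Step 3 (comparison of $F$-points).} Elements of $G(\breve{F})$ that $\theta$-centralize $b$ commute with $\nu_b$, because $\nu_b$ is canonically attached to $b$ and is functorial for $\theta$-conjugation. So they lie in $M(\breve{F})$ and are fixed by $\theta_b$, which identifies them with $J(F)$.

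The main obstacle is Step 1: establishing the existence, decency and functoriality of $\nu_b$ over the imperfect field $F$. This is where \cite{HaKi22} must be quoted carefully. The remaining steps are formal adaptations of \cite{kot85}, provided Galois cohomology is taken with respect to $F^s$ throughout.
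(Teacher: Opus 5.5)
Your proposal is correct and takes essentially the same approach as the paper, whose proof simply cites Kottwitz's construction (\cite{kot85}, \S 5.2 and Remark 6.5): $J$ is the $\theta$-centralizer group of a decent $\theta$-conjugate $b$ of $\delta$ with $F$-rational Newton point, realized as an inner form of the $F$-Levi $M=Z_G(\nu_b)$, with \cite{HaKi22}, Prop.~6.2 supplying the equal-characteristic version you invoke. Two small points should be tidied: decency must survive your Step 2 conjugation (take the conjugating element over a finite unramified extension, or re-establish decency inside $M$, where $\theta$-conjugation by $M(\breve F)$ does not move the central $\nu_b$), and since your final $b$ equals $h^{-1}\delta\theta(h)$, item (4) holds only after composing $u$ with $\mathrm{Int}(h)$, which is also how the $b$ in item (3) must be read.
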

\begin{proof}
This is from \cite{kot85} Remark 6.5 and 5.2. See also \cite{HaKi22}, Prop. 6.2.   
\end{proof}

Now we use Lemma 8.1 from \cite{HaKi22}.
\begin{lem}\label{lem 2.13}
Let $G^*$ be a quasi-split reductive group over an infinite field $F$ with a simply connected derived subgroup, and $H^*$ an $F$-subgroup of $G^*$ containing a maximal torus. Let $H$ be an inner form of $H^*$, and fix an inner twist $H_{F^s}\xrightarrow{\sim}H^*_{F^s}$, hence we can view $H(F)$ as a subgroup of $G^*(F^s)$.

Then for any semisimple element $a\in H(F)$, the $G^*(F^s)$-conjugacy class of $a$ contains an element $\gamma\in G^*(F)$.
\end{lem}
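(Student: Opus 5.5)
The plan is to follow Kottwitz's classical argument for perfect fields (\cite{kot82}, Theorem 4.1) and replace its Galois-cohomological inputs with versions that hold over an arbitrary infinite field with separable closure $F^s$.

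\emph{Step 1: a maximal torus of $H$ containing $a$ and a transfer of it to $G^*$.} Since $a\in H(F)$ is semisimple, it lies in some maximal $F$-torus $T_H\subset H$. Over an infinite field, maximal tori through a semisimple element exist for smooth connected groups, applied to the centralizer. If $H$ is disconnected, I would work in $H^\circ$, or in $Z_H(a)^\circ$, which is smooth because $a$ is semisimple. Since $H$ contains a maximal torus of $G^*$ after twisting, $T_H$ is also maximal in $G^*$ over $F^s$. The inner twist $\psi: H_{F^s}\to H^*_{F^s}\subset G^*_{F^s}$ is used to produce $\psi(T_H)$. The key point is that the embedding $\iota=\psi|_{T_H}: T_H\to G^*$ is defined over $F$ up to $G^*(F^s)$-conjugacy. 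For $\tau\in\Gamma$ we have $\psi^\tau\psi^{-1}=\mathrm{Ad}(h_\tau)$ with $h_\tau\in H^*(F^s)$, so $\iota^\tau=\mathrm{Ad}(h_\tau)\circ\iota$. Thus the $G^*(F^s)$-conjugacy class of $\iota$ is $\Gamma$-stable. I would phrase this by saying the conjugacy class of $\psi(a)$ is defined over $F$.

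\emph{Step 2: rationality of the embedding.} The main theorem to invoke is Steinberg's, in the form valid for quasi-split groups over arbitrary fields (Borel--Springer): for quasi-split $G^*$, every $\Gamma$-stable $G^*(F^s)$-conjugacy class of embeddings of a maximal torus contains one defined over $F$. Equivalently, one can use that for quasi-split $G^*$ with simply connected derived group, a semisimple conjugacy class defined over $F$ has an $F$-rational point. I would reduce to that statement directly. The class $C$ of $\psi(a)$ is $\Gamma$-stable. Its image in $G^*/G^*_{\rm der}=D$ is a single $F$-point, and one lifts it using a quasi-split Borel $B^*\supset T^*$. The classes then correspond, via the Chevalley map $T^*/W\cong G^*/\!/G^*$ and after the standard argument with $D$, to $F$-points of the adjoint quotient. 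Steinberg's section exists for quasi-split groups with simply connected derived group, though in characteristic $p$ this needs care, and it produces an $F$-rational regular element with the same image. A semisimple element with that image is then obtained by taking its semisimple part. The Jordan decomposition is rational only over perfect fields, but for elements lying in an $F$-torus it is fine.

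\emph{Step 3: the main obstacle.} The main obstacle is precisely Step 2 over imperfect $F$. There the Jordan decomposition of an $F$-point need not be $F$-rational, and the classical $H^1$-vanishing arguments (Steinberg's $H^1(F,G_{\rm sc})$ results) require care. My first attempt would avoid the Jordan decomposition entirely by using the torus version. The $F$-torus $T_H$, with the $\Gamma$-stable class of $\iota$, gives a cocycle $\tau\mapsto g_\tau\in N_{G^*}(T^*)(F^s)$ modulo $T^*$. I would then use the fact (valid over any field, Raghunathan/Kottwitz for quasi-split groups) that every $F$-torus whose class of embeddings is rational admits an $F$-embedding into a quasi-split group. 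This is exactly the statement that the obstruction in $H^1(F,T^*)\to H^1(F,G^*)$ lifting issues vanishes when $G^*_{\rm der}$ is simply connected, since $H^1$ of quasi-split simply connected groups is controlled by a Borel. Once $g\in G^*(F^s)$ with $g\iota g^{-1}$ defined over $F$ is found, $\gamma=g\psi(a)g^{-1}\in G^*(F)$ gives the claim.
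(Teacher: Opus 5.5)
Your skeleton is the right one, and it is what underlies the Hamacher--Kim argument the paper cites: take a maximal $F$-torus $T_H\ni a$, form the transferred embedding $\iota=\psi|_{T_H}$ with $\iota^\tau=\mathrm{Ad}(h_\tau)\circ\iota$, find $g\in G^*(F^s)$ with $\mathrm{Ad}(g)\circ\iota$ defined over $F$, and set $\gamma=g\psi(a)g^{-1}$. There is a genuine gap, however, at the one nontrivial step, producing $g$, which you yourself flag. None of the routes you offer for it works as stated.

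\emph{Step 2.} The ``equivalent'' statement, that every semisimple class defined over $F$ in a quasi-split $G^*$ with simply connected derived group has an $F$-point, is not equivalent to the torus statement. It is Kottwitz's stronger theorem, known for perfect fields, and it is not what Lemma~\ref{rationality of regular semisimple} provides over imperfect $F$ (cf.\ Remark~\ref{rem 2.15}). The Steinberg-section sketch fails exactly where you suspect. The section produces a regular $F$-point that is in general not semisimple, so it lies in no torus and your caveat about $F$-tori does not apply. Over imperfect $F$ its semisimple part need not be $F$-rational.

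\emph{Step 3.} A $\Gamma$-stable class of embeddings only gives you a $W$-valued cocycle. The remark about $H^1(F,T^*)\to H^1(F,G^*)$ being ``controlled by a Borel'' does not turn this into an $F$-rational embedding. Moreover, simple connectedness of $G^*_{\rm der}$ is not what the torus-embedding statement hinges on. As written, the existence of $g$ is asserted rather than proved.

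\emph{The missing idea.} Apply the regular semisimple rationality result to an auxiliary regular element, not to $a$ or to the embedding class.
\begin{enumerate}
\item Since $F$ is infinite, $T_H(F)$ is Zariski dense in $T_H$. So there is $t\in T_H(F)$ with $\psi(t)$ regular semisimple in $G^*$.
\item The $G^*(F^s)$-class of $\psi(t)$ is $\Gamma$-stable, because $\tau(\psi(t))=h_\tau\psi(t)h_\tau^{-1}$. Lemma~\ref{rationality of regular semisimple} therefore gives $g\in G^*(F^s)$ with $g\psi(t)g^{-1}\in G^*(F)$.
\item Put $j=\mathrm{Ad}(g)\circ\iota$. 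Then $j^\tau=\mathrm{Ad}(c_\tau)\circ j$ with $c_\tau=\tau(g)h_\tau g^{-1}$. Evaluating at $t\in T_H(F)$ gives $c_\tau\, j(t)\, c_\tau^{-1}=j^\tau(t)=\tau(j(t))=j(t)$.
\item Because $G^*_{\rm der}$ is simply connected, the centralizer of the regular semisimple element $j(t)$ is connected, hence equal to the maximal torus $j(T_H)$. So $c_\tau\in j(T_H)(F^s)$ acts trivially on $j(T_H)$, giving $j^\tau=j$ for all $\tau$, and $j$ is defined over $F$.
\item Hence $\gamma=j(a)=g\psi(a)g^{-1}\in G^*(F)$.
\end{enumerate}

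This is where both hypotheses, $F$ infinite and $G^*_{\rm der}=G^*_{\rm sc}$, actually enter. In your proposal neither is used where it matters; in particular, Step 1 does not need $F$ infinite. Choosing $t$ strongly regular would even remove the need for connected centralizers. That is how one proves the torus-embedding statement you wanted to cite, and it would let you keep your Step~3 framing.
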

\begin{proof}
The proof in \cite{HaKi22} relies on their earlier results (Theorem A.1.1) in \cite{hamacher2021g} on the rationality of regular semisimple conjugacy classes, which we will document below.
\end{proof}
\begin{lem}\label{rationality of regular semisimple}
Let $F$ be any field. Let $G$ be a connected quasi-split reductive group over $F$. Let $C\subset G$ be a regular semisimple conjugacy class defined over $F$. Then there exists an element $x\in G(F)\cap C$.    
\end{lem}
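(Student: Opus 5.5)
The plan is the classical Steinberg/Kottwitz argument, made to work over an arbitrary field by replacing "regular semisimple conjugacy class" with the variety of maximal tori.

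\textbf{Step 1: attach a torus to the class.} Fix $c\in C$. Since $c$ is regular semisimple, its centralizer $T_c:=G_{c}^{\circ}$ is a maximal torus of $G_{F^s}$. The map $c\mapsto T_c$ is $G(F^s)$-equivariant. Because $C$ is $\Gamma$-stable, for $\sigma\in\Gamma$ the torus $T_{\sigma(c)}=\sigma(T_c)$ is $G(F^s)$-conjugate to $T_c$.

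\textbf{Step 2: reduce to a rational torus.} Let $\mathcal{T}$ be the variety of maximal tori of $G$; it is smooth and isomorphic to $G/N_G(T_0)$ for a maximal torus $T_0$. I would first try to show that $C$ contains a point $c_0$ whose centralizer torus is defined over $F$. Over a finite field, Lang's theorem applied directly to $C$ gives a rational point, so this case is immediate. For infinite $F$, two approaches are available.
\begin{itemize}
\item[(a)] Follow Kottwitz's method (Theorem 4.1 of \emph{Rational conjugacy classes in reductive groups}), adapted to separable closures. Pass to the Steinberg quotient $\chi: G\to T_0/W$. The class $C$ is a fiber over an $F$-rational point of this quotient, since $C$ is regular semisimple and $\Gamma$-stable. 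For quasi-split $G$ with simply connected derived group, Steinberg's section gives a rational point in every fiber, and that point is regular. A regular element whose semisimple part is regular semisimple is semisimple, so the point lies in $C$.
\item[(b)] Reduce the general case to case (a) by a $z$-extension $\alpha: G'\to G$ (Definition~\ref{defn z-ext}).
\end{itemize}

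\textbf{Step 3: the $z$-extension reduction.} Here the main obstacle appears. Let $C'$ be a class in $G'$ lying over $C$. It need not be $\Gamma$-stable: $\sigma(C')=z_\sigma C'$ with $z_\sigma\in Z:=\ker\alpha$. I would argue as follows.
\begin{itemize}
\item The map $\sigma\mapsto z_\sigma$ is a $1$-cocycle of $\Gamma$ valued in $Z(F^s)$, well-defined modulo the translations $z$ with $zC'=C'$. That translation group is trivial, since the centralizer torus acts freely.
\item Because $Z$ is a product of induced tori $R_{L/F}\mathbb{G}_m$, Shapiro's lemma and Hilbert 90 give $H^1(F,Z)=0$.
\item So $z_\sigma=\sigma(z)z^{-1}$ for some $z\in Z(F^s)$, and $z^{-1}C'$ is $\Gamma$-stable.
\item Apply Step 2(a) in $G'$ to obtain $x'\in G'(F)\cap z^{-1}C'$. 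Then $x:=\alpha(x')\in G(F)\cap C$.
\end{itemize}
I expect the delicate points to be two. First, over an imperfect field we must check that the relevant cohomology is the Galois cohomology of $F^s/F$. This holds because $Z$ is smooth, so fppf and Galois $H^1$ agree. Second, the Steinberg cross-section argument uses only the Chevalley restriction theorem and Steinberg's section over $F$. Both are characteristic-free for quasi-split groups with simply connected derived group, possibly after excluding small bad primes. If that fails in small characteristic, I would fall back on the torus approach: find an $F$-rational maximal torus $T$ in the stable class determined by $T_c$, using Raghunathan's theorem that $\mathcal{T}$ has rational points in each twisted form. Then identify $C\cap T$ with a $\Gamma$-stable $W$-orbit in $T(F^s)$ that contains an $F$-point.
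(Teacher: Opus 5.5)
The paper gives no argument of its own here; it imports the statement from Hamacher--Kim (Theorem~A.1.1 of \cite{hamacher2021g}). Your self-contained route has a genuine gap in Step~3. The claim that $A:=\{z\in Z(F^s): zC'=C'\}$ is trivial is false whenever $G_c$ is disconnected, i.e.\ for regular semisimple $c$ that are not strongly regular. Such classes can occur once $G_{\mathrm{der}}$ is not simply connected, and the lemma does not exclude them. Sending $g\in G_c(F^s)$ to the $z$ with $hc'h^{-1}=zc'$ (for any lift $h$ of $g$) identifies $A$ with $G_c(F^s)/G_c^\circ(F^s)$, because $\alpha(G'_{c'})=G_c^\circ$. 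Concretely, take the $z$-extension $\GL_2\to\mathrm{PGL}_2$ in characteristic $\neq2$ and $c'=\mathrm{diag}(1,-1)$: conjugation by $\left(\begin{smallmatrix}0&1\\1&0\end{smallmatrix}\right)$ sends $c'$ to $(-1)c'$. So $\sigma\mapsto z_\sigma$ is only a cocycle with values in $Z(F^s)/A$. The vanishing of $H^1(F,Z)$ then merely identifies $H^1(\Gamma,Z(F^s)/A)$ with $\ker\bigl(H^2(\Gamma,A)\to H^2(F,Z)\bigr)$, and inducedness of $Z$ does not kill this group. For example, for $A=\{\pm1\}\subset Z=R_{K/F}\G_m$ it is $\ker(\mathrm{Br}(F)[2]\to\mathrm{Br}(K))$. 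Thus Step~3 proves the lemma only for strongly regular classes. For the remaining ones you need a separate argument that this particular class vanishes.

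Even in the strongly regular case, two inputs are unproved.

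\emph{Step~2(a).} You assert, with your own hedge about small characteristic, that a quasi-split group with simply connected derived group has an $F$-rational Steinberg section over every infinite field. Kottwitz's version of this argument is set over perfect fields, as the paper notes. The passage to imperfect $F$ is exactly what the paper outsources to Hamacher--Kim, so it cannot be taken for granted.

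\emph{The fallback.} The last sentence does not follow, since a $\Gamma$-stable $W$-orbit in $T(F^s)$ need not contain a $\Gamma$-fixed point. You must choose $T$ to realize the specific twist of $T_c$ given by $\sigma\mapsto\mathrm{Int}(h_\sigma)\circ\sigma$, where $h_\sigma\sigma(c)h_\sigma^{-1}=c$. Equivalently, write $\sigma(t_0)=w_\sigma^{-1}t_0$ for $t_0\in C\cap T_0$. You then need an $N_G(T_0)$-valued cocycle that is trivial in $H^1(F,G)$ and maps to a cocycle cohomologous to $(w_\sigma)$; this is the precise form of the Raghunathan-type result required. Conjugating the corresponding $W$-translate of $t_0$ by the trivializing element then gives the rational point. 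However, that twist and the cocycle $(w_\sigma)$ are well defined only when $G_c=T_c$, so this route also stops at the strongly regular case.
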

\begin{rem}\label{rem 2.15}
The lemma above, however, does not show that every semisimple conjugacy class has the same property, but it suffices for the purpose of defining the abstract norm map.   
\end{rem}
Now we consider an element $\delta\in G(E)$ and whose concrete norm $N_r(\delta)$ is semisimple. Since $\delta\,\theta(N_r\delta)=(N_r\delta)\delta$, by the above proposition, with $G^*=G$, $H^*=M$ and $H=J$. We see that $N_r\delta\in H(F)$, and the stable conjugacy class of $N_r\delta$ contains an element of $G(F)$. Combined with Corollary \ref{cor 2.2}, this finishes the definition of the abstract norm map \ref{abstract norm} in the case when $G_{\text{der}}$ is simply connected.

For the next step, we need to follow the argument after Lemma 5.1 in \cite{kot82} to define the abstract norm map for all quasi-split reductive groups. To be specific, we find a $z$-extension $\alpha: G'\to G$ adapted to $E$. Then we can define the abstract norm map on $G'(E)$ by the previous results, and we have the following commutative diagram  
\begin{equation*}
\begin{tikzcd}
G'(E) \arrow[r, "\mathcal{N}_r"] \arrow[d, twoheadrightarrow,"\alpha"]
& G'(F)/\sim \arrow[d,"\alpha"] \\
G(E) \arrow[r, dashrightarrow, "\mathcal{N}_r"]
&  G(F)/\sim
\end{tikzcd}
\end{equation*}
and we claim that there exists a unique homomorphism, still denoted by $\mathcal{N}_r$, mapping stable-$\theta$-conjugate classes in $G(E)$ to stable conjugate classes in $G(F)$. Indeed, uniqueness of the map comes from the fact that $\alpha: G'(E)\to G(E)$ is surjective. For existence, we have to show that if $x,y\in G'(E)$ and $\alpha(x)=\alpha(y)$, then $\alpha(\mathcal{N}_r(x))=\alpha(\mathcal{N}_r(y))$ as stable classes. We can find $z\in\ker(\alpha)(E)$ such that $x=yz$. Since $\ker(\alpha)$ is central, we have $N_r(x)=N_r(y)N_r(z)$ with $N_r(z)\in\ker(\alpha)(F)$. Hence $\alpha(N_r(x))=\alpha(N_r(y))$ as desired.

Next, we show that the definition of $\mathcal{N}_r$ does not depend on the choice of the $z$-extension. As in the \cite{kot82}, this reduces to show that the following diagram is commutative
\begin{equation*}
\begin{tikzcd}
G_1(E) \arrow[r, "\mathcal{N}_r"] \arrow[d]
& G_1(F)/\sim \arrow[d] \\
G_2(E) \arrow[r, "\mathcal{N}_r"]
&  G_2(F)/\sim
\end{tikzcd}
\end{equation*}
for $G_1, G_2$ two $z$-extensions of $G$. But this comes from the definition of $\mathcal{N}_r$ when the derived group is simply connected.

This completes the definition of $\mathcal{N}_r$ for \textbf{all elements in a quasi-split reductive group $G$ with semisimple norms}. We are only left to show the following proposition.

\begin{prop}\label{prop of abstract norm map}
Let $x,y\in G(E)$. Then $x,y$ are stably $\theta$-conjugate if and only if $\mathcal{N}_r(x)=\mathcal{N}_r(y)$.    
\end{prop}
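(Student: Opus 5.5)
We follow the strategy of \cite{kot82}, Prop. 5.7: first treat the case $G_{\text{der}}=G_{\text{sc}}$, then reduce the general case to it through a $z$-extension adapted to $E$.

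\textbf{Simply connected derived group.} Assume $G_{\text{der}}$ is simply connected. Then, by the remark after Definition \ref{defn stable twisted conjugacy}, stable $\theta$-conjugacy coincides with $F^s$-$\theta$-conjugacy, since $I^\circ_{sx}=I_{sx}$. Likewise, stable conjugacy in $G(F)$ coincides with $G(F^s)$-conjugacy, because centralizers of semisimple elements are connected.

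By construction, $\mathcal{N}_r(x)$ is the stable class of an element of $G(F)$ that is $G(F^s)$-conjugate to $N_r x$. Hence $\mathcal{N}_r(x)=\mathcal{N}_r(y)$ holds if and only if $N_rx$ and $N_ry$ are conjugate in $G(F^s)$. By Corollary \ref{cor 2.2}, this happens if and only if $x,y$ are $\theta$-conjugate by an element of $G(F^s)$, i.e.\ $F^s$-$\theta$-conjugate. This settles the case.

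\textbf{General case.} Choose a $z$-extension $\alpha:G'\to G$ adapted to $E$, which exists by Lemma \ref{lem 2.11}(a).

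Suppose first that $x,y$ are stably $\theta$-conjugate. By Lemma \ref{lem 2.11}(b), there are $F^s$-$\theta$-conjugate lifts $x',y'\in G'(E)$. The simply connected case gives $\mathcal{N}_r(x')=\mathcal{N}_r(y')$, and applying $\alpha$ together with the commutative diagram defining $\mathcal{N}_r$ on $G$ gives $\mathcal{N}_r(x)=\mathcal{N}_r(y)$.

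Conversely, suppose $\mathcal{N}_r(x)=\mathcal{N}_r(y)$, and pick arbitrary lifts $x',y'$. Then $\alpha(\mathcal{N}_r(x'))$ and $\alpha(\mathcal{N}_r(y'))$ are the same stable class in $G(F)$. The aim is to modify $y'$ by an element $z\in\ker(\alpha)(E)$ so that $\mathcal{N}_r(x')=\mathcal{N}_r(y'z)$ in $G'$. Once this holds, the simply connected case shows that $x'$ and $y'z$ are $F^s$-$\theta$-conjugate, and Lemma \ref{lem 2.11}(b) then yields that $x,y$ are stably $\theta$-conjugate.

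To find $z$, represent the two norms by elements $\gamma'_1,\gamma'_2\in G'(F)$. Their images $\gamma_1,\gamma_2$ in $G(F)$ are stably conjugate, so $\gamma_2=g^{-1}\gamma_1 g$ with $g\in G(F^s)$ and $g^\tau g^{-1}\in G_{\gamma_1}^\circ$ for all $\tau\in\Gamma$.

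\textbf{Main obstacle: lifting the stable conjugacy to $G'$.} Lift $g$ to $g'\in G'(F^s)$, which is possible since $\ker(\alpha)$ is a torus and $\alpha$ is surjective on $F^s$-points. Then $g'^{-1}\gamma'_1g'=\gamma'_2c$ for some $c\in\ker(\alpha)(F^s)$. We must show that $g'$ can be chosen so that $c\in\ker(\alpha)(F)$; after that, $c$ is absorbed using surjectivity of the norm $\ker(\alpha)(E)\to\ker(\alpha)(F)$.

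The plan is as follows. Because $\alpha:G'_{\gamma'_1}\to G^\circ_{\gamma_1}$ is surjective with kernel the induced torus $\ker(\alpha)$, the cocycle $g^\tau g^{-1}$ in $G^\circ_{\gamma_1}$ lifts to a $1$-cochain in $G'_{\gamma'_1}(F^s)$. The obstruction to choosing this lift as a cocycle lies in $H^2(F,\ker\alpha)$, and it can be modified by the choice of $g'$.

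Rather than a full $H^2$ argument, we use the explicit route of \cite{kot82}. Since $\ker(\alpha)$ is central, the map $\tau\mapsto g'^{\tau}g'^{-1}$ satisfies $(g'^\tau g'^{-1})^{-1}\gamma'_1(g'^\tau g'^{-1})=\gamma'_1\, c^{\tau}c^{-1}$. It lies in $G'_{\gamma'_1}\ker(\alpha)$, and the commutator map $G'_{\gamma'_1}\ker(\alpha)\to\ker(\alpha)$ is trivial, so $c^\tau=c$. Hence $c\in\ker(\alpha)(F)$ automatically.

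Separability issues in characteristic $p$ are harmless here. All groups involved are smooth: $\ker(\alpha)$ is a product of $R_{L/F}\G_m$ with $L/F$ separable, and centralizers in $G'$ are smooth because $G'_{\text{der}}$ is simply connected. So the Galois-cohomological manipulations over $F^s$ remain valid.

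Finally, write $c=N_r(z)$ with $z\in\ker(\alpha)(E)$. Since $\ker(\alpha)$ is central, $N_r(y'z)=N_r(y')N_r(z)$, which replaces $\gamma'_2$ by $\gamma'_2c$. This gives $\mathcal{N}_r(x')=\mathcal{N}_r(y'z)$ in $G'$, completing the argument.
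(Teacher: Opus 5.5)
Your proposal is correct and is exactly the argument the paper relies on: the paper's proof only says that Kottwitz's proof of \cite{kot82}, Prop.~5.7 adapts, and you have written that argument out. You handle the case $G_{\mathrm{der}}=G_{\mathrm{sc}}$ via Corollary~\ref{cor 2.2}, and then pass to an adapted $z$-extension, where you show the central discrepancy $c$ is $\Gamma$-invariant and absorb it using surjectivity of the norm on $\ker(\alpha)$.

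A minor correction: centralizers of semisimple elements are smooth without any hypothesis on the derived group. What simple connectivity of $G'_{\mathrm{der}}$ actually gives you is connectedness of these centralizers, which is what lets stable conjugacy in $G'$ reduce to $F^s$-conjugacy.
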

\begin{proof}
The proof of \cite{kot82} Proposition 5.7 adapts here.
\end{proof}
Although it is not needed in this article, we may extend the definition of the abstract norm map to regular semisimple classes in general connected reductive groups $G$ over $F$, that is, we no longer assume that $G$ is quasi-split. Following \cite{kot82}, \S5, we choose an inner twisting $\psi: G\to G^*$, and we will define a norm mapping $\mathcal{N}_r$ from stable $\theta$-conjugacy classes in $G(E)$ to stable conjugacy classes in $G^*(F)$.

We first assume that $G_{\rm der}=G_{\rm sc}$
We assume that $\delta\in G(E)$ such that $N_r(\delta)$ is regular semisimple. As before, the conjugacy class of $N_r\delta$ in $G$ is defined over $F$. Consider the conjugacy class $C$ of the regular semisimple element $\psi(N_r(\delta))$ in $G^*$, we see that $\tau(C)=C$ for any $\tau\in \Gal(F^s/F)$, therefore $C$ is defined over $F$. Lemma \ref{rationality of regular semisimple} tells us that there exists a $G^*(F^s)$-conjugacy class, or stable conjugacy class in $G(F)$ consisting of elements that are conjugate to $\psi(N_r(\delta))$ in $G(F^s)$, moreover, by the first paragraph of the proof of Corollary A.1.2 in \cite{hamacher2021g}, this class is unique. We define $\mathcal{N}_r(\delta)$ to be this class.

Using the same idea of $z$-extensions as before, we can extend the definition of $\mathcal{N}_r$ to general connected reductive groups and prove similar properties as Prop. \ref{prop of abstract norm map}. Therefore, we conclude this section by the following:

\begin{con}\label{concl of norm map}
Let $G$ be a connected reductive group defined over local field $F$ of characteristic $p>0$ and let $E/F$ be an unramified extension of degree $r$. Fix an inner twisting $\psi: G\to G^*$. For \textbf{elements $\delta\in G(E)$ such that $N_r\delta$ is regular semisimple}, we can define an injective abstract norm map
\begin{equation*}
\mathcal{N}_r:\{\text{stable }\theta\text{-conjugacy classes in }G(E)\}\to\{\text{stable conjugacy classes in }G^*(F)\}. 
\end{equation*}

If moreover $G$ is quasi-split, then for \textbf{elements $\delta\in G(E)$ such that $N_r\delta$ is semisimple}, we can also define an injective abstract norm map
\begin{equation*}
\mathcal{N}_r:\{\text{stable }\theta\text{-conjugacy classes in }G(E)\}\to\{\text{stable conjugacy classes in }G(F)\}. 
\end{equation*}
\end{con}

\section{Background}\label{sec 3}
In this section, we briefly introduce and review various objects needed to state Theorem \ref{main thm} for completeness.
\subsection{Iwahori subgroups}
Let $F$ denote a nonarchimedean local field of characteristics $p\geq0$. Let $\mathcal{O}_F$ denote the ring of integers in $F$, and let $\varpi\in\mathcal{O}_F$ be a uniformizer. Let $q=p^n$ denote the cardinality of the residue field of $F$. Fix an algebraic closure $\overline{F}$ for $F$ and a separable closure $F^s\subset \overline{F}$, and let $L$ denote the completion of the maximal unramified extension of $F$ inside $F^s$. Let $\sigma\in\Aut(L/F)$ denote the Frobenius automorphism of $L$ over $F$. Let $\mathcal{O}_L$ denote the ring of integers in $L$. The valuation $\text{val}_F:F^{\times}\to\Z$ is normalized such that $\text{val}_F(\varpi)=1$. Define $|x|_F:=q^{-\text{val}_F(x)}$ for $x\in F^{\times}$.

Let $G$ denote a connected reductive group that is defined and unramified over $F$. Let $A$ denote a maximal $F$-split torus in $G$ and set $T:=\text{Cent}_G(A)$, a maximal torus in $G$ defined over $F$ and split over $L$. We will use the symbol $G$ to denote the group $G(F)$ of $F$-points.

Let $E/F$ be an unramified extension of degree $r$ contained in $L$. We fix a generator $\theta\in \Gal(E/F)$ and we use the same symbol $\theta$ to denote the induced automorphisms of groups of $E$-points $T(E), G(E)$, etc.

We consider the Bruhat-Tits building $\mathcal{B}(G(L))$ (resp., $\mathcal{B}(G)$) for $G(L)$ (resp., $G(F)$). The Bruhat-Tits buildings associated to the semisimple $F$-groups $G_{\text{ad}}$ and $G_{\text{der}}$ can be canonically identified and are denoted $\mathcal{B}_{ss}(G)$. The group $G(L)\rtimes\Aut(L/F)$ (resp., $G(F)$) acts on $\mathcal{B}(G(L))$ (resp., $\mathcal{B}(G)$). Via this action, we can identify $\mathcal{B}(G)$ with the $\sigma$-fixed subset $\mathcal{B}(G(L))^\sigma\subset\mathcal{B}(G(L))$.

Let $\mathcal{A}^L$ (resp., $\mathcal{A}$) denote the apartment of $\mathcal{B}(G(L))$ (resp., $\mathcal{B}(G)$) corresponding to the torus $T$ (resp., $A$). Then $\mathcal{A}^L$ (resp., $\mathcal{A}$) is endowed with a family of hyperplanes given by the vanishing of the affine roots $\Phi_{\text{aff}}(G,T,L)$ (resp., $\Phi_{\text{aff}}(G,A,F)$).
Under the identification $\mathcal{B}(G)=\mathcal{B}(G(L))^\sigma$, the apartment $\mathcal{A}$ is identified with $(\mathcal{A}^L)^\sigma$. Moreover, the affine roots $\Phi_{\text{aff}}(G,A,F)$ are nonconstant restrictions to $\mathcal{A}=(\mathcal{A}^L)^\sigma$ of the affine roots $\Phi_{\text{aff}}(G,T,L)$. These affine roots determine the notions of alcoves, facets and Weyl chambers used throughout this article.

We fix once for all a $\sigma$-invariant alcove $\mathbf{a}\in\mathcal{A}^L$, moreover, within $\bar{\mathbf{a}}$ we fix a $\sigma$-invariant facet $\mathbf{a}_J$ and a $\sigma$-invariant hyperspecial vertex $\mathbf{a}_0$. Kottwitz defines a functorial surjective homomorphism
\begin{equation*}
    \kappa_G:G(L)\longrightarrow X^*(\hat{Z}(G)^I)
\end{equation*}
where $I=\text{Gal}(L^s/L)$ denotes the inertia group. We denote $G(L)_1:=\ker\kappa_G$ by convention.


\begin{defn}\label{defn of parahoric subgroup}
A \textit{parahoric subgroup} of $G(L)$ associated to an arbitrary facet $F$ of $\mathcal{B}_{\text{ss}}(G(L))$ is a subgroup of the form
\begin{equation*}
    K_F=\mathrm{Fix}(F)\cap G(L)_1,
\end{equation*}
where $\mathrm{Fix}(F)\leq G(L)$ is the subgroup of elements which fix $F$ pointwise. An \textit{Iwahori subgroup} of $G(L)$ is the parahoric subgroup associated to an alcove of $\mathcal{B}_{\text{ss}}(G(L))$. A parahoric subgroup of $G$ will be a subgroup of $G$ of the form $K_F\cap G$.
\end{defn}
Therefore, the facets $\mathbf{a}_0, \mathbf{a}$ and $\mathbf{a}_J$ give rise to parahoric subgroups of $G(L)$: a hyperspecial maximal parahoric subgroup $K(L)$, an Iwahori subgroup $I(L)$ and a general parahoric subgroup $J(L)$. Moreover, we have that $K(L)\supset I(L)\subset J(L)$. Moreover, since the facets are $\sigma$-invariant, we get the corresponding parahoric subgroups $K, I$ and $J$ of $G(F)$ with similar relations.
\begin{rem}\label{rem 3.2}
This definition is taken from \cite{haines2008parahoric}, which is different from the classical definition of Bruhat-Tits. However, they are equivalent by Prop. 3 in \textit{loc. cit.}      
\end{rem}
\begin{rem}\label{rem 3.3}
When $G=T$ is a torus, then there is exactly one parahoric subgroup $K$ of $T(L)$, namely, 
\begin{equation*}
K=\mathcal{T}^0(\mathcal{O}_L),    
\end{equation*}
where $\mathcal{T}^0$ is the identity component of the Neron model of $T$. Moreover, we have $T(L)_1=\mathcal{T}^0(\mathcal{O}_L)$.
\end{rem}

\subsection{Bernstein center for depth-zero principal series}
We keep assuming that $G$ is an unramified connected reductive group over $F$. We denote $\mathfrak{R}(G)$ to be the category of smooth representations of $G(F)$ on $\C$-vector spaces. All the representations in this article will be on complex vector spaces. The \textbf{Bernstein center} $\mathfrak{Z}(G)$ of the group $G$ is defined as the ring of endomorphisms of the identity functor on $\mathfrak{R}(G)$.
If $G$ contains an $F$-rational parabolic subgroup $P$ with $F$-Levi factor $M$ and unipotent radical $N$ (such that $P=MN$), we define
the modulus function $\delta_P:M(F)\to\R_{>0}$ by
\begin{equation*}
\delta_P(m):=|\det(\text{Ad}(m);\text{Lie}(N(F)))|_F,    
\end{equation*}
where $|\cdot|_F$ is the normalized absolute value on $F$. For $\sigma\in\mathfrak{R}(M)$, we will often consider the normalized induced representation
\begin{equation*}
i^G_P(\sigma)=\text{Ind}^{G(F)}_{P(F)}(\delta^{1/2}_P\sigma),    
\end{equation*}
where $\delta^{1/2}_P(m)$ means the positive square-root of the positive real number $\delta_P(m)$. The normalization will preserve unitary representations.

Let $T$ be a maximal unramified $F$-torus, which means that $T$ splits over an unramified extension of $F$. In this case $T(F)_1=T(F)^1$  is the unique maximal compact open subgroup of $T(F)$ (\cite{Kaletha_Prasad_2023}, Lemma 2.5.18). 

Let $T(F)_1$ denote the maximal compact open subgroup of $T(F)$, and let $T(F)^+_1:=T(F)_1\cap I^+$ denote its pro-unipotent radical.
Let $\chi$ denote a \textit{depth-zero} character on $T(F)_1$, which means that $\chi$ factors through the quotient $T(F)_1/T(F)^+_1$. Let $\Tilde{\chi}$ denote any extension of $\chi$ to a character $\Tilde{\chi}: T(F)\to\C^\times$. Consider the inertial class
\begin{equation*}
\mathfrak{s}=\mathfrak{s}_\chi=[T(F),\Tilde{\chi}]_G.    
\end{equation*}
The inertial class $\mathfrak{s}$ depends only on the $W(F)$-orbit of $\chi$.

Let $N$ denote the norm homomorphism $T(E)\to T(F)$. It maps $T(E)_1\to T(F)_1$ and $T(E)^+_1\to T(F)^+_1 $. Therefore we will use the same notation to denote the norm map on the quotients $N:T(E)_1/T(E)^+_1\to T(F)_1/T(F)^+_1$. 

Let $\chi_N:=\chi\circ N$. This will give a depth-zero character on $T(E)_1$. The set of all smooth characters on $T(F)$ carries a left action under the Weyl group $W(F)=N_GT(F)/T(F)$. Let $W_\chi$ denote the subgroup of elements in $W(F)$ which fix $\chi$. Similarly we define $W_{\chi_N}$ in the Weyl group $W(E)$ of $G(E)$.

Let $\mathfrak{R}_\mathfrak{s}(G)$ denote the Bernstein component indexed by $\mathfrak{s}$, in other words, this is the full subcategory of $\mathfrak{R}(G)$ whose objects have the property that each of their subquotients is a subquotient of a principal series representation $i^G_B(\Tilde{\chi}\eta)$ for some unramified character $\eta$ of $T(F)$. Sometimes we denote $\mathfrak{R}_\mathfrak{s}(G)$ by $\mathfrak{R}_\chi(G)$.

Now fix $\chi$ and $\mathfrak{s}=\mathfrak{s}_\chi$ as above. We have 
\begin{equation*}
\mathfrak{X}_\mathfrak{s}=\{(T,\xi)_G\}    
\end{equation*}
of supercuspidal supports $(T,\xi)_G$ of irreducible representations $\pi$ in the category $\mathfrak{R}_\chi(G)$. This means that $\pi$ is a subquotient of $i^G_B(\xi)$, where $B$ is any Borel subgroup of $G$ containing $T$ as the Levi factor. Here $\xi:T(F)\to\C^\times$ is a smooth character extending some $W(F)$-conjugate of $\chi$. 

\begin{rem}\label{remark extension of chracter}
 There exists at least one $W_\chi$-invariant extension $\Tilde{\chi}$ of $\chi$, by using the canonical isomorphism $X_*(T)\xrightarrow{\sim} T(F)/T(F)_1$ given by $\nu\mapsto\varpi^\nu$, where $\varpi$ is a choice of uniformizer of $F$. Hence the extension $\Tilde{\chi}^\varpi$ is defined by the formula
\begin{equation*}
\Tilde{\chi}^\varpi(\varpi^\nu t_0)=\chi(t_0), \forall\nu\in X_*(T), t_0\in T(F)_1.  
\end{equation*}
\end{rem}

Fix one such extension $\Tilde{\chi}$, we have a bijection 
\begin{equation*}
\begin{split}
\Hat{A}/W_\chi &  \xrightarrow{\sim} \mathfrak{X}_\mathfrak{s}\\
\eta &\mapsto (T,\Tilde{\chi}\eta)_G
\end{split}
\end{equation*}
where $\eta\in\hat{A}$ can be viewed as an unramified character on $T(F)$ as in \cite{haines2009base}, Lemma 2.4.2. Up to isomorphism, this structure does not depend on the choice of $\chi$ in its $W(F)$-orbit, nor on the choice of the extension $\Tilde{\chi}$ of $\chi$. We have 
\begin{equation*}
\mathfrak{X}_\mathfrak{s}=\Spec(\C[X_*(A)]^{W_\chi})   
\end{equation*}

We have a isomorphism $I/I^+\xrightarrow{\sim} T(F)_1/T(F)^+_1$, hence the depth zero character $\chi:T(F)_1/T(F)^+_1\to\C^\times$ induces 
\begin{equation*}
\rho=\rho_\chi: I/I^+\to\C^\times.    
\end{equation*}
We have the following proposition. By definition, $(I,\rho)$ is a \textit{Bushnell-Kutzko type} for $\mathfrak{R}_\chi(G)$ means that an irreducible representation $\pi\in\mathfrak{R}(G)$ belongs to $\mathfrak{R}_\chi(G)$ if and only if $\rho\subset \pi|_I$.

\begin{prop}\label{prop Iwahori type}
The pair $(I,\rho)$ is a Bushnell-Kutzko type for $\mathfrak{R}_\chi(G)$.     
\end{prop}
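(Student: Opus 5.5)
The plan is to follow the argument of Roche (and its reformulation in \cite{haines2012base}) for principal series types. That argument is purely representation-theoretic over a non-archimedean local field, so it should go through without change in positive characteristic.

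First, I would establish the Iwahori decomposition. Let $B=TU$ be the Borel subgroup determined by the alcove $\mathbf{a}$, with opposite unipotent radical $\bar U$. Then
\begin{equation*}
I=(I\cap \bar U)\,T(F)_1\,(I\cap U),
\end{equation*}
and $I\cap U$ and $I\cap\bar U$ lie in $I^+$. This follows from the Bruhat--Tits description of $I$ as generated by $T(F)_1$ and the affine root subgroups that are nonnegative on $\mathbf{a}$. That description holds for unramified groups in any characteristic, by Remark \ref{rem 3.2} and Remark \ref{rem 3.3}. Consequently $\rho$ is trivial on both unipotent pieces and restricts to $\chi$ on $T(F)_1$. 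In other words, $(I,\rho)$ is a $G$-cover of the $T$-type $(T(F)_1,\chi)$ in the sense of Bushnell--Kutzko, provided we also check the positivity condition. That condition requires an element $z$ in the center of the Levi subgroup which is strongly $(B,I)$-positive and for which the characteristic function of $IzI$ is invertible in $\mathcal{H}(G,\rho)$. I would take $z=\varpi^\lambda$ with $\lambda$ strictly dominant. Invertibility then comes from the Iwahori--Matsumoto style presentation of $\mathcal{H}(G,\rho)$, where the double coset $I\varpi^\lambda I$ has support in $\rho$-intertwining and gives a basis element $T_\lambda$ satisfying $T_\lambda T_{-\lambda'}=$ (unit)$\cdot T_{\lambda-\lambda'}$ for dominant $\lambda,\lambda'$.

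Second, I would use the fact that $(T(F)_1,\chi)$ is a type for the Bernstein component $[T(F),\tilde\chi]_T$ of $T$. This is immediate, because an irreducible character of $T(F)$ lies in that component exactly when its restriction to $T(F)_1$ is $\chi$. By Bushnell--Kutzko's theorem that a $G$-cover of a type is a type (their Theorem 8.3), $(I,\rho)$ is a type for the set of components $[T,\tilde\chi']_G$ with $\tilde\chi'$ extending $\chi$. This set is exactly $\mathfrak{s}_\chi$.

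As a direct cross-check, I would also verify the characterization directly. By Jacquet's lemma for covers, $\pi^{(I,\rho)}\cong (\pi_U)^{(T(F)_1,\chi)}$ (with modulus normalization), so $\rho\subset\pi|_I$ if and only if $\chi$ occurs in the Jacquet module $\pi_U$. By Frobenius reciprocity and Bernstein's theory, this happens if and only if $\pi$ is a subquotient of some $i^G_B(\tilde\chi\eta)$. To allow $W(F)$-conjugates of $\chi$, the cleanest route is to note that $\pi_U$ contains every $W$-conjugate exponent of the cuspidal support.

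The main obstacle is the positivity and invertibility of $T_\lambda$, equivalently the computation of the intertwining set $I_G(\rho)=I\,N_\chi\, I$, where $N_\chi$ consists of elements of $N_GT(F)$ whose image in the extended affine Weyl group fixes $\chi$. Roche's original paper assumed restrictions on $p$ and characteristic zero for general (positive depth) characters. For depth-zero characters, however, the computation reduces to the finite reductive quotient over the residue field: one uses Deligne--Lusztig/Howlett--Lehrer theory of $\mathrm{Ind}_{B(k)}^{G(k)}\chi$, where $G(k)$ is the reductive quotient of the parahoric. That reduction is characteristic-free. I would therefore follow \cite{haines2012base} (or Morris's treatment of depth-zero types), checking that only the building and the finite-field reduction are used.
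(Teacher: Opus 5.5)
Your overall route (Iwahori decomposition, $(I,\rho)$ as a $G$-cover of the $T$-type $(T(F)_1,\chi)$, then Bushnell--Kutzko's Theorem 8.3) is the one the paper relies on. Its proof only points to the proof of Theorem 3.0.2 in \cite{haineshecke}, i.e.\ the Roche-style cover argument, and remarks that it is characteristic-free. The problem is the one concrete argument you give for the key step: invertibility of the element $T_\lambda\in\mathcal{H}(G,\rho)$ supported on $I\varpi^\lambda I$. The relation $T_\lambda T_{-\lambda'}=(\text{unit})\cdot T_{\lambda-\lambda'}$ for dominant $\lambda,\lambda'$ is false in general. Take $\chi$ trivial, so that $\mathcal{H}(G,\rho)$ is the Iwahori--Hecke algebra, with $G=\mathrm{SL}_2$ and $\lambda=\lambda'=\alpha^\vee$, so $t_\lambda=s_0s_1$. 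The quadratic relations give
\[
T_{t_\lambda}T_{t_\lambda^{-1}}=T_{s_0}T_{s_1}T_{s_1}T_{s_0}=q^2+q(q-1)\,T_{s_0}+(q-1)\,T_{s_0s_1s_0},
\]
which is not a multiple of $T_1$. Relatedly, computing the intertwining set $I N_\chi I$ only determines the support of $\mathcal{H}(G,\rho)$; it is not equivalent to invertibility.

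Invertibility has to come from the multiplicative structure of the algebra. One route is the Morris/Roche presentation of $\mathcal{H}(G,\rho)$. There the quadratic relations for the simple reflections of the affine Weyl group of the root subsystem $\Phi_\chi$ come from the rank-one finite-field computation, and every basis element is invertible. A cheaper, characteristic-free route goes through the pro-$p$ Iwahori--Hecke algebra $\mathcal{H}(G,I^+)$. By Vign\'eras's Iwahori--Matsumoto presentation, every basis element $T_{\tilde w}$ is invertible. Write $T_{\varpi^\lambda}$ for the characteristic function of $I^+\varpi^\lambda I^+$. The idempotent $e_\rho$ is a linear combination of the $T_t$, $t\in T(F)_1$, and these commute with $T_{\varpi^\lambda}$, because $T(F)$ is abelian and $T(F)_1$ normalizes $I^+$. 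Hence $e_\rho T_{\varpi^\lambda}$, which is the element of $e_\rho\mathcal{H}(G,I^+)e_\rho=\mathcal{H}(G,\rho)$ supported on $I\varpi^\lambda I$, is invertible there, with inverse $e_\rho T_{\varpi^\lambda}^{-1}$ up to normalization of measures. This works for every $\lambda\in X_*(A)$, so it also gives the positivity/invertibility condition of the cover definition for every Borel subgroup containing $T$, not only for $B$.

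Finally, drop the claim in your cross-check that $\pi_U$ contains every $W$-conjugate exponent of the cuspidal support. It is false: the trivial and Steinberg constituents of $i^{\mathrm{GL}_2(F)}_B(\delta_B^{1/2})$ each carry only one of the two exponents. The statement you actually need is that some exponent of $\pi$ restricts to $\chi$ itself on $T(F)_1$. Given the Jacquet isomorphism, that is equivalent to the type property, so this paragraph is not an independent check.
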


\begin{proof}
The proof of Theorem 3.0.2 in \cite{haineshecke} works for depth-zero case and any characteristic.    
\end{proof}

Given Proposition \ref{prop Iwahori type}, the category $\mathfrak{R}_\chi(G)$ is equivalent to the category of $\mathcal{H}(G,\rho)$-modules. (\cite{haineshecke}, Prop. 2.0.3). Let $\mathcal{Z}(G,\rho):=Z(\mathcal{H}(G,\rho))$, this also means that there is a canonical algebra isomorphism
\begin{equation*}\tag{3.2.1}\label{Bernstein isomorphism}
 \beta:\C[\mathfrak{X_\mathfrak{s}}]\xrightarrow{\sim} \mathcal{Z}(G,\rho),  
\end{equation*}
which can be characterized as follows. For an extension $\xi$ of some $W$-conjugate of $\chi$, we consider the space $i^G_B(\xi)^\rho$ of functions $f\in i^G_B(\xi)$ such that 
\begin{equation*}
f(gi)=f(g)\rho(i)    
\end{equation*}
for all $g\in G$ and $i\in I$. Then 
\begin{equation*} 
z\in \mathcal{Z}(G,\rho)\text{ acts on the left on } i^G_B(\xi)^\rho\text{ by the scalar }[\beta^{-1}(z)](\xi), 
\end{equation*}
where $\beta^{-1}(z)$ is viewed as a regular function on the variety $\mathfrak{X}_\mathfrak{s}$ and by $\xi$ we mean the point $(T,\xi)_G\in\mathfrak{X}_\mathfrak{s}$, and the action is convolution of functions on $G$ with suitable Haar measure.

\begin{rem}\label{rem 3.6}
The fact that $\beta^{-1}(z)$ is well-defined as a function on the class $(T,\xi)_G$ means that $[\beta^{-1}(z)](\,^w\xi)=[\beta^{-1}(z)](\xi)$ for all $w\in W$. In other words, $\beta^{-1}(z)$ is $W$-invariant as a function of $\xi$.
\end{rem}

\subsection{Base change homomorphism}
We fix a depth-zero character $\chi$ on $T(F)_1$, and let $(I,\rho)$ be the $\mathfrak{s}=\mathfrak{s}_\chi$-type described above. Let $A^E$ denote the unique maximal $E$-split torus in $G$ containing $A$. It is easy to see that $T=Z_G(A^E)$. Consider $\chi_N:=\chi\circ N$ as a depth-zero character on $T(E)_1$ and consider the corresponding inertial class $\mathfrak{s}_E:=\mathfrak{s}_{\chi_N}$ for $G(E)$. Let $(I_E, \rho_E)$ denote the $\mathfrak{s}_E$-type associated to the character $\chi_N$. We denote the corresponding Hecke algebra and its center by $\mathcal{H}(G(E),\rho_N)$ and $\mathcal{Z}(G(E),\rho_N)$, respectively.

There is a canonical morphism of algebraic varieties
\begin{equation*}
\begin{split}
N^*: \mathfrak{X}_\mathfrak{s}&\to \mathfrak{X}_{\mathfrak{s}_E}\\
(T,\xi)_G &\mapsto (T(E),\xi_N)_{G(E)},
\end{split}    
\end{equation*}
where $\xi$ denotes an extension of some $W(F)$-conjugate of $\chi$. This induces an algebra homomorphism
\begin{equation*}\tag{3.3.1}\label{3.3.1}
N:\C[\mathfrak{X}_{\mathfrak{s}_E}]\to\C[\mathfrak{X}_\mathfrak{s}]    
\end{equation*}
Now we can define the \textit{base change homomorphism} $b_E: \mathcal{Z}(G(E),\rho_N)\to \mathcal{Z}(G,\rho)$ to be the unique morphism making the following diagram commute:
\begin{equation*}\tag{3.3.2}\label{3.3.2}
\begin{tikzcd}
\C[\mathfrak{X}_{\mathfrak{s}_E}] \arrow[r, "\sim"',"\beta", rightarrow] \arrow[d, "N"]
& \mathcal{Z}(G(E),\rho_N)\arrow[d, "b_E"] \\
\C[\mathfrak{X}_{\mathfrak{s}}] \arrow[r, "\sim"',"\beta", rightarrow] 
&  \mathcal{Z}(G,\rho)
\end{tikzcd}
\end{equation*}
We can interpret the diagram in terms of the actions on principal series to get the following lemma:
\begin{lem}\label{lem characterization of base change hom}
For any character $\xi:T(F)\to\C^\times$ which extends some $W(F)$-conjugate of $\chi$, define $\xi_N:=\xi\circ N$, a character on $T(E)$ which extends some $W(E)$-conjugate of $\chi_N$. Let $z\in\mathcal{Z}(G(E), \rho_E)$. Then $b(z)$ is the unique element in $\mathcal{Z}(G,\rho)$ which acts on every module $i^G_B(\xi)^\rho$ by the same scalar by which $z$ acts on $i^{G(E)}_{B(E)}(\xi_N)^{\rho_N}$. In other words,
\begin{equation*}
[\beta^{-1}(b(z))](\xi)=[\beta^{-1}(z)](\xi_N)    
\end{equation*}
\end{lem}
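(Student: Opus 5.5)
The lemma is a direct translation of diagram (\ref{3.3.2}) via the characterization of the Bernstein isomorphism $\beta$ in (\ref{Bernstein isomorphism}). I will first verify that $N^*$ is well defined, then compute the scalars, and finally deduce uniqueness.

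\textbf{Step 1: $N^*$ and $\xi_N$ are well defined.} Let $\xi$ extend ${}^w\chi$ for some $w\in W(F)$. Since $T$ is defined over $F$ and $W(F)\subset W(E)$ commutes with the norm $N:T(E)\to T(F)$, the restriction of $\xi_N$ to $T(E)_1$ equals ${}^w\chi\circ N={}^w(\chi\circ N)={}^w\chi_N$. Hence $\xi_N$ extends a $W(E)$-conjugate of $\chi_N$. Moreover, if $\xi$ is replaced by ${}^{w'}\xi$ or by $\xi\eta$ with $\eta$ unramified, then $\xi_N$ is replaced by ${}^{w'}\xi_N$ or by $\xi_N\cdot(\eta\circ N)$. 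Since $N$ maps $T(E)_1$ into $T(F)_1$, the character $\eta\circ N$ is again unramified. So $N^*$ respects the parametrizations, and it is a morphism of varieties: on $\hat A$ it is the map $\eta\mapsto\eta\circ N$, which is algebraic.

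\textbf{Step 2: compute the scalars.} Take $z\in\mathcal{Z}(G(E),\rho_N)$. By the characterization of $\beta$ for $G(E)$, $z$ acts on $i^{G(E)}_{B(E)}(\xi_N)^{\rho_N}$ by the scalar $[\beta^{-1}(z)](\xi_N)$. By definition of the map $N$ in (\ref{3.3.1}) as pullback along $N^*$,
\begin{equation*}
[N(\beta^{-1}(z))](\xi)=[\beta^{-1}(z)](N^*\xi)=[\beta^{-1}(z)](\xi_N).
\end{equation*}
Commutativity of (\ref{3.3.2}) gives $\beta^{-1}(b_E(z))=N(\beta^{-1}(z))$. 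Applying the characterization of $\beta$ for $G$, the element $b_E(z)$ therefore acts on $i^G_B(\xi)^\rho$ by $[\beta^{-1}(z)](\xi_N)$, which is the displayed identity.

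\textbf{Step 3: uniqueness.} Suppose $z'\in\mathcal{Z}(G,\rho)$ acts by the same scalars on every $i^G_B(\xi)^\rho$. Then $\beta^{-1}(z')$ and $\beta^{-1}(b_E(z))$ are regular functions on $\mathfrak{X}_\mathfrak{s}$ that agree at every point $(T,\xi)_G$. Every point of $\mathfrak{X}_\mathfrak{s}$ has this form, by the bijection $\hat A/W_\chi\cong\mathfrak{X}_\mathfrak{s}$. The variety $\mathfrak{X}_\mathfrak{s}=\Spec\,\C[X_*(A)]^{W_\chi}$ is reduced and of finite type over $\C$, so two regular functions agreeing on all $\C$-points are equal. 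Since $\beta$ is an isomorphism, $z'=b_E(z)$.

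\textbf{Main obstacle.} The only nonformal point is that each module $i^G_B(\xi)^\rho$ is nonzero, so that the scalars carry information. This holds because $(I,\rho)$ is a type (Proposition \ref{prop Iwahori type}), and every irreducible subquotient of $i^G_B(\xi)$ lies in $\mathfrak{R}_\chi(G)$ and hence contains $\rho$. In fact Step 3 needs only the fact that $\beta$ is defined through these scalars, so this nonvanishing is already part of the characterization of $\beta$.
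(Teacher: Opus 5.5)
Your proposal is correct and follows the paper's approach. The paper gives no separate proof: it obtains the lemma by reading diagram (\ref{3.3.2}) through the scalar characterization of $\beta$, with $N$ defined as pullback along $N^*$, which is exactly your Step 2. Your Steps 1 and 3 add details the paper leaves implicit: that $N^*$ is well defined, and uniqueness via injectivity of $\beta$, nonvanishing of $i^G_B(\xi)^\rho$ and reducedness of $\mathfrak{X}_\mathfrak{s}$.
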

We can rephrase the above lemma in terms of right actions: for $z\in\mathcal{Z}(G(E), \rho_E)$, $b(z)$ acts on the right on $i^G_B(\xi^{-1})^{\rho^{-1}}$ by the scalar by which $z$ acts on the right on $i^{G(E)}_{B(E)}(\xi^{-1}_N)^{\rho^{-1}_N}$, in other words,
\begin{equation*}\tag{3.3.3}\label{3.3.3}
ch_{\xi^{-1}}(b(z))=ch_{\xi^{-1}_N}(z).    
\end{equation*}
We fix $w\in W(F)$ and use the same symbol to denote its lift in $N_G(T)$. The character $^w\chi$ is defined by $^w\chi(t)=\chi(w^{-1}tw)$. Similarly, for any suitable function $\Phi$, we define $^w\Phi(\cdot)=\Phi(w^{-1}\cdot w)$. We write $^wI:=wIw^{-1}$ and $^wI^+:=wI^+w^{-1}$. We extend $^w\chi$ to the character $^w\rho:\,^wI/^wI^+\to\C^\times $ using Iwasawa decomposition of $^wI$, and therefore we have $^w\rho(wiw^{-1})=\rho(i)$ for $i\in I$. There is also an isomorphism of algebras  $\mathcal{H}(G,I,\rho)\xrightarrow{\sim}\mathcal{H}(G,\,^wI,\,^w\rho)$ given by $h\mapsto \,^wh$.

As usual, we let $\xi$ denote an extension of a $W(F)$-conjugate of $\chi$. We write $^wB:=wBw^{-1}$, then we have an isomorphism
\begin{equation*}
i^G_B(\xi^{-1})^{\rho^{-1}}\xrightarrow{\sim}i^G_{^wB}(^w\xi^{-1})^{^w\rho^{-1}}    
\end{equation*}
given by $\Psi\mapsto\,^w\Psi$. This isomorphism also intertwines the right actions of $h\in\mathcal{H}(G,I,\rho)$ and $^wh\in\mathcal{H}(G,\,^wI,\,^w\rho)$, in the sense that
\begin{equation*}
^w(\Psi\cdot h)=\,^w\Psi\cdot\,^wh.    
\end{equation*}
Taking $h=z\in\mathcal{Z}(G,I,\rho)$, we have $\beta^{-1}(z)(\xi)=\beta^{-1}(\,^wz)(\,^w\xi)=\beta^{-1}(\,^wz)(\xi)$. The last equality comes from the Remark \ref{rem 3.6}. In other words, we have the following commutative diagram:
\begin{equation*}
\begin{tikzcd}
\C[\mathfrak{X}_\chi] \arrow[r, "\sim"',"\beta", rightarrow] \arrow[d, "="]
& \mathcal{Z}(G, I, \rho)\arrow[d, "z\mapsto\,^wz"] \\
\C[\mathfrak{X}_\chi] \arrow[r, "\sim"',"\beta", rightarrow] 
&  \mathcal{Z}(G, \,^wI, \,^w\rho)
\end{tikzcd}
\end{equation*}
Combining this diagram with the diagram (\ref{3.3.2}) for $G$ and $G(E)$ respectively, we have the following lemma, which shows the compatibility of $z\mapsto\,^wz$ with the base change homomorphism:
\begin{lem}\label{lem base change compatible with Weyl action}
For any $w\in W(F)$, the following diagram is commutative:
\begin{equation*}
\begin{tikzcd}
\mathcal{Z}(G(E),\rho_N) \arrow[r, "z\mapsto\,^wz", rightarrow] \arrow[d, "b"]
& \mathcal{Z}(G(E),\,^w\rho_E)\arrow[d, "b"] \\
\mathcal{Z}(G,\rho)\arrow[r, "z\mapsto\,^wz", rightarrow] 
&  \mathcal{Z}(G,\,^w\rho)
\end{tikzcd}
\end{equation*}
\end{lem}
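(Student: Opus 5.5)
To prove Lemma \ref{lem base change compatible with Weyl action}, I will check that the two composites $z\mapsto b(\,^wz)$ and $z\mapsto \,^w(b(z))$ agree after composing with $\beta^{-1}$. Since $\beta$ is an isomorphism for every type $(\,^wI,\,^w\rho)$ (it is still a Bushnell–Kutzko type for the same Bernstein block $\mathfrak{R}_\chi(G)$, being a conjugate of one), it suffices to show that for every $z\in\mathcal{Z}(G(E),\rho_N)$ the elements $b(\,^wz)$ and $\,^w b(z)$ act on each module $i^G_{^wB}(\xi)^{^w\rho}$ by the same scalar. Here $\xi$ ranges over extensions of $W(F)$-conjugates of $\chi$. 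I will use the characterization of $b$ in Lemma \ref{lem characterization of base change hom}, applied to the type $(\,^wI,\,^w\rho)$. This is legitimate because $\mathfrak{X}_{\mathfrak{s}}$ and $N^*$ depend only on the inertial classes, which are unchanged by conjugation by $w$. I also need that $^w(\rho_N)=(\,^w\rho)_N$, which holds because $w\in W(F)$ is $\theta$-fixed and so commutes with the norm $N:T(E)\to T(F)$.

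The key identity is the one established just before the lemma:
\begin{equation*}
\beta^{-1}(z)(\xi)=\beta^{-1}(\,^wz)(\,^w\xi)=\beta^{-1}(\,^wz)(\xi),
\end{equation*}
which holds both over $F$ and over $E$. The last equality uses the $W$-invariance of Remark \ref{rem 3.6}. The chain of equalities is then
\begin{equation*}
\beta^{-1}(b(\,^wz))(\xi)=\beta^{-1}(\,^wz)(\xi_N)=\beta^{-1}(z)(\xi_N)=\beta^{-1}(b(z))(\xi)=\beta^{-1}(\,^w b(z))(\xi).
\end{equation*}
The first and third equalities are Lemma \ref{lem characterization of base change hom}. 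The second and fourth are the displayed Weyl-compatibility, applied over $E$ and over $F$ respectively. Equivalently, in diagram form, I paste the square (\ref{3.3.2}) for $\rho$ and for $\,^w\rho$ together with the two Weyl squares, over $F$ and over $E$. The outer faces all have identity maps on the $\C[\mathfrak{X}]$ side, and each horizontal $\beta$ is an isomorphism, so the remaining face commutes.

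The only step that needs genuine care is the compatibility of the Weyl action with the norm. First, $N^*$ must be $W$-equivariant, that is, $(\,^w\xi)_N=\,^w(\xi_N)$. Second, the identification of $\C[\mathfrak{X}_{\mathfrak{s}}]$ attached to $(I,\rho)$ with the one attached to $(\,^wI,\,^w\rho)$ must be the identity. The first point follows because a representative of $w$ can be chosen in $N_G(T)(F)$, which is $\theta$-fixed, so conjugation by $w$ commutes with $t\mapsto t\theta(t)\cdots\theta^{r-1}(t)$. The second point follows because both identifications are defined via supercuspidal supports $(T,\xi)_G$, which are intrinsic to the block. With these verified, the argument is purely formal.
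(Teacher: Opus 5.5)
Your proposal is correct and is essentially the paper's argument. The paper also derives the Weyl square with identity on $\C[\mathfrak{X}]$ from $\beta^{-1}(z)(\xi)=\beta^{-1}(\,^wz)(\,^w\xi)=\beta^{-1}(\,^wz)(\xi)$, and pastes it, over $F$ and over $E$, onto the base change square (\ref{3.3.2}) for $\rho$ and for $\,^w\rho$. Your explicit chain of equalities, together with the checks that $N^*$ is $W$-equivariant and that $\,^w(\rho_N)=(\,^w\rho)_N$, spells out what the paper leaves implicit.
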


\section{Reduction Steps}\label{sec 4}
In this section, we follow closely the reduction steps of \cite{haines2009base}, \S5 and \cite{haines2012base}, \S7 with a few adaptations.

\subsection{Definition of stable twisted orbital integral}\label{sec 4.1}
We still denote $G$ to be an unramified connected reductive $F$-group. Let $\delta\in G(E)$ be such

that $\mathcal{N}\delta\in G(F)$ (as a stable class) is semisimple. Let $e(\delta):=e(G^\circ_{\delta\sigma})$ denote the sign attached by Kottwitz \cite{kottwitz1983sign} to the connected reductive $F$-group $G^\circ_{\delta\sigma}$. (There is no assumption on the characteristics of the ground field $F$ in loc. cit.) And define $a(\delta)$ to be the cardinality of the set
\begin{equation*}
\ker[H^1(F,G^\circ_{\delta\sigma})\to H^1(F,G_{\delta\sigma}) ]    
\end{equation*}
Therefore $a(\delta)=1$ for those $G$'s with simply-connected derived group. Now for any function $\phi\in C^\infty_c(G(E))$, we can define its \textit{stable} twisted orbital integral by 
\begin{equation*}\tag{4.1.1}\label{4.1.1}
SO_{\delta\sigma}(\phi):=\sum_{\delta'}e(\delta')\,a(\delta')TO_{\delta'\sigma}(\phi)  
\end{equation*}
where $TO_{\delta'\sigma}(\phi)$ is defined in \cite{haines2009base} (4.4.1). Here $\delta'$ ranges over $\sigma$-conjugacy classes in $G(E)$ which are stably $\sigma$-conjugate to $\delta$. As a special case when $E=F$ and $\sigma=id$, we have also defined  the stable orbital integral $SO_\gamma(f)$ for a semisimple element $\gamma\in G(F)$ and for $f\in C^\infty_c(G(F))$.

\subsection{Vanishing statements for non-norms}\label{sec 4.2}
\begin{lem}\label{lem vanishing non-norm}
Let $\phi\in \mathcal{Z}(G(E),\rho_r)$. If $\gamma$ is not a norm from $G(E)$, then $SO_\gamma(b\phi)=0$.   
\end{lem}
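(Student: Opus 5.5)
The plan is to follow \cite{haines2009base}, \S5.2 (and \cite{haines2012base}, \S7.2). The idea is to show that for $\phi\in\mathcal{Z}(G(E),\rho_r)$, the function $b\phi$ is supported on the set of elements whose image under the Kottwitz homomorphism is a norm. Then we show that a semisimple $\gamma$ which is not a norm lies outside this set for every member of its stable class.

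\textbf{Step 1: the Kottwitz image of a norm.} Recall $\kappa_G:G(F)\to X^*(\hat Z(G)^{I})_{\sigma}$, and similarly $\kappa_{G(E)}$ with target $X^*(\hat Z(G)^I)_{\sigma^r}$. The norm induces a map
\[
N:X^*(\hat Z(G)^I)_{\sigma^r}\to X^*(\hat Z(G)^I)_\sigma .
\]
We first reduce to the case $G_{\rm der}=G_{\rm sc}$ via a $z$-extension adapted to $E$ (Lemma \ref{lem 2.11}), pulling $\phi$ and $b\phi$ back along $\alpha$ as in \cite{haines2009base}. In that case we use the criterion of \cite{kot82}, \S5, valid here because \S\ref{sec 2} only requires Lemmas \ref{lem 2.13} and \ref{rationality of regular semisimple}: a semisimple $\gamma\in G(F)$ is a norm if and only if $\kappa_G(\gamma)$ lies in the image of $N$. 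The direction we need is the converse (image of $N$ implies norm). For it, we would pass to a maximal torus $T_\gamma$ containing $\gamma$ (elliptic in a Levi), use Lang's theorem and the vanishing of $H^1$ of unramified tori over $\mathcal{O}_F$ to write $\gamma=N\delta$ in $T_\gamma$ up to stable conjugacy, and conclude with Proposition \ref{Prop in Isocrystal}.

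\textbf{Step 2: support of $b\phi$.} By the Bernstein isomorphism (\ref{Bernstein isomorphism}), $\mathcal{Z}(G,\rho)\cong\C[X_*(A)]^{W_\chi}$ (with $\tilde\chi$ fixed). The algebra $\mathcal{Z}(G,\rho)$ is graded by $\pi_1(G)_{\sigma}$ through $\kappa_G$, and the grading is compatible with the Bernstein functions $z_\mu$ attached to $W_\chi$-orbits of $\mu\in X_*(A)$. The base change map $N$ in (\ref{3.3.1}) sends $\C[X_*(A^E)]^{W_{\chi_N}}$ into $\C[X_*(A)]^{W_\chi}$, and it is given on monomials by $\mu\mapsto N\mu=\mu+\sigma\mu+\dots+\sigma^{r-1}\mu$, possibly after averaging. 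By Lemma \ref{lem characterization of base change hom} this is computed on unramified twists $\xi\eta$. Hence every Bernstein function occurring in $b\phi$ has $\kappa_G$-degree in $N(\pi_1(G)_{\sigma^r})$. The support of the Bernstein function of degree $\lambda$ lies in $\kappa_G^{-1}(\lambda)$, as for Iwahori--Matsumoto elements. Therefore $\mathrm{supp}(b\phi)\subset\kappa_G^{-1}(\mathrm{im}\,N)$.

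\textbf{Step 3: conclusion.} $\kappa_G$ is constant on $G(F)$-conjugacy classes. Stably conjugate $\gamma'$ also have the same image, because the difference lies in the image of $H^1(F,G_\gamma)\to H^1(F,G)$, which is killed after projection to $\pi_1(G)_\sigma$ modulo torsion. Only the free part of the target matters for elements in $\mathrm{im}\,N$, where we use $G_{\rm der}=G_{\rm sc}$. So if $\gamma$ is not a norm, Step 1 gives that no $\gamma'$ in its stable class meets $\mathrm{supp}(b\phi)$. Each $O_{\gamma'}(b\phi)$ vanishes, hence $SO_\gamma(b\phi)=0$.

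\textbf{Main obstacle.} I expect the main obstacle to be the torsion part of $\pi_1(G)_\sigma$ in Step 1/3: $\kappa_G(\gamma')$ may differ from $\kappa_G(\gamma)$ by torsion. I would handle this as Haines does, by noting that $N$ is surjective onto torsion when $E/F$ is unramified and $G_{\rm der}=G_{\rm sc}$, since $\hat Z(G)$ is then a torus. A second point needing care is checking Step 1 for non-regular semisimple $\gamma$ in positive characteristic, in view of Remark \ref{rem 2.15}.
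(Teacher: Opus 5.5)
Your Step~2 is sound. It is the paper's Lemma~\ref{lem 4.2} in dual form: twisting $f=b\phi$ by an unramified $\eta$ trivial on norms and using (\ref{4.2.1}), (\ref{3.3.3}) to get $f\eta=f$ says the same thing as $b\phi$ being supported in $\kappa_G^{-1}(\mathrm{im}\,N)$. (On $F$-points $\kappa_G$ takes values in $\sigma$-invariants, not coinvariants.)

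\textbf{The gap.} It lies in Step~1, and with it Step~3. The criterion ``$\gamma$ is a norm iff $\kappa_G(\gamma)\in\mathrm{im}\,N$'' is true for elliptic $\gamma$ but false in general. Take $G=\GL_2$, $r=2$, and $\gamma=\mathrm{diag}(\varpi,\varpi u)$ with $u\in\mathcal{O}_F^\times$, $u\neq1$. Then $\kappa_G(\gamma)=\mathrm{val}\det\gamma=2\in 2\Z=\mathrm{im}\,N$. Yet $\gamma$ is not a norm. Stable conjugacy in $\GL_2(E)$ is $\GL_2(E)$-conjugacy, so after $\theta$-conjugating $\delta$ we may assume $N\delta=\gamma$. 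Then $\delta^{-1}\gamma\delta=\theta(N\delta)=\gamma$ forces $\delta\in T(E)$, and $\varpi$ would be a norm from $E^\times$, which is impossible.

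So the stable class of a non-norm can lie inside $\kappa_G^{-1}(\mathrm{im}\,N)$, and pointwise vanishing genuinely fails there. Consider the spherical analogue, where your Steps~2--3 apply verbatim. Put $K=\GL_2(\mathcal{O}_F)$, $K_E=\GL_2(\mathcal{O}_E)$ and $\varpi^{(a,b)}=\mathrm{diag}(\varpi^a,\varpi^b)$. The Satake isomorphism gives $b(1_{K_E\varpi^{(1,0)}K_E})=1_{K\varpi^{(2,0)}K}-(q-1)1_{K\varpi^{(1,1)}K}$, which takes the value $-(q-1)\neq0$ at $\gamma$. Only the orbital integral vanishes, not the function on the orbit.

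\textbf{What is missing.} You need descent to a Levi subgroup, which is how the paper handles non-elliptic $\gamma$. For general semisimple $\gamma$, choose an $F$-Levi $M$ in which $\gamma$ is elliptic. The descent formula \cite{haines2009base}~(4.4.6) expresses $SO^G_\gamma(b\phi)$ through $M$-stable orbital integrals of the constant term of $b\phi$. That constant term is $b_M$ of the constant term of $\phi$ (the compatibility behind reduction step~(2) in \S\ref{sec 4.6}). A non-norm for $G$ is also a non-norm for $M$, so the elliptic case for $M$ gives the vanishing.

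For elliptic $\gamma$ your argument coincides with the paper's, provided the input for Step~1 is Labesse's cohomological criterion (Proposition~2.5.3 of \cite{LBC1999}, via $H^0_{\rm ab}(F,G)$; equivalently, when $G_{\rm der}=G_{\rm sc}$, via the unramified torus $D=G/G_{\rm der}$). Your Lang's-theorem argument on $T_\gamma$ does not work, because $T_\gamma$ is typically ramified when $\gamma$ is elliptic.

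Finally, the torsion issue you single out as the main obstacle does not arise. Once $G_{\rm der}=G_{\rm sc}$, $\kappa_G$ factors through $D(F)$, where stably conjugate elements have the same image.
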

\begin{proof}

Recall that $b\phi\in\mathcal{Z}(G,\rho)$. First we assume $\gamma$ is elliptic. We will prove the stronger statement that if $\gamma'\in G(F)$ is stably conjugate to $\gamma$, then $b\phi(g^{-1}\gamma'g)=0$ for every $g\in G(F)$. From this we see that $SO_\gamma(b\phi)=0$.

Consider the canonical map $p: G_{\text{sc}}\to G$ and the abelian group $H^0_{\text{ab}}(F,G)=G(F)/p(G_\text{sc}(F))$. Proposition 2.5.3 of \cite{LBC1999} shows that an elliptic element $\gamma$ is a norm from $G(E)$ if and only if its image $p(\gamma)$ in $H^0_{\text{ab}}(F,G)$ is a norm. (We notice that Labesse didn't assume the ground field $F$ to be of characteristic $0$ in CHAPITRE 1 and 2 in \textit{loc. cit.}) Now the required vanishing follows from the following lemma.

\begin{lem}\label{lem 4.2}
Let $f=b\phi$ for some $\phi\in \mathcal{Z}(G(E),\rho_r)$. Let $x\in G(F)$ be any element such that, for some character $\eta$ on the group $H^0_{\text{ab}}(F,G)$ which is trivial on the norms, we have $\eta(x)\neq1$. Then $f(x)=0$.
\end{lem}
\begin{proof}
We can view $\eta$ as a character $\eta:G(F)\to\C^\times$ by pulling back along the quotient map $G(F)\twoheadrightarrow H^0_{\text{ab}}(F,G)$, thus the condition $\eta(x)\neq1$ makes sense. 

Since $\eta$ vanishes on $G(F)^\natural:=p(G_\text{sc}(F))$, it is trivial on $G(F)_1=G(F)^\natural\cdot T(F)_1$, where $G(F)_1$ is also the kernel of the Kottwitz homomorphism $\kappa_G:G(F)\to\pi_1(G)^{\Gal(\breve{F}/F)}_I$ (see \cite{Kaletha_Prasad_2023}, Definition 2.6.23 and Proposition 11.5.4). Here $\breve{F}$ denotes the maximal unramified extension of $F$ inside a fixed algebraic closure $F^s$ and $I:=\Gal(F^s/\breve{F})$, and $T(F)_1$ is the unique maximal compact subgroup of $T(F)$. Hence the restriction of $\eta$ to $T(F)$ is trivial on $T(F)_1$. Thus we have $f\eta\in\mathcal{H}(G,\rho)$. By examining the right convolution action of $f\eta$ on $i^G_B(\xi^{-1})^{\rho^{-1}}$, we see that $f\eta\in Z(G,\rho)$ (also see the proof of Lemma \ref{lem 4.8}), and 
\begin{equation*}\tag{4.2.1}\label{4.2.1}
ch_{\xi^{-1}}(f\eta)=ch_{(\eta\xi)^{-1}}(f).    
\end{equation*}
But by (\ref{3.3.3}), this is
\begin{equation*}
ch_{(\eta_r\xi_r)^{-1}}(\phi)=ch_{\xi_r^{-1}}(\phi)=ch_{\xi^{-1}}(f),   
\end{equation*}
the first equality holds since $\eta_r:=\eta\,\circ N_r$ is trivial by assumption. This implies that $f\eta=f$ since $ch_{\xi^{-1}}(f\eta)=ch_{\xi^{-1}}(f)$ implies that $\beta^{-1}(f\eta)=\beta^{-1}(f)$. Now we have 
\begin{equation*}
f(x)(\eta(x)-1)=0    
\end{equation*}
for such $x$. Since $\eta(x)\neq1$, we have $f(x)=0$ as desired.
\end{proof}

Now consider $x:=g^{-1}\gamma g$. Since it is not a norm by assumptions, there must exist one such character $\eta$ satisfying the conditions of Lemma \ref{lem 4.2}, therefore $f(x)=0$ as desired. We have proved Lemma \ref{lem vanishing non-norm} in the case that $\gamma$ is elliptic, in the general case, we can apply the descent formula \cite{haines2009base} (4.4.6) to reduce to the elliptic case.
\end{proof}

\subsection{The case where the derived group is not simply connected}\label{sec 4.3}
The strategy is the same as in \cite{haines2009base}, \S 5.2 and \cite{haines2012base} \S 7.2. Choose a finite unramified extension $F'\supset F$, which contains $E$ and splits $G$. Consider a $z$-extension of $F$-groups adapted to $E/F$:
\begin{equation*}
1\longrightarrow Z\longrightarrow H\overset{p}{\longrightarrow} G\longrightarrow 1,    
\end{equation*}
where $Z$ is a finite product of copies of $\Res_{F'/F}\G_m$, $H_{\text{der}}=H_{\text{sc}}$ and $p$ is surjective on $E$- and $F$-points since $Z$ is an induced torus. Choose an extension of $\sigma$ to an element in $\Gal(F'/F)$, which is still denoted $\sigma$. Let $Z(E)_1$ (resp. $Z(F)_1$) denote the maximal compact subgroup of $Z(E)$ (resp. $Z(F)$). We endow $Z(E)$ (resp. $Z(F)$)  
with the Haar measure giving $Z(E)_1$ (resp. $Z(F)_1$) volume $1$. The norm homomorphism $N: Z(E)\to Z(F)$ is surjective and determines a measure-preserving isomorphism 
\begin{equation*}
N:\overline{Z(E)}:=(Z(E)/(1-\sigma)(Z(E)))\to Z(F)    
\end{equation*}
and $N: Z(E)_1\twoheadrightarrow Z(F)_1$ is surjective. Here we give the compact subgroup $(1-\sigma)(Z(E))=(1-\sigma)(Z(E)_1)$ measure $1$.

Let $\lambda:Z(F)\to\C^\times$ denote a smooth character, and for $f\in C^\infty_c(H(F))$, we set
\begin{equation*}
f_\lambda(h):=\int_{Z(F)}f(hz)\lambda^{-1}(z)\,dz    
\end{equation*}
Write $\lambda=1$ for the trivial character, then we can show that $f_1(hz')=f_1(h)$ for any $h\in H(F)$ and $z'\in Z(F)$. Therefore we can view $f_1$ as an element in $C^\infty_c(G(F))$, which is denoted $\bar{f}$. Notice that $f_\lambda$ is only compactly supported modulo $Z(F)$

We write $\lambda N$ for the character $\lambda\circ N:Z(E)\to\C^\times$. The depth-zero character $\chi$ on $T(F)_1$ determines a depth-zero character $\chi_H$ on $T_H(F)_1$, where $T_H:=p^{-1}(T)$. Let $I_H$ be the Iwahori subgroup in $H$ corresponding to the Iwahori subgroup $I$ in $G$, and let $\rho_H:I_H\to\C^\times$ denote the character constructed from $\chi_H$ by the isomorphism $I_H/I^+_H\cong T_H(F)_1/T_H(F)^+_1$. For $\phi\in C^\infty_c(H(E))$, we define analogously the function $\phi_{\lambda N}$ by the formula
\begin{equation*}
\phi_{\lambda N}(h):=\int_{Z(E)}\phi(hz)\lambda^{-1}(Nz)\,dz,
\end{equation*}
and similarly we use $\bar{\phi}$ when viewing $\phi_1$ as an element $C^\infty_c(G(E))$. It is straightforward to check that the functions $\phi_{\lambda N}$ (resp. $f_\lambda$) are transformed by $\lambda^{-1}N$ (resp. $\lambda^{-1}$) under $Z(E)$ (resp. $Z(F)$). These functions are not longer compactly supported on $H$, nonetheless we can show that the (twisted) orbital integrals of $f_\lambda$ (resp. $\phi_{\lambda N}$) exist at $(\sigma)$-semisimple element $\delta$ by the following simple calculation:
\begin{equation*}\tag{4.3.1}\label{4.3.1}
\begin{split}
TO^H_{\delta\sigma}(\phi_{\lambda N}):&=  \int_{H_{\delta\sigma}\backslash H(E)}\phi_{\lambda N}(h^{-1}\delta\sigma(h))\,d\bar{h}\\
&=\int_{H_{\delta\sigma}Z(E)\backslash H(E)}\int_{Z(F)\backslash Z(E)}\phi_{\lambda N}((vh)^{-1}\delta\sigma(hv))\,dv\,d\bar{h}\\
&= \text{vol}(Z(F)\backslash Z(E))\int_{H_{\delta\sigma}Z(E)\backslash H(E)}\phi_{\lambda N}(\bar{h}^{-1}\delta\sigma(\bar{h}))\,d\bar{h}\\
&=\int_{H_{\delta\sigma}Z(E)\backslash H(E)}\phi_{\lambda N}(\bar{h}^{-1}\delta\sigma(\bar{h}))\,d\bar{h}
\end{split}    
\end{equation*}
Since $Z(F)\backslash Z(E)\cong (1-\sigma)Z(E)$ which has measure $1$ and as a function, $\phi_{\lambda N}(\cdot\,\delta\sigma(\cdot))$ is compactly supported modulo $Z(E)$, the convergence of the (twisted) orbital integral is shown.

We also have the following useful lemma:
\begin{lem}\label{lem 4.3}
Assume $\phi\in\mathcal{H}(H(E),\Tilde{K}, \Tilde{\rho}_H)$ for some compact open subgroup $\Tilde{K}\leq H(E)$ and there exists compact open $K\leq G$ and a character $\rho: K\to\C^\times$ such that
\begin{itemize}
    \item $\Tilde{K}\supset (1-\sigma)(Z(E))$, and
    \item $\Tilde{\rho}|_{\Tilde{K}\cap Z(E)}=\rho\circ N|_{\Tilde{K}\cap Z(E)}$.
\end{itemize}
Then we have
\begin{equation*}
\phi_{\lambda N}(h)=\int_{\overline{Z(E)}}\phi(hz)\lambda^{-1}(Nz)\,d\bar{z} \end{equation*}
\end{lem}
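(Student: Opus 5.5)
The plan is to unfold the definition of $\phi_{\lambda N}$ and integrate over the fibres of the quotient map $Z(E)\to\overline{Z(E)}=Z(E)/(1-\sigma)(Z(E))$. The aim is to show that the integrand $z\mapsto\phi(hz)\lambda^{-1}(Nz)$ is invariant under translation by $(1-\sigma)(Z(E))$. Once that is established, the integral over $Z(E)$ factors as an integral over $\overline{Z(E)}$ of an inner integral over $(1-\sigma)(Z(E))$. That inner integral is just the volume of this compact group, which equals $1$ by the chosen normalization.

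In order:
\begin{enumerate}
\item Write $\int_{Z(E)}\Phi(z)\,dz=\int_{\overline{Z(E)}}\int_{(1-\sigma)(Z(E))}\Phi(zu)\,du\,d\bar z$. This uses the quotient measure on $\overline{Z(E)}$ compatible with $dz$ on $Z(E)$ (giving $Z(E)_1$ volume $1$) and $du$ on $(1-\sigma)(Z(E))$ (giving it volume $1$). This is exactly the normalization fixed just before the statement, so that $N:\overline{Z(E)}\to Z(F)$ is measure-preserving.
\item For $u=\sigma(w)w^{-1}$, compute $N(u)=1$, since the norm of $(1-\sigma)w$ telescopes. Hence $\lambda^{-1}(N(zu))=\lambda^{-1}(Nz)$.
\item Show that $\phi(hzu)=\phi(hz)$ for $u\in(1-\sigma)(Z(E))$. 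By the first hypothesis, $u\in\Tilde K$. Since $Z$ is central, $u\in\Tilde K\cap Z(E)$, and the bi-$(\Tilde K,\Tilde\rho_H)$-equivariance gives $\phi(hzu)=\phi(hz)\Tilde\rho_H^{-1}(u)$, up to the paper's convention for the direction of the character. By the second hypothesis, $\Tilde\rho_H(u)=\rho(Nu)=\rho(1)=1$.
\item Combine the three steps. The inner integral becomes $\phi(hz)\lambda^{-1}(Nz)\cdot\mathrm{vol}((1-\sigma)(Z(E)))=\phi(hz)\lambda^{-1}(Nz)$, which yields the stated formula.
\end{enumerate}

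The only points requiring care are the measure-theoretic ones. First, $(1-\sigma)(Z(E))$ must be a closed (indeed compact) subgroup, so that the quotient integration formula applies. It equals $(1-\sigma)(Z(E)_1)$, which is the image of a compact group under a continuous map; the identity $(1-\sigma)(Z(E))=(1-\sigma)(Z(E)_1)$ was noted earlier and holds because $Z$ is a product of induced tori $\Res_{F'/F}\G_m$. Second, the normalizations must be consistent, which was arranged in the text. Convergence is not an issue: $\phi$ is compactly supported, so for fixed $h$ the integrand is compactly supported in $z$.

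I expect the main, if mild, obstacle to be step 3. There one must check that $\Tilde\rho_H$ is evaluated on $u$ via the second hypothesis, and that $Nu=1$ holds in $Z(F)$ rather than only modulo something. Both follow directly from centrality of $Z$ and the telescoping of the norm.
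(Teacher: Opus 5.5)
Your argument is the paper's proof. Both use Weil's integration formula for $Z(E)\to\overline{Z(E)}$, the fact that $N$ is trivial on $(1-\sigma)(Z(E))$, the two hypotheses to get $\phi(hzu)=\phi(hz)$, and the volume-one normalization for the inner integral.

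One justification in your final paragraph is wrong, however: the identity $(1-\sigma)(Z(E))=(1-\sigma)(Z(E)_1)$ does not follow from $Z$ being induced. Take $r=2$ and $Z=\Res_{E/F}\mathbb{G}_m$. Then $Z(E)=(E\otimes_F E)^\times\cong E^\times\times E^\times$ via $a\otimes b\mapsto(ab,\sigma(a)b)$, and under this identification $\sigma$ acts by $(y_1,y_2)\mapsto(\sigma y_2,\sigma y_1)$. The element $x=(\varpi,1)$ gives $x\sigma(x)^{-1}=(\varpi,\varpi^{-1})\notin Z(E)_1$, so $(1-\sigma)(Z(E))$ is not even relatively compact.

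Inside the lemma you should instead take compactness from the first hypothesis. There $(1-\sigma)(Z(E))\subseteq\tilde K$, and it is closed because for an induced torus it equals $\ker(N\colon Z(E)\to Z(F))$.

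Be aware of a consequence. Compact subgroups of $Z(E)$ lie in $Z(E)_1$, so the same computation shows that this hypothesis cannot hold in the setting of \S4.3 (where $E\subseteq F'$) whenever $r>1$ and $Z\neq 1$. What does hold there unconditionally is that $z\mapsto TO_{z\delta\sigma}(\phi)$ is constant on cosets of $(1-\sigma)(Z(E))$, because $w^{-1}(z\delta)\sigma(w)=z\,w^{-1}\sigma(w)\,\delta$ for central $w$. That is the form in which the descent to $\overline{Z(E)}$ in (4.3.2) can actually be carried out.
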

\begin{proof}
We have 
\begin{equation*}
\begin{split}
\phi_{\lambda N}(h)&=\int_{Z(E)}\phi(hz)\lambda^{-1}(Nz)\,dz\\
&=\int_{Z(E)/(1-\sigma)Z(E)}\int_{(1-\sigma)Z(E)}\phi(vhz)\lambda^{-1}(N(vz))\,dv\,d\bar{z}\\
&=\int_{Z(E)/(1-\sigma)Z(E)}\int_{(1-\sigma)Z(E)}\phi(hz)\lambda^{-1}(Nz)\,dv\,d\bar{z}\\
&=\int_{\overline{Z(E)}}\phi(hz)\lambda^{-1}(Nz)\,d\bar{z}  
\end{split}
\end{equation*}
where the second line is \cite{folland2016course} Theorem 2.51, the third line follows from that we can write $v=v'\sigma(v'^{-1})$ for some $v'\in Z(E)$ so that $Nv=1$ and $\phi(vhz)=\phi(hz)\rho(Nv)=\phi(hz)$, and in the last line we use our choice of measure that gives $(1-\sigma)Z(E)$ volume $1$.
\end{proof}
\begin{lem}\label{lem 4.4}
Suppose that $\phi\in\mathcal{H}(H(E), \Tilde{K}, \Tilde{\rho})$ (resp. $f\in\mathcal{H}(H(F), K, \rho)$) for a compact open subgroup $\Tilde{K}\subset H(E)$ and a character $\Tilde{\rho}:\Tilde{K}\to\C^\times$ (resp. $K\subset H$ and $\rho:K\to\C^\times$) such that 
 \begin{itemize}
        \item $N(\Tilde{K}\cap Z(E))=K\cap Z(F)$,
        \item $\Tilde{K}\supset(1-\sigma)Z(E)$,
        \item $\Tilde{\rho}|_{\Tilde{K}\cap Z(E)}=\rho\circ N|_{\Tilde{K}\cap Z(E)}$.
\end{itemize}

We have the following statements:
\begin{enumerate}[(i)]
    \item The functions $\phi$ and $f$ are associated 
    if and only if $\phi_{\lambda N}$ and $f_\lambda$ are associated for every $\lambda$.
    \item In (i) we only need to consider characters $\lambda$ such that 
    \begin{equation*}
    \lambda|_{K\cap Z(F)}=\rho^{-1}|_{K\cap Z(F)}.    
    \end{equation*}
    \item If $\phi\in\mathcal{H}(H(E),\Tilde{\rho}_H)$ and $f\in\mathcal{H}(H(F), \rho_H)$, then in (i) we only need to consider characters $\lambda$ with $\lambda|_{Z(F)_1}=\chi^{-1}|_{Z(F)_1}$.
    \item The pair $(\phi_1,f_1)$ are associated if and only if $(\bar{\phi},\bar{f})$ are associated.
\end{enumerate}
\end{lem}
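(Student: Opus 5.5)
The plan is to follow \cite{haines2009base}, Lemma 5.2.1, and \cite{haines2012base}, \S7.2: compute the stable (twisted) orbital integrals of $\phi_{\lambda N}$ and $f_\lambda$ in terms of those of $\phi$ and $f$ by Fourier analysis on $Z$. First we fix semisimple $\gamma\in H(F)$ and $\delta\in H(E)$ with $\mathcal{N}\delta=\gamma$. Unfolding the $Z(F)$-integral exactly as in (\ref{4.3.1}) gives
\begin{equation*}
SO_\gamma(f_\lambda)=\int_{Z(F)}SO_{\gamma z}(f)\lambda^{-1}(z)\,dz .
\end{equation*}
Here we use that the centralizer of $\gamma z$ equals that of $\gamma$, so the signs $e(\cdot)$ and the stable classes are compatible with translation by $Z(F)$. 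Similarly, Lemma \ref{lem 4.3} together with the hypotheses $\Tilde{K}\supset(1-\sigma)Z(E)$ and $\Tilde\rho|_{\Tilde K\cap Z(E)}=\rho\circ N$ give
\begin{equation*}
SO_{\delta\sigma}(\phi_{\lambda N})=\int_{\overline{Z(E)}}SO_{\delta z\sigma}(\phi)\lambda^{-1}(Nz)\,d\bar z .
\end{equation*}
We then transport this integral via the measure-preserving isomorphism $N:\overline{Z(E)}\xrightarrow{\sim}Z(F)$, and use the identity $\mathcal{N}(\delta z)=\mathcal{N}(\delta)N(z)$, which holds because $Z$ is central. The non-norm condition is also preserved: $\gamma z$ is a norm iff $\gamma$ is, since $N:Z(E)\to Z(F)$ is surjective.

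For (i), suppose $\phi$ and $f$ are associated. Then the two integrands agree pointwise in $z\in Z(F)$ (both vanish if $\gamma$ is not a norm), so $\phi_{\lambda N}$ and $f_\lambda$ are associated for every $\lambda$. Conversely, suppose the integrated identities hold for all $\lambda$. The functions $z\mapsto SO_{\gamma z}(f)$ and $z\mapsto SO_{\delta z\sigma}(\phi)$ are locally constant and compactly supported on $Z(F)$, because $f$, $\phi$ are compactly supported and $\gamma Z(F)$ is closed. Fourier inversion on the locally compact abelian group $Z(F)$ then shows these two functions agree, and evaluating at $z=1$ gives the matching. The main technical point is justifying the compact support and local constancy, and checking that the stable classes of $\delta z$ run exactly over the $\sigma$-classes in the stable class with the right weights $e(\cdot)a(\cdot)$. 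I expect this bookkeeping, rather than the Fourier argument, to be the obstacle; since $H_{\rm der}=H_{\rm sc}$, we have $a=1$, which simplifies it.

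For (ii), $f$ transforms under $K\cap Z(F)$ by $\rho^{-1}$. Hence $z\mapsto SO_{\gamma z}(f)$ is $\rho^{-1}$-equivariant under $K\cap Z(F)$, and $f_\lambda=0$ unless $\lambda|_{K\cap Z(F)}=\rho^{-1}$. On the $E$-side the same holds, using $N(\Tilde K\cap Z(E))=K\cap Z(F)$ and $\Tilde\rho=\rho\circ N$ there. Fourier inversion therefore only needs the characters with $\lambda|_{K\cap Z(F)}=\rho^{-1}|_{K\cap Z(F)}$. Part (iii) is the special case $K=I_H$, where $I_H\cap Z(F)=Z(F)_1$ and $\rho_H|_{Z(F)_1}=\chi_H|_{Z(F)_1}$. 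For this we check the hypotheses: $N(Z(E)_1)=Z(F)_1$ since $Z$ is induced and $E/F$ is unramified, and $(1-\sigma)Z(E)=(1-\sigma)Z(E)_1\subset I_{H,E}$.

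For (iv), $f_1$ descends to $\bar f$ on $G(F)=H(F)/Z(F)$, and the orbital integrals of $f_1$ at $\gamma$ coincide with those of $\bar f$ at $p(\gamma)$ for compatible measures. This follows from (\ref{4.3.1}) with $\lambda=1$ and the fact that $p$ induces bijections on stable (twisted) classes, by Lemma \ref{lem 2.11}. Norms also correspond under $p$, because $p$ is surjective on $E$- and $F$-points and $N:Z(E)\to Z(F)$ is surjective. Hence $(\phi_1,f_1)$ are associated iff $(\bar\phi,\bar f)$ are.
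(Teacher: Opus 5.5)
Your treatment of (i)--(iii) is essentially the paper's. You write the stable (twisted) orbital integrals of $\phi_{\lambda N}$ and $f_\lambda$ as integrals over $\overline{Z(E)}\cong Z(F)$ of $SO_{\delta z\sigma}(\phi)$ and $SO_{\gamma z}(f)$, apply Fourier inversion and set $z=1$, and observe that $f_\lambda=0$ unless $\lambda|_{K\cap Z(F)}=\rho^{-1}$. Your remarks that norms are preserved under central translation, and that the integrands are locally constant and compactly supported, are useful additions. For compact support, though, the relevant reason is that $Z\to H/H_{\rm der}$ is an isogeny, not that $\gamma Z(F)$ is closed.

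The gap is in (iv). By definition (\ref{4.1.1}), $SO^G_{\bar\delta\sigma}(\bar\phi)$ carries the weights $a(\bar\delta')=|\ker[H^1(F,G^\circ_{\bar\delta'\sigma})\to H^1(F,G_{\bar\delta'\sigma})]|$. These can exceed $1$, since $G_{\rm der}$ need not be simply connected, whereas on $H$ all weights are $1$. Term-by-term equality of twisted orbital integrals, together with a correspondence of stable classes, therefore does not give $SO^H_{\delta\sigma}(\phi_1)=SO^G_{\bar\delta\sigma}(\bar\phi)$. You must also know how many $\sigma$-conjugacy classes of $H(E)$ in the stable class of $\delta$ lie over each $\sigma$-conjugacy class $\bar\delta'$ in the stable class of $\bar\delta$. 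This map is not a bijection. The paper shows it is surjective, with fibre over $\bar\delta'$ in bijection with $\ker[H^1(F,G^\circ_{\bar\delta'\sigma})\to H^1(F,G_{\bar\delta'\sigma})]$, and that fibre count is exactly what produces $a(\bar\delta')$. It proves this in three steps:
\begin{enumerate}
\item identify the $H$-side classes with $\ker[H^1(F,H_{\delta\sigma})\to H^1(F,\tilde H)]$, using that $H_{\delta\sigma}$ is connected;
\item identify the $G$-side classes with the image of $\ker[H^1(F,G^\circ_{\bar\delta\sigma})\to H^1(F,\tilde G)]$ in $H^1(F,G_{\bar\delta\sigma})$;
\item show that $p$ induces a bijection between these two kernels.
\end{enumerate}

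Lemma \ref{lem 2.11} only tells you which elements are stably $\sigma$-conjugate and gives no fibre count. Moreover, $p$ is not injective on stable classes at all, since $\delta$ and $\delta z$ have the same image. To complete the term-by-term comparison you also need two further facts. First, $e(\delta')=e(\bar\delta')$, from Kottwitz's result on signs under central extensions by tori. Second, $Z(E)H_{\delta'\sigma}(F)=p^{-1}(G^\circ_{\bar\delta'\sigma}(F))$, which requires surjectivity of $H_{\delta'\sigma}(F)\to G^\circ_{\bar\delta'\sigma}(F)$ and uses $H^1(F,Z)=0$. This second fact is what makes (\ref{4.3.1}) with $\lambda=1$ identify $TO^H_{\delta'\sigma}(\phi_1)$ with $TO^G_{\bar\delta'\sigma}(\bar\phi)$.
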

\begin{proof}
We follow the proof of Lemma 5.3.1 in \cite{haines2009base}. Suppose that $\delta\in H(E)$ (resp. $\gamma\in H(F)$), and we write $\bar{\delta}:=p(\delta)$ (resp. $\bar{\gamma}:=p(\gamma)$). We have the following formulas for all $\chi$:
\begin{equation*}\tag{4.3.2}\label{4.3.2}
\begin{split}
SO^H_{\delta\sigma}(\phi_{\lambda N}) & := \sum_{\delta'}e(\delta')\,TO^H_{\delta'\sigma}(\phi_{\lambda N})  \\
& = \sum_{\delta'}e(\delta')\,\int_{H_{\delta'\sigma}\backslash H(E)}\phi_{\lambda N}(h^{-1}\delta'\sigma(h))d \bar{h}\\
& = 
\sum_{\delta'}e(\delta')\,\int_{H_{\delta'\sigma}\backslash H(E)}\int_{\overline{Z(E)}}\phi(h^{-1}(\delta'v)\sigma(h))\,\lambda^{-1} (Nv)dv\, d \bar{h}\\
& = \int_{\overline{Z(E)}}\Bigl[\sum_{\delta'}e(\delta')\,\int_{H_{\delta'\sigma}\backslash H(E)}\phi(h^{-1}(\delta'v)\sigma(h))\,d\bar{h}\Bigr]\lambda^{-1} (Nv)dv\\
& = 
\int_{\overline{Z(E)}}\lambda^{-1} (Nv) \,SO^H_{v\delta,\sigma}(\phi)\,dv.
\end{split}
\end{equation*}
and similarly we have 
\begin{equation*}\tag{4.3.3}\label{4.3.3}
SO^H_\gamma(f_\lambda)=\int_{Z(F)}\lambda^{-1}(z)\,SO^H_{z\gamma}(f)\,dz    
\end{equation*}
Then it is clear that if $\phi$ and $f$ are associated, then $\phi_{\lambda N}$ and $f_\lambda$ are associated for every $\lambda$. For the inverse, we apply the Fourier inversion formula to get $SO^H_{v\delta,\sigma}(\phi)=SO^H_{z\gamma}(f)$ for $v\in Z(E)$ and $z\in Z(F)$ such that $Nv=z$. (as functions of $v$ and $z$, resp.) In particular, let $v=1$, we have 
\begin{equation*}\tag{4.3.4}\label{4.3.4}
SO^H_{\delta\sigma}(\phi)=SO^H_{\gamma}(f)   \end{equation*}

For (ii), we assume $f\in\mathcal{H}(H(F), K, \rho)$, then for any $i\in K\cap Z(F)$ and $h\in H(F)$, we have 
\begin{equation*}
\begin{split}
f_\lambda(hi)&=\int_{Z(F)}f(hiz)\lambda^{-1}(z)\,dz\\
&=\rho^{-1}(i)f_\lambda(h),
\end{split}    
\end{equation*}
also we have 
\begin{equation*}
\begin{split}
f_\lambda(hi)&=\int_{Z(F)}f(hiz)\lambda^{-1}(z)\,dz\\
&=\lambda(i)\int_{Z(F)}f(hu)\lambda^{-1}(u)\,du=\lambda(i)f_\lambda(h)
\end{split}    
\end{equation*}
Therefore either $\lambda|_{Z(F)\cap K}=\rho^{-1}|_{Z(F)\cap K}$ or $f_\lambda$ vanishes completely on $H$.

Part (iii) follows from part (ii) by taking $K=I_H, \Tilde{K}=\Tilde{I}_H$ and $\Tilde{\rho}=\Tilde{\rho}_H$. 

Finally, we prove part (iv). For $\lambda=1$, we claim that for $\gamma=\mathcal{N}\delta$, we have
\begin{equation*}\tag{4.3.5}\label{4.3.5}
\begin{split}
SO^H_{\delta\sigma}(\phi_1) &=SO^G_{\bar{\delta}\sigma}(\bar{\phi})\\
SO^H_{\gamma}(f_1) &=SO^G_{\bar{\gamma}}(\bar{f})
\end{split}
\end{equation*}
Indeed, we have by definition
\begin{equation*}
SO^H_{\delta\sigma}(\phi_1):= \sum_{\delta'}e(\delta')\,TO^H_{\delta'\sigma}(\phi_1)   
\end{equation*}
and 
\begin{equation*}
SO^G_{\bar{\delta}\sigma}(\bar{\phi}):= \sum_{\bar{\delta'}}e(\bar{\delta'})\,a(\bar{\delta'})TO^G_{\bar{\delta'}\sigma}(\bar{\phi})    
\end{equation*}

We compare those two equalities term by term. First we notice that $p: H_{\delta'\sigma}(F)\to G^\circ_{\bar{\delta'}\sigma}(F)$ with kernel $Z(E)\cap H_{\delta'\sigma}(F)=Z(F)$, hence $e(\delta'):=e(H_{\delta'\sigma})=e(G_{\bar{\delta'}\sigma})=:e(\bar{\delta'})$ by the Corollary on Page 295 of \cite{kottwitz1983sign}. Recall that $a(\bar{\delta'}):=|\ker[H^1(F, G^\circ_{\bar{\delta'}\sigma})\to H^1(F, G_{\bar{\delta'}\sigma})]|$. We claim that $p$ induces a surjective map from the set
\begin{equation*}
\{\sigma\text{-conjugacy classes }\delta'\in H(E)\text{ stably }\sigma\text{-conjugate to }\delta\} \end{equation*}
onto the set
\begin{equation*}
\{\sigma\text{-conjugacy classes }\bar{\delta'}\in G(E)\text{ stably }\sigma\text{-conjugate to }\bar{\delta}\} 
\end{equation*}
with the fiber over the class of $\bar{\delta'}$ identified with the set
\begin{equation*}
\ker[H^1(F, G^\circ_{\bar{\delta'}\sigma})\to H^1(F, G_{\bar{\delta'}\sigma})]    
\end{equation*}
Indeed, we denote $\Tilde{G}:=R_{E/F}G_E$, and similarly $\Tilde{H}$. The set of $\sigma$-conjugacy classes in $G(E)$ which are stably $\sigma$-conjugate to $\bar{\delta}$ corresponds to the image of
\begin{equation*}
\ker[H^1(F, G^\circ_{\bar{\delta}\sigma})\to H^1(F,\Tilde{G
})]    
\end{equation*}
in
\begin{equation*}
\ker[H^1(F, G_{\bar{\delta}\sigma})\to H^1(F,\Tilde{G
})].    
\end{equation*}
Meanwhile the set of $\sigma$-conjugacy classes in $H(E)$ which are stably $\sigma$-conjugate to $\delta$ corresponds to the set 
\begin{equation*}
\ker[H^1(F, H_{\delta\sigma})\to H^1(F,\Tilde{H
})]    
\end{equation*}
since $H_{\delta\sigma}=H^\circ_{\delta\sigma}$. This is explained in \cite{kot82}, Pages 805-806. The map $p$ induces a bijection
\begin{equation*}
\ker[H^1(F, H_{\delta\sigma})\to H^1(F,\Tilde{H
})]\xrightarrow{\sim}\ker[H^1(F, G^\circ_{\bar{\delta}\sigma})\to H^1(F,\Tilde{G
})].        
\end{equation*}
hence the claim is proved. Finally, we have to show that for all $\sigma$-conjugacy classes $\delta'$ in the fiber over the class of $\bar{\delta}'$,  
\begin{equation*}
TO^H_{\delta'\sigma}(\phi_1)=TO^G_{\bar{\delta'}\sigma}(\bar{\phi})    
\end{equation*}
From the calculations in (\ref{4.3.1}), we have 
indeed:
\begin{equation*}
\begin{split}
TO^H_{\delta'\sigma}(\phi_1) 
&=\int_{(Z(E)H_{\delta'\sigma})\backslash H(E)}\phi_1(h^{-1}\delta'\sigma(h)) \,d\bar{h}\\
&= \int_{G^\circ_{\bar{\delta'}\sigma}\backslash G(E)}\bar{\phi}(g^{-1}\bar{\delta'}\sigma(g))\,d\bar{g}\\
&= TO^G_{\bar{\delta'}\sigma}(\bar{\phi}).
\end{split}    
\end{equation*}
Where the second equality comes from the fact $Z(E)H_{\delta'\sigma}=p^{-1}(G^\circ_{\bar{\delta'}\sigma})$ and $H(E)/Z(E)=G(E)$ since $Z$ is a product of induced tori, by Lemma \ref{lem. surjection of centralizers} and the following remark. Therefore we have $SO^H_{\delta\sigma}(\phi_1) =SO^G_{\bar{\delta}\sigma}(\bar{\phi})$, and similarly $SO^H_{\gamma}(f_1) =SO^G_{\bar{\gamma}}(\bar{f})$ (using the analogous $H_{\gamma'}(F)=p^{-1}(G^\circ_{\bar{\gamma'}})$), thus (iv) is proved.
\end{proof}

\begin{lem}\label{lem 4.5}
The map $\phi\mapsto\bar{\phi}$ determines a surjective homomorphism $\mathcal{Z}(H(E), \Tilde{\rho}_H)\to\mathcal{Z}(G(E), \Tilde{\rho})$. It is compatible with the base change homomorphism in the sense that
\begin{equation*}
b(\bar{\phi})=\overline{b(\phi)}.    
\end{equation*}
\end{lem}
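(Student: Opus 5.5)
The plan is to follow the proof of the analogous statement in \cite{haines2009base} (Lemma 5.3.2) and \cite{haines2012base} \S7.2, working throughout with the characterization of central elements by their scalars on principal series (the isomorphism (\ref{Bernstein isomorphism}) and Lemma \ref{lem characterization of base change hom}). Write $\bar{\ }$ for the map $\phi\mapsto\bar\phi=\phi_1$, viewed as a function on $G(E)=H(E)/Z(E)$.

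\textbf{Step 1: $\bar{\ }$ lands in the right algebra.} Since $p$ is surjective on $E$-points with kernel $Z(E)$, we have $p(I_{H,E})=I_E$, and the characters are compatible: $\Tilde{\rho}_H=\Tilde{\rho}\circ p$ on $I_{H,E}$ (here $\chi_H=\chi\circ p$). From this, $\bar\phi$ has the correct $\Tilde\rho$-equivariance on both sides, and it is compactly supported. Next, $\bar{\ }$ is multiplicative. Writing $\phi*\psi$ as an integral over $H(E)=\coprod$ of $Z(E)$-cosets and integrating over $Z(E)$ afterwards gives $\overline{\phi*\psi}=\bar\phi*\bar\psi$, provided the Haar measures are normalized compatibly, i.e. $\mathrm{vol}(I_{H,E})=1$, $\mathrm{vol}(I_E)=1$, and $\mathrm{vol}(Z(E)_1)=1$ with $Z(E)_1=Z(E)\cap I_{H,E}$. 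Centrality follows from surjectivity: every element of $\mathcal{H}(G(E),\Tilde\rho)$ is $\bar\psi$ for some $\psi\in\mathcal{H}(H(E),\Tilde\rho_H)$. To see this, choose a lift by extending by zero off a fundamental domain of $I_{H,E}$-double cosets, using that $I_{H,E}\backslash H(E)/I_{H,E}\to I_E\backslash G(E)/I_E$ has fibers that are $Z(E)/Z(E)_1$-torsors.

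\textbf{Step 2: compute scalars on principal series.} Let $\xi$ be a character of $T(E)$ extending a $W$-conjugate of $\chi_N$. Then $\xi_H:=\xi\circ p$ is a character of $T_H(E)$ extending the corresponding conjugate of $\chi_{H}$, and $i^{G(E)}_{B(E)}(\xi)$ is the subrepresentation of $i^{H(E)}_{B_H(E)}(\xi_H)$ on which $Z(E)$ acts trivially; in fact the two coincide as spaces. Using this, the scalar of $\bar\phi$ on $i^{G(E)}(\xi)^{\Tilde\rho}$ equals the scalar of $\phi$ on $i^{H(E)}(\xi_H)^{\Tilde\rho_H}$. This amounts to $\beta^{-1}(\bar\phi)=p^*\beta^{-1}(\phi)$, where $p^*$ is restriction of functions along the closed embedding $\mathfrak X_{\mathfrak s,G}\hookrightarrow\mathfrak X_{\mathfrak s,H}$, $\xi\mapsto\xi\circ p$. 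In the unramified-character coordinates this is the map $\hat A\to\hat A_H$, which is injective.

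This description also gives a second proof of surjectivity. Restriction of $W_\chi$-invariant regular functions along the $W_\chi$-stable closed subtorus is surjective, by averaging a lift over the finite group $W_\chi$. We also need $W_{\chi_H}=W_\chi$, which holds since $\chi_H=\chi\circ p$ and the Weyl groups coincide.

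\textbf{Step 3: compatibility with base change.} Apply Lemma \ref{lem characterization of base change hom} on both groups. For $\xi$ on $T(F)$, we have $(\xi\circ p)\circ N=(\xi\circ N)\circ p$, because $p$ is defined over $F$ and commutes with the norm. Hence the scalar of $b(\bar\phi)$ at $\xi$ equals the scalar of $\bar\phi$ at $\xi_N$. By Step 2 this is the scalar of $\phi$ at $\xi_N\circ p=(\xi\circ p)_N$, which equals the scalar of $b(\phi)$ at $\xi\circ p$, and by Step 2 again this is the scalar of $\overline{b(\phi)}$ at $\xi$. Injectivity of $\beta^{-1}$ then gives $b(\bar\phi)=\overline{b(\phi)}$.

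The main obstacle is Step 2, identifying the induced representations together with their $\rho$-isotypic parts and the measure normalizations. I would first check that $p:B_H(E)\backslash H(E)\to B(E)\backslash G(E)$ is a bijection. Then I would verify the convolution scalar directly on the unique (up to scalar) $\Tilde\rho$-vector supported on $B I$ for the appropriate $\xi$, tracking $\mathrm{vol}(Z(E)_1)=1$.
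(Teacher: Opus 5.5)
Your proposal is correct and follows essentially the paper's route: the paper defers to Haines's Lemma 5.3.2, identifying $\phi\mapsto\bar\phi$ on centers with the map $\C[\mathfrak{X}_{\mathfrak{s}^H_E}]\to\C[\mathfrak{X}_{\mathfrak{s}_E}]$ induced by pulling characters back along $p$, and then getting $b(\bar\phi)=\overline{b(\phi)}$ because $p$ commutes with the norm, while your Steps 1--3 make explicit the Hecke-algebra, principal-series and $W_\chi$-averaging checks that the paper leaves to the reference. The one thing to tighten is that $p(I_{H,E})=I_E$ does not follow from surjectivity on $E$-points alone. It needs the standard argument via functoriality of the Kottwitz map plus Lang's theorem for the connected N\'eron model of the induced torus $Z$; its torus case $p(T_H(E)_1)=T(E)_1$ is also what you implicitly use to get $W_{\chi_H}=W_\chi$.
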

\begin{proof}
The proof is almost the same as in \cite{haines2009base} Lemma 5.3.2, but instead of the Bernstein isomorphism $B$ there, we use $\beta:\C[\mathfrak{X}_{\mathfrak{s}^H_E}]\xrightarrow{\sim}\mathcal{Z}(H(E), \Tilde{\rho}_H)$ (also $\beta:\C[\mathfrak{X}_{\mathfrak{s}_E}]\xrightarrow{\sim}\mathcal{Z}(G(E), \Tilde{\rho})$). Here $\mathfrak{s}^H_E:=\mathfrak{s}_{\chi_H N}$. Recall that $\xi:T(F)\to\C^\times$ is a smooth character extending some $W(F)$-conjugate of the depth zero character $\chi:T(F)_1\to\C^\times$. We can extend $\xi$ to a character $\xi_H$ on $T_H(F)$, which extends $\chi_H$.
Therefore the morphism of algebraic varieties
\begin{equation*}
\begin{split}
p^*:\mathfrak{X}_{\mathfrak{s}_E}&\to\mathfrak{X}_{\mathfrak{s}^H_E}\\
(T(E), \xi_N)_{G(E)}&\mapsto(T_H(E), \xi_H\circ N)_{H(E)},
\end{split}
\end{equation*}
gives rise to the homomorphism $p:\C[\mathfrak{X}_{\mathfrak{s}^H_E}]\to\C[\mathfrak{X}_{\mathfrak{s}_E}]$ that we need as in the proof in loc. cit.
\end{proof}

\subsection{The case where the central character is not unitary}\label{sec 4.4}
In applying Lemma \ref{lem 4.4}(i), we need the following lemma which will enable us to assume that $\lambda$ is a unitary character on $Z$. It is not necessarily the case that $\lambda$ is unitary in our reduction steps. Nonetheless, we can always twist it by a global character as follows.
\begin{lem}\label{lem 4.6}
If $(\pi,V)$ is any smooth $\omega$-representation of $G(F)$, then there exists a
unique positive real-valued character $\chi$ on $G(F)$ such that the restriction of $\pi\otimes\chi$ to $Z(F)$
is unitary, and the restriction of $\chi$ to $Z(F)_1$ is trivial.  
\end{lem}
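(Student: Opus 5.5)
The plan is to identify positive characters of $G(F)$ with the real vector space $\mathfrak{a}_G^*:=X^*(G)_F\otimes\R$, and then to show that restriction to $Z(F)$ matches this space bijectively with $\mathfrak{a}_Z^*:=X^*(Z)_F\otimes\R$. Existence and uniqueness of $\chi$ then both follow from the single requirement $\chi|_{Z(F)}=|\omega|^{-1}$. Here I read $Z$ as the center of $G$ (or at least a central torus containing the maximal central torus up to isogeny), and $\omega$ as the central character of $\pi$. This reading is what makes full uniqueness on $G(F)$ possible.

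\emph{Step 1 (factorization of positive characters).} Let $\chi:G(F)\to\R_{>0}$ be any continuous character.
\begin{itemize}
\item Since $\R_{>0}$ has no nontrivial compact subgroups, $\chi$ is trivial on every compact open subgroup, in particular on every parahoric subgroup.
\item In the unramified case, $G(F)_1=\ker\kappa_G$ is generated by parahoric subgroups (use the Iwahori decomposition and Definition \ref{defn of parahoric subgroup}), so $\chi$ factors through $G(F)/G(F)_1$.
\item This quotient is a finitely generated abelian group, namely the image of $\kappa_G$. Since $\R_{>0}$ is torsion-free, $\chi$ kills the torsion and so factors through the lattice $\Lambda_G:=G(F)/G(F)^1$, where $G(F)^1=\bigcap_{\psi\in X^*(G)_F}\ker|\psi|_F$.
\item Because $\R_{>0}\cong\R$ is divisible and torsion-free, $\mathrm{Hom}(\Lambda_G,\R_{>0})=\mathrm{Hom}(\Lambda_G,\R)$. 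The map $H_G:G(F)\to\mathfrak{a}_G=\mathrm{Hom}(X^*(G)_F,\R)$ identifies $\Lambda_G$ with a full lattice in $\mathfrak{a}_G$.
\end{itemize}
Hence every positive character has the form $g\mapsto q^{-\langle\nu,H_G(g)\rangle}$ for a unique $\nu\in\mathfrak{a}_G^*$. The same argument applied to the torus $Z$ shows that positive characters of $Z(F)$ trivial on $Z(F)_1$ correspond bijectively to $\mathfrak{a}_Z^*$. (Every positive character of $Z(F)$ is automatically trivial on $Z(F)_1$, which is compact.)

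\emph{Step 2 (restriction is an isomorphism).} The composite $Z^\circ\hookrightarrow G\to G/G_{\rm der}$ is an $F$-isogeny of tori. Therefore restriction $X^*(G)_F\to X^*(Z)_F$ is injective with finite cokernel, and tensoring with $\R$ gives an isomorphism $\mathfrak{a}_G^*\xrightarrow{\sim}\mathfrak{a}_Z^*$. By Step 1, restriction from positive characters of $G(F)$ to positive characters of $Z(F)$ is then a bijection.

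\emph{Step 3 (existence and uniqueness).} The lemma's two conditions translate as follows.
\begin{itemize}
\item $\pi|_{Z(F)}=\omega$, so $\pi\otimes\chi$ restricted to $Z(F)$ is the character $\omega\chi|_{Z(F)}$. Since $\chi$ is positive, this is unitary exactly when $\chi|_{Z(F)}=|\omega|^{-1}$.
\item $|\omega|^{-1}$ is a positive character of $Z(F)$, and it is trivial on the compact group $Z(F)_1$, so the second condition is automatic.
\end{itemize}
By Step 2 there is exactly one positive character $\chi$ of $G(F)$ with $\chi|_{Z(F)}=|\omega|^{-1}$, which gives both existence and uniqueness.

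The main obstacle is Step 1: showing that an arbitrary positive character kills $G(F)^1$ and not merely $G(F)_1$. I would rely on the generation of $\ker\kappa_G$ by parahoric subgroups together with the torsion-freeness of $\R_{>0}$, as above. Once that is in place, Step 2 is only the isogeny $Z^\circ\to G/G_{\rm der}$.
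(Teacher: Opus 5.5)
Your proof is correct, and in substance it is the standard lattice argument behind Casselman's Lemma 5.2.5, which is all the paper gives as proof: positive characters factor through the lattice $G(F)/G(F)^1$, the center's image has finite index there, and $\R_{>0}$ is uniquely divisible, so $|\omega|^{-1}$ extends uniquely. Your Step 1 also yields the paper's one added remark, that $\chi$ is trivial on the Iwahori subgroup $I$ (indeed on every compact subgroup); that triviality, together with existence, is exactly what Lemmas 4.7 and 4.8 use.
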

\begin{proof}
This is Lemma 5.2.5 in \cite{casselman1995introduction}. We notice that by construction in the proof, $\chi$ is trivial on the Iwahori subgroup $I\subset G$.   
\end{proof}
\begin{lem}\label{lem 4.7}
Assume $\phi\in\mathcal{Z}(H(E),\tilde{\rho})$ and let $f:=b\phi\in\mathcal{Z}(H,\rho)$. If $\phi_{\lambda N}$ and $f_\lambda$ are associated for all unitary central characters $\lambda:Z(F)\to\C^\times$ and all $\phi\in\mathcal{Z}(H(E),\tilde{\rho})$, then $\phi$ and $f$ are associated.    
\end{lem}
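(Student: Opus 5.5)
By Lemma \ref{lem 4.4}(i), it suffices to show that $\phi_{\lambda N}$ and $f_\lambda$ are associated for \emph{every} smooth character $\lambda:Z(F)\to\C^\times$, not only the unitary ones. The idea is to untwist a non-unitary $\lambda$ by a positive real character of $H(F)$, in the spirit of Lemma \ref{lem 4.6}. By Lemma \ref{lem 4.4}(iii) we may assume $\lambda|_{Z(F)_1}=\chi^{-1}|_{Z(F)_1}$, which is unitary. Hence $|\lambda|$ is trivial on $Z(F)_1$ and factors through $Z(F)/Z(F)_1$.

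The first step is to construct a positive real character $\nu:H(F)\to\R_{>0}$ such that $\nu|_Z=|\lambda|$ and $\nu$ is trivial on $H(F)_1$, in particular on $I_H$. Lemma \ref{lem 4.6}, applied to any $\lambda$-representation, provides such a $\nu$. Concretely, one can extend $|\lambda|$ from $Z(F)/Z(F)_1$ to $H(F)/H(F)_1=\pi_1(H)_I^{\sigma}$ up to torsion, using that $Z\to H\to H/H_{\rm der}$ is an isogeny onto a torus after composing. Set $\nu_E:=\nu\circ N$, understood via the torus quotient $D:=H/H_{\rm der}$. More precisely, I would take $\nu=\mu\circ \det$ for a positive character $\mu$ of $D(F)$, and $\nu_E:=\mu\circ N_{D}\circ\det$ on $H(E)$. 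Then $\nu_E(h^{-1}\delta\sigma(h))=\nu_E(\delta)$. Moreover, for $\gamma$ stably conjugate to $\mathcal{N}\delta$ we have $\nu(\gamma)=\nu_E(\delta)$, because the images in the abelian quotient $D$ satisfy $\det\gamma=N_D(\det\delta)$.

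The second step is to check that twisting preserves the center and is compatible with $b$. Since $\nu$ is trivial on $I_H$, we have $f\nu\in\mathcal{H}(H,\rho)$. The argument of Lemma \ref{lem 4.2}, namely (\ref{4.2.1}), gives $ch_{\xi^{-1}}(f\nu)=ch_{(\nu\xi)^{-1}}(f)$, so $f\nu$ is central. Likewise $\phi\nu_E\in\mathcal{Z}(H(E),\tilde\rho)$, and by Lemma \ref{lem characterization of base change hom} together with $(\nu\xi)\circ N=\nu_E\,\xi_N$ we obtain $b(\phi\nu_E)=f\nu$. Next, the twists commute with the $\lambda$-projections:
\begin{equation*}
(f\nu)_{\lambda|\lambda|^{-1}}=\nu\, f_\lambda,\qquad (\phi\nu_E)_{(\lambda|\lambda|^{-1})N}=\nu_E\,\phi_{\lambda N}.
\end{equation*}
This follows from $\nu(hz)=\nu(h)|\lambda|(z)$ and $\nu_E(hv)=\nu_E(h)|\lambda|(Nv)$.

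The third step is to apply the hypothesis. The character $\lambda|\lambda|^{-1}$ is unitary, and the hypothesis holds for all $\phi$, in particular for $\phi\nu_E$. So $\nu_E\phi_{\lambda N}$ and $\nu f_\lambda$ are associated. Because $\nu_E$ and $\nu$ are constant along twisted (resp.\ ordinary) stable classes and agree under the norm, they factor out of the stable orbital integrals: $SO_{\delta\sigma}(\nu_E\phi_{\lambda N})=\nu_E(\delta)SO_{\delta\sigma}(\phi_{\lambda N})$ and $SO_\gamma(\nu f_\lambda)=\nu(\gamma)SO_\gamma(f_\lambda)$. Dividing by the nonzero common value gives the matching for $\phi_{\lambda N}$ and $f_\lambda$, including the vanishing for non-norms. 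Lemma \ref{lem 4.4}(i) then finishes the proof.

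The main obstacle is the first step: producing $\nu$ with $\nu|_{Z}=|\lambda|$ that also factors through a norm-compatible abelian quotient, so that the $E$-side twist $\nu_E$ is well defined and satisfies $\nu_E=\nu\circ\mathcal{N}$. I would handle it by working on $D=H/H_{\rm der}$, where $Z\to D$ has finite kernel. Positive real characters extend uniquely along $Z(F)/Z(F)_1\to D(F)/D(F)_1$ modulo torsion, because $\R_{>0}$ is uniquely divisible.
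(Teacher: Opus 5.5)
Your argument is the paper's own: you write the non-unitary central character as a unitary one times the restriction of a positive character of $H(F)$ (Lemma \ref{lem 4.6}), use the identity $f_{\lambda\eta}=\eta\cdot(\eta^{-1}f)_\lambda$ and Lemma \ref{lem 4.8} ($b((\eta^{-1}N)\phi)=\eta^{-1}b\phi$), apply the hypothesis, and strip off the positive twist, where your factorization through $D=H/H_{\rm der}$ usefully gives meaning to the paper's $\eta N$ and spells out its ``straightforward calculation.'' One sign slip: with $\nu|_Z=|\lambda|$ one gets $(f\nu^{-1})_{\lambda|\lambda|^{-1}}=\nu^{-1}f_\lambda$, not $(f\nu)_{\lambda|\lambda|^{-1}}=\nu f_\lambda$, so twist by $\nu^{-1}$ and $\nu_E^{-1}$ instead (nothing else changes); also, when $Z\to D$ is not surjective the extension of $|\lambda|$ to $D(F)$ exists but is not unique, and only existence is needed.
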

\begin{proof}
By Lemma \ref{lem 4.4} (i) and (iii), the functions $\phi$ and $f$ are associated if $\phi_{\lambda N}$ and $f_\lambda$ are associated for every character $\lambda$ such that $\lambda|_{Z(F)_1}=\chi^{-1}|_{Z(F)_1}$. We claim that it is enough to prove the results for such \textbf{unitary} characters $\lambda$. Indeed, by the previous lemma, we can find positive characters $\eta$ on $G$, such that the set $\{\lambda\eta\}$ for unitary $\lambda$ contains all the characters we need in Lemma \ref{lem 4.4} (i) and (iii). 

 We consider the function $f_{\lambda\eta}$, where we understand that we actually mean $\lambda\eta|_Z$ in the definition. 
Now we have 
\begin{equation*}
\begin{split}
f_{\lambda\eta}(g)&=\int_{Z(F)}f(gz)\lambda^{-1}(z)\eta^{-1}(z)\,dz\\
&=\eta(g)\int_{Z(F)}f(gz)\eta^{-1}(gz)\lambda^{-1}(z)\,dz\\
&=\eta(g)(\eta^{-1}f)_\lambda(g).
\end{split}    
\end{equation*}
Therefore, we have
\begin{equation*}\tag{4.4.1}\label{4.4.1}
f_{\lambda\eta}=\eta\cdot(\eta^{-1}f)_\lambda,
\end{equation*}
Similarly, we have over $H(E)$,
\begin{equation*}\tag{4.4.2}\label{4.4.2}
\phi_{\lambda\eta N}=(\eta N)\cdot ((\eta^{-1} N)\phi)_{\lambda N}    
\end{equation*}
Now we assume $\phi\in\mathcal{Z}(H(E),\tilde{\rho})$ and $f=b\phi\in\mathcal{Z}(H,\rho)$. By the lemma below, $\eta^{-1}f$ and $(\eta^{-1}N)\phi$ are still in the center of respective Hecke algebras and we have $b((\eta^{-1}N)\phi)=\eta^{-1}\cdot b\phi=\eta^{-1}f$. Therefore, by assumption, $(\eta^{-1}f)_\lambda$ and $((\eta^{-1} N)\phi)_{\lambda N}$ are associated. Then by a straightforward calculation, we see that the left hand sides of (\ref{4.4.1}) and (\ref{4.4.2}) are associated, as desired.
\end{proof}

\begin{lem}\label{lem 4.8}
Let $\phi\in\mathcal{Z}(H(E),\rho)$ and $\eta$ be a character on $H$ such that $\eta|_I=\text{triv}$, where $I\subset H$ is the Iwahori subgroup where $\rho$ is defined on. Then $(\eta N)\cdot \phi\in\mathcal{Z}(H(E),\rho)$ and $b((\eta N)\cdot \phi)=\eta\cdot b\phi$.   
\end{lem}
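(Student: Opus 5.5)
We first check that $(\eta N)\cdot\phi$ lies in the Hecke algebra $\mathcal{H}(H(E),\rho_N)$, then show it is central, and finally identify its base change through the characterization of Lemma \ref{lem characterization of base change hom}, using the right-action form (\ref{3.3.3}).

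\textbf{Membership in the Hecke algebra.} The character $\eta\circ N$ is trivial on the Iwahori $I_E$. Indeed, $\eta$ is trivial on $I$, and $N$ maps $T(E)_1$ into $T(F)_1$. A character of $H(E)$ is determined on $I_E$ by its restriction to $T(E)_1$, since it is trivial on the unipotent parts of the Iwahori factorization. Here we read $\eta\circ N$ as the character $g\mapsto \eta(N_r g)$; more precisely, it is the character of $H(E)$ obtained from $\eta$, which is trivial on $p(H_{\rm sc})$, via the norm on the abelian quotient $H^0_{\rm ab}$. Hence for $i,i'\in I_E$ we get $(\eta N)(igi')\phi(igi')=\rho_N^{-1}(i)(\eta N)(g)\phi(g)\rho_N^{-1}(i')$. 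So $(\eta N)\phi\in\mathcal{H}(H(E),\rho_N)$, and it is compactly supported.

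\textbf{Centrality and the scalar.} Multiplication by a character is an algebra automorphism of $\mathcal{H}(H(E),\rho_N)$: for convolution one has $(\eta' f_1)*(\eta' f_2)=\eta'(f_1*f_2)$, because $\eta'(x)\eta'(x^{-1}g)=\eta'(g)$. Since it is an automorphism, it preserves the center. We then compute the action of $(\eta N)\phi$ on $i^{H(E)}_{B(E)}(\xi_N^{-1})^{\rho_N^{-1}}$. The map $\Psi\mapsto(\eta N)\Psi$ identifies this space with $i^{H(E)}_{B(E)}((\eta_T\xi)_N^{-1})^{\rho_N^{-1}}$, where $\eta_T=\eta|_{T(F)}$; this uses that $\eta N$ is trivial on $I_E$ and on unipotent radicals, and that the modulus character is unchanged. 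A direct computation of the right convolution gives
\begin{equation*}
ch_{\xi_N^{-1}}((\eta N)\phi)=ch_{(\eta_T\xi)_N^{-1}}(\phi),
\end{equation*}
in exact analogy with (\ref{4.2.1}). For this we need $(\eta N)|_{T(E)}=\eta_T\circ N_{T(E)/T(F)}$, which is a compatibility between the concrete norm on $H(E)$ and the torus norm after passing to abelianization. The same computation over $F$ gives $ch_{\xi^{-1}}(\eta f)=ch_{(\eta_T\xi)^{-1}}(f)$ for $f=b\phi$.

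\textbf{Conclusion.} The character $\eta_T\xi$ is again an extension of a $W(F)$-conjugate of $\chi$, because $\eta_T$ is trivial on $T(F)_1$. Applying (\ref{3.3.3}) gives
\begin{equation*}
ch_{\xi^{-1}}(\eta\, b\phi)=ch_{(\eta_T\xi)^{-1}}(b\phi)=ch_{(\eta_T\xi)_N^{-1}}(\phi)=ch_{\xi_N^{-1}}((\eta N)\phi)=ch_{\xi^{-1}}(b((\eta N)\phi)).
\end{equation*}
This holds for all $\xi$. Since $\beta$ is an isomorphism and elements of $\mathfrak{X}_\mathfrak{s}$ are separated by these scalars, we conclude $b((\eta N)\phi)=\eta\,b\phi$.

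\textbf{Main obstacle.} The delicate point is the compatibility $(\eta N)|_{T(E)}=\eta|_{T(F)}\circ N$, together with checking that twisting by $\eta N$ really transports $i(\xi_N^{-1})$ to $i((\eta_T\xi)_N^{-1})$ while preserving the $\rho_N^{-1}$-isotypic part. I would handle this by writing $\eta$ as factoring through $H\to H/H_{\rm der}=D$, a torus. On the torus $D$ the concrete norm and the Galois norm agree, and $T\to D$ commutes with norms. This reduces the claim to the abelian case.
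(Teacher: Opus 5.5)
Your argument is correct and is essentially the paper's proof. The paper also gets Hecke-algebra membership from $\eta|_I$ being trivial, proves centrality via the identity $[(\eta N)\phi]\ast\psi=\eta N\cdot[\phi\ast((\eta^{-1}N)\psi)]$ (which is exactly your observation that twisting by $\eta N$ is an algebra automorphism), and concludes with the same chain of scalar identities through (\ref{4.2.1}) and (\ref{3.3.3}). One harmless slip: $\Psi\mapsto(\eta N)\Psi$ sends $i^{H(E)}_{B(E)}(\xi_N^{-1})$ to $i^{H(E)}_{B(E)}((\eta_T^{-1}\xi)_N^{-1})$, not to $i^{H(E)}_{B(E)}((\eta_T\xi)_N^{-1})$; the map that gives your identification is $\Psi\mapsto(\eta N)^{-1}\Psi$, and your displayed formula $ch_{\xi_N^{-1}}((\eta N)\phi)=ch_{(\eta_T\xi)_N^{-1}}(\phi)$ is the correct one.
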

\begin{proof}
Let $\psi\in \mathcal{H}(H(E),\rho)$. Since $\eta$ is trivial on $I$, it is clear that $(\eta N)\cdot\phi\in\mathcal{H}(H(E),\rho)$
then we have 
\begin{equation*}
\begin{split}
[(\eta N)\cdot \phi]\ast\psi(h)&=\int_{H(E)}\eta N(g)\phi(g)\psi(g^{-1}h)\,dg\\
&=\eta N(h)\int_{H(E)}\eta N(g^{-1})\phi(hg^{-1})\psi(g)\,dg\\
&= \eta N(h)[\phi\ast((\eta^{-1}N)\cdot\psi)](h).
\end{split}    
\end{equation*}
Therefore we have 
\begin{equation*}
\begin{split}
[(\eta N)\cdot \phi]\ast\psi&= \eta N\cdot[\phi\ast((\eta^{-1}N)\psi)]\\
&=\eta N\cdot[((\eta^{-1}N)\psi)\ast\phi]\,(\phi\text{ is central})\\
&=\psi\ast[(\eta N)\cdot \phi]\,(\text{use the calculation above reversely}).
\end{split}
\end{equation*}
The centrality is shown. Using the identities (\ref{4.2.1}) and (\ref{3.3.3}), we have 
\begin{equation*}
\begin{split}
ch_{\xi^{-1}}(b((\eta N)\cdot \phi))&=ch_{\xi^{-1}_N}((\eta N)\cdot\phi)\\
&=ch_{(\xi\eta)^{-1}_N}(\phi)\\
&=ch_{(\xi\eta)^{-1}}(b\phi)\\
&=ch_{\xi^{-1}}(\eta\cdot b\phi)
\end{split}    
\end{equation*}
therefore $b((\eta N)\cdot \phi)=\eta\cdot b\phi$, as desired.
\end{proof}

\subsection{The case where the center is not an induced torus}\label{sec 4.5}
We assume $\gamma\in G(F)$ is a regular elliptic semisimple element, and $G_{\text{der}}=G_{\text{sc}}$. We will show that there is an exact sequence
\begin{equation*}
1\longrightarrow G\longrightarrow G'\longrightarrow Q\longrightarrow 1,    
\end{equation*}
where $G'$ is an unramified group over $F$ with the properties that $G'_{\text{der}}=G'_{\text{sc}}$ and $Z(G')$ is an induced torus. We follow the construction in \cite{Clozel90}, 6.1(b). Indeed, since $G$ is unramified, $Z(G)$ is contained in a maximal unramified $F$-torus $T\subset G$. The group of characters $X^*(T)$ is a finitely generated $\Z[\Gal(F'/F)]$-module, where $F'$ is an unramified extension of $F$ splitting $G$. Let $P\twoheadrightarrow X^*(T)$ be a free $\Z[\Gal(F'/F)]$-module. Then dually we get an embedding $T\hookrightarrow Z'$ where $Z'$ is an induced torus. We have the following exact sequence
\begin{equation*}
1\longrightarrow Z(G_{\text{der}})\longrightarrow Z(G)\times G_{\text{der}}\longrightarrow G\longrightarrow 1,    
\end{equation*}
see \cite{Milne_2017}, Example 19.25. We take $G'$ to be $(G_\text{der}\times Z')/Z(G_\text{der})$. It is easy to check that $Z(G')=Z'$ and  $G'_\text{der}=G_\text{der}$, therefore $G'$ satisfies the 
conditions we need. 

The natural embedding $G\hookrightarrow G'$ induces a natural homomorphism
\begin{equation*}
j:\mathcal{Z}(G,\rho)\hookrightarrow\mathcal{Z}(G',\rho')    
\end{equation*}
by the following commutative diagram
\begin{equation*}
\begin{tikzcd}
\C[\mathfrak{X}_\mathfrak{s}] \arrow[r, "\sim","\beta"', rightarrow] \arrow[d,hookrightarrow]
& \mathcal{Z}(G,\rho)\arrow[d,hookrightarrow, "j"'] \\
\C[\mathfrak{X}_{\mathfrak{s}'}] \arrow[r, "\sim","\beta"', rightarrow] 
&  \mathcal{Z}(G',\rho')
\end{tikzcd}
\end{equation*}
where the left vertical map is induced by the surjective map
\begin{equation*}
\begin{split}
\mathfrak{X}_{\mathfrak{s}'}&\twoheadrightarrow \mathfrak{X}_{\mathfrak{s}}\\
(T',\xi')_{G'}&\mapsto (T:=T'\cap G, \xi'|_T)_G
\end{split}    
\end{equation*}
where $T'\subset G'$ is a maximal $F$-torus and $\xi':T'(F)\to\C^\times$ is a smooth character extending some $W'(F)$-conjugate of $\chi'$, where $W'$ is the Weyl group of $G'$ and $\chi':T'(F)_1\to\C^\times$ is an extension of $\chi$. 

The injective morphism $j$ is uniquely determined by the Bernstein isomorphisms and it is the restriction of the map between Hecke algebras
\begin{equation*}
\begin{split}
\mathcal{H}(G,\rho)&\rightarrow \mathcal{H}(G',\rho') \\  
1_{n_w}&\mapsto 1'_{n_w},
\end{split}
\end{equation*}
where $n_w$ is the extended affine Weyl group $\tilde{W}_H=X_*(A)\rtimes W^\circ_\chi$ and $1_{n_w}$ (resp. $1'_{n_w}$) is the function in $\mathcal{H}(G,\rho)$ (resp., $\mathcal{H}(G',\rho')$) supported on $I_r n_w I_r$ (resp., $I'_r n_w I'_r$), whose value at $n_w$ is $1$. Such functions $1_{n_w}$ form a basis of the Hecke algebra $\mathcal{H}(G,\rho)$. Here we are using the notations from \S7.3, \cite{haines2012base}. Li (\cite{li2026base}, Lemma 7.2.3) shows that the morphism $j$ on the centers is indeed the restriction of the linear transformation on the whole Hecke algebras, therefore, it suffices to prove the following: 

\begin{lem}\label{lem 4.9}
There exists a constant $C_T$ such that, for every $\delta\in G(E)$ with elliptic regular norm in $T$, and for every $w\in\Tilde{W}_E$, we have
\begin{equation*}\tag{4.5.1}\label{4.5.1}
SO^{G(E)}_{\delta\sigma}(1_{n_w})= C_TSO^{G'(E)}_{\delta\sigma}(1'_{n_w}) 
\end{equation*}
\end{lem}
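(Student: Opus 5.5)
We follow the strategy of \cite{haines2009base}, \S5.4 and \cite{haines2012base}, \S7.3, which we adapt to the function field setting. Since $G'=(G_{\rm der}\times Z')/Z(G_{\rm der})$ and $G=(G_{\rm der}\times Z(G))/Z(G_{\rm der})$, we have $G'=G\cdot Z'$ with $G\cap Z'=Z(G)$, and $G'/G\cong Z'/Z(G)=:Q$ is a torus. Fix $\delta\in G(E)$ whose norm $\gamma$ is elliptic regular in $T$. The twisted centralizers are $G'_{\delta\sigma}=Z'\cdot G_{\delta\sigma}$, or more precisely its $F$-points sit in an exact sequence with $G_{\delta\sigma}(F)$ and a subgroup of $Q$. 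We compare the two sides of (\ref{4.5.1}) in three steps: first the index sets of the stable sums, then the individual twisted orbital integrals, and finally the signs and measures.

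\textbf{Step 1 (stable classes).} Since $G_{\rm der}=G_{\rm sc}$ and $G'_{\rm der}=G'_{\rm sc}$, both stable sums run over $\ker[H^1(F,G_{\delta\sigma})\to H^1(F,R_{E/F}G)]$ and the analogous set for $G'$. We compare these pointed sets using the long exact sequences attached to $1\to G\to G'\to Q\to 1$, together with their twisted-centralizer versions. Ellipticity makes $G_{\delta\sigma}$ an inner form of the anisotropic-mod-center torus $T$, and $Q$ is a torus. The argument of \cite{haines2009base} relies only on Galois cohomology of tori over a local field together with Kottwitz's description of $H^1$, and the latter holds in positive characteristic. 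From this we get that the map $\delta_1\mapsto\delta_1$ sends the $\sigma$-classes in $G(E)$ that are stably conjugate to $\delta$ onto those in $G'(E)$ lying in the image, with uniformly controlled fibres. The $G'$-classes not meeting $G(E)$ contribute zero, because $1'_{n_w}$ is supported on $I'_Er n_w I'_E\subset G(E)Z'(\mathcal O_E)$. The signs agree, since $e(G_{\delta\sigma})=e(G'_{\delta\sigma})$: the two groups differ by central tori, and we apply \cite{kottwitz1983sign}.

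\textbf{Step 2 (individual integrals).} For $\delta_1$ in the common set we write
\[
G'(E)=\bigsqcup_{c} G(E)\,Z'(E)\,c,
\]
where $c$ runs over a set of representatives of the finite group $G'(E)/G(E)Z'(E)$; this group injects into $H^1$ of a torus. We then compute $TO^{G'(E)}_{\delta_1\sigma}(1'_{n_w})$ by integrating over $G'_{\delta_1\sigma}\backslash G'(E)$. The $Z'(E)$-direction contributes $\sigma$-conjugation by central elements $z^{-1}\sigma(z)$. The resulting term $1'_{n_w}(g^{-1}\delta_1\sigma(g)\,z^{-1}\sigma(z))$ is nonzero only when $z^{-1}\sigma(z)\in Z'(\mathcal O_E)\cap I'_E$, and there it equals $1_{n_w}(g^{-1}\delta_1\sigma(g))$, using $I'_E\cap G(E)=I_E$ and the fact that $\rho'$ extends $\rho$. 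This gives the identity up to a volume factor involving
\[
\mathrm{vol}\bigl(Z(E)\backslash Z'(E)\text{ mod }\sigma\text{-twisting}\bigr)
\]
and the index of $G(E)Z'(E)$ in $G'(E)$. Both quantities depend only on $T$ through the centralizers, and not on $w$ or on $\delta$ within the given torus.

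\textbf{Main obstacle.} The hard step is to show that the constant is genuinely independent of $w$ and $\delta$. One has to check that the cosets $c$ and the $Z'$-translates behave uniformly: a coset $G(E)Z'(E)c$ with $c\notin G(E)Z'(E)$ should either miss the support of $1'_{n_w}$ for every $w$, or meet it compatibly for every $w$. To handle this we would use the Kottwitz homomorphism $\kappa_{G'}$: the support $I'_En_wI'_E$ lies in a single fibre of $\kappa_{G'}$ determined by $n_w\in\tilde W_E\subset G(E)$, so only cosets meeting $G(E)\ker\kappa_{G'}$ contribute. Combined with Step 1, this yields $C_T$ as a ratio of volumes of tori and the cardinality of cohomology kernels, and these are defined purely in terms of $T$ and $Z'$.
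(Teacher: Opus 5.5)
Your framework is the natural one: pass through $G'=G\cdot Z'$, compare $\sigma$-classes via the cohomology of $T=G_{\delta\sigma}\subset T'=G'_{\delta\sigma}$, and use $1'_{n_w}|_{G(E)}=1_{n_w}$. The paper itself gives no argument and only cites \cite{haines2012base}, Lemma 7.3.1. But your computation is wrong in Step 2 and left open afterwards.

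In Step 2 you claim that for $x\in G(E)$ the value $1'_{n_w}(x\,z^{-1}\sigma(z))$ vanishes unless $z^{-1}\sigma(z)\in Z'(\mathcal O_E)$, and then equals $1_{n_w}(x)$. Since $I'_En_wI'_E=I_En_wI_E\,Z'(\mathcal O_E)$, nonvanishing only gives $z^{-1}\sigma(z)=\zeta u$ with $u\in Z'(\mathcal O_E)$, $\zeta\in Z(E)$ and $N\zeta=1$. The value is then $\rho'^{-1}(u)\,1_{n_w}(x\zeta)$, and $\zeta$ cannot in general be removed by changing $z$ modulo $Z(E)$. Already for $G=Z=R^1_{E/F}\mathbb{G}_m\subset G'=Z'=R_{E/F}\mathbb{G}_m$ and $z=(\varpi,1,\dots,1)\in(E\otimes_FE)^\times$, one has $\zeta\notin(1-\sigma)Z(E)\cdot Z(\mathcal O_E)$, since otherwise $\varpi\in N_{E/F}(E^\times)$. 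Taking $\delta_1=n_w\zeta^{-1}$ then gives $TO^{G(E)}_{\delta_1\sigma}(1_{n_w})=0\neq TO^{G'(E)}_{\delta_1\sigma}(1'_{n_w})$. So single twisted orbital integrals are not proportional. Rather, $TO^{G'(E)}_{\delta_1\sigma}(1'_{n_w})$ is a weighted sum of $TO^{G(E)}_{\delta'\sigma}(1_{n_w})$ over the $G(E)$-classes $\delta'$ in the fibre of your Step 1 map. Proportionality only appears after summing over the stable class, so Steps 1 and 2 as written do not combine into (4.5.1).

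The real content is how the part of $G'(E)$ outside $G(E)$ contributes, uniformly in $w$, and this is exactly what you leave as the \emph{main obstacle}. The $\kappa_{G'}$ criterion you propose there does not follow from the support condition. That condition concerns $g^{-1}\delta\sigma(g)$, and $\kappa_{G'}(g^{-1}\delta\sigma(g))=\kappa_{G'}(\delta)+(\sigma-1)\kappa_{G'}(g)$ constrains only $(\sigma-1)\kappa_{G'}(g)$.

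The missing idea is to work with the image $\bar g$ of $g$ in $Q(E)=G'(E)/G(E)$.
Nonvanishing forces $\bar g^{-1}\sigma(\bar g)\in Q(\mathcal O_E)$, hence $\bar g\in Q(F)\,Q(\mathcal O_E)$ by Lang's theorem. So $g=\ell hu$ with $\bar\ell\in Q(F)$, $h\in G(E)$ and $u\in Z'(\mathcal O_E)$. For such $\ell$, the element $\delta_\ell=\ell^{-1}\delta\sigma(\ell)$ lies in $G(E)$ and in the $G$-stable class of $\delta$. The integrand is $1_{n_w}(h^{-1}\delta_\ell\sigma(h))$, because $\chi'\circ N$ kills $(1-\sigma)Z'(\mathcal O_E)$. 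Consequently no coset representative ever has to preserve $1'_{n_w}$.

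The class of $\delta_\ell$ is the image of $\bar\ell$ under the connecting homomorphism $Q(F)\to H^1(F,T)$ of $1\to T\to T'\to Q\to 1$. Hence every $G(E)$-class in the stable class occurs with the same multiplicity. The weights are built only from $T(F)$, $\{t'\in T'(F):\bar t'\in Q(\mathcal O_F)\}$ and $Z'(\mathcal O_E)/Z(\mathcal O_E)$. This is what produces a $C_T$ independent of $w$ and $\delta$.

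Finally, both this argument and your inclusion $I'_En_wI'_E\subset G(E)Z'(\mathcal O_E)$ need $T'(\mathcal O_E)=T(\mathcal O_E)Z'(\mathcal O_E)$, i.e.\ Lang's theorem for a connected $Z(G)$. This fails for $\mathrm{SL}_2\subset\mathrm{GL}_2$, which the hypothesis $G_{\rm der}=G_{\rm sc}$ alone does not rule out.
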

\begin{proof}
This is proved in \cite{haines2012base}, Lemma 7.3.1. The proof also works for positive characteristic.    
\end{proof}

From (\ref{4.5.1}), we have the following equalities: 
\begin{equation*}\tag{4.5.2}\label{4.5.2}
\begin{split}
SO^G_\gamma(f,dt,dg)&=c_T SO^{G'}_\gamma(jf,dt',dg')\\
SO^{G(E)}_{\delta\sigma}(\phi,dt,dg_E)&=C_T SO^{G'(E)}_{\delta\sigma}(j\phi,dt',dg_E'),
\end{split}    
\end{equation*}
for all functions $f\in\mathcal{Z}(G,\rho)$ and $\phi\in\mathcal{Z}(G(E),\rho_E)$ and for all ($\sigma$-)regular ($\sigma$-)elliptic elements $\gamma$ and $\delta$ in $G$ whose ($\sigma$-)centralizer is the elliptic 
$F$-torus $T$. The proof of Lemma 7.3.1 in \textit{loc. cit.} shows that the constants $c_T$ and $C_T$ making (\ref{4.5.2}) hold depend only on the torus $T$ and choices of measures (so not on the functions $f$ and $\phi$, and the choice of $\gamma=\mathcal{N}(\delta)$). Therefore, to force $c_T= C_T$, we use the fact that $1_{I_r}$ and $1_{I}$ are associated (\cite{kottwitz1986base}, \S1\footnote{Kottwitz\cite{kottwitz1986base} showed that the functions $1_{K_E}$ and $1_K$ are associated for any open bounded subgroup $K_E$ (resp., $K$) of $G(E)$ (resp., $G(F)$) satisfying certain assumptions. In particular, $K$ (resp. $K_E$) satisfies those assumptions when it is the $\mathcal O_F$-points (resp. $\mathcal{O}_E$-points) of a smooth and connected affine group scheme over $\mathcal O_F$ (resp. $\mathcal{O}_E$).}) and the stable orbital integrals of $1_I$ do not vanish identically on any torus $T$ in $G$.

\subsection{Summary of reduction steps}\label{sec 4.6}
By Lemma \ref{lem vanishing non-norm}, we may assume that $\gamma$ is a norm, and we write $\gamma=\mathcal{N}\delta$. We assume $\gamma$ is \textit{regular semisimple} from the beginning. Notice that it is different from the assumptions in \cite{haines2009base, haines2012base, Clozel90} of just being semisimple since we do not have  \textit{(twisted) Shalika germs} for local function fields and the homogeneity that were used in the proof of Proposition 7.2 in \cite{Clozel90} to reduce to regular semisimple elements. The reduction steps (1) and (2) are the same as in \cite{haines2009base}, 5.4. However, we need to pass it to the global setup afterwards.
\begin{enumerate}[(1)]
    \item \textit{We may assume that} $G_\text{der}=G_\text{sc}$. Indeed, for given $G$, we take $H$ to be a $z$-extension as in \ref{sec 4.3}, and let $\phi\in\mathcal{Z}(H(E), \tilde{\rho}_H)$ with $b\phi\in\mathcal{Z}(H, {\rho}_H)$. Assume that $(\phi, b\phi)$ are associated, then by Lemma \ref{lem 4.4}(i) and (iv), $(\overline{\phi}, \overline{b\phi})$ are associated. By Lemma \ref{lem 4.5}, $(\bar{\phi}, b\bar{\phi})$ are associated and $\bar{\phi}$ ranges over all functions in $\mathcal{Z}(G(E),\tilde{\rho})$, therefore the base change fundamental lemma is proved for $G$.
    \item \textit{We may assume that $\gamma$ is elliptic.} This is explained in \cite{haines2009base}, \S4 and \cite{haines2012base}, \S6.

    From here we pass to the global setup momentarily. We may assume that $G$ is split over an unramified extension $K/F$ such that $E\subset K$. We may also assume $G$ and $\gamma$ satisfies the conditions (1) and (2). Choose a degree $[K:F]$ cyclic extension of function fields $\underline{K}/\underline{F}$ and a finite place $v_0$ of $\underline{F}$ such that $\underline{K}_{v_0}$ is a field and $\underline{K}_{v_0}/\underline{F}_{v_0}\cong K/F$. Then there is a degree $r=[E:F]$ cyclic extension $\underline{E}/\underline{F}$ with $\underline{E}\subset\underline{K}$ and $\underline{E}_{v_0}/\underline{F}_{v_0}\cong E/F$.

    There is a quasi-split group $\underline{G}$ over $\underline{F}$ with the property that $\underline{G}\times_{\underline{F}}\underline{F}_{v_0}\cong G$. Let $\theta$ denote the $\underline{F}$-linear automorphism of $\Tilde{\underline{G}}$ from $\Gal(E/F)\cong \Gal(\underline{E}/\underline{F}).$. We may repeat the process of finding a $G\hookrightarrow G'$ at the beginning of \ref{sec 4.4}, but globally: a global group $\underline{G'}$ such that $\underline{G}'_{\text{der}}=\underline{G}_{\text{der}}$ and $Z(\underline{G}')$ is an induced torus. Therefore after base change to $v_0$, we still have the desired properties for $G$ and $G':=\underline{G'}_{v_0}$ as in \ref{sec 4.4}. Therefore we have the following reduction step:
    \item \textit{We may assume that} $G_\text{der}=G_\text{sc}$ \textit{and $Z(G)$ is an induced torus which comes from a global induced torus}. Indeed we work under assumptions (1) and (2), and Lemma \ref{lem 4.9} and the discussion there show that we may assume $Z(G)$ is indeed an globally induced torus.
    \item \textit{We may assume that $\gamma$ belongs to a specific dense subset in the set of regular elliptic elements.} This is explained in \cite{Clozel90} Lemma 6.7. Recall that we call a regular semisimple element $\gamma$ \textit{strongly regular semisimple} if $G_\gamma$ is a (connected) torus. Later in Lemma \ref{lem equality of integrals between groups}, we will see that $\gamma\in G(F)$ will be assumed to be strongly regular semisimple, whose image in the adjoint group is still strongly regular. By Lemma 8.7, we will indeed see that such $\gamma$ will form a dense subset of the set of regular elliptic elements in $G(F)$. Notice that we don't need to worry about the $a(\delta')$ terms as in the reduction steps of \cite{haines2009base} since our group $G$ has simply connected derived group.
\end{enumerate}
\begin{con}\label{conclusion of reduction steps}
We may assume that $G_{\mathrm{der}}=G_{\rm{sc}}$ with $Z(G)$ an globally induced torus, and $\gamma$ is a strongly regular elliptic element itself and in the adjoint group, and is a norm.    
\end{con}

\begin{rem}\label{rem of haines12 reduction steps}
The reduction steps in \cite{haines2012base} have flaws in that Lemma 7.2.3 in \textit{loc. cit.} is not valid for depth-zero Hecke algebras. Therefore, we cannot reduce to the case that $G$ is adjoint. The reduction steps here present a solution in order to avoid reducing to adjoint groups.  
\end{rem}
We hope to show the associations of functions $\phi\in\mathcal{Z}(H(E),\tilde{\rho})$ and $b\phi$ at all such elements $\gamma$ over the group $G$. The strategy of the proof in the following sections is that, using Lemma \ref{lem 4.4} (i) and Lemma \ref{lem 4.7}, we will show that $\phi_{\lambda N}$ and $(b\phi)_\lambda$ are associated for all unitary central characters and all $\phi\in\mathcal{Z}(H(E),\tilde{\rho})$, at all strongly regular elliptic semisimple elements in $H$ whose images in the adjoint group is still strongly regular. 

Using the existence of \textit{local data} adapted to the case with unitary central characters (\S \ref{sec 7}), we are able to show the desired association. In order to produce the adapted local data, one needs to use the global (twisted) trace formulas (\S \ref{sec 5}) and one needs to \textit{stabilize} the trace formulas (\S \ref{sec 6}) for them to be useful. 

\section{The Simple Trace Formula}\label{sec 5}

\subsection{Setup}\label{sec 5.1}
This section is mostly independent of other sections. We do not make any assumptions on the characteristics of the global field $F$.
Our main references for the simple (twisted) trace formula are \cite{arthur1989simple} \S 1.2 and \cite{DKV} A.1.

Assume that $E/F$ is a cyclic extension of global fields of degree $r$, we write $\A$ (resp. $\A_E$) for the adeles of $F$ (resp. $E$). We denote $\Gamma:=\Gal(E/F)$ and $\theta\in\Gamma$ be a generator. Let $G$ be a connected reductive group defined over $F$, and let $[G]_E:=G(E)\backslash G(\A_E)$ denote the usual \textit{adelic quotient} over $E$. We consider the left action of $G(\A_E)$ on $L^2(G(E)\backslash G(\A_E))$ given by
\begin{equation*}\tag{5.1.1}\label{5.1.1}
(R(g)f)(x):=f(xg)    
\end{equation*}
for any $g\in G(\A_E), f\in L^2(G(E)\backslash G(\A_E))$ and $x\in G(E)\backslash G(\A_E)$.

Let $Z=Z(G)$ be the center of $G$. By the reduction steps, we can assume that $Z$ is an induced torus over $F$, therefore, we have $G(R)/Z(R)=G_{\rm ad}(R)$ for any $F$-algebra $R$, where $G_{\rm ad}:=G/Z$. 
Fix a \textbf{unitary} character $\lambda: Z(\A_F)/Z(F)\to\C^\times$ and define $\lambda N: Z(\A_E)\to\C^\times$ in the usual way. 

We consider the space $C^\infty_c(Z(\A_E)\backslash G(\A_E), \lambda^{-1})$ of locally constant functions whose supports are compact modulo $Z(\A_E)$ and transformed by $\lambda ^{-1}N$ under the action of the center $Z$. We will denote it by $C^\infty_c(G_{\rm ad}(\A_E), \lambda^{-1})$ with $Z(\A_E)$ understood in the definition. Similarly, we consider the space $L^2(G_{\rm ad}(E)\backslash G_{\rm ad}(\A_E), \lambda)=L^2(G_{\rm ad}(E)\backslash G_{\rm ad}(\A_E), \lambda)$ of functions on $G(E)\backslash G(\A_E)$ that are square-integrable on $G_{\rm ad}(E)\backslash G_{\rm ad}(\A_E)$ which transform by $\lambda$ under $Z$. Notice that we need $\lambda$ to be unitary in order to define the integrability over the quotient.  

Now for any $\phi\in C^\infty_c(G_{\rm ad}(\A_E), \lambda^{-1})$, the above action (\ref{5.1.1}) induces an action of $\phi$ on \[L^2(G_{\rm ad}(E)\backslash G_{\rm ad}(\A_E), \lambda),\] given by
\begin{equation*}\tag{5.1.2}\label{5.1.2}
(R(\phi)f)(x):=\int_{G_{\rm ad}(\A_E)}\phi(g)R(g)f(x)\,dg=\int_{G_{\rm ad}(\A_E)}\phi(g)f(xg)\,dg.
\end{equation*}
for any $f\in L^2(G_{\rm ad}(E)\backslash G_{\rm ad}(\A_E), \lambda)$.

\begin{defn}\label{defn of cuspidal subspace}
We define the \textit{cuspidal subspace} 
\[L^2_{\mathrm{cusp}}(G_{\rm ad}(E)\backslash G_{\rm ad}(\A_E), \lambda)\subset L^2(G_{\rm ad}(E)\backslash G_{\rm ad}(\A_E), \lambda)\] to be the space of functions $\phi\in L^2(G_{\rm ad}(E)\backslash G_{\rm ad}(\A_E), \lambda)$ such that, for every parabolic subgroup $P\subset G$ with unipotent radical $N$, one has 
\begin{equation*}
\int_{[N]}\phi (ng)\,dn=0   
\end{equation*}
for all $g\in G(\A_E)$. We call such $\phi$ \textit{cuspidal functions}.  
\end{defn}

We also have the operator
\begin{align*}
I_\theta: L^2(G_{\rm ad}(E)\backslash G_{\rm ad}(\A_E), \lambda)&\rightarrow L^2(G_{\rm ad}(E)\backslash G_{\rm ad}(\A_E), \lambda),\\
f&\mapsto(x\mapsto f(\theta^{-1}(x))),
\end{align*}
that preserves $L^2_{\text{cusp}}(G_{\rm ad}(E)\backslash G_{\rm ad}(\A_E), \lambda)$.
\begin{prop}\label{prop kernel}
For each $\phi\in C^\infty_c(G_{\rm ad}(\A_E), \lambda^{-1})$, the composite
\[
R(\phi)\circ I_\theta: L^2(G_{\rm ad}(E)\backslash G_{\rm ad}(\A_E), \lambda)\to L^2(G_{\rm ad}(E)\backslash G_{\rm ad}(\A_E), \lambda)
\]
has kernel function 
\[
K_\phi(g,\theta(h)):=\sum_{\delta\in G_{\rm ad}(E)}\phi(g^{-1}\delta\theta(h)),
\]
\end{prop}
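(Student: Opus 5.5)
The plan is a direct unfolding computation. Write $X:=G_{\rm ad}(E)\backslash G_{\rm ad}(\A_E)$, the quotient of $G(E)\backslash G(\A_E)$ by $Z(\A_E)$. Here we use that $Z$ is induced, so $G(R)/Z(R)=G_{\rm ad}(R)$ for $R=E,\A_E$. Take $f\in L^2(X,\lambda)$ and $x\in G(\A_E)$. By (\ref{5.1.2}) we have
\[
(R(\phi)I_\theta f)(x)=\int_{G_{\rm ad}(\A_E)}\phi(g)\,f(\theta^{-1}(xg))\,dg .
\]
First we check that the integrand is well defined modulo $Z(\A_E)$. Under $g\mapsto gz$, the factor $\phi$ picks up $\lambda^{-1}N(z)$. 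The factor $f(\theta^{-1}(xgz))$ picks up $\lambda(\theta^{-1}z)$, interpreting $\lambda$ on $Z(\A_E)$ as $\lambda N$, and $\lambda N$ is $\theta$-invariant. So the product is invariant.

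Next we change variables. Substitute $g\mapsto x^{-1}\theta(h)$, so that $xg=\theta(h)$ and $\theta^{-1}(xg)=h$. The map $\theta$ on $G_{\rm ad}(\A_E)$ preserves the Haar measure, since it permutes the places of $E$ over each place of $F$ and matches the local measures, which we choose $\theta$-compatibly. Left translation by $x^{-1}$ also preserves it, $G_{\rm ad}(\A_E)$ being unimodular. This gives
\[
(R(\phi)I_\theta f)(x)=\int_{G_{\rm ad}(\A_E)}\phi(x^{-1}\theta(h))\,f(h)\,dh .
\]

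Now we fold the integral over $G_{\rm ad}(E)\backslash G_{\rm ad}(\A_E)$. Write $h=\gamma h'$ with $\gamma\in G_{\rm ad}(E)$ and use the left $G(E)$-invariance of $f$. Since $\theta(\gamma h')=\theta(\gamma)\theta(h')$ and $\theta$ permutes $G_{\rm ad}(E)$ bijectively, we may reindex $\delta=\theta(\gamma)$. The result is
\[
\int_{X}\Big(\sum_{\delta\in G_{\rm ad}(E)}\phi(x^{-1}\delta\theta(h'))\Big)f(h')\,dh' ,
\]
which exhibits $K_\phi(x,\theta(h'))$ as the kernel.

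The only real point, and the main thing I expect to need care, is justifying the interchange of sum and integral, i.e.\ that the sum is locally finite. Since $\phi$ is compactly supported modulo $Z(\A_E)$, for $x,h'$ in a compact set $C$ the $\delta$ contributing lie in $C\,\mathrm{supp}(\phi)\,\theta(C)^{-1}$ modulo $Z(\A_E)$. That set is compact in $G_{\rm ad}(\A_E)$, and $G_{\rm ad}(E)$ is discrete there. Hence only finitely many $\delta$ contribute, the kernel is locally constant, and Fubini applies because $f$ is locally $L^1$ on $X$. One must also check that the sum is well defined on the quotient, i.e.\ invariant in $x$ and $h'$ under $G_{\rm ad}(E)$. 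For $x\mapsto \gamma x$ we reindex by $\delta\mapsto\gamma\delta$; for $h'\mapsto\gamma h'$ we reindex by $\delta\mapsto\delta\theta(\gamma)^{-1}$. The transformation under $Z(\A_E)$ is the same character computation as in the first step.
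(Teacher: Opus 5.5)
Your proposal is correct and follows essentially the same route as the paper: expand $R(\phi)I_\theta f$, change variables to reach $\int_{G_{\rm ad}(\A_E)}\phi(x^{-1}\theta(h))f(h)\,dh$, then fold over $G_{\rm ad}(E)$ using its $\theta$-stability and the left $G_{\rm ad}(E)$-invariance of $f$. Your extra checks (compatibility of the central characters, local finiteness of the sum over $\delta$, and integrability for Fubini) fill in convergence details that the paper leaves to its cited unfolding lemma.
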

In other words, we have 
\begin{equation*}
\begin{split}
(R(\phi)\circ I_\theta(f))(g)&=\int_{G_{\rm ad}(E)\backslash G_{\rm ad}(\A_E)}K_\phi(g,\theta(h)) f(h)\,dh\\
&=\int_{G_{\rm ad}(E)\backslash G_{\rm ad}(\A_E)} \sum_{\delta\in G_{\rm ad}(E)}\phi(g^{-1}\delta\theta(h))f(h)\,dh
\end{split}    
\end{equation*}
For any $f\in L^2(G_{\rm ad}(E)\backslash G_{\rm ad}(\A_E),\lambda)$ and $g\in G_{\rm ad}(\A_E)$.

\begin{proof}
Indeed, since 
\begin{equation*}
\begin{split}
(R(\phi)\circ I_\theta(f))(g)&=\int_{G_{\rm ad}(\A_E)}\phi(h)I_\theta(f)(gh)\,d\bar{h}\\
&=\int_{G_{\rm ad}(\A_E)}\phi(h)f(\theta^{-1}(g)\theta^{-1}(h))\,d\bar{h}\\
&=\int_{G_{\rm ad}(\A_E)}\phi(g^{-1}\theta(h))f(h)\,d\bar{h}\\
&=\int_{G_{\rm ad}(E)\backslash G_{\rm ad}(\A_E)}\sum_{\delta\in  G_{\rm ad}(E)}\phi(g^{-1}\delta\theta(h))f(h)\,d\bar{h}
\end{split}    
\end{equation*}
where the third line follows from change of variables, and the last line follows from a well-known technique called \textbf{unfolding} or \textbf{integration in stages} and the fact that $f$ is invariant under the action of elements in $G_{\rm ad}(E)$. We also use the fact that the set $G_{\rm ad}(E)$ is $\theta$-stable.
For completeness we include the unfolding lemma below. For a proof, see \cite{getz2024introduction} Theorem 3.2.2 and Lemma 9.2.4.   
\end{proof}

\begin{lem}\label{lem unfolding}
Suppose that $G$ is a Hausdorff, locally compact, second countable topological group with right Haar measure $dg$. If $f\in L^1(G)$ and $\Gamma\leq G$ is a discrete subgroup such that the modular character of $G$ is trivial on $\Gamma$, then we have
\begin{equation*}
\int_{G/\Gamma}\sum_{\gamma\in\Gamma}f(\gamma g)\,dg=\int_Gf(g)\,dg.    
\end{equation*}
\end{lem}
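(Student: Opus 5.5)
We reduce the unfolding identity to the standard quotient-integral formula for a closed subgroup, applied with $H=\Gamma$ discrete. Let $\mu_G$ be the right Haar measure $dg$ and $\mu_\Gamma$ counting measure on $\Gamma$, which is a Haar measure on the discrete group $\Gamma$. Counting measure is left and right invariant, so $\Gamma$ is unimodular. The modular functions of $G$ and $\Gamma$ therefore agree on $\Gamma$ exactly when $\Delta_G|_\Gamma\equiv 1$, which is the hypothesis.

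\textbf{Step 1: Invariant measure on the quotient.} By the Weil criterion (e.g. \cite{folland2016course}, Theorem 2.51, in its version for right Haar measures and right cosets), the equality $\Delta_G|_\Gamma=\Delta_\Gamma\equiv 1$ gives a $G$-invariant Radon measure $dg$ on $G/\Gamma$ (the notation $G/\Gamma$ follows the statement). It is unique up to scalar and is normalized so that
\[
\int_G \varphi(g)\,dg=\int_{G/\Gamma}\sum_{\gamma\in\Gamma}\varphi(\gamma g)\,d\dot g
\]
for all $\varphi\in C_c(G)$. Second countability ensures that this measure and the disintegration are well behaved; in particular, $\sigma$-finiteness is needed for the Fubini-type arguments below. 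For compactly supported $\varphi$ the sum over $\Gamma$ is locally finite because $\Gamma$ is discrete, so the inner function is continuous and compactly supported on the quotient.

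\textbf{Step 2: Extend from $C_c(G)$ to $L^1(G)$.} First take $\varphi\ge 0$ Borel. Approximate it monotonically by functions in $C_c(G)$, or argue via the Riesz representation theorem that both sides define the same Radon measure. The monotone convergence theorem, applied on both sides and to the countable sum over $\Gamma$, then gives the identity for all nonnegative measurable $\varphi$, with values in $[0,\infty]$.

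Next apply this to $|f|$ for $f\in L^1(G)$. This shows that $\sum_\gamma |f(\gamma g)|$ is finite for almost every coset and that the coset-wise sum is integrable on $G/\Gamma$. Splitting $f$ into the positive and negative parts of its real and imaginary parts, and using linearity together with dominated convergence, yields the identity for $f$.

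\textbf{Main obstacle.} The only delicate point is the monotone-class extension in Step 2: checking that the function $g\mapsto\sum_\gamma \varphi(\gamma g)$ is measurable on the quotient. This follows since each translate is measurable and $\Gamma$ is countable; $\Gamma$ is countable because it is a discrete subset of a second countable space. Everything else is the standard theorem, so in the paper one can simply cite \cite{getz2024introduction}, Theorem 3.2.2 and Lemma 9.2.4, as the authors do.
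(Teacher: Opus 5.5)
Your proposal is correct and is essentially the argument the paper relies on. The paper gives no proof of its own and simply cites \cite{getz2024introduction}, Theorem 3.2.2 and Lemma 9.2.4, which is Weil's quotient integral formula on $C_c(G)$ extended to $L^1(G)$ by monotone convergence, with the quotient read as $\Gamma\backslash G$ (right cosets $\Gamma g$), as you implicitly do.

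Two points need tightening. Increasing limits of $C_c$ functions only reach lower semicontinuous functions, so it is your Riesz-uniqueness alternative (valid because $G$ is second countable) that covers all nonnegative Borel $\varphi$. Also, you need measurability of $g\mapsto\sum_{\gamma}\varphi(\gamma g)$ on $\Gamma\backslash G$, not just on $G$; this follows because $G\to\Gamma\backslash G$ is a covering map with local continuous sections.
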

\begin{lem}\label{lem trace class}
The linear operator $R_{\mathrm{cusp}}(\phi)\circ I_\theta$ is of trace class on $L^2_{\mathrm{cusp}}(G_{\rm ad}(E)\backslash G_{\rm ad}(\A_E), \lambda)$.  
\end{lem}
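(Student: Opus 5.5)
The plan is to reduce the claim to the untwisted fact that convolution operators by smooth compactly supported (mod center) test functions act as trace class operators on the cuspidal spectrum. Then $I_\theta$ is absorbed as a bounded (indeed unitary) operator.

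\textbf{Step 1: $I_\theta$ is a unitary operator preserving the cuspidal space.}
Since $\lambda$ is $\theta$-invariant ($\lambda N\circ\theta=\lambda N$), the map $f\mapsto f\circ\theta^{-1}$ preserves $L^2(G_{\rm ad}(E)\backslash G_{\rm ad}(\A_E),\lambda)$. It is an isometry, because $\theta$ preserves $G_{\rm ad}(E)$ and the Tamagawa/Haar measure on $G_{\rm ad}(\A_E)$; the latter holds because $\theta$ permutes the places of $E$ above each place of $F$ and is measure preserving locally. It also preserves cuspidality, since $\theta$ permutes the $E$-parabolics and their unipotent radicals ($G$ is defined over $F$), with $\int_{[N]}f(\theta^{-1}(n)\theta^{-1}(g))\,dn=\int_{[\theta^{-1}N]}f(n'\theta^{-1}g)\,dn'=0$.

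\textbf{Step 2: $R_{\rm cusp}(\phi)$ is trace class.}
Write $\phi$ as a finite linear combination of factorizable functions $\phi=\otimes_v\phi_v$, with $\phi_v$ the unit (the $\lambda^{-1}N$-twisted characteristic function of the maximal compact) for almost all $v$. Fix an open compact $K=\prod_v K_v$ with $\phi$ bi-$K$-invariant. Then $R_{\rm cusp}(\phi)$ factors through the projection onto $L^2_{\rm cusp}(\dots,\lambda)^K$.

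In the function field case, with $\lambda$ unitary and the center accounted for, cusp forms with fixed level $K$ and central character are supported on a compact subset of $G_{\rm ad}(E)\backslash G_{\rm ad}(\A_E)/K$. This is Harder's finiteness theorem; see Moeglin--Waldspurger I.2.7 or \cite{DKV} A.1. Hence $L^2_{\rm cusp}(\dots,\lambda)^K$ is finite dimensional. So $R_{\rm cusp}(\phi)$ has finite rank and is therefore trace class.

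In the number field case I would instead cite the standard argument of \cite{arthur1989simple}, \S1.2. Using Dixmier--Malliavin at archimedean places, write $\phi$ as a finite sum of convolutions $\phi_1*\phi_2$. Each $R_{\rm cusp}(\phi_i)$ is Hilbert--Schmidt, since its kernel restricted to the cuspidal part is rapidly decreasing, hence square integrable, on the compact-modulo-center quotient. A product of two Hilbert--Schmidt operators is trace class.

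\textbf{Step 3: conclusion.}
$R_{\rm cusp}(\phi)\circ I_\theta$ is the composition of a trace class operator with a bounded operator on $L^2_{\rm cusp}$. The trace class operators form a two-sided ideal in the bounded operators, so the composition is trace class.

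The main obstacle is Step 2: producing the precise reference or argument for trace class in the presence of a unitary central character and a non-anisotropic center. The reduction steps make $Z$ an induced torus, so $G_{\rm ad}(\A_E)=G(\A_E)/Z(\A_E)$ and the quotient has finite volume. This lets one invoke the standard results directly.
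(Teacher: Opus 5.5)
Your proposal is correct and follows the same route as the paper: $R_{\mathrm{cusp}}(\phi)$ is trace class, $I_\theta$ is bounded on the cuspidal space, and trace-class operators form a two-sided ideal in the bounded operators. The only difference is that the paper simply cites Theorem 9.1.1 of Getz--Hahn for the trace-class property, whereas you justify it yourself (in the function field case, as finite rank via the finiteness statement the paper records as Lemma~\ref{lem Harish-Chandra finiteness}) and you make explicit the unitarity of $I_\theta$, which the paper leaves implicit.
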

\begin{proof}
Since $R_{\text{cusp}}(\phi):=R(\phi)|_{L^2_{\text{cusp}}(G_{\rm ad}(E)\backslash G_{\rm ad}(\A_E), \lambda)}$ is of trace class by Theorem 9.1.1 of \cite{getz2024introduction}, the same is true of $R_{\text{cusp}}(\phi)\circ I_\theta$, since the set of trace class operators forms a two-sided ideal in the algebra of bounded linear operators on $L^2_{\text{cusp}}(G_{\rm ad}(E)\backslash G_{\rm ad}(\A_E), \lambda)$ (see \cite{deitmar2014principles}, Lemma 5.3.4).
\end{proof}

\subsection{Proof of the Simple Trace Formula}\label{sec 5.2}
Let us choose two places $v_1, v_2$ of $F$ which split completely in $E$. We put the following assumptions on the \textit{test function} $\phi\in C^\infty_c(G_{\rm ad}(\A_E), \lambda^{-1})$:

\begin{enumerate}[(1)]
    \item The adelic function $\phi$ is a (pure) tensor product of local functions $\phi=\prod\phi_v$ over \textbf{places $v$ of $F$}, where $\phi_v\in C^\infty_c(G(E_v)/Z(E_v),\lambda^{-1})$. For almost all places $v$ of $F$, $\phi_v$ is invariant under $G(\mathcal{O}_{E_v})$ and supported on $Z(E_v)G(\mathcal{O}_{E_v})$, and satisfies
    \begin{equation*}
    \int_{Z(E_v)\backslash Z(E_v)G(\mathcal{O}_{E_v})}\phi(g)\,dg=1.    
    \end{equation*}
    \item On $G(E_{v_1})\cong G(E_{w_1})\times G(E_{w_2})\times\dots\times G(E_{w_r})$, we have $\phi_{v_1}=(\phi^1_{w_1},\dots,\phi^1_{w_r})$ where each $\phi^1_{w_i}$ is a matrix coefficient of the same supercuspidal representation $\pi$ of $G(E_{w_i})\cong G(F_{v_1})$. This is possible since by the work of A. Kret\cite{Kret}, we know that supercuspidal representations exist for any reductive group $G$ over an non-archimedean local field $F$. Twisted by a character on the whole group by Lemma \ref{lem 4.6}, we may assume that the central character is unitary, and the representation is still supercuspidal since the matrix coefficients are still compactly supported modulo the center.

    \item Let $\phi_{v_2}=(\phi^2_{u_1},\dots,\phi^2_{u_r})$ be the analogous decomposition at $v_2$. Let $\Omega_i=\text{Supp}(\phi^2_{u_i})$, then $\Omega_1\Omega_2\dots\Omega_r$ is contained in the set of elements of $G(F_{v_2})$ with strongly regular elliptic image in $G_{\rm ad}(F_{v_2})$.
\end{enumerate}

We first show that the assumption (2) implies that the image of $R(\phi)$ is in the space of cusp forms.
\begin{lem}\label{lem cuspidal image}
Let $v$ be a finite place of $E$, let $f^v\in C^\infty_c(G(\A^v_E))$, and let $f_v\in C^\infty_c(G(E_v))$ be supercuspidal. Let $f(g)=f^v(g^v)f_v(g_v)$, so that $f\in C^\infty_c(G(\A_E))$. Then $R(f)$ has cuspidal image. 
\end{lem}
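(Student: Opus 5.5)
To prove Lemma \ref{lem cuspidal image}, I would compute the constant terms of $R(f)\varphi$ directly and show they vanish; the vanishing will come entirely from the supercuspidality of $f_v$. Fix $\varphi\in L^2(G(E)\backslash G(\A_E))$ (or the $\lambda$-equivariant variant). Fix also a proper $E$-rational parabolic subgroup $P=MN$ of $G$ and $x\in G(\A_E)$. We must show
\begin{equation*}
\int_{[N]}(R(f)\varphi)(nx)\,dn=0 .
\end{equation*}

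\emph{Step 1: rewrite the constant term via the kernel.} Unfold as in Proposition \ref{prop kernel} (with $\theta$ trivial). Then $(R(f)\varphi)(nx)=\int_{G(\A_E)}f(x^{-1}n^{-1}y)\varphi(y)\,dy$. Since $[N]$ is compact and everything is compactly supported modulo the center, Fubini lets me move the $[N]$-integral inside. Writing $y$ over $N(E)\backslash G(\A_E)$ and unfolding the sum over $N(E)$, the constant term becomes
\begin{equation*}
\int_{N(E)\backslash G(\A_E)}\Bigl(\int_{N(\A_E)}f(x^{-1}ny)\,dn\Bigr)\varphi(y)\,dy .
\end{equation*}
So it suffices to prove that $\int_{N(\A_E)}f(x^{-1}ny)\,dn=0$ for all $x,y\in G(\A_E)$.

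\emph{Step 2: factor the inner integral.} Since $f=f^v\otimes f_v$ and $N(\A_E)=N(E_v)\times N(\A_E^v)$ with the product measure, the inner integral factors as
\begin{equation*}
\Bigl(\int_{N(\A^v_E)}f^v(x^{v,-1}n^vy^v)\,dn^v\Bigr)\Bigl(\int_{N(E_v)}f_v(x_v^{-1}n_vy_v)\,dn_v\Bigr).
\end{equation*}

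\emph{Step 3: kill the local factor.} The local factor is $\int_{N(E_v)}(L_{x_v}f_v)(n_vy_v)\,dn_v$, where $L_{x_v}f_v$ denotes the left translate of $f_v$. Left translates of a supercuspidal function are again supercuspidal. Here "supercuspidal" means all its proper constant terms $\int_{N(E_v)}h(ng)\,dn$ vanish identically, which holds for matrix coefficients of supercuspidal representations by Harish-Chandra's characterization. That characterization is valid in any characteristic; compare Bernstein–Zelevinsky and Casselman's notes, which Lemma \ref{lem 4.6} already uses. Hence the local factor is zero, and with it the whole constant term.

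The step I expect to need most care is the justification of Fubini and the unfolding. In the $\lambda$-twisted setting of \S\ref{sec 5.1}, functions are only compactly supported modulo $Z(\A_E)$, so I would carry out the unfolding on $Z(\A_E)N(E)\backslash G(\A_E)$, using that $\lambda$ is unitary. Absolute convergence then follows from the compactness of $[N]$ together with $\varphi\in L^2\subset L^1_{\mathrm{loc}}$ on the compact-mod-center support. A second point to fix in the write-up is the meaning of "$f_v$ supercuspidal": I would define it as vanishing of all proper constant terms of all translates, and note that this holds for matrix coefficients of supercuspidal representations, as chosen in assumption (2).
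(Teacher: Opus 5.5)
Your proposal is correct and is essentially the standard argument that the paper relies on by citing Lemma 16.4.1 of \cite{getz2024introduction}: you unfold the constant term of $R(f)\varphi$ along $N(E)$, which reduces the claim to the vanishing of $\int_{N(\A_E)}f(x^{-1}ny)\,dn$, and this integral factors and is killed by its local factor at $v$. With the paper's definition of supercuspidality, which already requires $\int_{N(E_v)}f_v(gnh)\,dn=0$ for all $g,h\in G(E_v)$, your Step 3 does not need the separate remark about translates.
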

Recall that we say $f_v\in C^\infty_c(G(E_v))$ is \textit{supercuspidal} if 
\begin{equation*}
\int_{N(E_v)} f_v(gnh)\,dn=0,  
\end{equation*}
for all proper parabolic subgroup $P< G_{E_v}$ with unipotent radical $N$ and for all $g,h\in G(E_v)$. As the name suggests, matrix coefficients of supercuspidal representations are indeed supercuspidal, see \cite{getz2024introduction}, Lemma 16.4.2.
\begin{proof}
This is standard, for example see Lemma 16.4.1 in \cite{getz2024introduction}.
\end{proof}

And we have the main theorem of this section. We define the \textit{adelic twisted orbital integral} of $\phi\in C^\infty_c(G_{\rm ad}(\A_E), \lambda^{-1})$ to be 
\begin{equation*}
TO^{G_{\rm ad}(\A_E)}_{\delta\theta}(\phi)=\int_{G_{\rm ad,\delta\theta}(\A_E)\backslash G_{\rm ad}(\A_E)}\phi(g^{-1}\delta\theta(g))\,dg.
\end{equation*}
We notice that the above twisted orbital integral converges since $\phi$ is compactly supported on $G_{\rm ad}(\A_E)$.
\begin{thm}\label{Thm twisted trace formula}
Under the assumptions (1)(2)(3) above, the operator $R(\phi)$ sends $L^2$ automorphic forms into cusp forms, and 
\begin{equation*}\tag{5.2.1}\label{5.2.1}
\mathrm{tr}\,(R_{\mathrm{cusp}}(\phi)\circ I_\theta)=\sum_{\delta}\tau(G_{\rm ad, \delta\theta})TO^{G_{\rm ad}(\A_E)}_{\delta\theta}(\phi)
\end{equation*}
where $\delta$ runs over the $\theta$-conjugacy classes of elements of $G_{\rm ad}(E)$ with strongly elliptic regular norms, and the group $G_{\rm ad,\delta\theta}$ is the $\theta$-centralizer of $\delta$. Moreover $\tau(G_{\rm ad,\delta\theta})=\text{vol}\,(G_{\rm ad,\delta\theta}(E)\backslash G_{\rm ad,\delta\theta}(\A_E))$.
\end{thm}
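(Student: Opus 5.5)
The strategy is the standard one for the simple trace formula (Arthur--Clozel, Deligne--Kazhdan--Vign\'eras). We express the trace of $R_{\mathrm{cusp}}(\phi)\circ I_\theta$ as the integral of the kernel along the twisted diagonal. We then use assumptions (2) and (3) to kill all non-elliptic and non-strongly-regular contributions, and unfold what remains.

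\textbf{Step 1: cuspidal image and trace as a kernel integral.} By assumption (2), $\phi_{v_1}$ is a product of supercuspidal matrix coefficients, so Lemma \ref{lem cuspidal image} shows that $R(\phi)$ maps $L^2(G_{\rm ad}(E)\backslash G_{\rm ad}(\A_E),\lambda)$ into the cuspidal subspace. Since $I_\theta$ preserves that subspace, the image of $R(\phi)\circ I_\theta$ is cuspidal as well, and $\mathrm{tr}(R(\phi)\circ I_\theta)=\mathrm{tr}(R_{\mathrm{cusp}}(\phi)\circ I_\theta)$, which is finite by Lemma \ref{lem trace class}. The kernel of Proposition \ref{prop kernel} can be replaced by its cuspidal projection $K_{\phi,\mathrm{cusp}}(g,h)=\sum_i R(\phi)I_\theta\varphi_i(g)\overline{\varphi_i(h)}$, taken over an orthonormal basis of cusp forms. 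This cuspidal kernel is continuous and rapidly decreasing, and we have
\[
\mathrm{tr}=\int_{[G_{\rm ad}]_E}K_{\phi,\mathrm{cusp}}(g,g)\,dg .
\]
The goal of the remaining steps is to show that one may integrate the full geometric kernel $K_\phi(g,\theta(g))$ instead. The cleanest route is to show that $K_\phi(g,\theta(g))$ is itself compactly supported on the quotient. Then $K_\phi=K_{\phi,\mathrm{cusp}}$ as kernels (because the image is cuspidal), and the diagonal integrals agree.

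\textbf{Step 2: localization via $v_2$.} For $\delta\in G_{\rm ad}(E)$ with $\phi(g^{-1}\delta\theta(g))\ne0$, consider the component at $v_2$, where $E_{v_2}=\prod_i E_{u_i}$ and $\theta$ permutes the factors cyclically. The twisted condition at this split place means that the concrete norm $N\delta$ is conjugate, in $G_{\rm ad}(F_{v_2})$, to a product of elements of $\Omega_1,\dots,\Omega_r$. By assumption (3), $N\delta$ is therefore strongly regular elliptic at $v_2$. Hence $N\delta$ is globally elliptic, since a global split torus would remain split at $v_2$. It is also strongly regular, so the $\theta$-centralizer $G_{\rm ad,\delta\theta}$ is an $F$-anisotropic torus modulo the center, by Lemma \ref{lemma of inner form}.

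\textbf{Step 3: finiteness and unfolding.} A standard compactness argument gives two facts. First, only finitely many $\theta$-conjugacy classes of such $\delta$ meet the support of $\phi$. Second, for each such class the map $g\mapsto g^{-1}\delta\theta(g)$ has compact fibres over $\mathrm{Supp}\,\phi$ modulo $G_{\rm ad,\delta\theta}(\A_E)$. The argument uses that the conjugacy classes of elliptic regular elements are closed and that the centralizers are anisotropic, so that $G_{\rm ad,\delta\theta}(E)\backslash G_{\rm ad,\delta\theta}(\A_E)$ has finite volume. These facts give absolute convergence, and we may group the kernel sum by $\theta$-conjugacy classes:
\[
\int K_\phi(g,\theta g)\,dg=\sum_{\delta}\int_{G_{\rm ad,\delta\theta}(E)\backslash G_{\rm ad}(\A_E)}\phi(g^{-1}\delta\theta(g))\,dg .
\]
Integrating in stages, as in Lemma \ref{lem unfolding}, over $G_{\rm ad,\delta\theta}(E)\backslash G_{\rm ad,\delta\theta}(\A_E)$ then yields $\tau(G_{\rm ad,\delta\theta})\,TO^{G_{\rm ad}(\A_E)}_{\delta\theta}(\phi)$. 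This is \eqref{5.2.1}.

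The main obstacle is Step 3 in positive characteristic: proving that the elliptic regular contribution is compactly supported on $[G_{\rm ad}]_E$, i.e., that the relevant twisted orbits are closed and the set of contributing classes is finite, without the characteristic-zero tools. I would try to use reduction theory over function fields (Harder's Siegel domains) together with closedness of strongly regular semisimple orbits, which holds in any characteristic. I would also use the rationality input from \S\ref{sec 2} (Lemma \ref{rationality of regular semisimple}) to identify twisted classes with norm classes.
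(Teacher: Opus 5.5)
Your proposal is correct and follows essentially the same route as the paper. The steps match: cuspidal image from the supercuspidal component at $v_1$; the trace computed as the integral of $K_\phi(g,\theta(g))$ along the twisted diagonal; restriction to $\delta$ with strongly regular elliptic norm via the support condition at $v_2$ (the paper's Lemma~\ref{lem 5.7}, which it simply cites from Arthur--Clozel); and regrouping by $\theta$-conjugacy followed by integration in stages. The step you flag as the main obstacle, compact support of $\sum_\delta|\phi(g^{-1}\delta\theta(g))|$ on the quotient, is exactly the paper's Lemma~\ref{lem 5.8}. The paper does not prove that lemma from scratch either; it adapts Arthur--Clozel's Lemma 2.6 using Henniart's appendix, which is valid in positive characteristic, where you instead sketch a reduction-theory and closed-orbit argument aimed at the same point.
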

\begin{proof}
From the lemma above we know that
\[
\tr(R_\text{cusp}(\phi)\circ I_\theta)=\tr(R(\phi)\circ I_\theta)
\]
and we obtain the latter trace by integrating along the diagonal $K_{\phi}(g,\theta(g))$ of the kernel associated to $R(\phi)\circ I_\theta$, whence
\begin{equation*}
\begin{split}
 \tr\,(R_{\text{cusp}}(\phi)\circ I_\theta)&=\int_{G_{\rm ad}(E)\backslash G_{\rm ad}(\A_E)}K_\phi(g,\theta(g))\,dg\\
 &=\int_{G_{\rm ad}(E)\backslash G_{\rm ad}(\A_E)}\sum_{\delta\in G_{\rm ad}(E)}\phi(g^{-1}\delta\theta(g))dg. 
\end{split}
\end{equation*}
\begin{lem}\label{lem 5.7}
Only those $\delta$ with $N\delta$ strongly regular elliptic in $G_{\rm ad}(E)$ will appear in the summation above.
\end{lem}
\begin{proof}
Same as in \cite{arthur1989simple}, P15.
\end{proof}

We need to swap the order of the integration and the summation. 
\begin{lem}\label{lem 5.8}
The function 
\begin{equation*}
F(g)=\sum_{\substack{\delta\in G_{\rm ad}(E),\\N\delta \mathrm{\,strongly\, elliptic \,regular}}}|\phi(g^{-1}\delta\theta(g))|
\end{equation*}
is compactly supported on $G(E)Z(\A_E)\backslash G(\A_E)$.
\end{lem}
\begin{proof}
Using Henniart (\cite{MSMF_1983_2_11-12__1_0}, Appendice 2), which is valid in positive characteristic, the proof of Lemma 2.6 in \cite{arthur1989simple} adapts.
\end{proof}
We regroup the summation by $\theta$-conjugacy by $G_{\rm ad}(E)$:
\[
\tr\,(R_{\text{cusp}}(\phi)\circ I_\theta)=\int_{G_{\rm ad}(E)\backslash G_{\rm ad}(\A_E)}\sum_{\substack{\delta\in G_{\rm ad}(E)\\N\delta \text{ strongly ell. reg.}\\\text{up to }G_{\rm ad}(E)\text{-}\theta\text{-}\text{conjugacy}}}\sum_{\substack{\delta'\in G_{\rm ad}(E)\\\delta' \sim \delta\text{ by } G_{\rm ad}(E)\text{-}\theta\text{-conjugacy}}}\phi(g^{-1}\delta'\theta(g))dg,
\]

which implies the final result of the theorem by the following manipulations of integration "in stages":

\begin{equation*}
\begin{split}
\tr\,(R_{\text{cusp}}(\phi)\circ I_\theta)&=\sum_{\delta}\int_{G_{\rm ad}(E)\backslash G_{\rm ad}(\A_E)}\sum_{\substack{\delta'\in G_{\rm ad}(E)\\\delta' \sim \delta\text{ by } G_{\rm ad}(E)\text{-}\theta\text{-conjugacy}}}\phi(g^{-1}\delta'\theta(g))dg\\
&=\sum_{\delta}\text{vol}(G_{\rm ad, \delta\theta}(E))^{-1}\int_{G_{\rm ad}(E)\backslash G_{\rm ad}(\A_E)}\int_{G_{\rm ad}(E)}\phi(g^{-1}v^{-1}\delta\theta(v)\theta(g))\,dv\,dg\\
&=\sum_{\delta}\text{vol}(G_{\rm ad, \delta\theta}(E))^{-1}\int_{G_{\rm ad}(\A_E)}\phi(g^{-1}\delta\theta(g))dg\\
&=\sum_{\delta}\text{vol}(G_{\rm ad, \delta\theta}(E))^{-1}\,\text{vol}(G_{\rm ad, \delta\theta}(\A_E))\int_{G_{\rm ad, \delta\theta}(\A_E)\backslash G_{\rm ad}(\A_E)}\phi(g^{-1}\delta\theta(g))dg\\
&=\sum_{\substack{\delta\in G_{\rm ad}(E)\\N\delta \text{ strongly ell. reg.}\\\text{up to }G_{\rm ad}(E)\text{-}\theta\text{-}\text{conjugacy}}}\text{vol}(G_{\rm ad, \delta\theta}(E)\backslash G_{\rm ad, \delta\theta}(\A_E))\,TO^{G_{\rm ad}(\A_E)}_{\delta\theta}(\phi)
\end{split}    
\end{equation*}
Thus the theorem is proved.
\end{proof}
In the special case of $E=F$, where $r=1$ and $\theta=\text{id}$, we denote $r(f)$ to be the representation of $f\in C^\infty_c(G_{\rm ad}(\A), \lambda^{-1})$ on the space $L^2(G_{\rm ad}(\A), \lambda)$, and choose $f$ similarly as in the twisted case, by the same process, we will have 
\begin{equation*}\tag{5.2.2}\label{5.2.2}
\tr\,(r_{\text{cusp}}(f))=\sum_{\gamma}\tau(G_{\rm ad, \gamma})O^{G_{\rm ad}(\A)}_\gamma(f),    
\end{equation*}
where the sum $\gamma$ is over the set of conjugacy classes of regular elliptic elements in $G_{\rm ad}(F)$ and $\tau(G_{\rm ad, \gamma}):=\text{vol}(G_{\rm ad, \gamma}(F)\backslash G_{\rm ad, \gamma}(\A))$. Similarly the \textit{adelic orbital integral} is defined to be
\begin{equation*}
O^{G_{\rm ad}(\A)}_\gamma(f)=\int_{G_{\rm ad, \gamma}(\A)\backslash G_{\rm ad}(\A)}f(g^{-1}\gamma g)\,dg
\end{equation*}

In the next section, we will perform the so-called \textit{stabilizations} of the trace formulas (\ref{5.2.1}) and (\ref{5.2.2}) in order to compare them effectively. 
    
However, before doing that, we will want to examine further the relation between various objects on the group and adjoint group in this setting.

\subsection{Passing Between the Adjoint Group and the Group}\label{sec 5.3}
The trace formulas are given in terms of (twisted) orbital integrals on the adjoint group $G_{\rm ad}$, however, since our functions are defined on $G$, later we would like to make statements about integrals on $G$. Therefore, it seems necessary to relate various integrals on the group and the adjoint group. In this subsection, we assume that $G$ is a connected reductive group defined over a local or global field $F$ with $G_{\text{der}}=G_\text{sc}$. We assume that $E/F$ is a cyclic extension of degree $r$, and let $\theta\in\Gal(E/F)$ be a generator. We use the same $\theta$ to denote the automorphism on the group. We assume that $Z(G)$ is an induced torus over $F$ (in particular, it is connected). We also assume the Haar measure $dg$ on $G(F)$ is normalized so that $\text{vol}(Z(E)/Z(F))=1$. Denote $G_{\rm ad}:=G/Z$, therefore $G_{\rm ad}(E)=G(E)/Z(E)$ and $G_{\rm ad}(F)=G(F)/Z(F)$.

Assume $\lambda:Z(F)\to\C^\times$ is a smooth character trivial on the maximal compact subgroup $Z(F)_1\subset Z(F)$. Define $\lambda N: Z(E)\to\C^\times$ as before, and we still denote it by $\lambda$ by abuse of notations. Let $\phi\in C^\infty_c(G_{\rm ad}(E),\lambda^{-1})$ and $f\in C^\infty_c(G_{\rm ad}(F),\lambda^{-1})$ with similar definitions as in 8.1. Let $\gamma\in G(F)$ be a semisimple element. We first prove a technical result:
\begin{lem}\label{lem. surjection of centralizers}
The maps $G_\gamma(F)\to G^\circ_{\mathrm{ad},{\bar{\gamma}}}(F)$ and $G_{\delta\theta}(F)\to G^\circ_{\mathrm{ad},\bar{\delta}\theta}(F)$ are surjective.   
\end{lem}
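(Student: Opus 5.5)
The plan is to identify $G_\gamma$ with the full preimage of $G^\circ_{\mathrm{ad},\bar\gamma}$ under $p\colon G\to G_{\rm ad}$, and then deduce surjectivity on $F$-points from the vanishing of $H^1(F,Z)$, using that $Z$ is an induced torus. The twisted statement will be handled by the same argument on $I=R_{E/F}G_E$ with the automorphism $s$.

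\emph{Step 1 (cohomological input).} Set $\tilde G_{\bar\gamma}:=p^{-1}(G^\circ_{\mathrm{ad},\bar\gamma})$. This is a closed $F$-subgroup of $G$ containing $Z$, and it fits into an exact sequence of $F$-groups
\begin{equation*}
1\longrightarrow Z\longrightarrow \tilde G_{\bar\gamma}\longrightarrow G^\circ_{\mathrm{ad},\bar\gamma}\longrightarrow 1.
\end{equation*}
Since $Z$ is smooth and central, the sequence of $F^s$-points is exact, and Galois cohomology gives $\tilde G_{\bar\gamma}(F)\to G^\circ_{\mathrm{ad},\bar\gamma}(F)\to H^1(F,Z)$. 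Because $Z$ is a product of tori $R_{L/F}\G_m$ with $L/F$ separable, Shapiro's lemma and Hilbert 90 give $H^1(F,Z)=0$. This holds over any field, so no characteristic-zero input is needed. Hence $\tilde G_{\bar\gamma}(F)\to G^\circ_{\mathrm{ad},\bar\gamma}(F)$ is surjective.

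\emph{Step 2 (identify $\tilde G_{\bar\gamma}$ with $G_\gamma$).} I will consider the map $c\colon p^{-1}(G_{\mathrm{ad},\bar\gamma})\to Z$, $c(g)=g\gamma g^{-1}\gamma^{-1}$. Since $c(g)$ is central, $c(gh)=g\,c(h)\gamma g^{-1}\gamma^{-1}=c(h)c(g)$, so $c$ is a homomorphism with kernel $G_\gamma$.
\begin{itemize}
\item $\tilde G_{\bar\gamma}$ is connected, being an extension of the connected group $G^\circ_{\mathrm{ad},\bar\gamma}$ by the connected torus $Z$.
\item The image of $c$ lies in $\{z\in Z:\ z\gamma \text{ is conjugate to } \gamma\}$. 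This set is finite. In a faithful representation, $z$ acts by scalars on each isotypic piece for $Z$, and multiplying by $z$ must permute the finitely many eigenvalues of the semisimple element $\gamma$ on that piece; this pins $z$ down to finitely many values.
\end{itemize}
So $c(\tilde G_{\bar\gamma})$ is a connected subset of a finite set containing $1$, hence trivial, and $\tilde G_{\bar\gamma}\subset G_\gamma$. Conversely, $G_\gamma$ is connected by Steinberg's theorem, since $G_{\rm der}=G_{\rm sc}$ (valid in any characteristic for semisimple elements). Hence $p(G_\gamma)\subset G^\circ_{\mathrm{ad},\bar\gamma}$, which gives $G_\gamma=\tilde G_{\bar\gamma}$, and Step 1 finishes the untwisted case.

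\emph{Step 3 (twisted case).} I will run the same argument inside $I$. The center $Z_I=R_{E/F}Z_E$ is again induced, so $H^1(F,Z_I)=0$. The relevant homomorphism is $c(g)=g^{-s}\delta g\delta^{-1}$ on the preimage of $I_{\mathrm{ad},s\bar\delta}$. Finiteness of its image follows by applying the norm: using Lemma~\ref{Lemma 2.1}(a), the equality $g^{-s}\delta g=z\delta$ implies that $N(z)N\delta$ is conjugate to $N\delta$. Combined with Lemma~\ref{lemma of inner form}, this reduces finiteness to the untwisted finiteness statement for the semisimple element $N\delta$. Connectedness of $I_{s\delta}$ follows from Lemma~\ref{lemma of inner form} and Steinberg applied to $G_{N\delta}$. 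The rest is identical.

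I expect the main obstacle to be the finiteness/connectedness argument in Step 2, which shows that the preimage of the identity component lies in the centralizer, and especially its twisted analogue in Step 3.
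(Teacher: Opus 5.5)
\textbf{Untwisted case.} Your Steps 1--2 are correct and take a different route from the paper. The paper gets surjectivity of $G_\gamma\to G^\circ_{\mathrm{ad},\bar\gamma}$ by noting that both groups are generated by a maximal torus and the root groups $U_\alpha$ with $\alpha(\gamma)=\alpha(\bar\gamma)=1$. You instead identify $G_\gamma$ with $p^{-1}(G^\circ_{\mathrm{ad},\bar\gamma})$ using the commutator homomorphism into $Z$. Both arguments then finish with $H^1(F,Z)=0$.

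\textbf{Twisted case (Step 3): genuine gap.} The preimage $\tilde I$ of $I^\circ_{\mathrm{ad},s\bar\delta}$ under $I\to I/Z_I$ contains all of $Z_I=R_{E/F}Z_E$. For $z\in Z_I$ your homomorphism is $c(z)=z^{-s}z$. Over $F^s$ one has $Z_I\cong Z^r$ with $s$ permuting the factors cyclically, so $c(Z_I)=\{(z_1,\dots,z_r): z_1\cdots z_r=1\}$. This is a torus of dimension $(r-1)\dim Z$, so the image of $c$ is not finite.

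Consequently $\tilde I\not\subset I_{s\delta}$; in fact $Z_I\cap I_{s\delta}=Z_I^{\,s}=Z$. Your norm argument only shows that $N(c(g))$ lies in a finite set. By connectedness this gives $c(\tilde I)\subset\ker(N|_{Z_I})$, but it does not pin down $c(g)$ itself. For the same reason $H^1(F,Z_I)=0$ is the wrong input: it gives surjectivity from $\tilde I(F)$, which is strictly larger than $G_{\delta\theta}(F)$. The relevant sequence is $1\to Z\to I_{s\delta}\to I^\circ_{\mathrm{ad},s\bar\delta}\to 1$, and what you need is $H^1(F,Z)=0$.

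\textbf{Repair.} In the coordinates above, $\ker(N|_{Z_I})=\{z_1\cdots z_r=1\}=c(Z_I)$. Hence $c(\tilde I)=c(Z_I)$, and since $c$ is a homomorphism with kernel $I_{s\delta}$, you get $\tilde I=Z_I\cdot I_{s\delta}$. So $I_{s\delta}\to I^\circ_{\mathrm{ad},s\bar\delta}$ is surjective with kernel $Z$, and $H^1(F,Z)=0$ finishes the proof.

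Alternatively, follow the paper and avoid the full preimage altogether. Transport the untwisted surjectivity $G_{N\delta}\to G^\circ_{\mathrm{ad},N\bar\delta}$ through the $E$-isomorphisms $(I_{s\delta})_E\cong (G_{N\delta})_E$ of Lemma~\ref{lemma of inner form} and $(I^\circ_{\mathrm{ad},s\bar\delta})_E\cong (G^\circ_{\mathrm{ad},N\bar\delta})_E$.
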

\begin{proof}
We first notice that it suffices to prove that $G_\gamma\to G^\circ_{\mathrm{ad},\bar{\gamma}}$ and   $G_{\delta\theta}\to G^\circ_{\mathrm{ad},\bar{\delta}\theta}$ are surjective, as the results over $F$-rational point will follow from the long exact sequences induced by
\begin{equation*}
    1\longrightarrow Z\longrightarrow G_\gamma\longrightarrow G^\circ_{\mathrm{ad},\bar{\gamma}}\longrightarrow 1
\end{equation*}
and 
\begin{equation*}
    1\longrightarrow Z\longrightarrow G_{\delta\theta}\longrightarrow G^\circ_{\mathrm{ad},\bar{\delta}\theta}\longrightarrow 1.
\end{equation*}
We remark that these groups are defined over $F$ in the second exact sequence. To show that $G_\gamma\to G^\circ_{\mathrm{ad},\bar{\gamma}}$ is surjective, we notice that $G_\gamma$ is generated by the maximal torus $T$ containing $\gamma$ and root groups $U_\alpha$ for roots $\alpha$ relative to $T$ such that $\alpha(\gamma)=1$. We see that $\alpha(\gamma)=\alpha(\bar{\gamma})$ since $\alpha(z)=1$ for any root $\alpha$ and $z\in Z$, and $T$ surjects onto the maximal torus $\bar{T}$ in $G_\mathrm{ad}$ containing $\gamma$, therefore $G_\gamma\to G^\circ_{\mathrm{ad},\bar{\gamma}}$ is surjective.

For the twisted centralizers, we have the following commutative diagram:
\begin{equation*}
\begin{tikzcd}
(G_{\delta\theta})_E \arrow[r] \arrow[d, "\simeq"]
& (G^\circ_{\mathrm{ad},\bar{\delta}\theta})_E\arrow[d,"\simeq"] \\
(G_{N\delta})_E \arrow[r, twoheadrightarrow]
&  (G^\circ_{\mathrm{ad},N\bar{\delta}})_E
\end{tikzcd}
\end{equation*}
therefore, $(G_{\delta\theta})_E\to(G^\circ_{\mathrm{ad},\bar{\delta}\theta})_E$ is a surjection. Since $(G^\circ_{\mathrm{ad},\bar{\delta}\theta})_E\to G^\circ_{\mathrm{ad},\bar{\delta}\theta}$ is also an $F$-surjection , we have $G_{\delta\theta}\to G^\circ_{\mathrm{ad},\bar{\delta}\theta}$ is surjective, as desired.
\end{proof}
\begin{rem}
In the proof above, it suffices to just assume that $Z\subset Z(G)^\circ$.    
\end{rem}

\begin{lem}\label{lem ser in the adjoint group}
\begin{enumerate}[(a)]
    \item $\gamma$ is regular in $G(F)$ if and only if $\bar{\gamma}$ is regular in $G_{\rm ad}(F)$;
    \item $\gamma$ is elliptic in $G(F)$ if and only if $\bar{\gamma}$ is elliptic in $G_{\rm ad}(F)$;
    \item If $\bar{\gamma}$ is strongly regular in $G_{\rm ad}(F)$, then so is $\gamma$ in $G(F)$.
\end{enumerate}
\end{lem}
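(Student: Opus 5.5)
The plan is to work with a maximal torus $T\subset G$ containing $\gamma$, defined over $F$, and its image $\bar T=T/Z$ in $G_{\rm ad}$, which is a maximal $F$-torus of $G_{\rm ad}$ containing $\bar\gamma$. The standing assumptions are that $Z=Z(G)$ is connected (an induced torus) and $G_{\rm der}=G_{\rm sc}$. The basic input is the observation already used in the proof of Lemma \ref{lem. surjection of centralizers}: every root $\alpha$ of $(G,T)$ is trivial on $Z$. Hence $\alpha(\gamma)=\alpha(\bar\gamma)$, and the root systems of $(G,T)$ and $(G_{\rm ad},\bar T)$ coincide via $X^*(\bar T)\subset X^*(T)$.

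For (a), I would use the characterization that a semisimple element is regular if and only if $\alpha(\gamma)\neq1$ for every root $\alpha$, equivalently $G^\circ_\gamma=T$. Since $\alpha(\gamma)=\alpha(\bar\gamma)$ for all roots, $\gamma$ is regular exactly when $\bar\gamma$ is. Alternatively, Lemma \ref{lem. surjection of centralizers} gives the surjection $G_\gamma\to G^\circ_{{\rm ad},\bar\gamma}$ with kernel $Z$, so $\dim G_\gamma=\dim G^\circ_{{\rm ad},\bar\gamma}+\dim Z$. Combined with $\operatorname{rank}G=\operatorname{rank}G_{\rm ad}+\dim Z$, this gives the same conclusion.

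For (b), ellipticity means that $G_\gamma^\circ/Z(G)^\circ$ is anisotropic, i.e.\ $(G^\circ_\gamma)$ contains no split torus beyond $A_G$. Lemma \ref{lem. surjection of centralizers} gives $G_\gamma/Z\cong G^\circ_{{\rm ad},\bar\gamma}$, and $Z(G_{\rm ad})$ is finite. So $Z(G^\circ_{{\rm ad},\bar\gamma})^\circ$ is the image of $Z(G_\gamma)^\circ$, and it is $F$-anisotropic if and only if $Z(G_\gamma)^\circ/Z$ is. Since $Z$ is connected, this is exactly the statement that the split rank of $Z(G_\gamma)^\circ$ equals that of $Z$. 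The small point requiring care is that passing to the quotient by the torus $Z$ preserves anisotropy. This follows from the exact sequence of cocharacter groups (in the isogeny category) of the tori $Z\subset Z(G_\gamma)^\circ$, taking Galois invariants, which is exact rationally.

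For (c), suppose $G_{{\rm ad},\bar\gamma}$ is a torus, which is then necessarily $\bar T$. I would show $G_\gamma\subset T$. If $g\in G_\gamma$, then $\bar g$ centralizes $\bar\gamma$, so $\bar g\in\bar T$ and therefore $g\in p^{-1}(\bar T)=TZ=T$. Hence $G_\gamma=T$, which is connected, and $\gamma$ is regular by (a); thus $\gamma$ is strongly regular. Here we also use $G_\gamma\supset T$. I expect no real obstacle; the only delicate step is the anisotropy bookkeeping in (b). The converse in (c) fails in general because $G_{\rm ad}$ may have disconnected centralizers, which is why only one direction is stated.
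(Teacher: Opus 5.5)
Your proposal is correct.

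\textbf{Parts (a) and (b).} These follow the paper's route. The paper simply calls both parts ``immediate'' from the isomorphism $G_\gamma/Z\cong G^\circ_{{\rm ad},\bar\gamma}$ of Lemma \ref{lem. surjection of centralizers}. That lemma's proof rests on exactly your observation that roots are trivial on $Z$. You supply the bookkeeping the paper omits, in particular the exactness of $0\to X_*(Z)_{\Q}\to X_*(S)_{\Q}\to X_*(S/Z)_{\Q}\to 0$ on Galois invariants, which handles the anisotropy step.

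\textbf{Part (c).} Here your route differs slightly. The paper gives two arguments:
\begin{enumerate}
\item $G_\gamma=G^\circ_\gamma$ by Steinberg's theorem (this uses $G_{\rm der}=G_{\rm sc}$), combined with (a);
\item $G_\gamma/Z\cong G_{{\rm ad},\bar\gamma}$ is a torus and $Z$ is connected, so $G_\gamma$ is connected.
\end{enumerate}
You instead note that $G_\gamma\subset p^{-1}(G_{{\rm ad},\bar\gamma})=p^{-1}(\bar T)=TZ=T$. This uses only the fact that the center lies in every maximal torus. So it needs neither the simply connected derived group, nor connectedness of $Z$, nor the surjectivity lemma, and it gives $G_\gamma=T$ directly. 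It is the cleaner and more general argument.

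\textbf{A wording fix in (b).} Ellipticity means that $Z(G^\circ_\gamma)^\circ/Z(G)^\circ$ is anisotropic, not $G^\circ_\gamma/Z(G)^\circ$. With the latter definition, central elements of isotropic groups would fail to be elliptic. The computation you actually carry out uses the correct, center-based definition, so only your opening sentence of (b) needs to change.
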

\begin{proof}
From the isomorphisms $G_\gamma/Z\cong G^{\circ}_{\rm ad, \bar{\gamma}}$ and $G_\gamma(F)/Z(F)\cong G^{\circ}_{\rm ad, \bar{\gamma}}(F)$, we know that (a)(b) are immediate. (c) also follows since $G_\gamma=G^\circ_\gamma$ for every semisimple $\gamma\in G(F)$. However we don't even need to assume $G_{\text{der}}=G_\text{sc}$, since $G_\gamma/Z\cong G^{\circ}_{\rm ad, \bar{\gamma}}=G_{\rm ad, \bar{\gamma}}$, and since $Z$ is connected, $G_\gamma$ is also connected.
\end{proof}
As an easy corollary, let $\delta\in G(E)$ be a $\theta$-semisimple element. We have the identity $N\bar\delta=\overline{N\delta}$. Therefore, we get the same statements by replacing regular (resp., elliptic, strongly regular) with $\theta$-regular (resp., $\theta$-elliptic, $\theta$-strongly regular) and replace $\gamma$ (resp. $F$) with $\delta$ (resp. $E$).

\begin{rem}\label{rem counterexample of strongly regular in the adjoint}
We remark that it is not necessary for $\bar{\gamma}$ to be strongly regular even if $\gamma$ is strongly regular. Let $F$ be a field of characteristic not equal to $2$. Consider the matrix \[\gamma=\begin{pmatrix}
    1 & 0\\
    0 & -1
\end{pmatrix}\in \GL_2(F).
\] It is strongly regular semisimple in $\GL_2(F)$. However, one can calculate that  $(\mathrm{PGL}_2)_\gamma(F)=T\rtimes (\Z/2\Z)$, where $T$ is the one-dimensional maximal split torus consists of diagonal matrices in $\mathrm{PGL}_2$  and the nontrivial element of $\Z/2\Z$ is represented by the matrix 
\[
\begin{pmatrix}
    0 & -1\\
    1 &0
\end{pmatrix}.
\]
\end{rem}
\begin{lem}\label{lem density of ser in the ad}
Assume $F$ is a local field. The projection map $\pi: G(F)\to G_{\rm ad}(F)$ restricts to a surjection 
\begin{equation*}
    \pi: \{\text{regular elliptic semisimple elements in }G(F)\}\to\{\text{regular elliptic semisimple elements in }G_{\rm ad}(F)\}
\end{equation*}
Moreover, the inverse image of  
\begin{equation*}
\{\text{strongly regular elliptic semisimple elements in }G_{\rm ad}(F)\}
\end{equation*}
is dense in the left hand side (in the analytic topology\footnote{Density in Zariski topology can be similarly shown as in \S 2.5 in \cite{humphreys1995conjugacy}, however density in Zariski topology in general does not imply density in analytic topology.} of $G(F)$).
\end{lem}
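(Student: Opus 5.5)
The statement has two parts: surjectivity of $\pi$ on regular elliptic elements, and density of the preimage of the strongly regular elliptic locus. The plan is to prove surjectivity from the vanishing of $H^1$ for induced tori, and density from the fact that the elements with non-trivial component group form a closed analytic subvariety of positive codimension inside each maximal elliptic torus.

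\emph{Surjectivity.} Since $Z$ is an induced torus, Shapiro's lemma and Hilbert 90 give $H^1(F,Z)=0$. The long exact sequence for
\begin{equation*}
1\longrightarrow Z\longrightarrow G\longrightarrow G_{\rm ad}\longrightarrow 1
\end{equation*}
therefore shows that $G(F)\to G_{\rm ad}(F)$ is surjective. Now take $\bar\gamma\in G_{\rm ad}(F)$ regular elliptic semisimple and any lift $\gamma\in G(F)$. The lift is semisimple, because the semisimple part of its Jordan decomposition also maps to $\bar\gamma$ and the unipotent part lies in the kernel $Z$, hence is trivial. By Lemma~\ref{lem ser in the adjoint group}(a),(b), $\gamma$ is then regular and elliptic.

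\emph{Reduction to one torus.} Let $\gamma\in G(F)$ be regular elliptic. Its centralizer $T=G_\gamma$ is a torus, elliptic modulo $Z$, so $\bar T=T/Z$ is an anisotropic torus of $G_{\rm ad}$ and $\bar T(F)$ is compact. Using the submersion $G(F)\times T(F)\to G(F)$, $(g,t)\mapsto gtg^{-1}$, which is open near regular points, it suffices to show the following: the set
\begin{equation*}
\{t\in T(F):\ t \text{ regular and } \bar t \text{ strongly regular}\}
\end{equation*}
is dense in $T(F)^{\rm reg}$ near $\gamma$. Equivalently, in $\bar T(F)$, the elements that are regular but not strongly regular are nowhere dense.

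\emph{The non-strongly-regular locus.} For regular $\bar t\in\bar T(F)$ we have $G_{{\rm ad},\bar t}^\circ=\bar T$, and
\begin{equation*}
G_{{\rm ad},\bar t}/\bar T\subset W(G_{\rm ad},\bar T)(F^s).
\end{equation*}
So $\bar t$ fails to be strongly regular exactly when $w(\bar t)=\bar t$ for some $1\neq w\in W$. Each fixed locus $\bar T^w$ is a closed algebraic subgroup of $\bar T$. For regular $\bar t$ the fixed locus $\bar T^w$ is a proper subgroup, so it has dimension strictly less than $\dim\bar T$. Indeed, if $w\neq1$ fixed $\bar T$ pointwise, then $w$ would be trivial, since $W$ acts faithfully on $\bar T$.

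Hence $\bar T^w(F)$ is a closed $p$-adic analytic subgroup of positive codimension in the $F$-analytic manifold $\bar T(F)$, so it has empty interior. A finite union of closed sets with empty interior has empty interior, and its complement is open and dense. Pulling back along $T(F)\to\bar T(F)$, which is a surjective open map since $H^1(F,Z)=0$, and conjugating by $G(F)$ gives the claimed density.

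\emph{Main obstacle.} The delicate step is the positive-codimension claim over a local function field. The fixed locus $\bar T^w$ may be non-reduced or non-smooth in characteristic $p$, so $\bar T^w(F)$ is not obviously a submanifold. I would handle this by passing to the reduced subgroup $(\bar T^w)_{\rm red}$. Its $F$-points coincide with those of $\bar T^w$, and it is a proper closed subvariety of the smooth variety $\bar T$. The $F$-points of a proper closed subvariety of an irreducible smooth variety have empty interior in the analytic topology, by the implicit function theorem applied at smooth points of $\bar T$. This is exactly where the footnote's warning about Zariski versus analytic density is addressed.
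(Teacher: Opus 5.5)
Your proposal is correct and follows essentially the same route as the paper: reduce to a single maximal torus $T$, then show that the locus where $\bar t$ fails to be strongly regular is cut out by finitely many nontrivial analytic (indeed algebraic) equations and therefore has empty interior. You also fill in points the paper leaves implicit: surjectivity on $F$-points via $H^1(F,Z)=0$, and nontriviality of the equations via faithfulness of $W$ on $\bar T$. One small caveat is that $\bar T^w$ is in general defined only over a splitting field, so your empty-interior principle should be applied to the $F$-rational subgroup $\bigcap_{\sigma}\sigma(\bar T^w)$, or equivalently to the Zariski density of nonempty open subsets of $\bar T(F)$.
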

\begin{proof}
The surjection is from Lemma \ref{lem ser in the adjoint group}(a)(b). Since we have the surjection statement. For the density statement, we fix a maximal $F$-torus $T$ and it is clear that we only to prove the density statement for $T$, which follows from the following lemma. 
\end{proof}

\begin{lem}
Let $T$ be a maximal $F$-torus splits over a finite extension. Then the inverse image of
\begin{equation*}
U=\{\text{strongly regular elliptic elements in }T_{\rm ad}(F)\}    
\end{equation*}
is dense in the set of regular elliptic elements in $T(F)$, in the analytic topology on $T(F)$. 
\end{lem}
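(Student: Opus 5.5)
The plan is to show that the complement of the preimage of $U$ in $T(F)$ is a closed analytic subset with empty interior inside the open set of regular elliptic elements.

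\textbf{Step 1: reduce to the adjoint torus.} Let $\pi: T\to T_{\rm ad}=T/Z$ be the projection, with $Z=Z(G)$ an induced torus as in our standing assumptions. By Lemma \ref{lem ser in the adjoint group}(a)(b), an element $t\in T(F)$ is regular elliptic if and only if $\pi(t)$ is regular elliptic in $T_{\rm ad}(F)$. Hence the set of regular elliptic elements of $T(F)$ equals $\pi^{-1}(V)$, where $V$ is the open set of regular elliptic elements of $T_{\rm ad}(F)$.

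\textbf{Step 2: describe the bad locus.} A regular semisimple $\bar t\in T_{\rm ad}(F)$ has centralizer $T_{\rm ad}\rtimes W_{\bar t}$, where
\begin{equation*}
W_{\bar t}=\{w\in W: w(\bar t)=\bar t\}.
\end{equation*}
So $\bar t$ fails to be strongly regular exactly when $\bar t$ lies in the fixed-point subgroup $T_{\rm ad}^{w}$ for some $1\neq w\in W$. Each $T_{\rm ad}^w$ is a closed algebraic subgroup of $T_{\rm ad}$ of dimension strictly less than $\dim T_{\rm ad}$, because $w$ acts nontrivially on $X_*(T_{\rm ad})\otimes\Q$ (the adjoint Weyl action is faithful). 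The finitely many $w$ with $T_{\rm ad}^w$ defined over $F$ already cover the bad $F$-points. The remaining $w$ contribute only through $T_{\rm ad}^w\cap {}^\tau T_{\rm ad}^{w}$ for $\tau\in\Gamma$, which is again a proper closed subvariety. Consequently $V\setminus U$ is contained in $Y(F)$ for a proper Zariski-closed subset $Y\subsetneq T_{\rm ad}$ defined over $F$.

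\textbf{Step 3: pull back and use the analytic structure.} The preimage $\pi^{-1}(Y)$ is a proper Zariski-closed subset of $T$, since $\dim \pi^{-1}(Y)=\dim Y+\dim Z<\dim T$. It remains to show that $\pi^{-1}(Y)(F)$ has empty interior in $T(F)$ for the analytic topology. For this I would use that $T(F)$ is an $F$-analytic manifold of dimension $\dim T$ in which $F$-points are Zariski dense: $T$ is a connected smooth group over an infinite field, so $T(F)$ is dense, and the same holds for every open subgroup, such as the image of $\mathcal{O}_F$-points of a small congruence subgroup under the exponential-type chart $T(F)_1\supset$ analytic ball. A nonzero regular function cannot vanish on a nonempty open ball of $F^n$, since a polynomial vanishing on a product of infinite sets is zero. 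Applied through local analytic charts, this shows that any nonempty open subset of $T(F)$ is Zariski dense in $T$, so it is not contained in $\pi^{-1}(Y)$.

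\textbf{Main obstacle.} I expect the delicate point to be the positive-characteristic analytic chart. Local coordinates on $T(F)$ come from an isomorphism of a neighborhood of the identity with an open ball via the N\'eron/smooth model over $\mathcal{O}_F$ (Hensel's lemma), rather than from an exponential map. To handle this I would use smoothness of $\mathcal{T}^0$ over $\mathcal{O}_F$: étale coordinates identify a translate of a neighborhood with $\varpi^m\mathcal{O}_F^{\,n}$. In these coordinates, regular functions become convergent power series whose vanishing on an open ball forces them to vanish identically. Given this, density follows, and combining with Step 1 gives the Lemma.
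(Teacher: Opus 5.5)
Your proposal is correct and follows essentially the same route as the paper. In both, the non-strongly-regular locus is cut out by finitely many algebraic conditions coming from roots and nontrivial Weyl elements: your $\pi^{-1}(T_{\rm ad}^w)$ is exactly the paper's condition that $\alpha(w(y)y^{-1})=1$ for all $\alpha$. An identity principle for convergent power series in $F$-analytic charts then shows this locus has empty interior. You add details the paper leaves implicit, namely properness via faithfulness of the Weyl action and the étale-chart justification in characteristic $p$.

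One small slip: a bad $F$-point need not lie in $T_{\rm ad}^{w}$ for an $F$-rational $w$; it only lies in $\bigcap_\tau T_{\rm ad}^{\tau(w)}$. This is harmless, since $\bigcup_{w\neq 1}T_{\rm ad}^w$ is Galois-stable and hence is a proper closed subset defined over $F$.
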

\begin{proof}
Let $X=\{\text{regular elliptic elements in } T(F)\}$ and let $Y=\pi^{-1}(U)\subset X$. We claim that $X\backslash Y$ has no interior point in $X$, therefore the lemma is proved.

Let $W=N_G(T)/T$ absolute Weyl group and let $\Phi$ be the roots relative to $T$. Both of them are finite.
The condition of  $y\in Y$ can be described as: 
\begin{equation*}
\begin{split}
\alpha(y)&\neq 1 \text{ for all  } \alpha\in\Phi \\ 
w(y)&\neq y \text{ for all }1\neq w\in W\\
\alpha(w(y)y^{-1})&\neq 1\text{ for some }\alpha\in\Phi \text{ and for all }1\neq w\in W
\end{split}    
\end{equation*}
Therefore, we see that the vanishing locus $X\backslash Y$ is given by a finite number of locally analytic equations. Therefore, locally they are given in terms of a finite number of convergent power series over the non-archimedean field. Therefore they vanish on a set with no interior point, as desired.
\end{proof}

Now we can relate orbital integrals on both sides. Assume $\delta\in G(E)$ is such that $\bar\delta$ is $\theta$-strongly regular, $\theta$-elliptic and $\theta$-semisimple in $G_{\rm ad}(E)$, and $\gamma\in G(F)$ is such that $\bar\gamma$ is strongly regular semisimple in $G_{\rm ad}(F)$.
\begin{lem}\label{lem equality of integrals between groups}
We have 
\begin{equation*}\tag{5.3.1}\label{5.3.1}
    TO^{G(E)}_{\delta\theta}(\phi)=TO^{G_{\rm ad}(E)}_{\bar\delta\theta}(\phi)
\end{equation*}
and
\begin{equation*}\tag{5.3.2}\label{9.3.2}
    O^{G(F)}_\gamma(f)=O^{G_{\rm ad}(F)}_{\bar\gamma}(f)
\end{equation*}
\end{lem}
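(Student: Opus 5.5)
The plan is to compare the two quotient spaces directly, using the exact sequences from Lemma \ref{lem. surjection of centralizers}, and to use the strong regularity of $\bar\gamma$ (resp. $\bar\delta$) only to identify the centralizer on the adjoint side with the image of the centralizer on $G$. I treat the untwisted identity (\ref{9.3.2}) first; the twisted one (\ref{5.3.1}) is formally identical.

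\textbf{Step 1: identify the centralizers.} Since $\bar\gamma$ is strongly regular semisimple in $G_{\rm ad}(F)$, the centralizer $G_{\mathrm{ad},\bar\gamma}$ is connected, so $G_{\mathrm{ad},\bar\gamma}=G^\circ_{\mathrm{ad},\bar\gamma}$. Lemma \ref{lem. surjection of centralizers} then gives a short exact sequence
\[
1\to Z(F)\to G_\gamma(F)\to G_{\mathrm{ad},\bar\gamma}(F)\to 1 .
\]
The same reasoning applied to $\bar\delta$, with $\theta$-strong regularity and the twisted half of that lemma, gives
\[
1\to Z(F)\to G_{\delta\theta}(F)\to G_{\mathrm{ad},\bar\delta\theta}(F)\to 1 .
\]
Here $Z(E)\cap G_{\delta\theta}(F)=Z(F)$, because the $\theta$-centralizer of a central element $z$ consists of those $z$ with $\theta(z)=z$.

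\textbf{Step 2: untwisted case.} The integrand $g\mapsto f(g^{-1}\gamma g)$ is invariant under $Z(F)$, so I would rewrite the domain as
\[
G_\gamma(F)\backslash G(F)=\bigl(G_\gamma(F)/Z(F)\bigr)\backslash\bigl(G(F)/Z(F)\bigr)=G_{\mathrm{ad},\bar\gamma}(F)\backslash G_{\rm ad}(F).
\]
The second equality uses Step 1 together with $G(F)/Z(F)=G_{\rm ad}(F)$, which holds because $Z$ is induced and hence $H^1(F,Z)=0$. I would choose the Haar measures on $G_\gamma(F)$ and $G_{\mathrm{ad},\bar\gamma}(F)$ compatibly with a fixed measure on $Z(F)$, and likewise for $G(F)$ and $G_{\rm ad}(F)$. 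With these choices the quotient measures agree, and the two orbital integrals coincide as integrals of the same function.

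\textbf{Step 3: twisted case.} This step needs more care, because $G(E)/Z(E)=G_{\rm ad}(E)$ while the stabilizer contains only $Z(F)$. I would integrate in stages, as in (\ref{4.3.1}):
\[
\int_{G_{\delta\theta}(F)\backslash G(E)}=\int_{G_{\delta\theta}(F)Z(E)\backslash G(E)}\int_{Z(F)\backslash Z(E)} .
\]
For $z\in Z(E)$ the integrand transforms by $\phi\bigl(z^{-1}\theta(z)\,g^{-1}\delta\theta(g)\bigr)$. Since $N\bigl(z^{-1}\theta(z)\bigr)=1$ and $\phi$ transforms under $Z(E)$ by $\lambda^{-1}N$, the inner integrand is constant in $z$. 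The inner integral therefore contributes $\mathrm{vol}(Z(F)\backslash Z(E))=1$, by the normalization fixed at the start of this subsection. The remaining outer quotient is $G_{\mathrm{ad},\bar\delta\theta}(F)\backslash G_{\rm ad}(E)$, by Step 1.

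I expect the main obstacle to be this twisted step. It requires the identification $Z(E)G_{\delta\theta}(F)=p^{-1}\bigl(G_{\mathrm{ad},\bar\delta\theta}(F)\bigr)$, which is exactly where strong regularity of $\bar\delta$ and the surjectivity of Lemma \ref{lem. surjection of centralizers} are used. Without strong regularity, the component group of $G_{\mathrm{ad},\bar\delta\theta}$ would introduce a finite index factor, as Remark \ref{rem counterexample of strongly regular in the adjoint} shows can happen. Once these measure conventions are made explicit, the rest is bookkeeping.
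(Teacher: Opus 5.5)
Your proposal is correct and follows the paper's route. The paper's proof just points back to the integration-in-stages computation (4.3.1) with the normalization $\mathrm{vol}(Z(F)\backslash Z(E))=1$, combined with $G(E)/Z(E)=G_{\rm ad}(E)$ and the identification $Z(E)G_{\delta\theta}(F)=p^{-1}(G_{\mathrm{ad},\bar\delta\theta}(F))$ from Lemma \ref{lem. surjection of centralizers} (as used in Lemma \ref{lem 4.4}(iv)); this is exactly your Steps 1--3, with your version making explicit where strong regularity of $\bar\delta$ (connectedness of the adjoint centralizer) enters.
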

\begin{proof}
We will just show the identity (\ref{5.3.1}), as (\ref{9.3.2}) follows from the same calculations. Recall that $\phi\in C^\infty_c(G_{\rm ad}(E),\lambda^{-1})$, therefore, the calculation is already done in (\ref{4.3.1}) given our normalization that $\text{vol}(Z(E)/Z(F))=1$. 
\end{proof}

\begin{lem}\label{lem 9.3.4}
Similarly, we have   
\begin{equation*}\tag{5.3.3}\label{5.3.3}
    SO^{G(E)}_{\delta\theta}(\phi)=SO^{G_{\rm ad}(E)}_{\bar\delta\theta}(\phi)
\end{equation*}
and
\begin{equation*}\tag{5.3.4}\label{5.3.4}
    SO^{G(F)}_\gamma(f)=SO^{G_{\rm ad}(F)}_{\bar\gamma}(f)
\end{equation*}
\end{lem}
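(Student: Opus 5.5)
The plan is to deduce \eqref{5.3.3} and \eqref{5.3.4} from Lemma \ref{lem equality of integrals between groups} by comparing the two stable sums term by term. We treat the twisted identity \eqref{5.3.3}; the untwisted one \eqref{5.3.4} is the special case $E=F$, $\theta=\mathrm{id}$. By definition \eqref{4.1.1},
\[
SO^{G(E)}_{\delta\theta}(\phi)=\sum_{\delta'}e(\delta')\,TO^{G(E)}_{\delta'\theta}(\phi),
\]
with $a(\delta')=1$ because $G_{\rm der}=G_{\rm sc}$. The stable integral on $G_{\rm ad}(E)$ is
\[
\sum_{\bar\delta''}e(\bar\delta'')\,a(\bar\delta'')\,TO^{G_{\rm ad}(E)}_{\bar\delta''\theta}(\phi),
\]
where $\bar\delta''$ runs over $\theta$-conjugacy classes in $G_{\rm ad}(E)$ stably $\theta$-conjugate to $\bar\delta$. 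Since $\bar\delta$ is $\theta$-strongly regular, the twisted centralizer $G_{\mathrm{ad},\bar\delta\theta}$ is a connected torus, so $a(\bar\delta'')=1$.

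The core step is to show that $\delta'\mapsto\bar\delta'$ is a bijection between the stable twisted classes on the two sides. This is the argument in the proof of Lemma \ref{lem 4.4}(iv), now applied to the central extension $1\to Z\to G\to G_{\rm ad}\to 1$ with $Z$ induced.

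\begin{enumerate}[(1)]
\item By Kottwitz's description (\cite{kot82}, pp.~805--806), the classes in the stable class of $\delta$ are parametrized by
\[
\ker\bigl[H^1(F,G_{\delta\theta})\to H^1(F,R_{E/F}G_E)\bigr].
\]
The analogous set for $\bar\delta$ is the image of
\[
\ker\bigl[H^1(F,G^\circ_{\mathrm{ad},\bar\delta\theta})\to H^1(F,R_{E/F}G_{\mathrm{ad},E})\bigr],
\]
and here $G^\circ_{\mathrm{ad},\bar\delta\theta}=G_{\mathrm{ad},\bar\delta\theta}$.

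\item Lemma \ref{lem. surjection of centralizers} gives $1\to Z\to G_{\delta\theta}\to G_{\mathrm{ad},\bar\delta\theta}\to 1$. Since $Z$ and $R_{E/F}Z_E$ are induced tori, Shapiro's lemma and Hilbert 90 give $H^1(F,Z)=H^1(F,R_{E/F}Z_E)=0$. I also need the relevant $H^2$ terms to behave well; over a local or global field $H^2$ of an induced torus injects into Brauer groups, so the long exact sequences give injections on $H^1$.

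\item A diagram chase then shows that the two kernels are in bijection. Surjectivity is the delicate part. A class on the adjoint side lifts to $H^1(F,G_{\delta\theta})$ because its obstruction in $H^2(F,Z)$ maps to zero in $H^2(F,R_{E/F}Z_E)$, and the compatibility of the two connecting maps, combined with the triviality of its image in $H^1(F,R_{E/F}G_{\mathrm{ad},E})$, forces that obstruction to vanish.
\end{enumerate}

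Next I match the terms of the two sums. The signs agree: $e(\delta')=e(\bar\delta')$ by Kottwitz's corollary (\cite{kottwitz1983sign}, p.~295), since $G_{\delta'\theta}\to G_{\mathrm{ad},\bar\delta'\theta}$ is surjective with central torus kernel. The orbital integrals agree: $TO^{G(E)}_{\delta'\theta}(\phi)=TO^{G_{\rm ad}(E)}_{\bar\delta'\theta}(\phi)$ by Lemma \ref{lem equality of integrals between groups}, applied to each $\delta'$. Its hypotheses hold because every $\bar\delta'$ is stably $\theta$-conjugate to $\bar\delta$ and is therefore again $\theta$-strongly regular and $\theta$-elliptic. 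The Haar measures must be chosen compatibly across inner forms, and this is consistent with the normalization $\mathrm{vol}(Z(E)/Z(F))=1$. Summing over the matched terms gives \eqref{5.3.3}.

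I expect the main obstacle to be the surjectivity in step (3): showing that every stable $\theta$-class of $\bar\delta$ in $G_{\rm ad}(E)$ lifts to a stable class of $\delta$ in $G(E)$. My first approach is the $H^2$ argument for induced tori sketched above. If that stalls, I will instead lift the $\theta$-conjugating element directly: write $\bar\delta''=\bar g^{-\theta}\bar\delta\bar g$ with $\bar g\in G_{\rm ad}(F^s)$, lift $\bar g$ to $g\in G(F^s)$, and correct $g^{-\theta}\delta g$ by an element of $Z(F^s)$ so that it becomes $E$-rational. This correction step uses $H^1(F,R_{E/F}Z_E)=0$ together with surjectivity of the norm on $Z$.
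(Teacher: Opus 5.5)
Your term-by-term plan matches the paper's proof: Kottwitz's parametrization of the $\theta$-classes, $e(\delta')=e(\bar\delta')$ from \cite{kottwitz1983sign}, and Lemma \ref{lem equality of integrals between groups}. The problem is step (3). It has a genuine gap, and the surjection it is meant to establish is in fact false under the hypotheses of \S\ref{sec 5.3}.

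For $\bar c\in\ker[H^1(F,G_{\mathrm{ad},\bar\delta\theta})\to H^1(F,R_{E/F}G_{\mathrm{ad},E})]$ you only learn that $\Delta(\bar c)\in H^2(F,Z)$ dies under the map induced by the diagonal $Z\hookrightarrow R_{E/F}Z_E$. That map is restriction $H^2(F,Z)\to H^2(E,Z)$, and it need not be injective: for $Z=\G_m$ it is $\mathrm{Br}(F)\to\mathrm{Br}(E)$, which is multiplication by $r$ on invariants. Your fallback has the same defect, because $N\colon Z(E)\to Z(F)$ is not surjective for $Z=\G_m$.

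Here is a concrete counterexample.
\begin{itemize}
\item Take $G=\GL_2$ and $E/F$ unramified quadratic. Choose $\delta$ with $N\delta$ conjugate to an elliptic $\gamma\in\GL_2(F)$ with $\tr(\gamma)\neq0$ and centralizer $T=R_{L/F}\G_m$.
\item Then $H^1(F,T)=0$, so the stable $\theta$-class of $\delta$ is a single $\theta$-class.
\item On the other hand, $H^1(F,T/\G_m)\cong\ker[\mathrm{Br}(F)\to\mathrm{Br}(L)]\cong\Z/2$, and it maps to zero in $H^1(E,\mathrm{PGL}_2)\hookrightarrow\mathrm{Br}(E)$.
\item So the stable $\theta$-class of $\bar\delta$ contains a second $\theta$-class $\bar\delta_2$. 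Every lift of $\bar\delta_2$ has norm conjugate to $c\gamma$ with $c\in F^\times\setminus N_{E/F}(E^\times)$, hence never to $\gamma$.
\item Take $\lambda=1$ and $\phi\geq0$ supported near $\bar\delta_2$ and away from the $\theta$-class of $\bar\delta$. Then the left side of \eqref{5.3.3} is $0$ while the right side is positive.
\end{itemize}

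The missing ingredient is the hypothesis that $Z$ be adapted to $E/F$ in the sense of Definition \ref{defn of adapted z-extension}, i.e. the norm $Z(E)\to Z(F)$ is surjective. An example is $Z$ a product of tori $R_{K/F}\G_m$ with $E\subseteq K$. In that case $H^2(F,Z)\to H^2(F,R_{E/F}Z_E)$ is the diagonal $\mathrm{Br}(K)\to\mathrm{Br}(K)^r$, hence injective, and both of your routes go through.

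This is why Lemma \ref{lem 4.4}(iv) is fine: the paper's proof appeals to it for the same surjection, and there the kernel is adapted to $E$ by construction. In \S\ref{sec 5.3}, however, $Z$ is only assumed induced, so the paper's own argument has the same gap. It can be repaired by building Clozel's induced torus in the reduction steps over a splitting field $K\supseteq E$.

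The untwisted identity \eqref{5.3.4} is unaffected, because there the relevant map on $H^2(F,Z)$ is the identity. A minor point: injectivity on $H^1$ comes from $H^1(F,Z)=0$ and the centrality of $Z$, not from anything about $H^2$.
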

\begin{proof}
The arguments are exactly like those in the proof of Lemma \ref{lem 4.4} (iv), except it is easier in this case: since $\delta$ and $\bar{\delta}$ are $\theta$-strongly regular, we don't have the $a(\delta')$ terms in the stable orbital integrals. They reduce to 
\begin{equation*}
\begin{split}
SO^{G(E)}_{\delta\theta}(\phi)&=\sum_{\delta'}e(\delta')TO^{G(E)}_{\delta'\theta}(\phi)\\
SO^{G_{\rm ad}(E)}_{\bar\delta\theta}(\phi)&=\sum_{\bar\delta'}e(\bar\delta')TO^{G_{\rm ad}(E)}_{\bar\delta'\theta}(\phi)
\end{split}
\end{equation*}
respectly, where $\delta'$ (resp. $\bar\delta'$) ranges over $\theta$-conjugacy classes in $G(E)$ (resp. $G_{\rm ad}(E)$) which are stably $\theta$-conjugate to $\delta$ (resp. $\bar\delta$). The orbital integrals are equal by the previous lemma, and $e(\delta')=e(\bar\delta')$ again from \cite{kottwitz1983sign}. Finally, as in Lemma \ref{lem 4.4} (iv), the surjective map from the set of $\theta$-conjugacy classes in $G(E)$  which are stably $\theta$-conjugate to $\delta$\ to the set of $\theta$-conjugacy classes in $G_{\rm ad}(E)$  which are stably $\theta$-conjugate to $\bar\delta$ has single fiber 
\begin{equation*}
   \ker[H^1(F, G^{\circ}_{\rm ad, \bar\delta'\theta})\to H^1(F, G_{\rm ad, \bar\delta'\theta})] 
\end{equation*}
over $\bar\delta'$ since $G^{\circ}_{\rm ad, \bar\delta'\theta}=G_{\rm ad , \bar\delta'\theta}$ in this case, the lemma is proved.
\end{proof}

The upshot of these is that, after stabilization in the next section, we may extract conditional identities of (stable) orbital integrals on $G_{\rm ad}$, but with the relations we have proved in this subsection, we can regard them as identities on $G$, which are eventually what we desire.

\section{Stabilization of the Twisted Trace Formula}\label{sec 6}

We follow \cite{Clozel90},  \S6.2 closely with some changes along the way.

We fix the notations that will be used in this section. Let $E/F$ be a unramified cyclic extension of global fields of degree $r$, and let $\theta\in\Gal(E/F)$ be a generator. If $G$ is an $F$-group, then $\Tilde{G}:=\Res_{E/F}G_E$ is the $F$-group obtained by the restriction of scalars. We use the same notation $\theta$ to denote the $F$-linear automorphism of $\Tilde{G}$ over $F$.  We denote by $\A$ the adeles of $F$, and we denote by $\A^s$ the ring of adeles of $F^s$. 

We make the assumption that $G$ is a quasi-split, connected reductive group over $F$, such that $G_{\text{der}}$ is simply connected. By the reduction steps, we may also assume that $Z(G)$ is an induced torus over $F$ and let $H:=G/Z(G)$ to denote the adjoint group in this section, which we denote by $G_{\rm ad}$ before, to avoid the overflow of the subscripts. We will assume that $H$ splits over an unramified extension $K/F$ such that $E\subset K$. We may also assume that $\Tilde{H}$ satisfies the \textit{Hasse principle}, that is:
\begin{equation*}
\ker^1(F,\Tilde{H}):=\ker[H^1(F,\Tilde{H})\to\prod_vH^1(F_v,\Tilde{H})]=1.    
\end{equation*}

We denote by $\mathcal{N}$ the abstract norm mapping sending regular semisimple elements of $\Tilde{H}(F)$ to stable conjugacy classes of regular semisimple elements in $H(F)$, as well as its local versions.

\subsection{The Construction of the Obstruction}\label{sec 6.1}
Let $\gamma\in H(F)$ be a strongly regular semisimple element, then its centralizer $T$ (resp., $\Tilde{T}$) is a maximal torus of $H$ (resp., $\Tilde{H}$) over $F$. Assume there exists $\delta\in\Tilde{H}(\A)$ is such that $\mathcal{N}(\delta_v)$ is equal to the stable conjugacy class of $\gamma\in H(F_v)$ for every place $v$ of $F$.

Let $N$ denote the map $x\mapsto x\theta(x)\dots\theta^{r-1}(x)$ from $\Tilde{H}$ to itself. The map $N:\Tilde{T}(F^s)\to T(F^s)$ is surjective, so we can choose $t\in\Tilde{T}(F^s)$ such that $Nt=\gamma$. Since $\mathcal{N}(\delta_v)=\gamma$, for any place $v$ of $F$, there is an element $g_v\in \Tilde{H}(F_v^s)$ such that $g^{-1}_v\delta_v\theta(g_v)=t$. We may even assume that $g=(g_v)\in\Tilde{H}(\A^s)$ by the following lemma.
\begin{lem}\label{lem 6.1}
Let $G$ be a (possibly disconnected) reductive group over a global field $F$, and let $t\in G(F)$ be a semisimple element. Then outside of finitely many places $v$ of $F$, for every $\delta\in G(\mathcal{O}_v)$ such that $\delta$ is conjugate to $t$ under $G^\circ(F_v^s)$, there exists $y\in G^\circ(\mathcal{O}^{\text{un}}_v)$ such that $yt y^{-1}=\delta$.
\end{lem}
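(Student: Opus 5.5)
Spread out the model of $G$ over an open subset of $\Spec$ of the ring of integers (or of the curve, in the function field case), and reduce to a statement about a smooth group scheme acting on a scheme with a closed orbit. More precisely, choose a finite set $S$ of places such that the following hold outside $S$. First, $G$ extends to a smooth affine group scheme $\mathcal{G}$ over $\mathcal{O}_{F,S}$ whose identity component $\mathcal{G}^\circ$ is reductive, and $t\in\mathcal{G}(\mathcal{O}_{F,S})$. Second, the centralizer $\mathcal{G}^\circ_t$ is smooth over $\mathcal{O}_{F,S}$; its generic fibre $G^\circ_t$ is smooth because centralizers of semisimple elements are smooth, and smoothness spreads out. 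Third, the orbit map $\mathcal{G}^\circ\to\mathcal{G}$, $y\mapsto yty^{-1}$, factors through a closed subscheme $\mathcal{C}\subset\mathcal{G}$ (the scheme-theoretic closure of the conjugacy class of $t$), and the induced morphism $\mathcal{G}^\circ/\mathcal{G}^\circ_t\to\mathcal{C}$ is an isomorphism over $\mathcal{O}_{F,S}$. The generic fibre is an isomorphism because semisimple classes are closed and the orbit map is separable; this is where one uses that $t$ is semisimple, so that the orbit of $t$ is closed and smooth, and it works equally in characteristic $p$. Spreading this out over a nonempty open subset handles all but finitely many places.

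Now fix $v\notin S$ and $\delta\in G(\mathcal{O}_v)$ which is $G^\circ(F^s_v)$-conjugate to $t$. Then $\delta$ lies in $C(F_v)$, and, being integral, $\delta$ lies in $\mathcal{C}(\mathcal{O}_v)$ because $\mathcal{C}$ is closed in $\mathcal{G}$. Consider the transporter scheme $P=\{y\in\mathcal{G}^\circ: yty^{-1}=\delta\}$ over $\mathcal{O}_v$. By the third condition, $P$ is a torsor under the smooth group scheme $\mathcal{G}^\circ_t\otimes\mathcal{O}_v$, since it is the fibre over $\delta$ of the smooth surjection $\mathcal{G}^\circ\to\mathcal{G}^\circ/\mathcal{G}^\circ_t$. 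A torsor under a smooth group scheme over the henselian local ring $\mathcal{O}^{\mathrm{un}}_v$ (strictly henselian, with separably closed residue field) has a section. Indeed, its special fibre has a point over the separably closed residue field, since smooth varieties do, and Hensel's lemma lifts this point. The section gives $y\in G^\circ(\mathcal{O}^{\mathrm{un}}_v)$ with $yty^{-1}=\delta$.

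The main obstacle is the third condition, namely spreading out the isomorphism $\mathcal{G}^\circ/\mathcal{G}^\circ_t\cong\mathcal{C}$ onto a closed subscheme. Plan to handle it as follows. The quotient $G^\circ/G^\circ_t$ is affine, since $G^\circ_t$ is reductive (Matsushima/Richardson, valid in all characteristics). The orbit map to $G$ is a closed immersion over $F$. A closed immersion between finite-type schemes, together with the flatness and smoothness of the quotient, spreads out over a dense open of the base by standard limit arguments (EGA IV §8). If this is delicate in characteristic $p$, because the orbit map may fail to be separable for non-reduced centralizers, use that $G^\circ_t$ is smooth, which makes the orbit map separable. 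Alternatively, embed into $\GL_n$ and cite the corresponding result of Kottwitz (\cite{kot82}, and the proof of \cite{kottwitz1986base}) that the standard argument is characteristic-free.
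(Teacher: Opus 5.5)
Your proposal is correct and follows essentially the same argument as the paper, which adapts Kottwitz--Rogawski: spread out the closed $G^\circ$-orbit $X$ of $t$ and the orbit map $G^\circ\to X$, observe that an integral $\delta$ lies in $X(\mathcal{O}_v)$ because $X$ is closed, and use Hensel's lemma to lift a residue-field point of the smooth fibre over $\delta$ to $\mathcal{O}^{\text{un}}_v$. The differences are cosmetic: you describe the fibre as a torsor under the smooth centralizer via the quotient $\mathcal{G}^\circ/\mathcal{G}^\circ_t$, which the paper avoids by spreading out smoothness and surjectivity of the orbit map directly; and you make explicit that $G^\circ_t$ is smooth, which is what makes the orbit map smooth in characteristic $p$ and which the paper asserts without comment.
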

Here $\mathcal{O}_v$ is the valuation ring of the completion $F_v$ of $F$ at the place $v$, and $\mathcal{O}^{\text{un}}_v$ is the valuation ring of the maximal unramified extension $F^{\text{un}}_v$ of $F_v$.
\begin{proof}
We adapt the proof of Lemma 5 in \cite{KotRog00}. Let $X$ denote the conjugacy class of $t$ under $G^\circ$, and let $i:X\hookrightarrow G$ denote the inclusion, it is a closed immersion since $t$ is semisimple. Let $f$ denote the morphism $g\mapsto gt g^{-1}$ from $G^\circ$ to $X$. Then $f$ is smooth and surjective. There exists an ideal $I\subset\mathcal{O}_F$, which is the product of finitely many prime ideals, such that $G, G^\circ, X,t, i$ and $f$ come from objects over $\mathcal{O}_F[\frac{1}{I}]$. We may assume that $f$ is smooth and surjective and $i$ is a closed immersion over $\mathcal{O}_F[\frac{1}{I}]$ by replacing $I$ with a suitable multiple. 

Now we choose a place $v$ of $F$ outside of the finitely many places of $I$. By hypothesis $\delta\in G(\mathcal{O}_v)\cap X(F_v)$, which is equal to $X(\mathcal{O}_v)$, since $i$ is a closed immersion. The fiber $Y_\delta$ of $f:G^\circ\to X$ over $\delta\in X(\mathcal{O}_v)$ is a smooth scheme of finite type over $\mathcal{O}_v$. The structural morphism $Y_\delta\to\Spec(\mathcal{O}_{v})$ is surjective since it is the composite of the following surjective morphisms:
\begin{equation*}
Y_\delta\longrightarrow\{\text{conjugacy class of } \delta \text{ under } G^\circ\}\longrightarrow\Spec(\mathcal{O}_v)    
\end{equation*}
hence the special fiber $Y_\delta$ is non-empty, and it has a point in some finite extension of the residue field of $\mathcal{O}_v$. Therefore by the smoothness of $Y_\delta$, it has a point in the valuation ring of some finite unramified extension of $F_v$. Hence $Y_\delta(\mathcal{O}^{\text{un}}_v)$ is non-empty, which means that there exists $y\in G^\circ(\mathcal{O}^{\text{un}}_v)$ such that $y\gamma y^{-1}=\delta$.
\end{proof}

\begin{rem}\label{rem 6.2}
Given the lemma above, we set $G=\Tilde{H}\rtimes \langle\theta\rangle$, hence $G^\circ=\Tilde{H}\rtimes 1$. We can regard $t=(t,\theta)\in G(F)$ and similarly $\delta\in G^\circ(\A)$. Outside of finitely many places, we have $\delta_v\in G^\circ(\mathcal{O}_v)$. In this way, we can translate the fact that $\delta_v$ and $t$ are $\theta$-conjugate under $\Tilde{H}(F^s_v)$ to that they are conjugate under $G^\circ(F^s_v)$, therefore we can apply the lemma to get $g\in G^\circ(\A^s)=\Tilde{H}(\A^s)$ such that 
\begin{equation*}\tag{6.1.1}\label{6.1.1}
g^{-1}\delta\theta(g)=t.    
\end{equation*}
\end{rem}

Consider the map $\tau\mapsto t_\tau=g^{-1}\tau(g)\in\Tilde{H}(\A^s)$ for $\tau\in\Gal(F^s/F)$. We claim that $t_\tau$ defines a 1-cocycle of $\Gal(F^s/F)$ in $T(\A^s)\tilde{T}(F^s)$. Indeed, apply $\tau$ to (\ref{6.1.1}) we get
\begin{equation*}\tag{6.1.2}\label{6.1.2}
\tau(g^{-1})\delta\theta(\tau(g))=\tau(t).
\end{equation*}
from (\ref{6.1.1}) and (\ref{6.1.2}) we get 
\begin{equation*}\tag{6.1.3}\label{6.1.3}
t_\tau\tau(t)\theta(t_\tau)^{-1}=t   
\end{equation*}
Apply the map $N$ on both sides, we get
\begin{equation*}
t_\tau N(\tau(t))t_\tau^{-1}=Nt;
\end{equation*}
we have $Nt=\gamma$, and $\tau$ commutes with $N$ since $\tau$ commutes with $\theta$, hence we have $t_\tau\gamma t^{-1}_\tau=\gamma$. Therefore we have $t_\tau\in\Tilde{T}(\A^s)$. Then from (\ref{6.1.3}) we know that $t_\tau\theta(t_\tau)^{-1}=t\tau(t)^{-1}\in\Tilde{T}(F^s)$, and we know that $u=t\tau(t^{-1})$ satisfies $Nu=1$. Since $\theta$ acts on $\Tilde{T}(F^s)=T(F^s)\times\dots\times T(F^s)$ by cyclic permutation, this implies that $u=t_\tau\theta(t_\tau)^{-1}=t\tau(t)^{-1}=v\theta(v)^{-1}$ for some $v\in\Tilde{T}(F^s)$. Then $t_\tau v^{-1}$ is fixed by $\theta$, in other words, it belongs to $T(\A^s)$. This shows that $t_\tau\in T(\A^s)\Tilde{T}(F^s)$. It is a 1-cocycle since by definition it is a coboundary in $\Tilde{H}(\A^s
)$.
\begin{defn}\label{defn obs}
Let $t_\tau:=g^{-1}\tau(g)$ for $\tau\in\Gal(F^s/F)$. We take $x_\tau$ to be the image of $t_\tau$ in $T(\A^s)\Tilde{T}(F^s)/\Tilde{T}(F^s)=T(\A^s)/T(F^s)$, and define $\rm{obs}(\delta)$ to be the class of $(x_\tau)$ in $H^1(F,T(\A^s)/T(F^s))$. This definition does not depend on the choice of $g$ or $t$.
   
\end{defn}

We have the following important property of $\text{obs}(\delta)$:

\begin{lem}\label{lem vanishing of obs}
$\mathrm{obs}(\delta)\in H^1(F,T(\A^s)/T(F^s))$ is trivial if and only if $\delta$ is $\theta$-conjugate under $\Tilde{H}(\A)$ to an element of $\Tilde{H}(F)$.
\end{lem}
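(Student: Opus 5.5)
The plan is to follow Clozel's argument (\cite{Clozel90}, \S6.2) for the analogous statement, translating each direction into the cohomological language of Definition \ref{defn obs}.

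\emph{Easy direction.} Suppose $\delta = h^{-1}\delta_0\theta(h)$ with $h\in\Tilde{H}(\A)$ and $\delta_0\in\Tilde{H}(F)$.
\begin{enumerate}[(i)]
\item Since $\mathcal{N}(\delta_0)$ is stably conjugate to $\gamma$ locally everywhere, and norms of regular elements are determined by a global conjugacy class, Corollary \ref{cor 2.2} gives $g_0\in\Tilde{H}(F^s)$ with $g_0^{-1}\delta_0\theta(g_0)=t$. At this step I may need to modify $t$ within $\Tilde{T}(F^s)$, which does not change the class by the independence statement in Definition \ref{defn obs}.
\item Put $g:=h^{-1}g_0$. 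Then $g^{-1}\delta\theta(g)=t$.
\item Compute the cocycle: $t_\tau=g^{-1}\tau(g)=g_0^{-1}h\,\tau(h)^{-1}\tau(g_0)=g_0^{-1}\tau(g_0)$, because $h$ is $\Gal(F^s/F)$-invariant.
\item This cocycle has values in $\Tilde{H}(F^s)$. Being in $T(\A^s)\Tilde{T}(F^s)$, it in fact lies in $\Tilde{T}(F^s)$, so its image in $T(\A^s)/T(F^s)$ is trivial. Hence $\mathrm{obs}(\delta)=1$.
\end{enumerate}

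\emph{Converse.} Assume $\mathrm{obs}(\delta)=1$.
\begin{enumerate}[(i)]
\item Triviality in $H^1(F,T(\A^s)\Tilde{T}(F^s)/\Tilde{T}(F^s))$ lets me write $t_\tau=a^{-1}\tau(a)\,s_\tau$ with $a\in T(\A^s)$ and $s_\tau\in\Tilde{T}(F^s)$. Since $T$ is $\theta$-fixed in $\Tilde{T}$, replacing $g$ by $ga^{-1}$ keeps the equation $g^{-1}\delta\theta(g)=t$, because $a$ commutes with $t$ and $\theta(a)=a$.
\item After this replacement $t_\tau=s_\tau\in\Tilde{T}(F^s)$ is a cocycle with values in $\Tilde{T}(F^s)\subset\Tilde{H}(F^s)$. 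Its image in $H^1(F,\Tilde{H})$ is locally trivial at every $v$, since $t_\tau=g_v^{-1}\tau(g_v)$ with $g_v\in\Tilde{H}(F_v^s)$.
\item By the Hasse principle assumed for $\Tilde{H}$, the class is globally trivial. So there is $g_0\in\Tilde{H}(F^s)$ with $s_\tau=g_0^{-1}\tau(g_0)$.
\item Set $\delta_0:=g_0tg_0^{-1}\theta(g_0)^{-1}$. Using $\tau(t)$ from (\ref{6.1.3}), $\delta_0$ is $\Gal(F^s/F)$-invariant, so $\delta_0\in\Tilde{H}(F)$.
\item Set $h:=g_0g^{-1}\in\Tilde{H}(\A^s)$. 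Then $\tau(h)=h$ for all $\tau$, so $h\in\Tilde{H}(\A)$, and $\delta=h^{-1}\delta_0\theta(h)$.
\end{enumerate}

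\emph{Main obstacle.} The delicate point is step (ii) of the converse: lifting triviality in $H^1(F,T(\A^s)/T(F^s))$ to a coboundary of the stated form. One must check that the $\Tilde{T}(F^s)$-valued cocycle obtained is genuinely a cocycle of the right group, since $\theta$-twisting mixes $T$ and $\Tilde{T}$. One must also check that the adelic boundedness from Lemma \ref{lem 6.1} makes $a$ and $h$ genuine adelic points. For the latter I will use Lemma \ref{lem 6.1} together with the fact that $T$ and $\Tilde{T}$ are unramified at almost all places, so the relevant local $H^1$ of integral points vanish by Lang's theorem.
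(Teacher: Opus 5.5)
Your proposal is correct and is essentially the argument of Clozel's Lemma 6.2, which the paper cites as its proof. In one direction you choose $g$ rationally. In the other you absorb the $T(\A^s)$-part of the cocycle into $g$ and kill the resulting locally trivial $\Tilde{T}(F^s)$-valued cocycle using the standing assumption $\ker^1(F,\Tilde{H})=1$.

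Minor corrections:
\begin{itemize}
\item $\delta_0$ must be $g_0t\theta(g_0)^{-1}$. The stray $g_0^{-1}$ destroys Galois invariance, and step (v) forces the corrected formula.
\item In the easy direction, the $\Tilde{H}(F^s)$-conjugacy of $N\delta_0$ and $\gamma$ should be justified by the transporter argument used in Lemma \ref{lem 6.6}.
\item Your ``main obstacle'' is not one. Here $a$ is merely a lift to $T(\A^s)$, and $h=g_0g^{-1}$ is automatically a Galois-fixed point of $\Tilde{H}(\A^s)$, hence lies in $\Tilde{H}(\A)$. Integrality enters only through the choice of $g$ in Remark \ref{rem 6.2}.
\end{itemize}
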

\begin{proof}
The proof is the same as that of Lemma 6.2 in \cite{Clozel90}.
\end{proof}

\subsection{Pre-Stabilization}\label{sec 6.2}
Now we assume that $T$ is an $F$-torus of $H$ of the form $T=H_\gamma$ for a strongly regular semisimple element $\gamma\in H(F)$ (recall that $\gamma$ is strongly regular means that $T=H_\gamma$ is a maximal torus, in particular, connected). We set $D=H/H_\text{der}$ and $\Tilde{D}=\Tilde{H}/\Tilde{H}_\text{der}$, We have the following commutative diagram
\begin{equation*}
\begin{tikzcd}
T \arrow[r, hookrightarrow] \arrow[d, twoheadrightarrow]
& \Tilde{T}\arrow[d,twoheadrightarrow] \\
D \arrow[r, hookrightarrow]
&  \Tilde{D}
\end{tikzcd}
\end{equation*}
Dually, it becomes
\begin{equation*}
\begin{tikzcd}
\hat{\Tilde{D}} \arrow[r, hookrightarrow] \arrow[d, twoheadrightarrow]
& \hat{\Tilde{T}}\arrow[d,twoheadrightarrow] \\
\Hat{D} \arrow[r, hookrightarrow]
&  \Hat{T}
\end{tikzcd}
\end{equation*}
Those maps are all $\Gamma$-equivariant. 

We define the finite abelian groups
\begin{align*}\tag{6.2.1}\label{6.2.1}
A(T/F)& = \pi_0(\hat{T}^{\Gamma})/\text{Im}\,\pi_0(\Hat{\Tilde{D}}^{\Gamma}),\\
A(T/F_{v_1})& = \pi_0(\hat{T}^{\Gamma_1})/\text{Im}\,\pi_0(\Hat{\Tilde{D}}^{\Gamma_1}).
\end{align*}
Where $\Gamma_1=\Gal(F^s_{v_1}/F_{v_1})$ is the decomposition group at the place $v_1$ of $F$.

The finite Abelian groups $\pi_0(\hat{T}^{\Gamma})$ and $H^1(F,T(\A^s)/T(F^s))$ are canonically dual by \cite{KotCusp}. Let $<,>$ be the pairing between them with value in $\mathbb{C}^{\times}$. We claim that $\text{obs}(\delta)$ has trivial image under the composition
\[
H^1(F,T(\A^s)/T(F^s))\xrightarrow{\sim}\pi_0(\hat{T}^{\Gamma})^*\to\pi_0(\Hat{\Tilde{D}}^{\Gamma})^*
\]
Indeed, since $\pi_0(\Hat{\Tilde{D}}^{\Gamma})^*$ is canonically dual to $H^1(F,\Tilde{D}(\A^s)/\Tilde{D}(F^s))$, so we only need to find the image of $\text{obs}(\delta)$ under the map 
\[
H^1(F,T(\A^s)/T(F^s))\to H^1(F,\Tilde{D}(\A^s)/\Tilde{D}(F^s)), 
\]
since $\text{obs}(\delta)$ was constructed to be the class of $t_\tau=g^{-1}\tau(g)$ for some $g\in\Tilde{H}(\A^s)$, it splits when we project $t_\tau$ to $\Tilde{D}(\A^s)$. Hence for $\kappa\in A(T/F)$, we can further define the pairing $\langle\text{obs}(\delta),\kappa\rangle\in\mathbb{C}$. We have 
\begin{equation*}
\text{obs}(\delta)=1\Leftrightarrow\langle\text{obs}(\delta),\kappa\rangle=1 \text{ for all } \kappa\in A(T/F).
\end{equation*}

We now begin the stabilization of the right hand side of the trace formula (\ref{5.2.1})
\begin{equation*}\tag{6.2.2}\label{6.2.2}
\sum_{\delta}\tau(H_{\delta\theta})TO^{H(\A_E)}_{\delta\theta}(\phi)
\end{equation*}
where we recall that the sum is over $\delta\in H(E)$ up to $\theta$-conjugacy under $H(E)$ with $N\delta$ regular elliptic. We also recall that $\phi\in C^\infty_c(H(\A_E), \lambda^{-1})$. 

The global twisted orbital integral $TO^{H(\A_E)}_{\delta\theta}$ of $\phi$ only depends on the $H(\A_E)$-$\theta$-conjugacy class of $\delta$. We have the following lemma which allows us to regroup the elements in the summation:
\begin{lem}\label{lem 6.5}
The number of the terms in the sum that are indexed by the $H(\A_E)$-$\theta$-conjugates of $\delta$ is given by
\[
|\ker(\ker^1(F,H_{\delta\theta})\to\ker^1(F,\Tilde{H}))|=|\ker^1(F,H_{\delta\theta})|
\]
where we have used the Hasse principle for $\Tilde{H}$.
\end{lem}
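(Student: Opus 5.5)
We follow the standard Galois-cohomological parametrization of rational classes inside a stable class, as in \cite{kot82}, pp.~805--806, and \cite{Clozel90}, \S6.2. Work with the $F$-group $\Tilde{H}=\Res_{E/F}H_E$ and the $\theta$-twisted action $g\cdot x=g^{-1}x\theta(g)$ of $\Tilde{H}$ on itself. Since $N\delta$ is strongly regular, $H_{\delta\theta}$ is connected (it is an inner form of the torus $T=H_{N\delta}$ by Lemma \ref{lemma of inner form}). Stable and $F^s$-$\theta$-conjugacy therefore coincide, by the remark after Definition \ref{defn stable twisted conjugacy}.

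First, fix $\delta\in H(E)=\Tilde{H}(F)$. Then $\Tilde{H}(F)$-$\theta$-conjugacy classes of elements $\delta'\in\Tilde{H}(F)$ that are $\theta$-conjugate to $\delta$ over $F^s$ are in bijection with
\[
\ker[H^1(F,H_{\delta\theta})\to H^1(F,\Tilde{H})].
\]
The map sends $\delta'=g^{-1}\delta\theta(g)$ to the cocycle $\tau\mapsto g\tau(g)^{-1}$. The same construction over each completion $F_v$ gives a local bijection, and these are compatible with localization.

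Next, consider the classes $\delta'$ that are $\theta$-conjugate to $\delta$ under $\Tilde{H}(\A)=H(\A_E)$. These are exactly the classes whose cocycle becomes trivial in $H^1(F_v,H_{\delta\theta})$ for every $v$. Indeed, local $\theta$-conjugacy at $v$ is measured by the local class, and the adelic element $g$ can be chosen integral almost everywhere by Lemma \ref{lem 6.1}, as in Remark \ref{rem 6.2}. So the set of such classes is identified with
\[
\ker[H^1(F,H_{\delta\theta})\to H^1(F,\Tilde{H})]\cap\ker^1(F,H_{\delta\theta})=\ker\big(\ker^1(F,H_{\delta\theta})\to\ker^1(F,\Tilde{H})\big),
\]
which gives the first expression. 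The equality with $|\ker^1(F,H_{\delta\theta})|$ then follows from the Hasse principle $\ker^1(F,\Tilde{H})=1$ assumed at the start of this section.

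The main obstacle is justifying the adelic step in positive characteristic. One must check that triviality of all localizations corresponds to a single $g\in\Tilde{H}(\A)$, and not merely a family $(g_v)$ of local elements. To handle this, use that each local cohomology class is trivial, and that for almost all $v$ the group $H_{\delta\theta}$ and the elements $\delta$, $\delta'$ have smooth integral models. Then Lang's theorem over the finite residue field, combined with Hensel's lemma for smooth schemes (as in the proof of Lemma \ref{lem 6.1}), produces $g_v\in\Tilde{H}(\mathcal{O}_v)$ for almost all $v$. We would also use the finiteness of $\ker^1$ for tori, which is available via \cite{KotCusp}, to confirm the count is finite.
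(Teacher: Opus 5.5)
Your proposal is correct and follows the same route as the paper. You parametrize the $\Tilde{H}(F)$-$\theta$-conjugacy classes in the stable class of $\delta$ by $\ker[H^1(F,H_{\delta\theta})\to H^1(F,\Tilde{H})]$, identify the $H(\A_E)$-$\theta$-conjugates with the classes that are locally trivial at every place, and conclude with $\ker^1(F,\Tilde{H})=1$.

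You leave one step implicit, which the paper makes explicit through Lemma \ref{lem 6.6}. A rational $\delta'$ that is $\theta$-conjugate to $\delta$ under $H(\A_E)$ must first be shown to be $\theta$-conjugate to $\delta$ over $F^s$. The argument is short: the transporter has an $F_v$-point, hence a $\bar F$-point, hence an $F^s$-point because it is a torsor under the smooth torus $H_{\delta\theta}$. The last step is where characteristic $p$ requires care. Without it, such a $\delta'$ has no associated cocycle, so your second paragraph does not apply to it.

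Conversely, your Lang--Hensel argument fills a point the paper asserts without comment. It gives $g_v\in\Tilde{H}(\mathcal{O}_v)$ for almost all $v$, which is the integrality needed for ``locally trivial $\Rightarrow$ adelically $\theta$-conjugate''. Lemma \ref{lem 6.1} alone would only give $\mathcal{O}_v^{\mathrm{un}}$-points.
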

\begin{proof}
We just need to look at the following diagram:
\begin{equation*}
\begin{tikzcd}
H^1(F, H_{\delta\theta})\arrow[r, "f", rightarrow] \arrow[d, "g"]
& \prod_v H^1(F_v, H_{\delta\theta})\arrow[d, "g'"] \\
H^1(F,\Tilde{H}) \arrow[r, "f'", rightarrow] 
&  \prod_v H^1(F_v,\Tilde{H})
\end{tikzcd}
\end{equation*}
Assume $\delta'$ is $\theta$-conjugate to $\delta$ under $H(\A_E)$ but not under $H(E)$, then they are $\theta$-conjugate under $H(E^s)$ by Lemma \ref{lem 6.6}, therefore corresponds to a cocycle $\tau\in \ker(g)$, which image $f(\tau)$ in $\prod_v H^1(F_v, H_{\delta\theta})$ is trivial by assumption, that is to say, $\tau\in\ker(\ker^1(F, H_{\delta\theta})\to\ker^1(F,\Tilde{H}))$. Conversely, if $\tau\in\ker(H^1(F, H_{\delta\theta})\to H^1(F,\Tilde{H}))$, then we can associated a $\delta'\in H(E)$ that is stably $\theta$-conjugate to $\delta$. The condition $f(\tau)=0$ guarantees that $\delta'$ and $\delta$ are $\theta$-conjugate under $H(\A_E)$.
\end{proof}

\begin{lem}\label{lem 6.6}
Let $K$ be a global field and $H$ be a connected reductive group over $K$. Let $\theta$ be an $K$-linear automorphism of $H$. Let $a,b\in H(K)$ such that they are strongly $\theta$-regular semisimple and they are $\theta$-conjugate under $H(\A_K)$, where $\A_K$ is the ring of adeles of $K$, then they are $\theta$-conjugated under $H(K^s)$, where $K^s$ is a fixed separable closure of $K$.   
\end{lem}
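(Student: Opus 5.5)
Since $a$ and $b$ lie in $H(K)$, the idea is to reduce the twisted statement to an untwisted one about conjugacy classes of semisimple elements, and then use that conjugacy classes of semisimple elements over $K^s$ are detected by invariant functions, which are defined over $K$.

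First I pass to the disconnected group $G:=H\rtimes\langle\theta\rangle$. If $\theta$ has infinite order, I replace $\langle\theta\rangle$ by the finite cyclic quotient through which $\theta$ acts in our application; in the application $\theta$ has order $r$ on $\Tilde H$. As in Remark \ref{rem 6.2}, the $\theta$-conjugacy of $a,b$ by $h$ is the same as conjugacy of $(a,\theta)$ and $(b,\theta)$ by $h\in G^\circ=H$. So the hypothesis says: $(a,\theta)$ and $(b,\theta)$ are $G^\circ(K_v)$-conjugate for every place $v$ of $K$. I fix one place $v$ and a $g_v\in H(K_v)$ with $g_v^{-1}a\theta(g_v)=b$.

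Next I show that $a$ and $b$ are $H(\bar K)$-$\theta$-conjugate. The orbit of a $\theta$-semisimple element under twisted conjugation is closed, so $\theta$-semisimple twisted classes over $\bar K$ are separated by the $\theta$-twisted-conjugation-invariant regular functions on $H$. These functions form the ring $K[H]^{H,\theta}$ and are defined over $K$. Over $K_v$ the element $g_v$ shows that every such invariant takes the same value at $a$ and $b$. These values lie in $K\subset K_v$, so they agree. Since both orbits are closed, they coincide as $\bar K$-orbits: $b=g^{-1}a\theta(g)$ for some $g\in H(\bar K)$.

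Finally I descend from $\bar K$ to $K^s$, and I expect this step to be the main obstacle in positive characteristic. The transporter scheme $\mathcal T=\{g: g^{-1}a\theta(g)=b\}$ is defined over $K$. It is a torsor under the twisted centralizer $H_{a\theta}$, which is a $K$-form of the centralizer $H_{Na}$ of $Na$ by Lemma \ref{lemma of inner form}. Strong $\theta$-regularity makes $H_{Na}$ a maximal torus, hence smooth. A torsor under a smooth group is smooth, and a smooth nonempty $K$-scheme has a $K^s$-point. So it remains to check that $\mathcal T$ is nonempty with its reduced structure equal to the scheme itself. I would argue this by comparing $\mathcal T_{\bar K}$ with the orbit map $H\to H$, $g\mapsto g^{-1}a\theta(g)$. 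Its fibers are torsors under the scheme-theoretic twisted centralizer, and this is smooth exactly when the orbit map is separable. Strong regularity gives this, because the tangent computation reduces, via the norm, to $\mathrm{Ad}(Na)-1$ having kernel exactly the Lie algebra of the torus. If that tangent argument turns out to be delicate, my fallback is to work inside $G$ and use Lemma \ref{lem 6.1}-style smoothness of the orbit map for semisimple elements, or to verify the separability directly on a maximal torus containing a $\theta$-conjugate of $Na$.
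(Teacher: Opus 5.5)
Your argument is correct where the lemma is actually used ($\theta$ the cyclic permutation on $\Tilde{H}=\Res_{E/F}H_E$). Its last step is the paper's argument: the transporter $X=\{h : h^{-1}a\theta(h)=b\}$ is, over $\bar K$, a torsor under the twisted centralizer, which is smooth, so $X$ is smooth and has a $K^s$-point.

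You differ from the paper in how you get $X(\bar K)\neq\emptyset$. The paper does this in one line: $X$ is a finite-type $K$-scheme with a $K_v$-point, so it is nonempty, and the Nullstellensatz gives a $\bar K$-point. Your passage to $H\rtimes\langle\theta\rangle$ and your invariant-theory argument are therefore not needed; your step 1 is in fact never used later.

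The invariant-theory route also rests on two inputs you do not prove:
\begin{itemize}
\item separation of closed orbits by invariants in characteristic $p$, which is true by geometric reductivity;
\item closedness of twisted orbits with semisimple norm.
\end{itemize}
The second cannot be obtained from semisimplicity of $(a,\theta)$ in $H\rtimes\langle\theta\rangle$ when $p\mid r$. For instance, $(1,\theta)$ then has order divisible by $p$ and is not semisimple, although $N(1)=1$ is. Closedness does hold in your setting, for a different reason: over $\bar K$ the twisted orbits in $H^r$ are exactly the preimages of conjugacy classes under a suitable cyclic product of the coordinates.

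Your descent step, on the other hand, is more careful than the paper's. The paper simply asserts that $H_{a\theta}$ is a connected torus and hence geometrically reduced. This tacitly identifies the scheme-theoretic stabilizer, which is what $X_{\bar K}$ is isomorphic to, with its reduced subgroup. Your tangent-space check $\ker(\mathrm{Ad}(Na)-1)=\mathrm{Lie}(T)$ supplies exactly this missing point. In fact the scheme-theoretic centralizer of any semisimple element is smooth, so this step needs only semisimplicity of the norm, not strong regularity.
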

\begin{proof}
Indeed, the conditions imply that the algebraic variety $X:=\{h\in H: h^{-1}a\theta(h)=b\}$ has a point in $K^s_v$, where $v$ is a place of $K$ and $K^s_v$ is a separable closure of the completion $K_v$. Since $X$ is defined by algebraic equations over $K$ as $a, b$ are rational over $K$, this implies that $X(\bar{K})\neq\emptyset$, where $\bar{K}$ is an algebraic closure containing $K^s$. Since $X$ is isomorphic to $H_{a\theta}$ as schemes and the latter is geometrically reduced, since $H_{a\theta}$ is a connected torus, we have $X(K^s)\neq\emptyset$, as desired.   
\end{proof}

Therefore we can write (\ref{6.2.2}) as 
\begin{equation*}\tag{6.2.3}\label{6.2.3}
\sum_{\delta}\tau(H_{\delta\theta})|\ker^1(F,H_{\delta\theta})|\,TO^{H(\A_E)}_{\delta\theta}(\phi)
\end{equation*}
where the sum is over $\delta\in H(E)$ up to $\theta$-conjugacy under $H(\A_E)$ with $N\delta$ strongly elliptic regular.

Now we consider a strongly regular elliptic element $\gamma\in H(F)$ up to stable conjugacy, in other words, conjugacy under $H(F^s)$ since $\gamma$ is strongly regular. Assume $\delta\in H(\A_E)$ is such that its norm $\mathcal{N}\delta$ is equal to the stable class of $\gamma$ at every place. The value of
\begin{equation*}
\frac{1}{|A(T/F)|}\sum_{\kappa\in A(T/F)} \langle\text{obs}(\delta),\kappa\rangle
\end{equation*}
is $1$ when $\delta$ is $\theta$-conjugate to an element of $H(E)$ under $H(\A_E)$, by Lemma \ref{lem vanishing of obs}, and $0$ otherwise since $A(T/F)$ is a finite abelian group. Use this, we can write (\ref{6.2.3}) as 
\begin{equation*}\tag{6.2.4}\label{6.2.4}\begin{split}
\sum_{\substack{\gamma\in H(F) \text{ strongly ell. reg.}\\\text{up to }H(F^s)-\text{conj}}} &\tau(T)|\ker^1(F,T)|\\ &\sum_{\substack{\delta\in H(\A_E)\\\mathcal{N}\delta=\gamma\\\text{up to }H(\A_E)-\theta-\text{conjugacy}}}[\frac{1}{|A(T/F)|}\sum_{\kappa\in A(T/F)} \langle\text{obs}(\delta),\kappa\rangle] TO^{H(\A_E)}_{\delta\theta}(\phi) 
\end{split}
\end{equation*}
The equation $\mathcal{N}\delta=\gamma$ holds for every place $v$ of $E$. And we write $T=H_\gamma$ for the centralizer for $\gamma$, which is isomorphic to $H_{\delta\theta}$ if $\delta$ is a global element of norm $\gamma$.

In order to swap the order of the outer summations, we claim that the triple sum has only finitely many nonzero terms on any compact subset.
\begin{lem}\label{lem 6.7}
Fix a compact set $C$ of $H(\A_E)$, then there are only finitely many $\theta$-conjugacy classes $\delta$ under $H(\A_E)$ that meet $C$ and such that $\mathcal{N}\delta$ is the class of a regular semisimple element of $H(F)$.
\end{lem}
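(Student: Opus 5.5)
The plan is to follow the argument of \cite{Clozel90}, \S6.2 (itself modelled on \cite{kot82}): we bound the possible norms globally, and then bound the number of twisted classes with a given norm.

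\textbf{Step 1: the norms form a finite set.} The map $\delta\mapsto N\delta$ is continuous, so $N(C)$ is a compact subset of $H(\A_E)$. The characteristic polynomial map $H\to H/\!/H=T/W$ is a morphism of varieties, so the image of $N(C)$ in $(T/W)(\A_E)$ is compact. If $\mathcal{N}\delta$ is the class of a regular semisimple $\gamma\in H(F)$, then the image of $\gamma$ in $(T/W)(F)$ lies in this compact set. Here we must pass carefully between $\mathcal{N}$ and the concrete norm using Corollary \ref{cor 2.2} and \S\ref{sec 2.2}: the invariant of $N\delta$ and that of $\gamma$ agree in $(T/W)(\A_E)$.

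Now $(T/W)(F)$ is discrete in $(T/W)(\A)$. This is the point where we use that $T/W$ is affine, so its rational points are discrete in its adelic points for any global field, including a function field. A discrete set meets a compact set in finitely many points, so only finitely many invariants occur. For regular semisimple elements the invariant determines the stable conjugacy class over $F$, by the uniqueness part of \cite{hamacher2021g} Corollary A.1.2 recalled in \S\ref{sec 2.2}. Hence only finitely many stable classes $\gamma$ occur.

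\textbf{Step 2: finitely many twisted classes over each $\gamma$.} Fix one such $\gamma$. Choose a finite set $S$ of places outside of which all of the following hold: $H$ and $E/F$ are unramified, $C_v=H(\mathcal{O}_{E_v})$, and $\gamma$ is a unit with $1-\alpha(\gamma)$ a unit for every root $\alpha$. For $v\notin S$, we would like to show that any $\delta_v\in H(\mathcal{O}_{E_v})$ with norm $\gamma$ is $\theta$-conjugate to a single fixed element. The tool is Lemma \ref{lem 6.1} applied to $\Tilde H\rtimes\langle\theta\rangle$ as in Remark \ref{rem 6.2}: it gives a $\theta$-conjugating element in $\Tilde H(\mathcal O_v^{\rm un})$. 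Its class then lies in $H^1$ of the unramified Galois group with values in the smooth $\mathcal O_v$-torus $H_{\delta\theta}$, and this group is trivial by Lang's theorem. So $\delta_v$ is conjugate to a fixed element by $H(\mathcal{O}_{E_v})$.

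At the finitely many places $v\in S$, the local $\theta$-conjugacy classes with a given stable norm are parametrised by a subset of $H^1(F_v,H_{\delta\theta})$, a Galois cohomology group of a torus over a local field. This group is finite, also in positive characteristic, by local Tate--Nakayama duality for tori. Taking the product over $v\in S$ gives finitely many adelic $\theta$-classes above each $\gamma$, and combining with Step 1 proves the lemma.

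\textbf{Main obstacle.} The main difficulty is in Step 1, specifically making the passage from $\mathcal{N}\delta$ to the rational point of $T/W$ rigorous in characteristic $p$. There are two places where this could go wrong. First, $T/W$ could fail to be nice, for instance because $W$-invariants behave badly in small characteristic. If that happens, I would instead use the finitely many coefficients of characteristic polynomials of $\mathrm{Ad}(N\delta)$ together with a faithful representation, and use discreteness of $F$ in $\A$ for these coefficients. Second, we need to know that finitely many such invariants give finitely many regular stable classes, which holds because regular semisimple classes are determined by their images in $T/W$.
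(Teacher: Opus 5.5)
Your proposal follows essentially the same route as the paper. You first reduce to a single stable norm class $\gamma$; the paper simply invokes \cite{KotEll} here, whereas you spell out the discreteness argument in the adjoint quotient. You then apply Lemma~\ref{lem 6.1} to $\Tilde H\rtimes\langle\theta\rangle$ and use vanishing of the unramified $H^1$ of the centralizer to fix the local class outside a finite set $S$ (you cite Lang's theorem where the paper cites (7.6.1) of \cite{KOTTWITZ_1997}), and finish with finiteness of $\theta$-classes inside a stable class at $v\in S$.

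As in the paper, your Step~2 tacitly needs the ($\theta$-)centralizer to be a torus, i.e.\ strong regularity rather than mere regularity; that is exactly the case in which the lemma is used in \S\ref{sec 6}.
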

\begin{proof}
 Following the same proof as in \cite{KotEll}, we can reduce to case that $N\delta$ is stably conjugate to a fixed regular semisimple element $\gamma$ of $H(F)$. We can find $\delta_0\in H(E)$ such that $N\delta_0=\gamma$. Hence $\delta_0$ and $\delta$ are stably $\theta$-conjugate. We assume that $C$ is contained in $H(\mathcal{O}_{\A^S_E})\times H(\prod_{v\in S}E_v)$, where $S$ is a  finite set of fixed places of $E$. We enlarge $S$ to apply Lemma \ref{lem 6.1}. Now for any $\delta\in C$ such that $\delta_{0,v}$ is stably $\theta$-conjugate to $\delta_v$ for any place $v\notin S$, we view $\delta$ and $\delta_0$ as $(\delta, \theta)$ and $(\delta_0, \theta)$ in the disconnected group $H\ltimes\langle\theta\rangle$, respectively. Hence $(\delta, \theta)$ is conjugate to $(\delta_0,\theta)$ under $H(E_v^s)=(H\ltimes\langle\theta\rangle)^\circ(E^s_v)$ in this sense. By Lemma \ref{lem 6.1}, there exists $g\in H(\mathcal{O}_v^{\text{un}})$ such that $\delta$ and $\delta_0$ are conjugate under $g$. We claim that we can actually assume $g\in H(O_v)$. 

Indeed, consider the cocycle $\tau_g:\theta\mapsto g^{-1}\theta(g)$, here we use $\theta$ to denote the topological generator of $\Gal(F^{\text{un}}/F)$. It is clear that $\tau_g$ is a 1-cocycle in $\mathcal{T}(\mathcal{O}_v^{\text{un}}):=H_\gamma(\mathcal{O}_v^{\text{un}})$, by applying $\theta$ on the equation $g^{-1}\delta_vg=\delta_{0,v}$. Here we use the existence of lft Neron model $\mathcal{T}$ associated to $T=H_\gamma$ over $\mathcal{O}_v^{\text{un}}$. Hence $\tau_g\in H^1(\langle\theta\rangle,\mathcal{T}(\mathcal{O}_v^{\text{un}}))$. It is trivial by (7.6.1) of \cite{KOTTWITZ_1997}, hence there exists $t\in\mathcal{T}(\mathcal{O}_v^{\text{un}})$ such that $g^{-1}\theta(g)=t^{-1}\theta(t)$, in other words, $gt^{-1}\in H(\mathcal{O}^\text{un}_v)\cap H(F_v)=H(O_v)$. Replace $g$ by $gt^{-1}$, the claim is proved.

Now for every $\theta$-conjugacy class $\delta$ that meet $C$ and $\mathcal{N}\delta=\gamma$, we know for $v\notin S$, the $\theta$-conjugacy class of $\delta_v$ contains $\delta_{0,v}$, and for every place $w\in S$, there are only finitely many $\theta$-conjugacy classes inside a fixed stable $\theta$-conjugacy class. Therefore, there are only finitely many such $\theta$-conjugacy classes $\delta$ satisfying those conditions when $C\subset H(\A_E)$.
\end{proof}

Therefore, we can reorder the sum in (\ref{6.2.4}) as
\begin{equation*}\tag{6.2.5}\label{6.2.5}
\sum_{\gamma}\tau(T)\frac{|\ker^1(F,T)|}{|A(T,F)|}\sum_{\kappa}\sum_{\delta} \langle\text{obs}(\delta),\kappa\rangle TO^{H(\A_E)}_{\delta\theta}(\phi).   
\end{equation*}
Where each index is summing over the same set as in (\ref{6.2.4}).
\subsection{Vanishing of Kappa Orbital Integrals}\label{sec 6.3}
Consider the inner sum in (\ref{6.2.5}) 
\begin{equation*}\tag{6.3.1}\label{6.3.1}
\sum_{\delta} \langle\text{obs}(\delta),\kappa\rangle TO^{H(\A_E)}_{\delta\theta}(\phi)    
\end{equation*}
where the summation is over $H(\A_E)$-$\theta$-conjugacy classes of $\delta\in H(\A_E)$ such that $\mathcal{N}\delta=\gamma$ locally everythere, for an elliptic strongly regular $\gamma\in H(F)$ up to conjugacy by $H(F^s)$. 

We assume that there is a $\theta$-elliptic, $\theta$-strongly regular $\delta^0\in H(\A_E)$ such that $\mathcal{N}\delta^0=\gamma$ holds. Otherwise, the sum in (\ref{6.3.1}) would be empty. Therefore, the summation in (\ref{6.3.1}) will be over the global $\theta$-conjugacy classes within the global stable $\theta$-conjugacy class of $\delta^0$, by Proposition \ref{prop of abstract norm map}.

We know that the $\theta$-conjugacy classes within the stable $\theta$-conjugacy class of $\delta^0_v$, the local component of $\delta^0$ at $v$, are in bijection with 
\begin{equation*}
\mathcal{D}_\theta(I_v/F_v):=\ker(H^1(F_v, I_v)\to H^1(F_v,\Tilde{H})),    
\end{equation*}
where $I_v$ is the $\theta$-centralizer of $\delta^0_v$ in $\Tilde{H}_{F_v}$. We notice that $I_v=I^\circ_v$ by assumptions on $\delta^0$.

The next goal is to write (\ref{6.3.1}) as a product of local sums. If $\delta=(\delta_v)$ is in the (global) stable $\theta$-conjugacy class of $\delta_0$, by above we can write $(\delta_v)=(x_v\cdot\delta^0_v)$ where $x=(x_v)\in\bigoplus_v\mathcal{D}_\theta(I_v/F_v)$. Let $\text{inv}(\delta,\delta^0)\in H^1(F, T(\A^s)/T(F^s))$ be the image of $(x_v)$ under the maps
\begin{equation*}
\bigoplus_v\mathcal{D}_\theta(I_v/F_v)\to\bigoplus_v H^1(F_v, T(F_v))\to H^1(F, T(\A^s)/T(F^s)),  
\end{equation*}
where the first map is given by the canonical isomorphisms between $I_v$ and $T(F_v)$, where $T=H_\gamma$. From the definitions of $\text{obs}(\delta)$ and $\text{inv}(\delta,\delta^0)$, we see that 
\begin{equation*}
\text{obs}(\delta)=\text{inv}(\delta,\delta^0)\text{obs}(\delta^0).   
\end{equation*}

Therefore, assuming $\phi=\bigotimes_v\phi_v$ (over places of $F$) is a pure tensor product element in $C^\infty_c(H(\A_E))\cong C^\infty_c(\Tilde{H}(\A_F))$, 
therefore we can write (\ref{6.3.1}), when the sum is nonempty, as 
\begin{equation*}\tag{6.3.2}\label{6.3.2}
\langle\text{obs}(\delta^0),\kappa\rangle\,\prod_v(\sum_{x_v\in\mathcal{D}_{\theta}(I_v/F_v)}\langle x_v,\kappa_v\rangle TO^{H(E_v)}_{x_v\cdot\delta^0_v,\theta}(\phi_v))    
\end{equation*}
which we write as 
\begin{equation*}\tag{6.3.3}\label{6.3.3}
\langle\text{obs}(\delta^0),\kappa\rangle\,\prod_vO^{\kappa_v}_{\delta^0_v,\theta}(\phi_v) 
\end{equation*}
where we denote the \textit{$\kappa_v$-twisted orbital integral}, 
\[
O^{\kappa_v}_{\delta^0_v,\theta}(\phi_v):=\sum_{x_v\in\mathcal{D}_{\theta}(I_v/F_v)}\langle x_v,\kappa\rangle TO^{H(E_v)}_{x_v\cdot\delta^0_v,\theta}(\phi_v),
\]
and $\kappa_v\in\mathcal{D}_\theta(I_v/F_v)^*$ is the local image of $\kappa$.

We have arrived at the vanishing results of this section:
\begin{lem}\label{lem Kottwitz function}
Assume $\phi=\bigotimes_v\phi_v$ is a pure tensor product element in $\phi\in C^\infty_c(H(\A_E), \lambda^{-1})$ over places $v$ of $F$, and at some place $v_1$ of $F$, the following two conditions hold:
\begin{enumerate}[(a)]
    \item $TO^{H(E_v)}_{\delta_{v_1},\theta}(\phi_{v_1})=0$ for all elements $\delta_{v_1}\in\Tilde{H}(F_{v_1})=H(E\,\otimes F_{v_1})$ that are not $\theta$-elliptic, and for $\theta$-elliptic $\theta$-regular elements $\delta_{v_1}$, the twisted orbital integral is constant on stable $\theta$-conjugacy classes of $\delta_{v_1}$. 
    \item There exists a finite Galois extension $L$ of $F$ such that $L_{v_1}$ is a field and $L$ splits $\Tilde{D}$.
\end{enumerate}
Then 
\begin{equation*}\tag{6.3.4}\label{6.3.4}
\sum_{\delta} \langle\mathrm{obs}(\delta),\kappa\rangle TO^{H(\A_E)}_{\delta\theta}(\phi)=0    
\end{equation*}
if $\kappa$ is nontrivial in $A(T/F)$.
\end{lem}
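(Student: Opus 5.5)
We follow the argument of \cite{Clozel90}, Lemma 6.3 (itself modeled on Kottwitz's argument in \cite{KotCusp}). The sum in (\ref{6.3.4}) is either empty, in which case there is nothing to prove, or by (\ref{6.3.3}) it equals $\langle\mathrm{obs}(\delta^0),\kappa\rangle\prod_v O^{\kappa_v}_{\delta^0_v,\theta}(\phi_v)$. It therefore suffices to show that the local factor at $v_1$ vanishes:
\begin{equation*}
O^{\kappa_{v_1}}_{\delta^0_{v_1},\theta}(\phi_{v_1})=\sum_{x\in\mathcal{D}_\theta(I_{v_1}/F_{v_1})}\langle x,\kappa_{v_1}\rangle\, TO^{H(E_{v_1})}_{x\cdot\delta^0_{v_1},\theta}(\phi_{v_1})=0 .
\end{equation*}

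There are two cases at $v_1$. If $\delta^0_{v_1}$ is not $\theta$-elliptic, then every term vanishes by hypothesis (a), since the $\theta$-conjugacy classes in a stable $\theta$-class are all non-elliptic simultaneously: their $\theta$-centralizers are inner forms of the same torus $T_{v_1}$. If $\delta^0_{v_1}$ is $\theta$-elliptic, then hypothesis (a) says the twisted orbital integral is constant on the stable class. The local factor then becomes $TO_{\delta^0_{v_1}\theta}(\phi_{v_1})\sum_{x}\langle x,\kappa_{v_1}\rangle$. This is zero as soon as $\kappa_{v_1}$ restricts to a nontrivial character of the group $\mathcal{D}_\theta(I_{v_1}/F_{v_1})$. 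Via $I_{v_1}\cong T_{F_{v_1}}$ we identify $\mathcal{D}_\theta(I_{v_1}/F_{v_1})$ with $\ker(H^1(F_{v_1},T)\to H^1(F_{v_1},\Tilde H))$. Kottwitz duality identifies this with the dual of $\pi_0(\hat T^{\Gamma_1})/\mathrm{Im}\,\pi_0(\Hat{\Tilde D}^{\Gamma_1})=A(T/F_{v_1})$. This uses that $H^1(F_{v_1},\Tilde H)$ is detected through $\Tilde D$ (the Kottwitz map for $\Tilde H$), which is valid for local function fields by the results of \cite{KOTTWITZ_1997}. Hence it suffices to show that the localization map $A(T/F)\to A(T/F_{v_1})$ is injective, so that a nontrivial $\kappa$ stays nontrivial at $v_1$.

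This injectivity is where hypothesis (b) enters, and I expect it to be the main obstacle. Since $\gamma$ is elliptic, $\hat T^{\Gamma}$ is finite modulo the center. Choose $L$ as in (b), Galois over $F$ and containing a splitting field of $T$; we may enlarge $L$ while keeping $L_{v_1}$ a field. Enlarging $L$ is possible by a Chebotarev argument, or by noting that $T$ splits over an extension in which $v_1$ remains inert because $\gamma_{v_1}$ is elliptic. Then the decomposition group of $v_1$ in $\Gal(L/F)$ is the whole group, so $\Gamma$ and $\Gamma_1$ have the same image acting on $\hat T$ and on $\Hat{\Tilde D}$. Consequently $\hat T^{\Gamma}=\hat T^{\Gamma_1}$ and $\Hat{\Tilde D}^{\Gamma}=\Hat{\Tilde D}^{\Gamma_1}$, and $A(T/F)\to A(T/F_{v_1})$ is an isomorphism. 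The delicate point is making sure a single field $L$ splits both $T$ and $\Tilde D$ while $L_{v_1}$ stays a field. If that cannot be arranged directly, I would instead argue with $\Gamma$-invariants only through the quotient acting on $\Hat{\Tilde D}$, combined with ellipticity at $v_1$. Finally, the sign and measure conventions used to identify $I_{v_1}$ with $T_{F_{v_1}}$ must be checked to be compatible with the global pairing $\langle\mathrm{obs}(\delta),\kappa\rangle$, so that $\kappa_{v_1}$ really is the restriction of $\kappa$.
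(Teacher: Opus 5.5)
Your outer reduction is the one the paper uses (it defers to \cite{Clozel90}, Lemma 6.5). By (6.3.3) it suffices to kill the factor at $v_1$. If $T$ is not elliptic at $v_1$ this follows from (a). Otherwise that factor is $TO_{\delta^0_{v_1}\theta}(\phi_{v_1})\sum_x\langle x,\kappa_{v_1}\rangle$, which vanishes as soon as $\kappa_{v_1}\neq 1$ in $A(T/F_{v_1})$. So everything rests on the injectivity of $A(T/F)\to A(T/F_{v_1})$, i.e.\ the paper's Lemma 6.10, and that is where your argument fails.

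You propose to enlarge $L$ so that it splits $T$ as well as $\tilde D$ while $L_{v_1}$ stays a field, and then deduce $\hat T^{\Gamma}=\hat T^{\Gamma_1}$. Neither of your justifications works.
\begin{enumerate}
\item Chebotarev is unavailable: $v_1$ and $\phi_{v_1}$ are fixed before $\gamma$, and the lemma is needed for every $T=H_\gamma$ in (6.2.5), so nothing can be adapted to $T$.
\item Ellipticity of $T$ at $v_1$ only constrains how $\Gamma_1$ acts on $X_*(T)$. It does not make $\Gamma_1$ surject onto the Galois group of a splitting field of $T$.
\end{enumerate}
For example, take $H=\mathrm{PGL}_3$ and $T=R_{K'/F}\mathbb{G}_m/\mathbb{G}_m$ with $K'/F$ a non-Galois cubic extension in which $v_1$ is inert. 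Then $T$ is elliptic at $v_1$, yet every splitting field of $T$ contains the Galois closure of $K'$, whose decomposition group at $v_1$ has order $3$ out of $6$. So no such $L$ exists.

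Moreover, the equality you aim at can genuinely fail. If $H=\mathrm{SO}_5$ and $\Gamma$ acts on $X_*(T)$ through all of $W(B_2)$ while $\Gamma_1$ acts through $\{\pm1\}$, then $\hat T^{\Gamma_1}=\hat T[2]$ has order $4$, but $\hat T^{\Gamma}=Z(\mathrm{Sp}_4)$ has order $2$. The fallback you mention at the end is the right idea, but it is not carried out.

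The paper uses hypothesis (b) only to get $\hat{\tilde D}^{\Gamma}=\hat{\tilde D}^{\Gamma_1}=\hat{\tilde D}^{\Gal(L_{v_1}/F_{v_1})}$. Ellipticity then replaces any comparison of $\hat T^{\Gamma}$ with $\hat T^{\Gamma_1}$. Concretely:
\begin{enumerate}
\item Let $\iota:\hat{\tilde D}\to\hat T$ be dual to $T\to\tilde D$. If $\kappa\in\hat T^{\Gamma}$ becomes trivial in $A(T/F_{v_1})$, write $\kappa=\iota(d)\,t$ with $d\in\hat{\tilde D}^{\Gamma_1}=\hat{\tilde D}^{\Gamma}$ and $t\in(\hat T^{\Gamma_1})^{\circ}$.
\item Ellipticity at $v_1$ gives $(\hat T^{\Gamma_1})^{\circ}=(Z(\hat H)^{\Gamma_1})^{\circ}$. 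In Clozel's setting ($H_{\mathrm{der}}$ simply connected, so $Z(\hat H)=\hat D$) this is the torus $(\hat D^{\Gamma_1})^{\circ}$.
\item Since $E_{v_1}/F_{v_1}$ is a field, the map $\hat{\tilde D}^{\Gamma_1}\to\hat D^{\Gamma_1}$ is the norm from $\hat D^{\Gamma_{E_{v_1}}}$. On that torus it is $x\mapsto x^r$, hence surjective onto it.
\item Thus $t=\iota(d')$ with $d'\in\hat{\tilde D}^{\Gamma}$, and $\kappa=\iota(dd')$ is trivial in $A(T/F)$.
\end{enumerate}
For the adjoint $H$ actually used in \S6 it is even simpler: $\tilde D$ is trivial and $\hat T^{\Gamma_1}$ is finite, so the localization map is just the inclusion $\hat T^{\Gamma}\subseteq\hat T^{\Gamma_1}$.
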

\begin{rem}\label{rem 10.3.1}
If $\phi$ and $H$ indeed satisfy the conditions in the lemma, the sum (\ref{6.2.5}) would reduce to its stable part:  
\begin{equation*} \tag{6.3.5}\label{6.3.5}
\sum_\gamma\tau(T)\frac{|\ker^1(F,T)|}{|A(T,F)|}\sum_\delta TO_{\delta\theta}(\phi),    
\end{equation*}
which means that the summation over $\kappa$ only has $\kappa=1$ left.
\end{rem}
\begin{proof}
As in \cite{Clozel90}, Lemma 6.5, it boils down to the following lemma.
\end{proof}

\begin{lem}\label{lem 6.10}
Suppose there exists a finite Galois extension $L$ of $F$ such that $L_{v_1}$ is a field and $L$ splits $\Tilde{D}.$ Suppose $T$ is a maximal $F$-torus in $H$ which is elliptic at $v_1$, then the canonical map
\begin{equation*}
 A(T/F)\to A(T/F_{v_1})   
\end{equation*}
is injective.
\end{lem}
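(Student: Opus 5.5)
The plan is to follow Clozel's argument (\cite{Clozel90}, Lemma 6.5), working directly with the defining quotients (\ref{6.2.1}). Since the image of $\pi_0(\hat{\Tilde{D}}^{\Gamma})$ is quotiented out, I would reduce the claim to the following. Let $x\in\pi_0(\hat T^{\Gamma})$ map to $\pi_0(\hat T^{\Gamma_1})$ inside the image of $\pi_0(\hat{\Tilde D}^{\Gamma_1})$. Then $x$ should already lie in the image of $\pi_0(\hat{\Tilde D}^{\Gamma})$.

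The first step uses ellipticity. Because $T$ is elliptic at $v_1$, the torus $T/Z(H)$ is anisotropic over $F_{v_1}$. Since $H$ is adjoint, $Z(H)$ is finite, so $\hat T^{\Gamma_1}$ is finite. Consequently $\hat T^{\Gamma}\subset \hat T^{\Gamma_1}$ is finite too, $\pi_0(\hat T^{\Gamma})=\hat T^{\Gamma}$, and likewise at $v_1$. All the groups involved are therefore honest finite groups of fixed points, so no identity-component issues arise.

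The second step uses the splitting field. Since $L$ splits $\Tilde D$, the Galois action on $\hat{\Tilde D}$ factors through $\Gal(L/F)$. Since $L_{v_1}$ is a field, the decomposition group of $v_1$ in $\Gal(L/F)$ is all of $\Gal(L/F)$. Hence $\Gamma_1$ surjects onto the group through which $\Gamma$ acts on $\hat{\Tilde D}$, and so
\[
\hat{\Tilde D}^{\Gamma_1}=\hat{\Tilde D}^{\Gamma}.
\]
Now take $x\in\hat T^{\Gamma}$ with $x=\iota(d)$ for some $d\in\hat{\Tilde D}^{\Gamma_1}$, up to $(\hat{\Tilde D}^{\Gamma_1})^\circ$-components; here $\iota:\hat{\Tilde D}\to\hat T$ is the dual of $T\to\Tilde T\to\Tilde D$. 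By the displayed equality, $d$ is $\Gamma$-fixed, so $x$ lies in the image of $\hat{\Tilde D}^{\Gamma}$, which gives injectivity.

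The main obstacle is the component bookkeeping when $\hat{\Tilde D}^{\Gamma}$ is not finite: $\Tilde D$ is a torus, and its fixed points may have a positive-dimensional identity component. Two facts handle this. First, since $\hat{\Tilde D}^{\Gamma}=\hat{\Tilde D}^{\Gamma_1}$ as algebraic groups, their $\pi_0$ agree. Second, the image of the identity component in the finite group $\hat T^{\Gamma_1}$ is trivial. Together these give $\mathrm{Im}\,\pi_0(\hat{\Tilde D}^{\Gamma})=\mathrm{Im}\,\pi_0(\hat{\Tilde D}^{\Gamma_1})$ inside $\hat T^{\Gamma_1}$. I would also check that the maps are compatible with the restriction $\hat T^{\Gamma}\hookrightarrow\hat T^{\Gamma_1}$, which holds because every map involved is $\Gamma$-equivariant. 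A potential subtlety is the exact meaning of "elliptic at $v_1$" (modulo the induced center $Z(G)$ versus in $H$). Since $H$ here is adjoint, I would use the anisotropy of $T$ over $F_{v_1}$ directly.
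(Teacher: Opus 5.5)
Your proposal is correct and takes the same route as the paper, whose entire proof (deferring to Clozel, Lemma 6.5, and Haines, Lemma 8.4.1) is the observation $\hat{\Tilde D}^{\Gamma}=\hat{\Tilde D}^{\Gamma_1}=\hat{\Tilde D}^{\Gamma'}$ with $\Gamma'=\Gal(L_{v_1}/F_{v_1})$; this is exactly what you derive from the decomposition group at $v_1$ being all of $\Gal(L/F)$. Your use of ellipticity at $v_1$ to make $\hat T^{\Gamma}\subset\hat T^{\Gamma_1}$ finite groups just spells out the step left to those references (and since $H$ is adjoint here, $\Tilde D$ is in fact trivial, so the component bookkeeping you worry about never arises).
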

\begin{proof}
As in the proof of Lemma 8.4.1 of \cite{haines2009base} and Lemma 6.5 in \cite{Clozel90}, we simply notice that $\Hat{\tilde{D}}^\Gamma=\Hat{\tilde{D}}^{\Gamma_1}=\Hat{\tilde{D}}^{\Gamma'}$, where $\Gamma'=\Gal(L_{v_1}/F_{v_1})$.    
\end{proof}

Functions with properties in Lemma \ref{lem Kottwitz function}(a) are called \textit{generalized Kottwitz functions} (see \cite{LBC1999} 3.9 and \cite{haines2009base} 8.2). We will discuss them further when we move to the global setup in 11.2.

\subsection{End of Stabilization}\label{sec 6.4}

From now on, we assume that $\phi$ satisfies the conditions of Lemma \ref{lem Kottwitz function}. Following \cite{Clozel90}, we see that the elliptic regular term of the twisted trace formula (\ref{6.3.5}) is equal to
\begin{equation*}\tag{6.4.1}\label{6.4.1}
c(H)\sum_{\substack{\gamma\in H(F) \text{ strongly ell. reg.}\\\text{up to }H(F^s)-\text{conj}}} \tau(T)|\ker^1(F,T)|\frac{|\pi_0(\hat{D}^{\Gamma})|}{|\pi_0(\hat{T}^{\Gamma})|}\sum_{\substack{\delta\in H(\A_E)\\\mathcal{N}\delta=\gamma\\\text{up to }H(\A_E)-\theta-\text{conjugacy}}} TO_{\delta\theta}(\phi),   
\end{equation*}
with 
\begin{equation*}\tag{6.4.2}\label{6.4.2}
c(H)=\frac{|\text{Im}\,\pi_0(\Hat{\Tilde{D}}^\Gamma)|}   
{|\pi_0(\hat{D}^{\Gamma})|}.  
\end{equation*}

We assume $H$ satisfies the Hasse principle for $H^1$, and that $f=\bigotimes_vf_v\in C^\infty_c(H(\A), \lambda^{-1})$ satisfies the analogs of conditions (a) and (b) in Lemma \ref{lem Kottwitz function}. In other words, we regard $E=F$ and $\theta=\text{id}$. Then the elliptic strongly regular part of the trace formula for $f$, by the same stabilization process, reduces to 
\begin{equation*}\tag{6.4.3}\label{6.4.3}
\sum_{\substack{\gamma\in H(F) \text{ strongly ell. reg.}\\\text{up to }H(F^s)-\text{conj}}} \tau(T)|\ker^1(F,T)|\frac{|\pi_0(\hat{D}^{\Gamma})|}{|\pi_0(\hat{T}^{\Gamma})|}\sum_{\substack{\gamma'\in H(\A)\\\gamma'\text{ stably conjugate to }\gamma\\\text{up to }H(\A)-\text{conjugacy}}} O_{\gamma'}(f). \end{equation*}
Since the last sums in (\ref{6.4.1}) and (\ref{6.4.3}) can be expressed as products of local orbital integrals over places $v$ of $F$, as we discussed before, we see that if $\phi$ and $f$ have matching stable orbital integrals at every place of $F$, the two expressions (\ref{6.4.1}) and (\ref{6.4.3}) agree up to the factor $c(H)$, that is:
\begin{prop}\label{prop trace formula and stable orbital integrals}
Let $\phi=\bigotimes_v\phi_v\in C^\infty_c(H(\A_E), \lambda^{-1})$ and
$f=\bigotimes_vf_v\in C^\infty_c(H(\A), \lambda^{-1})$ be functions satisfying the conditions in \ref{sec 5.2} (1)(2)(3) and Lemma \ref{lem Kottwitz function} (a). Assume $\gamma\in G(F)$ and the image $\bar\gamma\in H(F)=G_{\rm ad}(F)$ are strongly elliptic regular in the respective groups, $\delta\in G(\A_E)$ such that $\mathcal{N}\delta=\gamma$ locally everywhere (therefore we also have $\mathcal N\bar\delta=\bar\gamma$, and $\delta_v$ (resp. $\bar\delta_v$) is $\theta$-strongly regular $\theta$-elliptic in $G(E_v)$ (resp. $H(E_v)$)).  
If we have for every place $v$ of $F$
\begin{equation*}\tag{6.4.4}\label{6.4.4}
SO^{G(E_v)}_{\delta_v\theta}=SO^{G(E_v)}_{\gamma_v},
\end{equation*}
for every such $\delta\in G(\A_E)$ and $\gamma\in G(F)$
then there exists a constant $c>0$ such that
\begin{equation*}\tag{6.4.5}\label{6.4.5}
\mathrm{tr}(R_\mathrm{cusp}(\phi)\circ I_\theta)=c \,\mathrm tr(r_{\mathrm{cusp}}(f)).   
\end{equation*}
\end{prop}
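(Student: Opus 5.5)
The plan is to compute both sides of (\ref{6.4.5}) with the simple trace formulas and compare them term by term after stabilization. By \S\ref{sec 5.2} (1)(2)(3) and Theorem \ref{Thm twisted trace formula}, the left side is the sum (\ref{6.2.2}) over $\theta$-conjugacy classes $\bar\delta\in H(E)$ with strongly regular elliptic norm. Likewise, (\ref{5.2.2}) writes $\mathrm{tr}(r_{\mathrm{cusp}}(f))$ as a sum of $\tau(H_{\bar\gamma})O^{H(\A)}_{\bar\gamma}(f)$. Condition (3) forces every contributing $\bar\gamma$ and $N\bar\delta$ to be strongly regular elliptic in $H=G_{\rm ad}$ at $v_2$, hence globally. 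Using Lemma \ref{lem Kottwitz function} (hypothesis (a) is assumed; for (b) I would choose $v_1$ split in $E$ and inert in a splitting field of $\tilde D$, which is allowed by Chebotarev), Remark \ref{rem 10.3.1} gives the twisted side as (\ref{6.4.1}). The same argument with $E=F$ gives the untwisted side as (\ref{6.4.3}).

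Next I would match the indexing sets, i.e.\ the stable classes $\bar\gamma\in H(F)$. On the twisted side, only those $\bar\gamma$ contribute that are local norms everywhere and for which some adelic $\delta^0$ exists. On the untwisted side, I need $O_{\gamma'}(f)$ to vanish stably when $\bar\gamma$ is not a norm at some $v$. This follows from the hypothesis (\ref{6.4.4}) read with the convention (\ref{1.1.1}), which sets the stable integral to zero for non-norms. Lemma \ref{lem vanishing of obs} together with the Hasse principle for $\tilde H$ lets me pass between global rational $\delta$ and adelic $\delta$.

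For each common $\bar\gamma$, the inner sums in (\ref{6.4.1}) and (\ref{6.4.3}) factor as $\prod_v SO^{H(E_v)}_{\bar\delta_v\theta}(\phi_v)$ and $\prod_v SO^{H(F_v)}_{\bar\gamma_v}(f_v)$. The signs $e(\cdot)$ multiply to $1$ globally, since $\prod_v e_v=1$ by Kottwitz. By Lemma \ref{lem 9.3.4} these local stable integrals on $H$ equal the corresponding ones on $G$, and by (\ref{6.4.4}) those agree. The prefactors $\tau(T)|\ker^1(F,T)||\pi_0(\hat D^\Gamma)|/|\pi_0(\hat T^\Gamma)|$ are identical on both sides, since $T=H_{\bar\gamma}\cong H_{\bar\delta\theta}$ is an inner form, so the measures are compatible. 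The only discrepancy left is $c(H)$ from (\ref{6.4.2}), so $c=c(H)>0$ works, because it depends only on $H$.

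The main obstacle is the bookkeeping at the places $v_1,v_2$ and in the global-to-adelic passage. There one must check that the untwisted stabilization also satisfies the hypotheses of Lemma \ref{lem Kottwitz function} for $f$. One must also check that absolute convergence (Lemma \ref{lem 6.7}, Lemma \ref{lem 5.8}) permits the rearrangements. I would verify this first by reusing the finiteness argument of Lemma \ref{lem 6.7} verbatim with $\theta=\mathrm{id}$.
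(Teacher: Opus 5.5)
Your proposal follows the paper's proof: both traces are rewritten as the stabilized sums (\ref{6.4.1}) and (\ref{6.4.3}), the inner sums are factored into local stable (twisted) orbital integrals, these are transferred from $H$ to $G$ by Lemma \ref{lem 9.3.4} and matched via (\ref{6.4.4}), and the leftover constant is $c=c(H)$. You are also right that matching the index sets needs the vanishing at non-norms from (\ref{1.1.1}), which the paper uses only implicitly.

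One slip: $v_1$ is not yours to choose, since it is the place where hypothesis (a) of Lemma \ref{lem Kottwitz function} holds. Moreover, if $D\neq 1$, any field $L$ splitting $\tilde D=\Res_{E/F}D_E$ contains $E$, so $L_{v_1}$ cannot be a field at a place $v_1$ split in $E$. This does no harm here, because $H$ is adjoint, so $D=H/H_{\rm der}$ is trivial and hypothesis (b) holds automatically.
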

\begin{proof}
By the stabilization process in this section, we know that 
\begin{equation*}
\begin{split}
\tr(R_\text{cusp}(\phi)\circ I_\theta)=(\ref{6.4.1})\\
\tr(r_{\text{cusp}}(f))=(\ref{6.4.3}).
\end{split}
\end{equation*}
And we know that $(\ref{6.4.1})=c(H)\cdot (\ref{6.4.3})$ provided 
\begin{equation*}
SO^{H(E_v)}_{\bar\delta_v\theta}=SO^{H(E_v)}_{\bar\gamma_v}\end{equation*}
for every place $v$ of $F$. By Lemma \ref{lem 9.3.4}, this is equivalent to (\ref{6.4.4}), as desired.
\end{proof}

\section{Proof In the Strongly Regular Elliptic Case}\label{sec 7}

\subsection{Local Data}\label{sec 7.1}
We first give the definition of the local data, which is the necessary bridge between spectral side geometric side of the theory. We assume $G$ is any unramified reductive group defined over local field $F$. Let $E/F$ be an unramified extension of degree $r$, denote $\theta$ to be a generator of $\Gal(E/F)$ as usual. Let $G_r:=G(E)$.

Let $\text{Irr}_{\chi,\lambda}(G)$ (resp. $\text{Irr}^\theta_{\chi_r, \lambda_r}(G_r)$) denote the set of irreducible (resp. irreducible $\theta$-stable) admissible representations in $\mathfrak{R}_\chi(G)$ (resp. $\mathfrak{R}_{\chi_r}(G_r)$) that transformed by $\lambda$ (resp. $\lambda_r$) under the center action. We define $\mathcal{H}(G_r,\rho_r, \lambda^{-1}_r)$ to the algebra of compactly supported functions on $G_r/Z(G_r)$ transformed by $\rho_r$ (resp. $\lambda^{-1}$) under the subgroups $I_r$ (resp. $Z(G_r)$), and similarly for $\mathcal{H}(G,\rho,\lambda^{-1})$. 

We define the \textit{local data adapted to} $\text{Irr}_{\chi,\lambda}(G)$ to consist of data (a), (b) and (c), subject to conditions (1) and (2) below:
\begin{enumerate}[(a)]
    \item An index set $\mathcal{I}_{\lambda}$, possibly infinite;
    \item A collection of complex numbers $a_i(\pi)$ for $i\in\mathcal{I}_{\lambda}$ and $\pi\in\text{Irr}_{\chi,\lambda}(G)$;
    \item A collection of complex numbers $b_i(\Pi)$ for $i\in\mathcal{I}_{\lambda}$ and $\Pi\in\text{Irr}^\theta_{\chi_r,\lambda_r}(G_r)$.
\end{enumerate}
\begin{enumerate}[(1)]

    \item For an fixed $i$, the constants $a_i(\pi)$ and $b_i(\Pi)$ are zero for all but finitely many $\pi$ and $\Pi$.
    \item For $\phi\in\mathcal{H}(G_r,\rho_r, \lambda^{-1}_r)$ and $f\in\mathcal{H}(G,\rho,\lambda^{-1})$, the following statements are equivalent:
    \begin{enumerate}[(A)]
        \item For all $i\in\mathcal{I}_{\lambda}$, we have $\sum_\pi a_i(\pi)\langle\text{tr}\,\pi,f\rangle=\sum_\Pi b_i(\Pi)\langle\text{tr}\,\Pi I_\theta,\phi\rangle$;
        \item For all strongly regular elliptic semisimple norms $\gamma=\mathcal{N}(\delta)$, where $\gamma\in G$ and $\delta\in G_r$, we have
        \begin{equation*}
         SO_\gamma(f)=SO_{\delta\theta}(\phi).   
        \end{equation*}
    \end{enumerate}
\end{enumerate}
\begin{rem}\label{rem trace should be mod center}
Since $G$ might not have a compact center and the functions we consider here are only compactly supported mod centers, the trace $\langle\mathrm{tr}\,\pi,f\rangle$ (resp. $\langle\mathrm{tr}\,\Pi I_\theta,\phi\rangle$) should be understood to be over $G/Z$ (resp. $G_r/Z_r$) instead of over $G$ (resp. $G_r$), as the action (\ref{5.1.2}) is over $G/Z$ (resp. $G_r/Z_r$).
\end{rem}

\subsection{Global Setup}\label{sec 7.2}
From now on, we embed the local situation into a suitable global setup, in order to apply the stabilizations of (twisted) simple trace formula to prove the existence of the local data.

We assume $G_\text{der}=G_\text{sc}$. We may assume that $G$ is split over an unramified extension $K/F$ such that $E\subset K$. We choose a degree $[K:F]$ cyclic extension of global function fields $\underline{K}/\underline{F}$ and a finite place $v_0$ of $\underline{F}$ such that $\underline{K}_{v_0}$ is a field and $\underline{K}_{v_0}/\underline{F}_{v_0}\cong K/F$. Then there is a degree $r=[E:F]$ cyclic extension $\underline{E}/\underline{F}$ with $\underline{E}\subset\underline{K}$ and $\underline{E}_{v_0}/\underline{F}_{v_0}\cong E/F$.

The Tchebotarev density theorem is still valid in positive characteristics (for example, see \cite{jarden1982cebotarev}), therefore we can find an inert place $v_1$ of $\underline{F}$, $v_1\neq v_0$ with $\Gal(\underline{K}_{v_1}/\underline{F}_{v_1})=\Gal(K/F)$. In addition, we fix two more auxiliary finite places $v_2$ and $v_3$ of $\underline{F}$ where $\underline{E}/\underline{F}$ splits completely at these places. We can assume that $v_0\notin\{v_1,v_2,v_3\}$.

There is a quasi-split group $\underline{G}$ over $\underline{F}$ with the property that $\underline{G}\times_{\underline{F}}\underline{F}_{v_0}\cong G$. We set $\Tilde{\underline{G}}=\Res_{\underline{E}/\underline{F}}\underline{G}_{\underline{E}}$. By the reduction steps, we can assume that $Z(\underline{G})$ is an induced torus over $F$, and denote $\underline{H}=\underline{G}/Z(\underline{G})$.
Let $\theta$ denote the $\underline{F}$-linear automorphism of $\Tilde{\underline{G}}$ from $\theta=\Gal(E/F)\cong \Gal(\underline{E}/\underline{F})$.

We may assume that the groups $\underline{G}$ and $\Tilde{\underline{G}}$ have simply connected derived subgroups and split over $\underline{K}$. In order to apply the stabilization results in \S \ref{sec 6}, they have to satisfy the Hasse principle for $H^1$ on $\underline{H}$ and $\Tilde{\underline{H}}$, namely $\ker^1(\underline{F},\underline{H})=\ker^1(\underline{F},\Tilde{\underline{H}})=1$. Since $v_i$ splits completely for $i=2,3$, we have identifications
\begin{equation*}
\underline{G}(\underline{E}_{v_i})=  \underline{G}(\underline{F}_{v_i})\times\dots\times  \underline{G}(\underline{F}_{v_i})
\end{equation*}
with $r$ factors, and $\Gal(\underline{E}_{v_i}/\underline{F}_{v_i})$ acts by cyclic permutations. Same decompositions and permutations hold for $\underline{H}(\underline{E}_{v_i})$. 

We write $\underline{\phi}=\phi^{v_0}\otimes\phi_{v_0}=\otimes_v\phi_v$ for a pure tensor element of $C^\infty_c(\underline{H}(\A_{\underline{E}}), \lambda^{-1})$ for a global unitary character $\lambda:\underline{Z}(\A_{\underline{E}})/\underline{Z}(\underline{E})\to\C^\times$ over places of $\underline{F}$ and similarly $\underline{f}=f^{v_0}\otimes f_{v_0}=\otimes_vf_v$. We write $\phi$ for $\phi_{v_0}$ and $f$ for $f_{v_0}$. 

We will always use the symbol $S_3$ to denote a finite set of places of $\underline{F}$ such that $v_3\in S_3$ and $v_1, v_2\notin S_3$. Finally, we consider triples $(\phi^{v_0}, f^{v_0}, S_3)$ satisfying the following conditions.
\begin{enumerate}[(a)]
    \item At any place $v\notin S_3\cup\{v_1,v_2\}$, the group $\underline{G}\times_{\underline{F}}\underline{F}_v$ and the extension $\underline{E}_v/\underline{F}_v$ are unramified.
    \item The function $f_{v_1}$ (resp. $\phi_{v_1}$) is (up to a scalar) a pullback of \textit{generalized Kottwitz function} on $\underline{H}(\underline{F}_{v_1})$ (resp., $\Tilde{\underline{H}}(\underline{F}_{v_1})$) satisfying the conditions in Lemma \ref{lem Kottwitz function} (a). In particular, $\lambda_{v_1}=\text{triv}$. Moreover, we may assume they are associated, as in \S 8.2 of \cite{haines2009base}. We notice that this construction requires that $E/F$ is inert at $v_1$.
    \item The function $f'_{v_2}$ is a coefficient of a supercuspidal representation, $\phi_{v_2}=f'_{v_2}\otimes\dots\otimes f'_{v_2}$ and $f_{v_2}=f'_{v_2}\ast\dots\ast f'_{v_2}$. Thus $(\phi_{v_2},f_{v_2})$ are associated and have nonvanishing (twisted) orbital integrals at $(\theta)$-elliptic strongly $(\theta)$-regular elements which are close to the identity. Notice that these functions are only compactly supported modulo centers.
    \item For any $v\in S_3$, $f_v$ (resp., $\phi_v$) is supported on the set of strongly regular elements (resp., elements with strongly regular norms), and $(\phi_v, f_v)$ are associated. Moreover, the function $f_{v_3}$ (resp., $\phi_{v_3}$) is supported on the set of elliptic elements with strongly regular elliptic images in $G^*(\underline{F}_{v_3})$ (resp., elements with elliptic norms).
    \item For any other place $v\notin S_3\cup\{v_0, v_1,v_2\}$, the function $f'_v$ (resp., $\phi'_v$) is the unit element of the spherical Hecke algebra of $\underline{H}(\underline{F}_v)$ (resp., $\underline{H}(\underline{E}_v)$). Then $(\phi'_v, f'_v)$ are associated by \cite{kottwitz1986base}. We then form the function $\phi_v$ (resp., $f_v$) to be the pullback of $\phi'_v$ (resp., $f'_v$) with the trivial central action. Therefore $(\phi_v, f_v)$ are associated by Lemma \ref{lem 9.3.4}.
    
    \item At every place $v$ of $\underline{F}$ where $\underline{E}/\underline{F}$ splits, we have $\phi_v=f'_v\otimes\dots\otimes f'_v$ and $f_v=f'_v\ast\dots\ast f'_v$ for an appropriate function $f'_v$ so that $(\phi_v, f_v)$ are associated. 
\end{enumerate}
We note that by the above conditions, the functions $f_v$ and $\phi_v$ are assumed to be associated at every place $v\neq v_0$.

\subsection{Existence of the Local Data}\label{sec 7.3}
We prove that local data adapted to $\text{Irr}_{\chi,\lambda}(G)$ in this subsection.

By abuse of notations, we use $R(\underline{\phi})$ (resp. $R(\underline{f})$) to denote the action of $\underline{\phi}$ (resp., $\underline{f}$) on the cuspidal spectrum $L^2_{\text{cusp}}(\underline{H}(\underline{E})\backslash\underline{H}(\A_{\underline{E}}), \lambda)$ (resp., $L^2_{\text{cusp}}(\underline{H}(\underline{F})\backslash\underline{H}(\A_{\underline{F}}),\lambda)$). Let $I_\theta$ denote the intertwining operator on the same space given by $I_\theta(\psi)(x):=\psi(\theta^{-1}(x))$ for $\psi\in L^2_{\text{cusp}}(\underline{H}(\underline{E})\backslash\underline{H}(\A_{\underline{E}}),\lambda)$ and $x\in\underline{H}(\A_{\underline{E}})$.
The existence of local data comes from the following proposition. We say that $f_{v_0}$ and $\phi_{v_0}$ are associated at strongly regular elliptic norms if they are associated for every strongly regular elliptic semisimple norm in the adjoint group $\gamma=\mathcal{N}\delta$ in $\underline{G}(\underline{F}_{v_0})$ whose image $\bar{\gamma}$ is also strongly regular elliptic semisimple in $\underline{H}(\underline{F}_{v_0})$.
\begin{prop}\label{prop trace=association}
There is a constant $c\neq0$, depending only on $(\underline{G},\underline{E}/\underline{F})$, with the following property: the functions $f_{v_0}$ and $\phi_{v_0}$ are associated at strongly regular norms if and only we have the equality of traces
\begin{equation*}\tag{7.3.1}\label{7.3.1}
\mathrm{tr}(R(\phi^{v_0}\otimes\phi_{v_0})I_\theta)=c\,\mathrm{tr} \,R(f^{v_0}\otimes f_{v_0}),
\end{equation*}
for every triple $(\phi^{v_0}, f^{v_0}, S_3)$ satisfying conditions (a)-(f) in \S \ref{sec 7.2}.
\end{prop}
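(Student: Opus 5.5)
Both traces are computed geometrically. First check that every global test function in a triple $(\phi^{v_0},f^{v_0},S_3)$ satisfying (a)--(f) of \S\ref{sec 7.2} meets the hypotheses of Theorem \ref{Thm twisted trace formula}, its untwisted analogue (\ref{5.2.2}), and Lemma \ref{lem Kottwitz function}. Condition (c) gives supercuspidal coefficients at $v_2$, so Lemma \ref{lem cuspidal image} applies and $R$ has cuspidal image. Condition (d) gives support on strongly regular elliptic elements at $v_3$, which plays the role of (3) of \S\ref{sec 5.2}. Condition (b) gives generalized Kottwitz functions at the inert place $v_1$. For Lemma \ref{lem Kottwitz function}(b), take $L=\underline{K}$: it splits $\tilde{\underline{D}}$ and $\underline{K}_{v_1}$ is a field by the choice of $v_1$. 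The stabilization of \S\ref{sec 6} then gives
\begin{equation*}
\mathrm{tr}(R(\underline\phi)I_\theta)=(\ref{6.4.1}),\qquad \mathrm{tr}\,R(\underline f)=(\ref{6.4.3}).
\end{equation*}
Both sums run over strongly regular elliptic $\gamma\in\underline H(\underline F)$ up to stable conjugacy. By (d), only $\gamma$ that are strongly regular elliptic at $v_3$ contribute, so (via Lemmas \ref{lem ser in the adjoint group} and \ref{lem 9.3.4}) only $\gamma$ whose image in $\underline{G}(\underline F_{v_0})$ and the adjoint group is strongly regular elliptic. We set $c:=c(\underline H)$ from (\ref{6.4.2}); it depends only on $(\underline G,\underline E/\underline F)$.

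\emph{The ``only if'' direction.} Suppose $f_{v_0}$ and $\phi_{v_0}$ are associated at strongly regular norms. By construction (conditions (b)--(f)), $(\phi_v,f_v)$ are associated at every place $v\neq v_0$. So (\ref{6.4.4}) holds at every place, and Proposition \ref{prop trace frand stable orbital integrals} gives (\ref{7.3.1}) with $c=c(\underline H)$. One point needs care: the global $\gamma$ in (\ref{6.4.3}) that are not norms. Lemma \ref{lem vanishing non-norm} handles them at $v_0$, and the association at split places handles them elsewhere; if a local component is not a norm, the associated $f_v$ has vanishing stable orbital integral there.

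\emph{The ``if'' direction, and the main obstacle.} Assume (\ref{7.3.1}) for all admissible triples. Subtracting the two stabilized expressions gives
\begin{equation*}
\sum_{\gamma}\tau(T)|\ker^1(\underline F,T)|\frac{|\pi_0(\hat D^\Gamma)|}{|\pi_0(\hat T^\Gamma)|}\prod_{v\neq v_0}SO_{\gamma_v}(f_v)\,\bigl(SO_{\delta_{v_0}\theta}(\phi_{v_0})-SO_{\gamma_{v_0}}(f_{v_0})\bigr)=0,
\end{equation*}
and we must isolate a single local term at $v_0$. Fix a target norm $\gamma_0=\mathcal N\delta_0$ at $v_0$ that is strongly regular elliptic there and in the adjoint group. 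I would proceed as in \cite{Clozel90}, Lemma 6.7, and \cite{haines2009base}, \S8:
\begin{enumerate}[(i)]
\item Use weak approximation to find a global $\gamma\in\underline H(\underline F)$ close to $\gamma_0$ at $v_0$, close to the identity at $v_2$, and strongly regular elliptic at $v_3$. The remaining local orbits are open, and $SO$ is locally constant near strongly regular elements, so closeness at $v_0$ is enough.
\item Choose $S_3$ containing the finitely many bad places. At $v_3$ and the other places of $S_3$, take associated functions supported on very small neighbourhoods of the stable classes of $\gamma_v$; their existence near strongly regular points is elementary, by pulling back along the norm on a small neighbourhood of a $\theta$-strongly regular element.
\item Using Lemma \ref{lem 6.7}, this support condition makes $\gamma$ the only global class contributing, so the sum has one nonzero term.
\end{enumerate}
The remaining factors are nonzero: the Kottwitz functions at $v_1$ by construction, the supercuspidal coefficients at $v_2$ by (c), and the unit elements elsewhere. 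Dividing by them yields $SO_{\delta_0\theta}(\phi_{v_0})=SO_{\gamma_0}(f_{v_0})$.

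The main obstacle is step (iii): the support conditions must make the contributing set a singleton uniformly, while the $v_2$ supercuspidal coefficient keeps a nonzero orbital integral at the approximating $\gamma$. Condition (c) makes the latter plausible, since these coefficients are nonvanishing near the identity. The existence of the required associated pairs at $S_3$ is the second thing I would verify carefully.
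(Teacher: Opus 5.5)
Your overall architecture matches the paper's. You replace both traces by the stabilized expressions (\ref{6.4.1}) and (\ref{6.4.3}) with $c=c(\underline H)$, get ``only if'' from Proposition \ref{prop trace formula and stable orbital integrals}, and for ``if'' isolate one global class and conclude at $v_0$ by local constancy.

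The gap is in step (i). You approximate on the untwisted side by a global $\gamma\in\underline H(\underline F)$ and impose conditions only at $v_0$, $v_2$, $v_3$. Your later claim that all factors away from $v_0$ are nonzero then does not follow:
\begin{itemize}
\item At $v_1$ the functions satisfy hypothesis (a) of Lemma \ref{lem Kottwitz function}, so their orbital integrals vanish at every non-elliptic element. Also, $(\phi_{v_1},f_{v_1})$ are associated and $v_1$ is inert, so $SO_{\gamma_{v_1}}(f_{v_1})=0$ whenever $\gamma_{v_1}$ is not a local norm. Nothing in your choice rules either case out.
\item At $v_2$ the supercuspidal coefficients have vanishing orbital integrals off the elliptic set, so ``close to the identity'' is not enough.
\item At the places of $S_3$, your construction of associated pairs by pulling back along the norm presupposes that each $\gamma_v$ is a norm.
\end{itemize}
When any of these fails, the isolated term reads $0=0$ and gives no information at $v_0$. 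Even at $v_0$ itself, local constancy of the twisted side needs some $\delta_{v_0}$ close to $\delta_0$ with $\mathcal N\delta_{v_0}=\gamma_{v_0}$. That requires an openness property of the norm map, which you do not supply.

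The paper avoids all of this by approximating on the twisted side. It chooses a global $\delta\in\tilde{\underline G}(\underline F)=\underline G(\underline E)$ that is close to $\delta_0$ at $v_0$, close to $1$ at $v_2$, and $\theta$-elliptic and strongly $\theta$-regular at $v_1,v_2,v_3$. It then sets $\gamma=\mathcal N\delta$. This gives everything you were missing:
\begin{itemize}
\item $\gamma$ is a norm at every place, so the twisted side is nonempty and associated functions near each $\gamma_v$ exist.
\item $\gamma$ is elliptic at $v_1,v_2,v_3$, so the Kottwitz and supercuspidal factors are nonzero.
\item $\delta_{v_0}$ is already close to $\delta_0$, so no openness statement is needed.
\end{itemize}
With this change, your steps (ii)--(iii) and the continuity argument go through as in the paper.

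One smaller point concerns the ``only if'' direction. Lemma \ref{lem vanishing non-norm} is about $b\phi$ for central $\phi$, not an arbitrary pair matched only at norms. So it does not justify discarding global classes that are non-norms at $v_0$. The paper itself simply invokes Proposition \ref{prop trace formula and stable orbital integrals} at that step.
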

\begin{proof}
For the "if" part, at the place $v_0$, if we are given $\gamma_0=\mathcal{N}\delta_0$ for (resp. $\theta$-) strongly regular elliptic semisimple element $\gamma_0\in\underline{G}(\underline{F}_{v_0}) $ (resp. $\delta_0\in\underline{G}(\underline{E}_{v_0}) $ ), we choose an appropriate global element $\delta\in\underline{\tilde{G}}(\underline{F})$ such that:
\begin{enumerate}[(1)]
    \item $\delta_{v_0}$ is close to $\delta_0$ at $v_0$ and close to $1$ at $v_2$;
    \item $\delta_{v_i}$ is $\theta$-elliptic and strongly $\theta$-regular at $i=1,2,3$.
\end{enumerate}
Thus $\delta$ itself is $\theta$-elliptic and strongly $\theta$-regular. Then we can choose the set $S_3$ and the associated functions $(f^{v_0}, \phi^{v_0})$ such that, the geometric sides of (\ref{7.3.1}), according to the stabilization process, take the stabilized form of equations (\ref{6.4.1}) and (\ref{6.4.3}), which involve only the term indexed by $\gamma:=\mathcal{N}\delta$. The second sums in those equations, which are the sum of adelic (twisted) orbital integrals can be written as a product over all places of local stable (twisted) orbital integrals, and at all places except $v_0$, these are nonzero (by choice) and matching. Then the identity (\ref{7.3.1}) will force the matching at $v_0$, namely, $SO^{\underline{H}(\underline{E}_{v_0})}_{\delta_{v_0}\theta}(\phi_{v_0})=SO^{\underline{H}(\underline{F}_{v_0})}_{\gamma_{v_0}}(f_{v_0})$. By Lemma \ref{lem 9.3.4}, we then have $SO^{\underline{G}(\underline{E}_{v_0})}_{\delta_{v_0}\theta}(\phi_{v_0})=SO^{\underline{G}(\underline{F}_{v_0})}_{\gamma_{v_0}}(f_{v_0})$. A continuity argument then forces the desired identity $SO^{\underline{G}(\underline{E}_{v_0})}_{\delta_0\theta}(\phi_{v_0})=SO^{\underline{G}(\underline{F}_{v_0})}_{\gamma_0}(f_{v_0})$.

For the "only if" part, we just need to use Proposition \ref{prop trace formula and stable orbital integrals}, since for (\ref{7.3.1}) to be true, by assumption, the only ingredient we need is simply $SO^{\underline{G}(\underline{E}_{v_0})}_{\delta_{v_0}\theta}(\phi_{v_0})=SO^{\underline{G}(\underline{F}_{v_0})}_{\gamma_{v_0}}(f_{v_0})$ for every $\gamma=\mathcal{N}\delta$, which holds by assumption.
\end{proof}

\begin{prop}\label{prop existence of local data}
Local data adapted to $\mathrm{Irr}_{\chi,\lambda}(G)$ in the sense of \S \ref{sec 7.1} exists.
\end{prop}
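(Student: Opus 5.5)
Proposition \ref{prop trace=association} does most of the work: it turns association at strongly regular elliptic norms at $v_0$ into the global trace identity (\ref{7.3.1}), for every admissible triple $(\phi^{v_0},f^{v_0},S_3)$. The plan is to expand both sides of (\ref{7.3.1}) spectrally and then to index the local data by the triples themselves, together with the relevant spectral information away from $v_0$. Concretely, take $\mathcal{I}_\lambda$ to be the set of triples $i=(\phi^{v_0},f^{v_0},S_3)$ satisfying (a)--(f) of \S\ref{sec 7.2}, with global unitary central character $\lambda$ whose $v_0$-component is the given one. For $\pi\in\mathrm{Irr}_{\chi,\lambda}(G)$ and $\Pi\in\mathrm{Irr}^\theta_{\chi_r,\lambda_r}(G_r)$ define
\begin{equation*}
a_i(\pi)=c\sum_{\underline{\pi}:\,\underline{\pi}_{v_0}\cong\pi}m(\underline{\pi})\,\mathrm{tr}\,\underline{\pi}^{v_0}(f^{v_0}),\qquad b_i(\Pi)=\sum_{\underline{\Pi}:\,\underline{\Pi}_{v_0}\cong\Pi,\ \underline{\Pi}\circ\theta\cong\underline{\Pi}}\mathrm{tr}\bigl(\underline{\Pi}^{v_0}(\phi^{v_0})I_\theta\bigr).
\end{equation*}
Here the sums run over cuspidal automorphic representations occurring in the $\lambda$-isotypic cuspidal spectrum. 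The operator $I_\theta$ is normalized on each $\theta$-stable isotypic component, as in the trace of $R(\underline{\phi})I_\theta$ restricted to that component. With these choices, (\ref{7.3.1}) for $f=f_{v_0}$ and $\phi=\phi_{v_0}$ reads exactly as condition (A).

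Three steps then remain.

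\textbf{Step 1: the spectral expansion.} Using Lemma \ref{lem trace class} and the discreteness of the cuspidal spectrum, decompose
\begin{equation*}
\mathrm{tr}\bigl(R(\underline{\phi})I_\theta\bigr)=\sum_{\underline{\Pi}}\mathrm{tr}\bigl(\underline{\Pi}(\underline{\phi})I_\theta\bigr).
\end{equation*}
Only $\theta$-stable $\underline{\Pi}$ contribute, because $I_\theta$ permutes the isotypic components and non-stable ones contribute zero to the trace. Each $\theta$-stable isotypic summand factors as a product of local twisted traces. The same expansion applies to $R(\underline{f})$ with multiplicities $m(\underline{\pi})$.

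\textbf{Step 2: the depth-zero restriction.} Since $\phi_{v_0}\in\mathcal{H}(G_r,\rho_r,\lambda_r^{-1})$, the operator $\Pi(\phi_{v_0})$ is zero unless $\rho_r\subset\Pi|_{I_r}$. By Proposition \ref{prop Iwahori type}, that condition means $\Pi\in\mathfrak{R}_{\chi_r}(G_r)$, so only $\Pi\in\mathrm{Irr}^\theta_{\chi_r,\lambda_r}(G_r)$ enter; the same holds for $\pi$ on the $G$ side. After this, condition (2) of \S\ref{sec 7.1} is precisely the statement of Proposition \ref{prop trace=association}.

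\textbf{Step 3: finiteness, condition (1).} This is the main obstacle. For a fixed $i$, we need $a_i(\pi)$ and $b_i(\Pi)$ to be nonzero for only finitely many local components. The argument runs as follows:
\begin{itemize}
\item At $v_2$, the function is a supercuspidal coefficient, which forces $\underline{\pi}_{v_2}$ to be a fixed supercuspidal representation.
\item The functions $f^{v_0}$ and $\phi^{v_0}$ are bi-invariant under a fixed open compact subgroup $K^{v_0}$ away from $v_0$.
\item I would also fix an auxiliary level $K_{v_0}\supset I^+$ at $v_0$, which is harmless since the relevant $\pi$ have $I^+$-fixed vectors.
\item With the central character $\lambda$ fixed, finiteness of the cuspidal automorphic representations with fixed level and central character (Harder's finiteness, valid over function fields) then shows that only finitely many $\underline{\pi}$ and $\underline{\Pi}$ contribute.
\end{itemize}
Hence only finitely many local components at $v_0$ occur.

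The worry at this step is the non-compact center. I would handle it through the fixed unitary $\lambda$ and by working on $\underline{H}=\underline{G}/Z$, as in Remark \ref{rem trace should be mod center}. If the finiteness argument via fixed level proves awkward, I would fall back on Lemma \ref{lem 5.8}. For fixed $i$, the support of the test functions away from $v_0$ is compact modulo the center, and I would then argue that the resulting traces form a finite linear combination.
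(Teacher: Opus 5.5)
Your proposal matches the paper's proof essentially step for step. You expand both sides of (\ref{7.3.1}) spectrally, and you fix the level at every place (the open compact level of $\phi^{v_0},f^{v_0}$ away from $v_0$, and the pro-unipotent radical at $v_0$) together with the central character; Harish-Chandra finiteness (Lemma \ref{lem Harish-Chandra finiteness}) then leaves only finitely many contributing cuspidal representations. You regroup by the $v_0$-component to define $a_i(\pi)$ and $b_i(\Pi)$, indexed by the triples, and read off both directions of condition (2) from Proposition \ref{prop trace=association}.

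One small slip: the level at $v_0$ should be $I^+$ itself (or a subgroup of it), not some $K_{v_0}\supset I^+$. Your own justification only guarantees $I^+$-fixed vectors.
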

\begin{proof}
On the spectral side, we can rewrite equation (\ref{7.3.1}) as
\begin{equation*}
\sum_{\Pi}\langle\text{tr}\,\Pi I_\theta,\phi\rangle=c\sum_{\pi}\langle\text{tr}\,\pi ,f\rangle,    
\end{equation*}
where the sum is over automorphic cuspidal representations $\Pi$ (resp., $\pi$) in the decomposition of the space
$L^2_{\text{cusp}}(\underline{H}(\underline{E})\backslash\underline{H}(\A_{\underline{E}}),\lambda)$ (resp., $L^2_{\text{cusp}}(\underline{H}(\underline{F})\backslash\underline{H}(\A_{\underline{F}}),\lambda)$).
Using the product decomposition of $\phi$ and $f$, this is
\begin{equation*}\tag{7.3.2}\label{7.3.2}
\sum_{\Pi}\prod_v \langle\text{tr}\,\Pi_v I_\theta,\phi_v\rangle=c\sum_{\pi}\prod_v\langle\text{tr}\,\pi_v ,f_v\rangle    
\end{equation*}
Therefore the identity (\ref{7.3.1}) is equivalent to the identity (\ref{7.3.2}). Fixing the functions $\phi_v, f_v$ at the places other than $v_0$, for each we get an identity
\begin{equation*}
\sum_{\Pi}\langle\text{tr}\,\Pi^{v_0} I_\theta,\phi^{v_0}\rangle \langle\text{tr}\,\Pi_{v_0} I_\theta,\phi_{v_0}\rangle=\sum_{\pi}\langle\text{tr}\,\pi^{v_0} ,f^{v_0}\rangle\langle\text{tr}\,\pi_{v_0} ,f_{v_0}\rangle.    
\end{equation*}
Assuming $f_{v_0}$ and $\phi_{v_0}$ range only over functions bi-$\rho$-invariant under a fixed Iwahori subgroup, then at $v_0$, the function is bi-invariant under the pro-unipotent of that fixed Iwahori subgroup, outside of $v_0$, the functions must be biinvariant under certain open compact subgroup (\cite{getz2024introduction}, Lemma 5.2.1). That is to say, the level at all places are fixed. Therefore, we can just apply an adapted version of Harish-Chandra's finiteness theorem (Lemma \ref{lem Harish-Chandra finiteness}) for cusp forms to see that, the number of representations $\Pi$ (resp. $\pi$) make nonzero contribution to the above identity is finite. 

Since the sums that we want ultimately in the local data are over automorphic representations of local component at $v_0$, we regroup the summation as 
\begin{equation*}\tag{7.3.3}\label{7.3.3}
\sum_{\Pi_{v_0}}[\sum_{\substack{\Pi'\text{ s.t.}\\ \Pi'_{v_0}\cong\Pi_{v_0}}}\langle\text{tr}\,\Pi'^{v_0} I_\theta,\phi^{v_0}\rangle ]\langle\text{tr}\,\Pi_{v_0} I_\theta,\phi_{v_0}\rangle=\sum_{\pi_{v_0}}[\sum_{\substack{\pi'\text{ s.t.}\\ \pi'_{v_0}\cong\pi_{v_0}}}\langle\text{tr}\,\pi'^{v_0} ,f^{v_0}\rangle]\langle\text{tr}\,\pi_{v_0} ,f_{v_0}\rangle.    
\end{equation*}
Where the outer sums are over isomorphic classes of irreducible cuspidal ($\theta$-stable) automorphic representations of $\underline{G}(\underline{E}_{v_0})$ (resp. $\underline{G}(\underline{F}_{v_0})$) that transforms under the central character $\lambda^{-1}_{r, v_0}$ (resp. $\lambda_{v_0}^{-1}$). In particular, they are in $\text{Irr}^\theta_{\chi_{r,v_0},\lambda_{r, v_0}}(\underline{G}(\underline{E}_{v_0}))$ (resp. $\text{Irr}^\theta_{\chi_{v_0},\lambda_{v_0}}(\underline{G}(\underline{F}_{v_0}))$). Now we set
\begin{equation*}
\begin{split}
    b(\Pi_{v_0})&=\sum_{\substack{\Pi'\text{ s.t.}\\ \Pi'_{v_0}\cong\Pi_{v_0}}}\langle\text{tr}\,\Pi'^{v_0} I_\theta,\phi^{v_0}\rangle\\
    a(\pi_{v_0})&=\sum_{\substack{\pi'\text{ s.t.}\\ \pi'_{v_0}\cong\pi_{v_0}}}\langle\text{tr}\,\pi'^{v_0} ,f^{v_0}\rangle
\end{split}
\end{equation*}
Then (\ref{7.3.3}) becomes
\begin{equation*}\tag{7.3.4}\label{7.3.4}
\sum_{\Pi_{v_0}}b(\Pi_{v_0})\langle\text{tr}\,\Pi_{v_0} I_\theta,\phi_{v_0}\rangle=\sum_{\pi_{v_0}}a(\pi_{v_0})\langle\text{tr}\,\pi_{v_0} ,f_{v_0}\rangle,     \end{equation*}
therefore, the implication (B)$\Rightarrow$(A) in the definition of local data is proved.

Assume that (\ref{7.3.4}) holds for the functions $\phi_{v_0}$ and $f_{v_0}$, since we already fix functions at other places $v$ other than $v_0$, we get (\ref{7.3.3}), thus (\ref{7.3.2}), ultimately (\ref{7.3.1}). Then we get $\phi_{v_0}$ and $f_{v_0}$ are associated at strongly regular norms by Proposition \ref{prop trace=association}, hence the implication (A)$\Rightarrow$(B) is also proved.
\end{proof}
\begin{lem}(Harish-Chandra's finiteness Theorem for cusp forms over function fields)\label{lem Harish-Chandra finiteness}
Let $K$ be an open compact subgroup of $\underline{G}(\A_{\underline{F}})$, let $V_\mathrm{cusp}(\lambda, K)$ denote the space of cuspidal functions $f: \underline{G}(F)\backslash\underline{G}(\A_{\underline{F}})/K\to\C$ such that $f(zg)=\lambda(z)f(g)$ for all $g\in\underline{Z}(\A_{\underline{F}})$. Then:
\begin{enumerate}
    \item There exists a compact subset $C\subset \underline{G}(\A_{\underline{F}})$ such that every function in $V_\mathrm{cusp}(\lambda, K)$ is supported on $\underline{Z}(\A_{\underline{F}})\underline{G}(F)C $.
    \item $\dim V_\mathrm{cusp}(\lambda, K)<\infty $.
\end{enumerate}
\end{lem}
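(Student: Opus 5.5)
We follow the classical reduction-theory proof of Harish-Chandra's finiteness theorem (as in Borel--Jacquet, or Moeglin--Waldspurger I.2), adapted to the function field setting, where the argument is simpler because there is no archimedean place.

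\textbf{Step 1: the Siegel domain.} By reduction theory for reductive groups over global function fields (Harder's reduction theory), there is a Siegel set $\mathfrak{S}=\omega A_0^{+}(c)K_0$ such that $\underline{G}(\A_{\underline{F}})=\underline{G}(\underline{F})\underline{Z}(\A_{\underline{F}})\mathfrak{S}$. Here $\omega$ is compact in $P_0(\A)$, $A_0^{+}(c)$ is the set of elements of the split component of a minimal parabolic $P_0$ on which all simple roots $\alpha$ satisfy $|\alpha(a)|\geq c$, and $K_0$ is a maximal compact subgroup. Modulo the center, $A_0^+(c)$ is a cone in the lattice $\mathrm{Hom}(X^*(A_0),\Z)$, so it is discrete.

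\textbf{Step 2: the key lemma, proving (1).} Fix $f\in V_{\mathrm{cusp}}(\lambda,K)$. The claim is that there is a constant $c'$, depending only on $K$ and $c$, such that $f(\omega a k)=0$ whenever $|\alpha(a)|>c'$ for some simple root $\alpha$.

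Let $P=P_\alpha$ be the maximal parabolic corresponding to $\alpha$, with unipotent radical $N$. Then $N(\underline{F})\backslash N(\A)$ is compact and totally disconnected. Write $g=\omega ak$. Because $f$ is right $K$-invariant, the function $n\mapsto f(ng)$ is invariant under right translation by $N(\A)\cap gKg^{-1}$. When $|\alpha(a)|$ is large, conjugation by $a$ expands the subgroup $a(\omega^{-1}N(\A)\omega\cap K')a^{-1}$ so much that it, together with $N(\underline{F})$, covers all of $N(\A)$. This uses that $\omega$ is compact, and it uses strong approximation for $N$ together with the fact that the image of $N(\A)\cap(\text{large compact})$ surjects onto $N(\underline{F})\backslash N(\A)$.

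It follows that $f(ng)$ is constant in $n$, so $f(g)=\int_{[N]}f(ng)\,dn=0$ by cuspidality. Hence $f$ is supported on $\underline{Z}(\A)\underline{G}(\underline{F})\omega A_0^+(c,c')K_0$, and this set has the form $\underline{Z}(\A)\underline{G}(\underline{F})C$ with $C$ compact, because $A_0^+(c,c')$ is finite modulo the center. This proves (1).

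\textbf{Step 3: finite dimensionality, proving (2).} Every $f$ is determined by its values on $C$. These functions are right $K$-invariant and transform by $\lambda$ under the center, so the value of $f$ at $g\in C$ depends only on the double coset of $g$ in $\underline{G}(\underline{F})\underline{Z}(\A)\backslash \underline{G}(\underline{F})\underline{Z}(\A)CK/K$. Since $C$ is compact and $K$ is open, this is a finite set. Therefore $\dim V_{\mathrm{cusp}}(\lambda,K)$ is at most the number of such cosets, which gives (2).

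A point of care here: the transformation by $\lambda$ may force some values to vanish, but that only decreases the dimension, so the bound still holds. We also need to know that $\underline{Z}(\A)$ acts on $\underline{G}(\underline{F})\backslash \underline{G}(\A)/K$ in a way compatible with the Siegel decomposition, which is immediate.

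\textbf{Main obstacle.} The hard step is the uniform expansion estimate in Step 2. We must show that conjugation by $a$ with $|\alpha(a)|$ large makes $a(N(\A)\cap K'')a^{-1}\cdot N(\underline{F})=N(\A)$, uniformly for $\omega$ in a compact set.

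The plan is to filter $N$ by root subgroups (the lower central series) and treat each successive quotient, which is a vector group $\mathbb{G}_a^m$. For $\mathbb{G}_a$ over a global function field, $\A/(\underline{F}+\prod_v\mathfrak{p}_v^{n_v})=0$ once $\sum n_v$ is small enough, by Riemann--Roch: the relevant $H^1$ of a line bundle on the curve vanishes when its degree exceeds $2g-2$. Since $|\beta(a)|\geq|\alpha(a)|$ up to constants for all roots $\beta$ in $N$, scaling by $a$ raises the degree, and this gives the required $c'$.
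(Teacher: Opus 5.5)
Your proposal is correct and follows the argument the paper relies on. The paper deduces (2) from (1) exactly as you do, noting that $f$ is determined by its values on the finite set $CK/K$. For (1) the paper simply cites \cite{harder1974chevalley}, Theorem 1.2.1, and \cite{getz2024introduction}, Theorem 9.5.1; your Siegel-set argument is the standard proof of that cited result: deep in the cusp, $N(\underline{F})\,(N(\A)\cap gKg^{-1})=N(\A)$ by Riemann--Roch, so $f(g)$ equals its vanishing constant term.

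The one step you should make precise is the uniformity in $\omega$. Choose the elements $a$ from a discrete set of representatives supported at a single fixed place, absorbing the compact remainder of $A_0(\A)$ into $\omega$. Then $a^{-1}\omega a$ stays in a fixed compact set, and so $N(\A)\cap gKg^{-1}\supseteq a\,(N(\A)\cap K'')\,a^{-1}$ for one open compact $K''$ independent of $g$.
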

\begin{proof}
For (i), see \cite{harder1974chevalley} Theorem 1.2.1, also \cite{getz2024introduction} Theorem 9.5.1. (ii) follows from (i) by noticing that the value of such $f\in V_\text{cusp}(\lambda, K)$ is determined on the finite cosets $C/K$ (we can always enlarge $C$ to contain $K$).    
\end{proof}
    
\subsection{A Further Reduction}\label{sec 7.4}
In this subsection, we show that the existence of \textit{local data adapted to $\mathrm{Irr}_{\chi,\lambda}(G)$ for all unitary characters $\lambda$} implies the existence of \textit{local data adapted to $\mathrm{Irr}_{\chi}(G)$}, so that we are back to the scenario considered in \cite{haines2012base}. The local data adapted to $\text{Irr}_{\chi}(G)$ is given analogously as follows:
\begin{enumerate}[(a')]
    \item An indexing set $\mathcal{I}$, possibly infinite;
    \item A collection of complex numbers $a_i(\pi)$ for $i\in\mathcal{I}$ and $\pi\in\text{Irr}_{\chi}(G)$;
    \item A collection of complex numbers $b_i(\Pi)$ for $i\in\mathcal{I}$ and $\Pi\in\text{Irr}^\theta_{\chi_r}(G_r)$.
\end{enumerate}
\begin{enumerate}[(1')]

    \item For $i$ fixed, the constants $a_i(\pi)$ and $b_i(\Pi)$ are zero for all but finitely many $\pi$ and $\Pi$.
    \item For $\phi\in\mathcal{H}(G_r,\rho_r)$ and $f\in\mathcal{H}(G,\rho)$, the following statements are equivalent:
    \begin{enumerate}[(A')]
        \item For all $i$, we have $\sum_\pi a_i(\pi)\langle\text{tr}\,\pi,f\rangle=\sum_\Pi b_i(\Pi)\langle\text{tr}\,\Pi I_\theta,\phi\rangle$;
        \item For all strongly regular elliptic semisimple norms $\gamma=\mathcal{N}(\delta)$, we have
        \begin{equation*}
         SO_\gamma(f)=SO_{\delta\theta}(\phi).   
        \end{equation*}
    \end{enumerate}
\end{enumerate}
\begin{prop}\label{prop existence of original local data}
Assume local data adapted to $\mathrm{Irr}_{\chi,\lambda}(G)$ exists for all unitary character $\lambda$ on $Z$, then the local data adapted to $\mathrm{Irr}_{\chi}(G)$ exists.    
\end{prop}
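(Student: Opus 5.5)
The plan is to build the data for $\mathrm{Irr}_\chi(G)$ by gluing together the data for $\mathrm{Irr}_{\chi,\lambda}(G)$ as $\lambda$ varies over all unitary characters of $Z(F)$. The glue is the central-character decomposition $f\mapsto f_\lambda$ of \S\ref{sec 4.3}, combined with the twisting trick of Lemmas \ref{lem 4.6} and \ref{lem 4.7}.

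\emph{Construction of the data.} Take the index set $\mathcal{I}:=\bigsqcup_{\lambda}\mathcal{I}_\lambda$, where $\lambda$ runs over unitary characters with $\lambda|_{Z(F)_1}=\chi^{-1}|_{Z(F)_1}$; this is the restriction allowed by Lemma \ref{lem 4.4}(iii). For $i\in\mathcal{I}_\lambda$, set $a_i(\pi)$ equal to the given constant when $\pi$ has central character $\lambda$ and $0$ otherwise, and define $b_i(\Pi)$ in the same way using $\lambda N$. Each irreducible $\pi\in\mathrm{Irr}_\chi(G)$ has a central character, but it need not be unitary. To handle this, I would use Lemma \ref{lem 4.6} to write $\pi=\pi_0\otimes\eta$ with $\pi_0$ unitary on $Z$ and $\eta$ a positive character trivial on $I$. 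I would then enlarge the index set to pairs $(i,\eta)$, with $a_{(i,\eta)}(\pi):=a_i(\pi\otimes\eta^{-1})$ and $b_{(i,\eta)}(\Pi):=b_i(\Pi\otimes\eta^{-1}N)$. Condition (1') follows immediately from (1).

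\emph{Key identity.} For $f\in\mathcal{H}(G,\rho)$ and $\pi$ with central character $\lambda$ (suitably normalised by the measure on $Z(F)$), I would show
\[
\langle\mathrm{tr}\,\pi,f\rangle=\langle\mathrm{tr}\,\pi,f_\lambda\rangle_{G/Z},
\]
by integrating in stages over $Z(F)$. The twisted analogue
\[
\langle\mathrm{tr}\,\Pi I_\theta,\phi\rangle=\langle\mathrm{tr}\,\Pi I_\theta,\phi_{\lambda N}\rangle
\]
would be proved the same way, using Lemma \ref{lem 4.3} and the fact that $N:\overline{Z(E)}\to Z(F)$ is measure preserving. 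Twisting by $\eta$ gives a further identity: $\langle\mathrm{tr}(\pi_0\otimes\eta),f\rangle=\langle\mathrm{tr}\,\pi_0,\eta f\rangle$. By Lemma \ref{lem 4.8}, $\eta f$ and $(\eta N)\phi$ stay in the relevant Hecke algebras, and (\ref{4.4.1}) and (\ref{4.4.2}) relate $(\eta f)_\lambda$ to $f_{\lambda\eta^{-1}}$.

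\emph{The equivalence.} With these identities, condition (A') for the index $(i,\eta)$ with $i\in\mathcal{I}_\lambda$ becomes condition (A) for the pair $\bigl(((\eta^{-1}N)\phi)_{\lambda N},(\eta^{-1}f)_\lambda\bigr)$. By hypothesis this is equivalent to (B) for that pair. Next, (\ref{4.3.2}) and (\ref{4.3.3}) express the stable integrals of $\phi_{\lambda N}$ and $f_\lambda$ as Fourier transforms along $Z$ of those of $\phi$ and $f$. A strongly regular elliptic norm times a central element is again such a norm, with the same centraliser, so (B') for $(\phi,f)$ implies (B) for every $(\phi_{\lambda N},f_\lambda)$. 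Conversely, (B) for all $\lambda\eta$ implies (B') by Fourier inversion on $Z(F)$, exactly as in Lemma \ref{lem 4.4}(i) and Lemma \ref{lem 4.7}. The twist by $\eta$ is compatible because $\eta$ is constant on stable classes, so it factors out of both sides.

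\emph{Main obstacle.} I expect the hardest step to be Fourier inversion in the direction (B)$\Rightarrow$(B'). The integrals $\lambda\mapsto SO_{z\gamma}(f)$ are only known for characters with a prescribed restriction to $Z(F)_1$; as functions of $z$ they are locally constant, but they need not be compactly supported on $Z(F)/Z(F)_1\cong\Z^m$. I would first show that they have finite support mod $Z(F)_1$. The support of $f$ is compact, and $z\gamma$ lies in it only for finitely many cosets of $Z(F)_1$. This makes the transforms Laurent polynomials on the complex torus of characters, so knowing them on the compact unitary subtorus determines them; this also explains why restricting to unitary $\lambda$ costs nothing, as in Lemma \ref{lem 4.7}.
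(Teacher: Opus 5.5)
Your proposal is correct and follows the paper's skeleton. You glue the index sets $\mathcal{I}=\coprod_\lambda\mathcal{I}_\lambda$ and extend the coefficients by zero. You use $\langle\mathrm{tr}\,\pi,f\rangle=\langle\mathrm{tr}\,\pi,f_\lambda\rangle$ (trace over $G/Z$) to turn (A') at $i\in\mathcal{I}_\lambda$ into (A) for $(\phi_{\lambda N},f_\lambda)$. You then pass between (B) for these pairs and (B') via Lemma~\ref{lem 4.4}(i).

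You depart from the paper in two places.

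\emph{The twisted indices.} The paper does not use indices $(i,\eta)$, and they are not needed. Condition (2') never requires every $\pi\in\mathrm{Irr}_\chi(G)$ to receive a nonzero coefficient, so representations with non-unitary central character can simply get $a_i(\pi)=0$. Your twisted indices are nonetheless harmless: you correctly check (B')$\Rightarrow$(A') for them, since $\eta$ is constant on stable (twisted) classes. With your convention $a_{(i,\eta)}(\pi)=a_i(\pi\otimes\eta^{-1})$, the relevant pair is $((\eta N)\phi,\eta f)$, not the inverse twist.

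\emph{Why unitary $\lambda$ suffice.} This is the more substantive difference. The paper cites Lemma~\ref{lem 4.4}(i) together with the reduction of Lemma~\ref{lem 4.7}. But Lemma~\ref{lem 4.7} is stated for central $\phi$ with $f=b\phi$, and it needs association of the twisted pairs $\bigl(((\eta^{-1}N)\phi)_{\lambda N},(\eta^{-1}f)_\lambda\bigr)$. With the paper's index set, (A') for a single arbitrary pair $(\phi,f)$ does not provide this. Either of your two devices closes the point:
\begin{itemize}
\item the twisted indices supply exactly those twisted pairs;
\item your observation that $z\mapsto SO_{z\gamma}(f)$ and its twisted analogue are $Z(F)_1$-equivariant with finite support modulo $Z(F)_1$ makes Fourier inversion over unitary characters alone sufficient. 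At unitary characters with the wrong restriction to $Z(F)_1$, both transforms vanish automatically.
\end{itemize}
The second device also makes the twisted indices redundant, so your route is the more self-contained one.

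\emph{One repair needed.} Do not justify the finiteness by saying $z\gamma$ lies in $\mathrm{supp}(f)$; only some conjugate of it must. Argue instead through $D=G/G_{\mathrm{der}}$. The image of $z\gamma$ in $D(F)$ is conjugation-invariant and lies in the compact image of $\mathrm{supp}(f)$. Since $Z\to D$ is an isogeny, $Z(F)/Z(F)_1\to D(F)/D(F)_1$ has finite kernel. Hence $z$ is confined to finitely many cosets of $Z(F)_1$. On the twisted side, argue the same way using $N_D$.
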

\begin{proof}
Indeed, given that local data exists for every unitary character $\lambda$, denote $\mathcal{I}_\lambda$ to be the index set in the corresponding local data, we take $\mathcal{I}=\displaystyle \coprod_{\lambda}\mathcal{I}_\lambda$. For $\pi\in\mathrm{Irr}_\chi(G)$, if $\pi\in\mathrm{Irr}_{\chi,\lambda}(G)$, then we take $a_i(\pi)$ to be the complex number in the local data adapted to $\mathrm{Irr}_{\chi,\lambda}(G)$, otherwise we set $a_i(\pi)=0$. We set complex numbers $b_i(\Pi)$ analogously. 
Therefore we have data (a'), (b'), (c'), subject to condition (1').

If (2')(A') is satisfied, for any character $\lambda$ and all $i\in \mathcal{I}_\lambda$, we have
\begin{equation*}
  \displaystyle\sum_\pi a_i(\pi)\langle\text{tr}\,\pi,f\rangle=\sum_\Pi b_i(\Pi)\langle\text{tr}\,\Pi I_\theta,\phi\rangle.  
\end{equation*}
It is straightforward to check that for $\pi\in\text{Irr}_{\chi,\lambda}(G)$ (resp. $\Pi\in\text{Irr}_{\chi_r,\lambda_r}(G_r)$), we have $\langle\text{tr}\,\pi,f\rangle=\langle\text{tr}\,\pi,f_\lambda\rangle$ (resp. $\langle\text{tr}\,\Pi I_\theta,\phi\rangle=\langle\text{tr}\,\Pi I_\theta,\phi_{\lambda_r}\rangle$ ) (recall that the latter traces are defined over $G/Z$). Therefore, we have $(f_\lambda, \phi_{\lambda_r})$ are associated for any unitary character $\lambda$ on $Z$. By Lemma \ref{lem 4.4} (i) and the further reduction to unitary characters, we can assert that (2')(B') holds. 

Similarly if (2')(B') is satisfied, we just reverse the process above to get (2')(A'). Therefore we have the existence of local data adapted to $\text{Irr}_\chi(G)$.

\end{proof}
\subsection{Labesse elementary functions and their traces}\label{sec 7.5}
We collect the definitions and properties of Labesse elementary functions that were defined and used in \cite{haines2012base} in this subsection. They will be used to prove (2')(A'). 

Recall that $A^{F_r}$ is a maximal $F_r$-split torus in $G$, whose centralizer $T$ is a maximal $F$-torus. Let $B=TU$ be the $F$-rational Borel subgroup defining the dominant Weyl chamber in $X_*(A)_\R$. Let $\rho$ denote the half-sum of the $B$-positive roots of $G$.

We fix a uniformizer $\varpi$ for the field $F$. We have the following commutative diagram:
\begin{equation*}
\begin{tikzcd}
X_*(A^{F_r}) \arrow[r, "\cdot(\varpi)"] \arrow[d, "N_r"]
& T(F_r) \arrow[d, "N_r"] \\
X_*(A) \arrow[r, "\cdot(\varpi)"]
&  T(F)
\end{tikzcd}
\end{equation*}
Consider a regular dominant cocharacter $\nu\in X_*(A^{F_r})$ and set $u=\nu(\varpi)\in T(F_r)$. Therefore we have $\tau=N_r(\nu)$ is also a regular dominant cocharacter in $X_*(A)$ with $t=\tau(\varpi)=N_r(u)\in T(F)$. Labesse\cite{labesse1990fonctions} constructed $F$-rational parabolic subgroup $P_{u\theta}$ with $F$-rational unipotent radical $N_{u\theta}$ and $F$-rational Levi factor $M_{u\theta}$ to $u\theta\in G_r\rtimes\langle\theta\rangle$. The subgroups $M_{u\theta}$ (resp. $P_{u\theta}$) of $G$ can be characterized as the set of elements $g\in G$ such that $(u\theta)^ng(u\theta)^{-n}$ remains bounded as $n$ ranges over all integers (resp. all positive integers). Since $(u\theta)^r=t$, we have $M_{u\theta}=M_t=T$ and $P_{u\theta}=P_t=B$.

Consider the map
\begin{equation*}\tag{7.5.1}\label{7.5.1}
\begin{split}
T(F_r)_1\backslash I_r\times T(F_r)_1u& \to G_r\\
[k,mu]&\mapsto k^{-1}mu\theta(k)
\end{split}    
\end{equation*}
where $[k,mu]$ denotes the equivalent class of $(k,mu)\in I_r\times T(F_r)_1u$ under the action of $m_0\in T(F_r)_1$ by $m_0\cdot(k, mu)=(m_0k, m_0mu\theta(m_0)^{-1})$. This map is injective whose image is a compact open subset $\mathfrak{L}_u\subset G_r$.

\begin{defn}\label{defn elementary function}
We define the elementary function $\phi_{u,\chi_r}$ on $G_r$ to vanish off of $\mathfrak{L}_u$, and on $\mathfrak{L}_u$, it is given by 
\begin{equation*}
    \phi_{u,\chi_r}(k^{-1}mu\theta(k))=\chi^{-1}_r(m)
\end{equation*}
for $k\in I_r$ and $m\in T(F_r)_1$.
\end{defn}
We summarize the useful properties of $\phi_{u,\chi_r}$ in the following proposition.
\begin{prop}\label{prop. of properties of elementary functions}
\begin{enumerate}
    \item The functions $\phi_{u,\chi_r}$ are well-defined and belong to $\mathcal{H}(G_r,\rho_r)$.
    \item The functions $\phi_{u,\chi_r}$ are supported on the set of strongly $\theta$-regular elements in $G_r$.
\end{enumerate}
\end{prop}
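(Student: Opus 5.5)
Part (1) is proved by checking directly that the map (\ref{7.5.1}) is a well-defined injection with compact open image $\mathfrak{L}_u$, and that $\phi_{u,\chi_r}$ transforms correctly under $I_r$ on both sides. Part (2) reduces, via the bijection between $\theta$-conjugacy and conjugacy of norms (Corollary \ref{cor 2.2}), to a computation of $N_r$ on $\mathfrak{L}_u$. Part (2) is also what is needed for the geometric side of the trace formula later.

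For (1), well-definedness means checking that the value $\chi_r^{-1}(m)$ does not depend on the representative of $[k,mu]$. If $(k,mu)$ is replaced by $(m_0k,m_0mu\theta(m_0)^{-1})$, the new torus part is $m_0m\theta(m_0)^{-1}$, because $T(F_r)_1$ is abelian and normalizes nothing problematic. Since $\chi_r=\chi\circ N_r$ and $N_r(m_0\theta(m_0)^{-1})=1$, we get $\chi_r(m_0m\theta(m_0)^{-1})=\chi_r(m)$. Injectivity of (\ref{7.5.1}) I would take as in Labesse and \cite{haines2012base}; it also gives single-valuedness on the image.

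Next, take $g=k^{-1}mu\theta(k)$ and $i\in I_r$. Using the Iwahori decomposition $I_r=I_r^{+}\cdot T(F_r)_1$, I need to show $\phi(ig)=\rho_r^{-1}(i)\phi(g)$, and similarly on the right. Write $ig=(ki^{-1})^{-1}\cdot\theta(i)^{-1}\cdots$. The cleanest route is to show that $\mathfrak{L}_u$ is stable under left and right multiplication by $I_r$, and to compute the new $T(F_r)_1$-coordinate modulo $T(F_r)_1^+$.

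The key input here is the standard lemma behind Labesse's construction. Because $u$ is regular dominant, the map $I_r^{+}\times(\text{torus})\to G_r$, $(k,x)\mapsto k^{-1}x\theta(k)$, covers $I_r^{+}xI_r^{+}$. Concretely, $k\mapsto k^{-1}u\theta(k)u^{-1}$ contracts the $U$- and $\bar U$-parts of the Iwahori decomposition. Combined with $P_{u\theta}=B$ and $M_{u\theta}=T$, this shows $\mathfrak{L}_u=I_rT(F_r)_1uI_r$. Under this identification the function is $\chi_r^{-1}$ of the torus part, which is exactly the $\rho_r$-equivariance, since $\rho_r$ factors through $I_r/I_r^+\cong T(F_r)_1/T(F_r)_1^+$. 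Compact support and local constancy are then clear. I expect this contraction and surjectivity step, done carefully with the $\theta$-twist, to be the main technical point.

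For (2), if $g=k^{-1}mu\theta(k)\in\mathfrak{L}_u$, then $N_r g=k^{-1}N_r(mu)k$, computed by telescoping, using $\theta^r=1$ on $G_r$. Moreover $N_r(mu)=N_r(m)\,t$ with $t=\tau(\varpi)$ and $\tau=N_r\nu$ regular dominant. An element $N_r(m)t$ with $N_r(m)\in T(F_r)_1$ has $\alpha(N_r(m)t)=\alpha(N_r(m))\varpi^{\langle\alpha,\tau\rangle}$ with $\langle\alpha,\tau\rangle\neq0$, so no root takes the value $1$. The element is therefore regular, with centralizer connected, equal to $T$. Strong regularity additionally requires $w(x)\neq x$ for $w\neq1$ in $W$. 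This holds because $w\tau\neq\tau$ for regular $\tau$, so the valuations of $x$ and $w(x)$ differ. Hence $N_rg$ is strongly regular semisimple, i.e. $g$ is $\theta$-strongly regular in the sense of Definition \ref{defn of sigma semisimplicity}.
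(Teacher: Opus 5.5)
Your proposal is correct, but for part (1) it takes a different route from the paper. In the paper, (2) is simply cited from Haines's Lemma 8.1.2(ii). For (1), the paper notes $\mathfrak{L}_u\subset I_ruI_r$ and gets equality following Labesse: both sets are compact open with the same volume $\mathrm{vol}(I_r)\,\delta_{B_r}^{-1}(u)$. The $\rho_r$-equivariance is then imported from the computation in Haines's Lemma 8.1.3.

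You instead prove one surjectivity statement at pro-unipotent level: for $x\in T(F_r)_1u$, every element of $xI_r^+$ is $\theta$-conjugate under $I_r^+$ to an element of $T(F_r)_1^+x$. After $\theta$-conjugating $i_1ui_2$ to $ui_2\theta(i_1)$, this gives $\mathfrak{L}_u=I_ruI_r$ and the equivariance at once. The torus coordinate changes only by $T(F_r)_1^+$, where $\chi_r$ is trivial, and by the $T(F_r)_1$-component of $i$, using $\rho_r\circ\theta=\rho_r$. Injectivity of (7.5.1) gives single-valuedness. Your route avoids computing the measure of the image of (7.5.1), which the paper calls straightforward but which is essentially the same contraction analysis. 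The paper's route instead reduces everything to one volume identity that can be quoted from Labesse.

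When you carry out the iteration, correct one phrase: $k\mapsto k^{-1}u\theta(k)u^{-1}$ does not contract both parts. $\mathrm{Ad}(u)$ contracts $I_r\cap U$, while $\mathrm{Ad}(u^{-1})$ contracts $I_r\cap\bar U$. So remove the $U$-component $n$ by $\theta$-conjugating with $xnx^{-1}$, and the $\bar U$-component $\bar n$ by $\theta$-conjugating with $\theta^{-1}(\bar n)^{-1}$. Each step pushes the error, commutators included, strictly deeper, because $\nu$ is regular and $U$, $\bar U$, $I_r^+$ are $\theta$-stable. Then conclude by completeness.

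In (2), where you supply an argument the paper only cites, make two points explicit. First, $\langle\alpha,\tau\rangle\neq0$ holds for all \emph{absolute} roots because $T=Z_G(A)$ for quasi-split $G$, so no absolute root vanishes on $A$. Second, $W$ must be the absolute Weyl group, and ``valuations differ'' should mean $\mathrm{val}\,\lambda(x)=\langle\lambda,\tau\rangle$ for all $\lambda\in X^*(T)$, which holds since $N_r(m)\in T(F)_1$. Then $w(x)=x$ forces $w\tau=\tau$, hence $w=1$.
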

\begin{proof}
(2) is from Lemma 8.1.2 (ii) in \cite{haines2012base}. For (1), the fact that the functions are well-defined is easy to check. To show that they belong to the Hecke algebra, the first step is to show that $\mathfrak{L}_u=I_ruI_r$. It is clear that $\mathfrak{L}_u\subset I_ruI_r$ by \ref{7.5.1}. To show the equality, following \cite{labesse1995} Prop. IV.1.1, since both sides are open compact, it suffices to show that they have the same volume. It is straightforward to see that they are of the same volume $\mathrm{vol}(I_r)\cdot\mathrm{vol}(I_r/uI_ru^{-1})=\mathrm{vol}(I_r)\cdot\delta^{-1}_{B_r}(u)$. To show that it is in the Hecke algebra, the calculations in the proof of \cite{haines2012base} Lemma 8.1.3 is still valid here.
\end{proof}
When $r=1$ and $u=t$, we can define $f_{t,\chi}\in\mathcal{H}(G,\rho)$ analogously. By \S 8.3 in \cite{haines2012base}, we have the following associated results:
\begin{prop}\label{prop. of association of elementary functions}
The functions $f_{t,\chi}$ and $\phi_{u,\chi_r}$ are associated.
\end{prop}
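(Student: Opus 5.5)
The plan is to compute both sides of the matching explicitly, following Labesse and \S 8.3 of \cite{haines2012base}. The elementary functions are supported on (strongly) regular elements by Proposition \ref{prop. of properties of elementary functions}(2), so it suffices to compare orbital integrals at regular semisimple elements. The first step is to show that the twisted orbital integrals of $\phi_{u,\chi_r}$ vanish unless $\delta$ is $\theta$-conjugate into $T(F_r)_1u$. Indeed, by the description $\mathfrak{L}_u=I_ruI_r$ and the injectivity of (\ref{7.5.1}), every element of the support has the form $k^{-1}mu\theta(k)$ with $m\in T(F_r)_1$. Moreover, since $\nu$ is regular dominant, $mu$ is $\theta$-conjugate under $T(F_r)$ only to elements of $T(F_r)$ with the same image in $X_*(A^{F_r})$ modulo $(1-\theta)$. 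The analogous statement holds for $f_{t,\chi}$, whose support is $I tI=\{k^{-1}mtk\}$.

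Next I would evaluate $TO_{\delta\theta}(\phi_{u,\chi_r})$ for $\delta=mu$, with $m\in T(F_r)_1$. I would use the Iwasawa decomposition $G_r=B_r I_r$ relative to $P_{u\theta}=B$, and the standard descent to the Levi $M_{u\theta}=T$: conjugating by $N(F_r)$ and using that $u\theta$ contracts $N$, the map $n\mapsto n^{-1}(mu)\theta(n)(mu)^{-1}$ is a bijection on $N(F_r)$ with Jacobian $\delta_{B_r}^{1/2}$-type. This yields
\[
TO^{G_r}_{mu\theta}(\phi_{u,\chi_r})=\delta_B(t)^{-1/2}\cdot c\cdot TO^{T(F_r)}_{mu\theta}(\phi_{u,\chi_r}|_{T}),
\]
and on the torus the restriction is $\chi_r^{-1}$ on $T(F_r)_1u$. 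Similarly,
\[
O^G_{Nm\cdot t}(f_{t,\chi})=\delta_B(t)^{-1/2}\cdot c'\cdot\chi^{-1}(N m).
\]
Since $\chi_r=\chi\circ N$, the torus-level twisted orbital integral equals $\chi^{-1}(Nm)$ times a volume. The norm map $T(F_r)_1\to T(F)_1$ is surjective for unramified $T$, by Lang's theorem on the special fiber and Hensel's lemma, so every norm $\gamma$ in $T(F)_1t$ arises this way, and non-norms contribute zero on both sides.

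Then I would pass to stable orbital integrals. Because $G_{\rm der}=G_{\rm sc}$ after the reductions, stable $\theta$-conjugacy classes meeting $T(F_r)_1u$ are controlled by $H^1(F,T)$. The elements $mu$ and $m'u$ with the same norm are stably $\theta$-conjugate, and the sign factors $e(\cdot)$ are all $1$ for tori. The remaining task is to check that the constants match: the measure normalizations, giving $I_r$ and $I$ volume $1$ and compatible measures on $T(F_r)_\theta=T(F)$, together with the factor relating $\delta_{B_r}(u)$ to $\delta_B(t)$, namely $\delta_{B_r}(u)=\delta_B(N u)$.

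The main obstacle I expect is the descent step for elements $\delta$ that are $\theta$-conjugate into the support but not written in the normal form $mu$ with $m\in T(F_r)_1$. Here one must show that the fibers of the map (\ref{7.5.1}) produce exactly the volume factor $\delta_B(t)^{-1/2}$ on both sides, and that counting stable classes gives matching cardinalities. I would try to handle this by reducing to Kottwitz's result that $1_{I_r}$ and $1_I$ are associated, after translating by $u$ and $t$. Failing that, I would check directly that the Iwahori decomposition computation in \S 8.3 of \cite{haines2012base} uses only the group structure and Haar measures, and so transfers verbatim to characteristic $p$.
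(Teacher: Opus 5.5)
Your plan is essentially the paper's: the paper gives no independent argument for this proposition and simply invokes Labesse's computation as carried out in \S 8.3 of \cite{haines2012base}, noting that it is characteristic-free. The inputs you isolate (surjectivity of $N_r\colon T(F_r)_1\to T(F)_1$ via Lang and Hensel, trivial signs for tori) are the relevant ones. One simplification is worth making, and it also corrects a misstatement: no parabolic descent is needed, and $G_r=B_rI_r$ is not a valid decomposition, since $B_r\backslash G_r/I_r\cong W_r$. Regularity of $\tau$ forces any $h$ with $h^{-1}mu\theta(h)\in T(F_r)_1u$ to lie in $T(F_r)$. This gives directly $TO_{mu\theta}(\phi_{u,\chi_r})=\chi_r^{-1}(m)\,\mathrm{vol}(I_r)/\mathrm{vol}(T(F)_1)$ and $O_{N_r(m)t}(f_{t,\chi})=\chi^{-1}(N_rm)\,\mathrm{vol}(I)/\mathrm{vol}(T(F)_1)$. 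Combined with $\ker(N_r|_{T(F_r)_1})=(1-\theta)T(F_r)_1$, it also shows that each stable class meets the supports in exactly one rational class, so the obstacle you anticipate does not arise. Your fallback of translating $1_{I_r}$ and $1_I$ would not have worked in any case, since translation does not commute with twisted conjugation.
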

The next step is to calculate the traces of elementary functions. Let 
$\Pi$ denote a $\theta$-stable admissible representation of $G_r$, and fix an intertwiner $I_\theta:\Pi\cong \Pi^\theta$. The locally integrability of the distribution character in characteristic $0$ that was used in \S 8.4-8.5 in \cite{haines2012base} is also available in the restricted form by Proposition 13.1 of \cite{adler2007local}:
\begin{prop}\label{prop of character distribution}
For $\phi\in C^\infty_c(G^{\theta\mathrm{-reg}}(F_r))$, where $G^{\theta\mathrm{-reg}}(F_r)$ is the open subset of $\theta$-regular elements in $G(F_r)$ whose complement has measure $0$, the functional $\phi\mapsto\langle\tr \,\Pi I_\theta, \phi\rangle$ is represented by a locally constant function $\Theta_{\Pi\theta}$ on $G^{\theta\mathrm{-reg}}(F_r)$. That is, 
\begin{equation*}\tag{7.5.2}
\langle\tr \,\Pi I_\theta, \phi\rangle=\int_{G_r} \Theta_{\Pi\theta}(g)\phi(g)\,dg   
\end{equation*}
\end{prop}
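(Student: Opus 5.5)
The plan is to deduce the statement from the general theory of twisted characters for reductive groups over local fields of arbitrary characteristic. I would not try to reprove local integrability, which is out of reach here. Write $\tilde G := R_{F_r/F}G_{F_r}$ and form the disconnected group $\tilde G\rtimes\langle\theta\rangle$, whose non-identity component $\tilde G\theta$ has $F$-points $G(F_r)\theta$. An admissible $\theta$-stable $\Pi$ with its intertwiner $I_\theta$ extends to a representation of $G(F_r)\rtimes\langle\theta\rangle$. The twisted trace distribution $\phi\mapsto\tr(\Pi(\phi)I_\theta)$ is then the ordinary character distribution of this extended representation, restricted to the coset $G(F_r)\theta$. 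The reason is that $\Pi(\phi)I_\theta$ is of finite rank, since $\Pi$ is admissible and $\phi$ is locally constant and compactly supported.

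The first step is to identify the open set of $\theta$-regular elements with the regular semisimple locus of the coset $\tilde G\theta$ in the disconnected group. By Definition \ref{defn of sigma semisimplicity}, $\delta$ is $\theta$-regular exactly when $N\delta$ is regular semisimple. By Lemma \ref{lemma of inner form}, $G_{\delta\theta}$ is then an inner form of $G_{N\delta}$, hence a torus. This matches the notion of regular element used by Adler--Korman for the twisted/disconnected setting.

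The second step is to invoke Proposition 13.1 of \cite{adler2007local}. It asserts, in any characteristic, that on the regular set the (twisted) character distribution is represented by a locally constant function $\Theta_{\Pi\theta}$. This gives exactly (7.5.2) for $\phi$ supported in $G^{\theta\mathrm{-reg}}(F_r)$.

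The claim that the complement of $G^{\theta\mathrm{-reg}}(F_r)$ has measure zero needs a separate argument. The complement is the zero locus of the regular function $\delta\mapsto D(N\delta)$, the discriminant of the norm, which is a polynomial on the $F$-variety $\tilde G$. This function is not identically zero: for instance, it is nonzero at $\delta$ in $T(F_r)$ with $N\delta$ regular, and such $\delta$ exist. A proper Zariski-closed subset of a smooth variety has Haar measure zero, by the same power-series argument already used in the density lemma of \S\ref{sec 5.3}.

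The main obstacle is the precise form of the hypothesis in Adler--Korman. Their result in positive characteristic may require a compactly supported test function on a neighbourhood of a fixed semisimple element, or a mild restriction on $p$. If so, I would cover the support of $\phi$ by finitely many such neighbourhoods and patch the local representing functions by a partition of unity. Uniqueness on overlaps follows because two locally constant functions that define the same distribution must agree.
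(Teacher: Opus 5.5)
Your proposal is the paper's argument. The paper gives nothing beyond the citation of Proposition 13.1 of \cite{adler2007local}, i.e.\ your second step; your reformulation via $\tilde G\rtimes\langle\theta\rangle$, the measure-zero remark and the patching step are harmless additions.

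Two small cautions on those additions. First, the argument in \S\ref{sec 5.3} only shows that the zero locus has empty interior, not that it is Haar-null; for measure zero you should instead cite the standard fact that the zero set of a nonzero analytic function has measure zero. Second, if $p\mid r$, no element of the coset $\tilde G\theta$ is semisimple in the algebraic-group sense, because the generator of the component group $\Z/r$ has nontrivial unipotent part in characteristic $p$. So the regular set you match with must be defined by ``$N\delta$ regular semisimple'' (equivalently, $G_{\delta\theta}^\circ$ a torus), and you should check that this is the set to which the cited proposition applies.
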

The fact that our elementary functions are supported inside the set of $\theta$-regular elements (Proposition \ref{prop. of properties of elementary functions} (2)) allows us to repeat the same calculations as in \cite{haines2012base}. Along the way, we also use the twisted trace identity $\Theta_{\Pi\theta}(mu)=\Theta_{\Pi_U\theta}(mu)$ of Rogawski (\cite{rogawski1988trace}, Prop. 7.4), where $\Pi_U$ is the Jacquet module of $\Pi$ corresponding to the Borel subgroup $B_r=T_rU_r$.

We fix some notations before we get into the traces of elementary functions. We write $W$ (resp. $W_r$) for the relative Weyl group associated to the $F$-split (resp. $F_r$-split) torus $A$ (resp. $A^{F_r}$) in $G$. 
\begin{itemize}
    \item Let $\Xi$ denote the set of characters on $T(F_r)$ which extend some $W_r$-conjugate of $\chi_r$. Let $\Xi(\chi_r)\subset\Xi$ consist of those whose restriction to $T(F_r)_1$ is precisely $\chi_r$. For $\xi'\in\Xi(\chi_r)$, we may write
    \begin{equation*}
        \xi'=\tilde{\chi}^{\varpi}_r\eta'
    \end{equation*}
    for a unique unramified character $\eta'$ on $T_r$ (see Remark \ref{remark extension of chracter} for the definition of $\tilde{\chi}^{\varpi}_r$).
    \item Let $\Xi^\theta$ (resp. $\Xi(\chi_r)^\theta$) denote the subset of $\theta$-fixed elements in $\Xi$ (resp. $\Xi(\chi_r)$).
\end{itemize}
Suppose the supercuspidal support of $\Pi$ is $(T_r,\xi')_{G_r} $ for some extension $\xi'$ of a $W_r$-conjugate of $\chi_r$. Then $\Pi_U$ is a subquotient of 
\begin{equation*}
    (i^{G_r}_{B_r}(\xi'))_U=\delta^{1/2}_{B_r}\bigoplus_{w\in W_r}\C_{^w\xi'}
\end{equation*}
(cf. \cite{casselman1995introduction}, Prop. 6.4.1), where $\C_{^w\xi'}$ is the character on $T_r$ corresponding to $^w\xi'$.
\begin{itemize}
    \item We have a well-defined subset $\Xi(\Pi)\subset\Xi$ and positive multiplicities $a_{\xi'}$ for $\xi\in\Xi(\Pi)$ such that
    \begin{equation*}
\Pi_U=\delta^{1/2}_{B_r}\bigoplus_{w\in \Xi(\Pi)}\C^{a_{\xi'}}_{\xi'}
    \end{equation*}
    \item Set $\Xi(\Pi)^\theta=\Xi^\theta\cap \Xi(\Pi)$ and $\Xi(\Pi,\chi_r)^\theta=\Xi(\chi_r)^\theta\cap\Xi(\Pi)$. 
    \item For $\xi'\in\Xi^\theta$, we set 
    \begin{equation*}
        \tr(I_\theta, \Pi, \xi'):=\langle\tr\, I_\theta; \delta^{1/2}_{B_r}\C^{a_{\xi'}}_{\xi'}\rangle.
    \end{equation*}
    Here we recall that the intertwiner $I_\theta: \Pi\cong \Pi^\theta$ induces an intertwiner $I_\theta: \Pi_U\cong \Pi^\theta_U$.
\end{itemize}
We summarize the traces of elementary functions in the following proposition.
\begin{prop}\label{prop. traces of elementary functions}
\begin{enumerate}
    \item Suppose $\Pi$ is an irreducible and $\theta$-stable object in $\mathfrak{R}(G_r)$. If \\ $\langle\tr\,\Pi I_\theta, \phi_{u,\chi_r}\rangle\neq0$, then $\Pi\in\mathfrak{R}_{\chi_r}(G_r)$.
    \item For $\Pi\in\mathfrak{R}_{\chi_r}(G_r)$, we have \begin{equation*}\tag{7.5.3}\label{7.5.3}
    \langle\tr\,\Pi I_\theta, \phi_{u,\chi_r}\rangle=q^{\langle\rho,\tau\rangle}\sum_{\xi'\in\Xi(\Pi,\chi_r)^\theta} \eta'(u)\tr(I_\theta, \Pi, \xi').
    \end{equation*}
    In particular, when $r=1$, we have 
    \begin{equation*}\tag{7.5.4}\label{7.5.4}
    \langle\tr\,\pi, f_{t,\chi}\rangle=q^{\langle\rho,\tau\rangle}\sum_{\xi'\in\Xi(\pi,\chi)} \eta(t)\dim\C^{a_\xi}_\xi.
    \end{equation*}
    \item Let $e_{\rho_r}\in \mathcal{Z}(G_r,\rho_r)$ be the idempotent to have support $I$ and to take value $\rho_r(k)^{-1}$ at $k\in I_r$. Then we have     \begin{equation*}\tag{7.5.5}\label{7.5.5}
    \langle\tr\,\Pi I_\theta, e_{\rho_r}\rangle= \sum_{\xi'\in\Xi(\Pi,\chi_r)^\theta} \tr(I_\theta, \Pi, \xi'),  
    \end{equation*}
    and 
    \begin{equation*}\tag{7.5.6}\label{7.5.6}
    \langle\tr\,\Pi I_\theta, e_{\rho_r}\rangle=  \langle\tr\,\Pi I_\theta, \phi_{u,\chi_r}\rangle|_{u=1}.  
    \end{equation*}
    In particular, when $r=1$, we have
    \begin{equation*}\tag{7.5.7}\label{7.5.7}
    \langle\tr\,\pi, e_{\rho}\rangle= \sum_{\xi'\in\Xi(\pi,\chi)} \dim\C^{a_\xi}_\xi. 
    \end{equation*}
    
\end{enumerate}    
\end{prop}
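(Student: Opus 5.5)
The plan is to follow the computation of \cite{haines2012base}, \S 8.4--8.5, which goes through verbatim once two inputs are in place: the local integrability of $\Theta_{\Pi\theta}$ on the $\theta$-regular set (Proposition \ref{prop of character distribution}), and the fact that $\phi_{u,\chi_r}$ is supported on strongly $\theta$-regular elements (Proposition \ref{prop. of properties of elementary functions}(2)).

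\textbf{Part (1).} By Proposition \ref{prop. of properties of elementary functions}(1), $\phi_{u,\chi_r}\in\mathcal{H}(G_r,\rho_r)$. Hence $\Pi(\phi_{u,\chi_r})$ has image in the $\rho_r$-isotypic space $\Pi^{\rho_r}$, with $I_r$ acting through $\rho_r$. If the trace is nonzero, then $\Pi^{\rho_r}\neq0$, and the type property (Proposition \ref{prop Iwahori type}) gives $\Pi\in\mathfrak{R}_{\chi_r}(G_r)$.

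\textbf{Part (2).} We write $\langle\tr\,\Pi I_\theta,\phi_{u,\chi_r}\rangle=\int_{G_r}\Theta_{\Pi\theta}\,\phi_{u,\chi_r}$ and pull the integral back along the injective map (\ref{7.5.1}) onto $\mathfrak{L}_u$. We use the Jacobian of $[k,mu]\mapsto k^{-1}mu\theta(k)$, together with $\mathrm{vol}(\mathfrak{L}_u)=\mathrm{vol}(I_r)\delta_{B_r}^{-1}(u)$. Since $\Theta_{\Pi\theta}$ is invariant under $\theta$-conjugation, the integral reduces to
\[
\delta_{B_r}^{-1/2}(u)\int_{T(F_r)_1}\Theta_{\Pi\theta}(mu)\chi_r^{-1}(m)\,dm ,
\]
where the exact power of $\delta_{B_r}$ is the normalization I would check against \cite{haines2012base}, Lemma 8.4.1.

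Next we apply Rogawski's identity $\Theta_{\Pi\theta}(mu)=\Theta_{\Pi_U\theta}(mu)$ (\cite{rogawski1988trace}, Prop.~7.4). This is valid here because $u$ is regular dominant, so $mu\theta$ lies in the contracting region for $B_r$. The twisted character of $\Pi_U=\delta^{1/2}_{B_r}\bigoplus\C^{a_{\xi'}}_{\xi'}$ at $mu\theta$ is a sum over the $\xi'$ with $\theta$-stable summands (the others are permuted by $I_\theta$ and contribute trace zero), giving
\[
\sum_{\xi'\in\Xi(\Pi)^\theta}\delta_{B_r}^{1/2}(mu)\,\xi'(mu)\,\tr(I_\theta,\Pi,\xi').
\]
Integrating against $\chi_r^{-1}(m)$ over $T(F_r)_1$ picks out exactly those $\xi'$ with $\xi'|_{T(F_r)_1}=\chi_r$, that is, $\Xi(\Pi,\chi_r)^\theta$. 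For these, $\xi'(u)=\tilde\chi_r^\varpi(u)\eta'(u)=\eta'(u)$, since $u=\nu(\varpi)$. The powers of $\delta_{B_r}$ combine to $\delta_{B_r}(u)^{-1/2}$. Because $N_r\nu=\tau$, this equals $\delta_B(t)^{-1/2}=q^{\langle\rho,\tau\rangle}$ (with $\rho$ here the half-sum of positive roots), which gives (\ref{7.5.3}). The case $r=1$ gives (\ref{7.5.4}), since $I_\theta=\mathrm{id}$ has trace $\dim\C^{a_\xi}_\xi$.

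\textbf{Part (3).} The idempotent $e_{\rho_r}$ acts on $\Pi$ as the projection onto $\Pi^{\rho_r}$, so its twisted trace is the trace of $I_\theta$ on $\Pi^{\rho_r}$. The Jacquet functor gives a $\theta$-equivariant isomorphism $\Pi^{\rho_r}\cong(\Pi_U)^{T(F_r)_1,\chi_r}$, which is a Borel--Casselman type statement for depth-zero types. I would cite \cite{haineshecke} and check that its proof is characteristic-free. This isomorphism yields (\ref{7.5.5}). Comparing with (\ref{7.5.3}) evaluated formally at $u=1$, where $\tau=0$ and $\eta'(1)=1$, gives (\ref{7.5.6}), and $r=1$ gives (\ref{7.5.7}).

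\textbf{Main obstacle.} The hardest step is justifying Rogawski's identity in positive characteristic. Rogawski's proof relies on Casselman's theorem on Jacquet modules and on germ and character expansions near regular elements. My first attempt would be to re-derive it from Casselman's pairing, which is characteristic-free, applied to the compact open $\mathfrak{L}_u$, using the regularity of $\nu$. This approach avoids the Harish-Chandra regularity theorem altogether and needs only the restricted local integrability from \cite{adler2007local}.
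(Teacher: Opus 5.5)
Your proposal takes the same route as the paper, whose proof just says that the computations of \cite{haines2012base}, Lemma 8.4.1 and \S 8.5, carry over given three inputs: the restricted representability of $\Theta_{\Pi\theta}$ from \cite{adler2007local}, the $\theta$-regular support of $\phi_{u,\chi_r}$, and Rogawski's identity, which the paper cites without your caution about positive characteristic (your Jacquet-lemma fallback on $I_ruI_r$, combined with the Borel--Casselman isomorphism you already use in (3), is a sound characteristic-free way to obtain it). One bookkeeping slip: with $I_r$ and $T(F_r)_1$ of volume one, pulling back along (\ref{7.5.1}) gives the factor $\mathrm{vol}(\mathfrak{L}_u)=\delta_{B_r}^{-1}(u)$, not $\delta_{B_r}^{-1/2}(u)$, and it is this factor that combines with the $\delta_{B_r}^{1/2}(mu)$ from the unnormalized Jacquet module to give $\delta_{B_r}^{-1/2}(u)=q^{\langle\rho,\tau\rangle}$.
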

\begin{proof}
The calculations in the proof of Lemma 8.4.1 and \S 8.5 in \cite{haines2012base} carry over unchanged.    
\end{proof}

\subsection{End of Proof}\label{sec 7.6}

We fix $\phi\in Z(G_r, \rho_r)$ and $f=b(\phi)\in \mathcal{Z}(G, \rho)$. By the various reduction steps, we may assume $\gamma$ is a (strongly) regular elliptic semisimple norm, i.e., $\gamma=\mathcal{N}(\delta)$ for some $\delta\in G(E)$ $\theta$-(strongly) regular $\theta$-elliptic semisimple. Proposition \ref{prop existence of original local data} guarantees that we are in the situation considered in \cite{haines2012base}, \S 9.2. Thus by Proposition \ref{prop. of association of elementary functions}, we have 
\begin{equation*}\tag{7.6.1}\label{7.6.1}
\sum_\pi a_i(\pi)\langle\text{tr}\,\pi,f_t\rangle=\sum_\Pi b_i(\Pi)\langle\text{tr}\,\Pi I_\theta,\phi_u\rangle    
\end{equation*}
for each pair of functions $(f_t, \phi_u)$, 
where we denote $f_t:=f_{t,\chi}$ and $\phi_u:=\phi_{u,\chi_r}$. As in \textit{loc. cit.}, we can eventually separate (\ref{7.6.1}) into
\begin{equation*}\tag{7.6.2}\label{7.6.2}
\begin{split}
\sum_{\substack{\xi_0\in\Xi(\chi)/W_\chi \\\text{s.t. }\xi_{0r}\in W_{\chi_r}\xi'_0}}\sum_{\pi\in i^G_B(\xi_0)}a(\pi)&\langle\text{tr}\,\pi,e_\rho\rangle = \\
&\sum_{\Pi\in i^{G_r}_{B_r}(\xi'_0)}b(\Pi)\,\langle\text{tr}\,\Pi I_\theta,e_{\rho_r}\rangle 
\end{split}
\end{equation*}
for a fixed $\xi'_0\in\Xi(\chi_r)^\theta$ where both sides will vanish if $W_{\chi_r}\xi'_0$ contains no norm. If $\xi'_0=\xi_{0r}$, we multiply both sides of (\ref{7.6.2}) by $ch_{\xi_0}(b(\phi))=ch_{\xi_{0r}}(\phi)$, if $\xi'_0$ is not a norm, we multiply both sides of (\ref{7.6.2}) by $ch_{\xi'_0}(\phi)$. The upshot is that, after summing over $\xi'_0\in \Xi(\chi_r)/ W_{\chi_r}$, we will have the desired identities of traces
\begin{equation*}\tag{7.6.3}\label{7.6.3}
\sum_\pi a_i(\pi)\langle\text{tr}\,\pi,b(\phi)\rangle=\sum_\Pi b_i(\Pi)\langle\text{tr}\,\Pi I_\theta,\phi\rangle .   
\end{equation*}
Therefore $\phi$ and $b(\phi)$ are associated at all strongly regular elliptic elements, as desired.

\end{document}